\documentclass[10pt,oneside,letter,reqno]{amsart}

\usepackage[T1]{fontenc}
\usepackage[utf8]{inputenc}
\usepackage[british]{babel}
\usepackage{amsmath, amsthm, amssymb, mathtools, mathrsfs}
\usepackage{slashed}
\usepackage{stmaryrd}
\usepackage{tikz-cd}
\usepackage[colorlinks=true,allcolors=gray,hyperindex,breaklinks]{hyperref}
\usepackage{enumitem}

\setlength{\textwidth}{\paperwidth}
\addtolength{\textwidth}{-2.9in}
\calclayout

\numberwithin{equation}{section}
% theorems, definitions
\theoremstyle{plain}

\newtheorem{thm}{Theorem}[section]
\newtheorem{lem}[thm]{Lemma}
\newtheorem{prop}[thm]{Proposition}
\newtheorem{cor}[thm]{Corollary}
\newtheorem{step}{Step}

\theoremstyle{definition}
\newtheorem{exmp}[thm]{Example}
\newtheorem{defn}[thm]{Definition}

\newtheorem{rem}[thm]{Remark}

% commands

% mathbb
\newcommand{\Z}{\mathbb{Z}}
\newcommand{\R}{\mathbb{R}}
\newcommand{\C}{\mathbb{C}} 
\renewcommand{\H}{\mathbb{H}}
\renewcommand{\P}{\mathbb{P}}
\newcommand{\CP}{\mathbb{CP}}

% operators
\newcommand{\del}{\overline{\partial}}
\renewcommand{\Re}{\operatorname{Re}}
\renewcommand{\Im}{\operatorname{Im}}
\newcommand{\rank}{\operatorname{rank}}
\newcommand{\D}{\slashed{D}}
\newcommand{\Hom}{\operatorname{Hom}}

% mathrm
\newcommand{\SO}{\mathrm{SO}}
\newcommand{\SU}{\mathrm{SU}}
\newcommand{\U}{\mathrm{U}}
\newcommand{\Sp}{\mathrm{Sp}}
\newcommand{\SL}{\mathrm{SL}}

\newcommand{\loc}{\mathrm{loc}}
\newcommand{\hol}{\mathrm{hol}}
\newcommand{\sign}{\mathrm{sign}}
\newcommand{\SW}{\mathrm{SW}}
\newcommand{\vol}{\mathrm{vol}}
\newcommand{\Sym}{\mathrm{Sym}}

% mathcal
\newcommand{\cA}{\mathcal{A}}
\newcommand{\cE}{\mathcal{E}}
\newcommand{\cG}{\mathcal{G}}
\newcommand{\cC}{\mathcal{C}}
\newcommand{\cM}{\mathcal{M}}
\newcommand{\cN}{\mathcal{N}}
\newcommand{\cU}{\mathcal{U}}
\newcommand{\cZ}{\mathcal{Z}}
\newcommand{\cO}{\mathcal{O}}
\newcommand{\cP}{\mathcal{P}}

% mathfrak
\newcommand{\su}{\mathfrak{su}}
\newcommand{\Met}{\mathfrak{Met}}
\newcommand{\Ob}{\mathfrak{Ob}}

% mathbf
\newcommand{\bp}{\mathbf{p}}

% mathscr
\newcommand{\sP}{\mathscr{P}}

% restriction
\newcommand{\defined}[1]{\emph{#1}}

\newcommand\restr[2]{{% we make the whole thing an ordinary symbol
  \left.\kern-\nulldelimiterspace % automatically resize the bar with \right
  #1 % the function
  \vphantom{\big|} % pretend it's a little taller at normal size
  \right|_{#2} % this is the delimiter
  }}

\begin{document}

\title[Seiberg--Witten monopoles with multiple spinors on $S^1\times\Sigma$]{Seiberg--Witten monopoles with multiple spinors on a surface times a circle}
\author{Aleksander Doan}
%\date{}
%\address{Department of Mathematics, Stony Brook University, John S. Toll Drive, Stony Brook, NY 11794, USA}
%\email{aleksander.doan@stonybrook.edu}

\begin{abstract}
The Seiberg--Witten equation with multiple spinors generalises the classical Seiberg--Witten equation in dimension three. In contrast to the classical case, the moduli space of solutions $\cM$ can be non-compact due to the appearance of so-called Fueter sections. In the absence of Fueter sections we define a signed count of points in $\cM$ and show its invariance under small perturbations. We then study the equation on the product of a Riemann surface and a circle, describing $\cM$ in terms of holomorphic data over the surface. We define analytic and algebro-geometric compactifications of $\cM$, and construct a homeomorphism between them.  For a generic choice of circle-invariant parameters of the equation, Fueter sections do not appear and $\cM$ is a compact K\"ahler manifold. After a perturbation it splits into isolated points which can be counted with signs, yielding a number independent of the initial choice of the parameters. We compute this number for surfaces of low genus.
\end{abstract}

\maketitle 

\section{Introduction}

This article addresses the questions of transversality and compactness for the moduli spaces of \emph{Seiberg--Witten multi-monopoles}: solutions of the \emph{Seiberg--Witten equation with multiple spinors} introduced in \cite{bryan-wentworth, haydys-walpuski}. We begin by considering the equation on an arbitrary closed Riemannian spin three-manifold $Y$ but the majority of the paper concerns three-manifolds of the form $Y = S^1 \times \Sigma$ for a closed Riemann surface $\Sigma$.

\subsection{Instantons in higher dimensions}
The motivation for studying generalised Seiberg--Witten equations comes from higher-dimensional gauge theory. We only briefly outline this relationship as it will not be essential for understanding the results presented here. Donaldson, Thomas, and Segal initiated a programme of defining an enumerative invariant of Riemannian $7$--manifolds with holonomy contained in the exceptional Lie group $G_2$ \cite{donaldson-thomas, donaldson-segal}. The putative invariant counts, in the sense of Fredholm differential topology, \emph{$G_2$--instantons}: a class of Yang--Mills connections solving a $7$--dimensional analogue of the anti-self-duality equation. A sequence of $G_2$--instantons can concentrate along a three-dimensional \emph{associative submanifold} $Y$, developing a bubble singularity in a way familiar from the four-dimensional theory \cite{tian}. This phenomenon causes the moduli space of $G_2$--instantons to be non-compact. In particular, for a family $(g_t)_{t\in[0,1]}$ of $G_2$--metrics, the cobordism between the moduli spaces of $G_2$--instantons with respect to $g_0$ and $g_1$, given by the one-parameter moduli space, may fail to be compact. If this is the case, the count of $G_2$--instantons with respect to $g_t$ jumps as $t$ varies, and is not a deformation invariant. Haydys and Walpuski \cite{walpuski,haydys-walpuski,haydys4} proposed to compensate for such jumps by adding the number of Seiberg--Witten multi-monopoles on $Y$, counted with signs. This number itself is not a topological invariant of $Y$; it can jump when the Riemannian metric on $Y$ and other parameters of the equation are deformed. The point is that such jumps should occur exactly when a $G_2$--instanton is created or destroyed along $Y$. It is conjectured that a combination of the two numbers: the count of $G_2$--instantons and that of multi-monopoles summed over all associative submanifolds is invariant under deformations.

The case $Y = S^1 \times \Sigma$ discussed in this article is particularly relevant to $G_2$--manifolds of the form $S^1 \times X$ for a Calabi--Yau three-fold $X$ containing $\Sigma$ as a holomorphic curve. $G_2$--instantons over $S^1 \times X$ correspond to Hermitian--Yang--Mills connections on $X$ and so one expects that there is a relationship between multi-monopoles on $S^1 \times \Sigma$ and Hermitian--Yang--Mills connections on $X$ whose energy is highly concentrated around $\Sigma$. From this perspective, our work can be seen as the first step towards a gauge-theoretic interpretation of local Donaldson--Thomas invariants in algebraic geometry \cite{okounkov, diaconescu}.

\subsection{Multi-monopoles on three-manifolds} 
Let $Y$ be a closed three-manifold and let $E \to Y$ and $L \to Y$ be vector bundles with the structure group  $\SU(n)$ and $\U(1)$ respectively. Fix an $\SU(n)$--connection $B$ on $E$. A spin structure on $Y$ and a choice of a Riemannian metric endow $Y$ with the spinor bundle $S$: a rank two Hermitian bundle with trivial determinant line bundle and a compatible connection. The \emph{Seiberg--Witten equation with $n$ spinors} is the following differential equation for a pair $(A,\Psi)$ of a connection on $L$ and a section of $\mathrm{Hom}(E, S \otimes L)$:
\begin{equation} \label{eqn:nsw0}
\left\{
\begin{array}{l}
\slashed{D}_{AB} \Psi = 0, \\
F_A = \Psi \Psi^* - \frac{1}{2} | \Psi |^2.
\end{array}
\right.
\end{equation} 
Here, $\D_{AB}$ is the Dirac operator on $\Hom(E, S \otimes L)$ twisted by the connections $A$ and $B$, $F_A$ is the curvature two-form, and in the second equation we identify imaginary-valued two-forms with traceless skew-Hermitian endomorphisms of $S$ using the Clifford multiplication.

For $n=1$ this is the standard Seiberg--Witten equation in dimension three. As in the classical setting, one introduces the moduli space of gauge-equivalence classes of solutions which depends on the choice of the parameters of the equation: the Riemannian metric on $Y$, the connection $B$, and a closed two-form used to perturb the equation to guarantee transversality. 
\begin{defn}
Denote by $\sP$ the space of all parameters of the equation as described above and for $\bp \in\sP$ let $\cM(\bp)$ be the corresponding moduli space of solutions to the Seiberg--Witten equation with $n$ spinors; see section \ref{sec:nsw}, in particular Definition \ref{Def_Parameters}, for more details.
\end{defn}
The dependence on $\bp$ does not play an important role for $n=1$.
In that case, for a generic\footnote{By \emph{generic} we mean: from a residual subset of the space of objects in question. A subset of a topological space is residual if it contains a countable intersection of open and dense subsets. Baire’s theorem asserts that a residual subset of a complete metric space is dense. Unless said otherwise, we use the $C^{\infty}$--topology. } choice of $\bp$, the moduli space of irreducible solutions is an oriented, compact, zero-dimensional manifold, that is: a finite collection of points equipped with signs. If $b_1(Y) > 1$, there are no reducible solutions and the moduli spaces for two different choices of $\bp$ are connected through an oriented, compact, one-dimensional cobordism. As a consequence, the signed count of points in $\cM(\bp)$, for any generic $\bp$, is a topological invariant of $Y$ \cite{meng-taubes, lim, chen}. 

A new feature of the equation for $n \geq 2$ is that $\cM(\bp)$ may be non-compact for some choices of $\bp$. Building on work of Taubes \cite{taubes3, taubes2}, Haydys and Walpuski \cite{haydys-walpuski} showed that a sequence of points in $\cM(\bp)$ which does not have any  convergent subsequence can be rescaled so that it converges in an appropriate sense to a \emph{Fueter section}, a section of a fibre bundle over $Y$ satisfying a non-linear analogue of the Dirac equation. We review Haydys and Walpuski's compactness theorem in subsection \ref{subsec:compactness}; Fueter sections are introduced in Definition \ref{defn:fueter}. An important point is that the differential equation obeyed by a Fueter section, just like the Seiberg--Witten equation itself, depends on the parameter $\bp$.
As a result, Fueter sections may or may not exist depending on the choice of $\bp\in\sP$. 

\begin{defn}
Denote by  $\cU \subset \sP$ the set of all parameters $\bp \in \sP$ for which the moduli space $\cM(\bp)$ is compact. Haydys and Walpuski's theorem and Corollary \ref{cor:compactness} imply that $\cU$ contains an open neighbourhood of the set of all $\bp\in\sP$ for which no Fueter sections exist.
\end{defn}

Our first result, proved in section \ref{sec:nsw}, shows that the existence of Fueter sections is the only obstruction to defining a signed count of multi-monopoles.

\begin{thm} \label{thm:moduli}
Let $Y$ be a closed oriented spin three-manifold with $b_1(Y) > 1$. Fix vector bundles $E\to Y$ and $L\to Y$ with the structure group $\SU(n)$ and $\U(1)$ respectively.
\begin{enumerate}
\item There exists a locally constant function $\SW \colon \cU \to \Z$ such that if $\cM(\bp)$ is Zariski smooth with the obstruction bundle $\Ob \to \cM(\bp)$, then $\SW(\bp)$ equals the integral of the Euler class of $\Ob$ over $\cM(\bp)$; see Definitions \ref{defn:zariski}, \ref{defn:obstruction}, and \ref{defn:tau2}.
\item For a generic $\bp \in \cU$ the moduli space $\cM(\bp)$ is a compact, zero-dimensional manifold equipped with a natural orientation described in subsection \ref{subsec:orientations}. In particular, in this case $\SW(\bp)$ equals the signed count of points in $\cM(\bp)$.
\end{enumerate}
\end{thm}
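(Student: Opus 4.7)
The plan is to organise the proof around the infinite-dimensional implicit function theorem, viewing gauge equivalence classes of solutions to (\ref{eqn:nsw0}) as the zero set of a smooth Fredholm map depending on the parameter $\bp \in \sP$. Part (2) will follow from a parameterised Sard--Smale argument, and part (1) from a cobordism between two nearby generic perturbations inside $\cU$, with the definition extended to Zariski-smooth (but not necessarily transverse) points using the Euler number of the obstruction bundle $\Ob$.

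For part (2), I would first set up the configuration space $\cC$ of Sobolev pairs $(A, \Psi)$, quotient by the $\U(1)$--gauge group $\cG$, and observe that the linearisation of the Seiberg--Witten map at an irreducible solution, coupled with Coulomb gauge, is an elliptic self-adjoint Fredholm operator of index zero; this is the Fredholm theory developed in section \ref{sec:nsw}. The hypothesis $b_1(Y) > 1$ (equivalently $b_2(Y) > 1$) ensures that the cohomological obstruction to the existence of a reducible solution cuts out a positive-codimension subset of the space of closed two-form perturbations, so generic parameters carry no reducibles, even along one-parameter families. A parameterised Sard--Smale argument then produces a residual subset $\sP_{\mathrm{reg}} \subset \sP$ over which the universal zero set is cut out transversely; intersecting with the open set $\cU$ yields a residual subset over which $\cM(\bp)$ is a compact, smooth, zero-dimensional manifold. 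The orientation comes from a trivialisation of the determinant line of the deformation operator, as described in subsection \ref{subsec:orientations}.

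For part (1), I would define $\SW(\bp)$ for $\bp \in \cU$ by perturbing to a nearby generic $\bp'$ and taking the signed count of $\cM(\bp')$. Independence of the choice of $\bp'$ follows by a standard cobordism: join any two generic $\bp'_0, \bp'_1$ close to $\bp$ by a generic path inside $\cU$, which is possible because $\cU$ is open. The parameterised moduli space over this path is a smooth oriented $1$--manifold by transversality in families, and it is compact because each intermediate parameter lies in $\cU$, so Haydys--Walpuski compactness (Corollary \ref{cor:compactness}) applies uniformly along the path. Its oriented boundary is $\cM(\bp'_1) - \cM(\bp'_0)$, proving equality of the signed counts and hence that $\SW$ is locally constant on $\cU$. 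When $\cM(\bp)$ is merely Zariski smooth with obstruction bundle $\Ob$, finite-dimensional obstruction theory in a Kuranishi neighbourhood identifies the contribution of each component of $\cM(\bp)$ to the signed count of a nearby transverse perturbation with the integral of the Euler class of $\Ob$; summing over components shows this agrees with $\SW(\bp)$.

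The hard part will be verifying transversality: the perturbation enters only through a closed two-form in the curvature equation, so one must prove that varying this form is enough to make the linearisation surjective at every irreducible solution in the multi-spinor setting $n \geq 2$. This typically reduces to a unique continuation statement for the coupled Dirac operator $\D_{AB}$, ruling out the possibility that an element of the cokernel vanishes on an open set, together with a density argument showing that closed two-forms span the relevant test space modulo the image of the Dirac operator. The second delicate point is the uniform compactness of the parameterised moduli space along a path in $\cU$, which requires a family version of Haydys and Walpuski's theorem; without it a Fueter section could appear in the interior of the path even though both endpoints lie in $\cU$. Granted both inputs from section \ref{sec:nsw}, the rest is a standard cobordism-plus-Euler-class argument.
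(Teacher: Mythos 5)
Your overall skeleton is the same as the paper's: Sard--Smale to get a residual set of regular parameters, absence of reducibles from $b_1(Y)>1$ (Propositions \ref{prop:transversality}, \ref{prop:reducibles}, \ref{prop:regular}), an oriented one-parameter cobordism over a generic path for invariance (Proposition \ref{prop:regular2}), and finite-dimensional obstruction theory identifying the count with the Euler number of $\Ob$ in the Zariski-smooth case, for which the paper simply cites \cite[Lemma 3.3]{friedman-morgan2}. The genuine gap is in where you locate the transversality difficulty. You assume the perturbation enters \emph{only} through the closed two-form $\eta$ and propose to conclude via unique continuation plus a density argument for closed two-forms. In the paper the perturbation is $\sigma=(B,\eta)\in\cP=\cA(E)\times\cZ$: the background $\SU(n)$--connection is part of the parameter, and for $n\geq 2$ this is essential. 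In the proof of Proposition \ref{prop:transversality}, varying $\eta$ only kills the two-form component $\omega$ of a cokernel element $(\Xi,\omega)$; to kill the spinor component $\Xi$ one needs, after unique continuation, the pointwise statement of Lemma \ref{lem:transversality}, which uses variations $b\in\su(2)\otimes\su(n)$ of $B$. Variations available without perturbing $B$ (imaginary-valued one-forms $a$ and functions $v$, acting through the Clifford multiplication on the $S$--factor only) cannot do this: if at a point $\Psi$ and $\Xi$ have orthogonal $E$--components, then $\Re\langle a\cdot\Psi+v\Psi,\Xi\rangle=0$ for all such $a,v$. So the step you flag as "the hard part" would fail along the route you sketch; one must include $B$ in the perturbed parameters, exactly as the theorem's $\sP$ does.

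Two further inaccuracies in the cobordism step. First, you invoke openness of $\cU$ twice (to find nearby generic parameters and to intersect residual sets); the paper neither claims nor uses this --- it only knows that $\cU$ contains a neighbourhood of the Fueter-free locus, and its proof instead picks a regular parameter in the same \emph{connected component} of $\cU$ and a generic path inside $\cU$. Second, compactness of the parameterised moduli space is not a consequence of "each intermediate parameter lies in $\cU$, so Corollary \ref{cor:compactness} applies": the hypothesis of that corollary is the absence of Fueter sections, not compactness of the fibre, and fibrewise compactness along a path does not by itself rule out a sequence of solutions at parameters $t_i\to t_*$ blowing up to a Fueter section at $t_*$. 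The input actually used is the family form of the compactness theorem (Theorem \ref{thm:compactness} is already stated for a converging sequence of parameters); your closing remark that a family version is needed is correct, but it contradicts the earlier appeal to Corollary \ref{cor:compactness}, and the argument should be rewritten around Theorem \ref{thm:compactness} and the structure of $\cU$ rather than around its purported openness.
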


At present, little is known about the set $\cU$ and the existence of Fueter sections. The main difficulty in understanding Fueter sections lies in the fact that they are defined only on the complement of a singular set---which is known to be closed and of Hausdorff dimension at most one \cite{taubes2}---and thus not amenable to standard elliptic theory. Conjecturally, $\cU$ is open and dense in $\sP$ and its complement has codimension one \cite{donaldson-segal, haydys-walpuski}, a prediction strongly supported by recent work of Takahashi \cite{takahashi}. If this is the case, $\cU$ has multiple connected components on which the function $\SW\to\Z$ is constant. These components are separated by codimension one walls on which Fueter sections appear and the value of $\SW$ jumps as we cross one of them. It is exactly this jumping phenomenon that indicates a relationship between the enumerative theories for multi-monopoles and $G_2$--instantons. 

%\begin{rem}
%In the second item of Theorem \ref{thm:moduli} the Riemannian metric can be fixed; it is enough to perturb $B$ and $\eta$ to achieve transversality.
%\end{rem}

In view of Theorem \ref{thm:moduli}, the central problem in the study of multi-monopoles on three-manifolds---one of importance to the applications in higher-dimensional gauge theory---is that of the existence and properties of Fueter sections. We make some progress on this problem by describing the moduli spaces of multi-monopoles and Fueter sections for a particular class of parameters $\bp$.  In particular, we exhibit the first examples of $Y$ and $\bp$ such that
\begin{itemize}
  \item $\bp\in\cU$ and $\cM(\bp)$ is non-empty, compact, and consists of irreducible solutions,
  \item $\bp\notin\cU$ and $\cM(\bp)$ contains sequences of solutions converging to a Fueter section,
  \item there exist Fueter sections whose singular sets are non-empty \footnote{In fact, more is true: these Fueter sections do not arise from everywhere defined harmonic spinors and so are examples of singular harmonic $\Z_2$ spinors as in \cite[Definition 1.1]{doan3}.},
  \item there exist Fueter sections that do not appear in the compactification of $\cM(\bp)$.
\end{itemize} 
  
\begin{rem}
Since the first version of this paper appeared, Theorem \ref{thm:moduli} has been used to study the wall-crossing phenomenon for multi-monopoles and to prove the existence of Fueter sections, for some choice of $\bp$, on all closed spin three-manifolds with $b_1 > 1$ \cite{doan3}.
\end{rem}

\subsection{Gauge theory on Riemann surfaces}
The examples of $Y$ and $\bp$ mentioned in the four bullet points above are constructed by means of dimensional reduction. We consider three-manifolds of the form $Y=S^1\times\Sigma$ equipped with a spin structure induced from a spin structure on $\Sigma$. The bundles $E$ and $L$ are assumed to be pulled-back from bundles on $\Sigma$, and in the space of all parameters of the equation $\sP$ we distinguish the subspace $\sP_{\Sigma}$ consisting of the parameters pulled-back from $\Sigma$; see Definition \ref{Def_ParameterSpaceSigma}.

For $n=1$ the classical Seiberg--Witten equation on $S^1 \times \Sigma$ reduces to the vortex equation on $\Sigma$ and the moduli space of solutions is homeomorphic to the symmetric product of $\Sigma$ \cite{morgan-et.al,mrowka-et.al,munoz}. Similarly, in section \ref{sec:reduction} we prove that for $\bp\in\sP_{\Sigma}$ all irreducible solutions of the Seiberg--Witten equation with multiple spinors are pulled back from solutions of a generalised vortex equation on $\Sigma$. In fact, we prove this for a much broader class of three-dimensional Seiberg--Witten equations associated with representations; see Theorem \ref{thm:invariance}.

In section \ref{sec_holomorphic} we show that solutions of the dimensionally reduced equation correspond to triples of the form $(\mathcal{L}, \alpha, \beta)$ where $\mathcal{L} \to \Sigma$ is a holomorphic line bundle and $\alpha$, $\beta$ are holomorphic sections of certain holomorphic vector bundles over $\Sigma$ twisted by $\mathcal{L}$. This is a version of the Hitchin--Kobayashi correspondence, following earlier work of Bryan and Wentworth \cite{bryan-wentworth}.  We construct the moduli space $\cM_{\hol}(\bp)$ of isomorphism classes of such triples. In section \ref{sec_compactifications} we introduce also its natural complex-geometric compactification $\overline{\cM}_{\hol}(\bp)$ which is a compact complex analytic space containing $\cM_{\hol}$ as a Zariski open dense subset. In parallel, guided by an enhanced version of the Haydys and Walpuski's compactness theorem \cite{doan}, we define also a gauge-theoretic compactification $\overline{\cM}(\bp)$ of the Seiberg--Witten moduli space. In what follows we assume for simplicity that $\rank E = 2$.

\begin{thm} \label{thm:homeo0}
For every $\bp\in\sP_{\Sigma}$ there is a homeomorphism $\overline{ \cM}(\bp) \to \overline{ \cM}_{\hol}(\bp)$ restricting to an isomorphism of real analytic spaces  $\cM(\bp) \to\cM_{\hol}(\bp)$.
\end{thm}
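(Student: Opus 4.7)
\emph{Interior bijection.} The first task is to construct a bijection $\cM(\bp) \to \cM_{\hol}(\bp)$. By Theorem~\ref{thm:invariance}, every irreducible solution of the Seiberg--Witten equation with parameters in $\sP_{\Sigma}$ descends to a solution of the dimensionally reduced vortex equation on $\Sigma$; by the generalised Hitchin--Kobayashi correspondence of Section~\ref{sec_holomorphic}, such solutions are in bijection with isomorphism classes of polystable holomorphic triples $(\mathcal{L},\alpha,\beta)$. Composing the two identifications gives the required set-theoretic bijection.

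\emph{Real analytic structure.} To upgrade it to an isomorphism of real analytic spaces I would compare the Kuranishi models on both sides. Around a solution $(A,\Psi)$ the local model is cut out by the nonlinear Seiberg--Witten operator in Coulomb gauge; around the corresponding triple $(\mathcal{L},\alpha,\beta)$ it is cut out by the holomorphic deformation complex together with a Hermitian slice. These two elliptic complexes are quasi-isomorphic via the Dolbeault decomposition on $\Sigma$, and the corresponding obstruction maps agree up to a real analytic change of variables. The resulting identification of local models gives an isomorphism of real analytic spaces.

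\emph{Extension to the compactifications.} The heart of the theorem is the extension to the boundary. A sequence $(A_i,\Psi_i)\in\cM(\bp)$ without a convergent subsequence can, by the enhanced Haydys--Walpuski theorem \cite{doan}, be rescaled so that $\Psi_i/\|\Psi_i\|_{L^2}$ converges to a Fueter section on $S^1\times\Sigma$; since the parameters are $S^1$--invariant, the limit descends to a harmonic $\Z_2$--spinor on $\Sigma$ with a finite singular set $Z\subset\Sigma$. On the holomorphic side, a sequence of triples $(\mathcal{L}_i,\alpha_i,\beta_i)$ can degenerate when $\alpha_i$ and $\beta_i$ develop common zeros, producing an effective divisor $D$ on $\Sigma$ and a residual triple on $\mathcal{L}\otimes\cO(-D)$. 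The compactifications introduced in Section~\ref{sec_compactifications} are designed so that $Z$, weighted by the concentration of $|\Psi_i|$, matches $D$ and the rescaled limit matches the residual triple, extending the bijection to $\overline{\cM}(\bp)\to\overline{\cM}_{\hol}(\bp)$.

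\emph{Conclusion and main obstacle.} Continuity at interior points follows from the Kuranishi comparison, and at boundary points from the matching of degenerations above; since both compactifications are compact Hausdorff, a continuous bijection between them is automatically a homeomorphism. The hardest step will be the boundary matching: one must show that $|\Psi_i|$ concentrates at each point $p\in Z$ with exactly the common vanishing order of $\alpha$ and $\beta$ at $p$, and conversely that every divisor realisable as a residual zero arises in this way. This requires a delicate asymptotic analysis of the rescaled spinors near the singular points, together with a gluing argument producing genuine Seiberg--Witten solutions from stable degenerate triples to verify surjectivity onto $\overline{\cM}_{\hol}(\bp)$.
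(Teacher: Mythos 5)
Your interior step is essentially the paper's: Theorem \ref{thm:invariance} reduces irreducible monopoles to the vortex-type equation on $\Sigma$, the Kazdan--Warner argument (Lemma \ref{lem:bryan-wentworth}) gives the bijection with holomorphic triples, and the analytic isomorphism comes from comparing Kuranishi models. Two caveats: no polystability condition enters (the non-vanishing of $\alpha$ or $\beta$, dictated by the sign of $d-\tau$, plays that role), and the comparison of local models is not a formal ``Dolbeault quasi-isomorphism'' --- the actual content is a \emph{linearised} version of Theorem \ref{thm:invariance}, an integration-by-parts identity showing that every element of $H^1_{A,\Psi}$ for the three-dimensional complex at a circle-invariant solution is itself circle-invariant, so that the 3d and 2d deformation complexes have literally the same kernels and cokernels; without this the two analytic structures need not agree.

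The genuine gap is the boundary matching, which you defer, and the route you sketch for it does not fit the compactifications the theorem is about. On the holomorphic side, $\overline{\cM}_{\hol}$ is not a divisor/``residual triple on $\mathcal{L}\otimes\cO(-D)$'' compactification: it is obtained by adjoining a coordinate $t$ and projectivising the linear $\beta$-directions, so its boundary points are classes $[A,\alpha,\beta,0]$ with both $\alpha\neq0$ and $\beta\neq0$; degeneration in $\cM_{\hol}$ means $\beta$ escaping to infinity in the fibre (equivalently $\|\Psi\|_{L^2}\to\infty$), and the divisors $D_1,D_2$ of $\alpha,\beta$ enter only afterwards, as the singular set $D=D_1\cup D_2$ of the associated limiting configuration. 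On the gauge-theoretic side, $\overline{\cM}$ is \emph{defined} (Definition \ref{defn:compactifiedmoduli}) as $\cM\cup\mathcal{F}$ with $\mathcal{F}$ the space of \emph{all} limiting configurations, so the gluing theorem you invoke to get surjectivity is neither needed nor available. What actually has to be proved --- and is the core of the paper's argument (Proposition \ref{prop:homeofueter}) --- is a homeomorphism $\overline{\cM}_{\hol}\setminus\cM_{\hol}\to\mathcal{F}$: one rotates a boundary triple by the simple complex gauge transformation $h=\sqrt{|\beta|/|\alpha|}$ to achieve $|\alpha|=|\beta|$, checks via Lemma \ref{lem:varphi} and Lemma \ref{lem:extend} that the resulting connection is flat with $\Z_2$ holonomy and that the curvature measure has the prescribed integer weights; conversely one removes the singularities of a limiting configuration by the meromorphic-extension argument of Lemma \ref{lem:extend}, and continuity requires the degree bound of Lemma \ref{lem:bounded} and the measure-comparison Lemma \ref{lem:measures}, together with the circle-invariant compactness theorem \ref{thm:inv-compactness} (since all solutions are already $S^1$-invariant, no descent of a three-dimensional Fueter section is needed here). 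Finally, your closing shortcut ``both compactifications are compact Hausdorff'' is circular: compactness of $\overline{\cM}$ is Corollary \ref{cor:intro}, a \emph{consequence} of the theorem. The paper instead constructs the continuous bijection in the direction $\overline{\cM}_{\hol}\to\overline{\cM}$ and concludes using compactness of $\overline{\cM}_{\hol}$ (Proposition \ref{prop:compactification}) against the Hausdorff property of $\overline{\cM}$ (Lemma \ref{lem:hausdorff}).
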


%
%\begin{cor}
%There exist Fueter sections that are not the limits of sequences of Seiberg--Witten multi-monopoles in $\cM$.
%\end{cor}
%
%This stems from the fact $\overline{\cM} \setminus \cM$ comprises only of those Fueter sections satisfying conditions $(1)$, $(2)$, $(3)$ listed above; see Example \ref{exmp:fueterlimiting} for details.

\begin{cor}\label{cor:intro}
The gauge-theoretic compactification $\overline{\cM}(\bp)$ is a compact real analytic space containing $\cM(\bp)$ as a Zariski open dense subset.
\end{cor}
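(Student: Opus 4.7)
The plan is to deduce the corollary directly from Theorem \ref{thm:homeo0} by transport of structure, so that essentially all the content is already packaged in the previous theorem. The first step is to recall from section \ref{sec_compactifications} that $\overline{\cM}_{\hol}(\bp)$ has been constructed as a \emph{compact complex analytic space} containing $\cM_{\hol}(\bp)$ as a Zariski open dense subset. Forgetting the complex structure gives an underlying compact real analytic space in which $\cM_{\hol}(\bp)$ remains Zariski open and dense, since a proper complex analytic subvariety is in particular a proper real analytic subvariety.

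Next, I would use the homeomorphism $\Phi \colon \overline{\cM}(\bp) \to \overline{\cM}_{\hol}(\bp)$ of Theorem \ref{thm:homeo0} to transport this real analytic structure to $\overline{\cM}(\bp)$, declaring $\Phi$ to be an isomorphism of real analytic spaces by definition. Compactness is a topological property and is preserved. The subset $\Phi^{-1}(\cM_{\hol}(\bp)) = \cM(\bp)$ is then tautologically Zariski open and dense in $\overline{\cM}(\bp)$, because its complement is the preimage of a closed real analytic subvariety.

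The only point requiring genuine verification, and what I expect to be the main (albeit mild) obstacle, is the compatibility of the transported real analytic structure on $\cM(\bp) \subset \overline{\cM}(\bp)$ with the intrinsic one coming from the Fredholm description of the gauge-theoretic moduli space developed in section \ref{sec:nsw}. However, this is exactly what the second clause of Theorem \ref{thm:homeo0} asserts: the restriction of $\Phi$ to $\cM(\bp) \to \cM_{\hol}(\bp)$ is an isomorphism of real analytic spaces. Thus the two structures on $\cM(\bp)$ agree, and the transported structure on $\overline{\cM}(\bp)$ is a genuine extension of the gauge-theoretic one, completing the proof.
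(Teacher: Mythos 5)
Your argument is correct and is essentially the paper's own route: the corollary is stated as an immediate consequence of Theorem \ref{thm:homeo0}, obtained by transporting the structure of Proposition \ref{prop:compactification} (compactness of $\overline{\cM}_{\hol}$ with $\cM_{\hol}$ Zariski open and dense) through the homeomorphism, with the second clause of the theorem guaranteeing compatibility with the gauge-theoretic analytic structure on $\cM(\bp)$. Nothing further is needed.
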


\begin{rem}
It is desirable to define $\overline{\cM}(\bp)$ and prove Corollary \ref{cor:intro} without the assumptions $Y = S^1 \times \Sigma$ and $\bp\in\sP_{\Sigma}$. Even though $\cM(\bp)$ is expected to be compact for a generic choice of $\bp\in\sP$, one needs a good notion of a compactification in order to study generic one-parameter families of moduli spaces and the wall-crossing phenomenon \cite[Section 5]{doan3}.
We hope that the construction of $\overline{\cM}(\bp)$ and the analysis constituting the proof of Theorem \ref{thm:homeo0} offer some guidance in the study of compactness for arbitrary three-manifolds.
\end{rem}

The holomorphic description of Fueter sections allows us to prove that a generic point of $\sP_{\Sigma}$ belongs to the set $\cU$ introduced earlier---this is the main result of sections \ref{sec_fuetersections} and \ref{sec:vortices}.

%  Suppose that the genus $g(\Sigma)$ of $\Sigma$ is at least one and that $L \to Y$ is pulled-back from a degree $d$ line bundle over $\Sigma$---these assumptions are not restrictive as otherwise $\cM$ is generically empty. 
%The next result is a consequence of Theorems \ref{thm:nofueter} and \ref{thm:sigmacount}.

\begin{thm}\label{thm:generic0}
Let $\Sigma$ be a closed spin surface of genus $g(\Sigma)\geq 1$. Suppose that $Y= S^1\times\Sigma$ is equipped with a spin structure induced from the spin structure on $\Sigma$,  and that the bundles $E$ and $L$ are pulled-back from bundles over $\Sigma$. Let $d = \langle c_1(L), [\Sigma] \rangle$ be the degree of $L$.

For a generic choice of $\bp\in\sP_{\Sigma}$ there exist no Fueter sections with respect to $\bp$ and the moduli space $\cM(\bp)$ is a compact K\"ahler manifold of dimension 
\[ \dim_{\C} \cM(\bp) = g(\Sigma)-1 \pm 2d, \]
where the sign depends on $\bp$.
In this case, the signed count of Seiberg--Witten multi-monopoles is, up to a sign, the Euler characteristic of the moduli space:
\[ \SW(\bp) = (-1)^{g(\Sigma)-1} \chi(\cM(\bp)). \]
Moreover, $\SW(\bp)$ does not depend on the choice of a generic $\bp\in\sP_{\Sigma}$.
\end{thm}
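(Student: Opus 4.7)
The overall strategy is to translate the three-dimensional gauge theory problem into a question about holomorphic triples $(\mathcal{L},\alpha,\beta)$ on $\Sigma$ via Theorem \ref{thm:homeo0}, and then to apply Theorem \ref{thm:moduli} to compute $\SW(\bp)$.

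First I would establish the non-existence of Fueter sections for generic $\bp \in \sP_\Sigma$. Invoking the holomorphic description of Fueter sections on $S^1 \times \Sigma$ developed in sections \ref{sec_fuetersections} and \ref{sec:vortices}, this reduces to a statement about holomorphic data on $\Sigma$ that is amenable to a Sard--Smale argument applied to a universal parametrised moduli of candidate Fueter triples over $\sP_\Sigma$. The same parametric transversality, applied to the linearised Seiberg--Witten equation at each point of $\cM_\hol(\bp)$, gives the smoothness of $\cM(\bp)$ for generic $\bp$. Combined with Haydys--Walpuski's compactness theorem, the absence of Fueter sections places $\bp$ in $\cU$ and makes $\cM(\bp)$ compact. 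The K\"ahler structure is inherited from the ambient configuration space of holomorphic triples via the Hitchin--Kobayashi correspondence, and the complex dimension $g(\Sigma) - 1 \pm 2d$ is an index calculation: Riemann--Roch applied to the deformation complex of $(\mathcal{L},\alpha,\beta)$ yields this number, with the $\pm$ reflecting which of the two holomorphic components is forced to vanish on the stable locus, as determined by $\bp$.

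The Euler characteristic formula is the heart of the proof. By Theorem \ref{thm:moduli}(1), since $\cM(\bp)$ is Zariski smooth,
\[
\SW(\bp) = \int_{\cM(\bp)} e(\Ob),
\]
so it suffices to identify the obstruction bundle $\Ob$. A Fourier decomposition of the three-dimensional linearised operator along the $S^1$--direction isolates a zero-mode piece that reproduces the two-dimensional deformation-obstruction complex, whose cohomology gives the tangent space of $\cM(\bp)$; for $\bp \in \sP_\Sigma$ the non-zero Fourier modes are invertible. A careful matching of complex structures then shows that $\Ob$ is isomorphic to the conjugate of the tangent bundle of $\cM(\bp)$, so that
\[
\int_{\cM(\bp)} e(\Ob) = (-1)^{\dim_{\C} \cM(\bp)} \chi(\cM(\bp)) = (-1)^{g(\Sigma)-1}\chi(\cM(\bp)),
\]
since $\pm 2d$ is even. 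This identification of $\Ob$, which rests on a Weitzenb\"ock-type decomposition and an orientation comparison with the conventions of subsection \ref{subsec:orientations}, is the principal technical obstacle.

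Finally, independence of the generic parameter follows from the local constancy of $\SW$ on $\cU$: any two generic $\bp_0, \bp_1 \in \sP_\Sigma$ lie in $\cU$, and a generic path between them in $\sP_\Sigma$---perturbed slightly into $\sP$ if necessary to avoid codimension-one walls---remains in $\cU$, so $\SW(\bp_0) = \SW(\bp_1)$.
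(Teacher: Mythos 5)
Your outline follows essentially the same route as the paper: Fueter sections on $S^1\times\Sigma$ are shown to be circle-invariant and equivalent to holomorphic data on $\Sigma$, then excluded for generic circle-invariant parameters by a Sard--Smale argument of non-positive Riemann--Roch index (Theorem \ref{thm:nofueter}); compactness follows from the Haydys--Walpuski theorem; the K\"ahler structure and the dimension $g(\Sigma)-1\pm 2d$ come from the framed-vortex description; and the count is evaluated through the obstruction bundle, with parameter independence obtained from local constancy of $\SW$ on $\cU$ along generic paths. Two points deserve comment. First, be careful with the claim that ``the same parametric transversality, applied to the linearised Seiberg--Witten equation'' yields smoothness: for $\bp\in\sP_{\Sigma}$ the three-dimensional linearisation is never surjective once $\dim\cM(\bp)>0$ (the moduli space is Zariski smooth but obstructed), so smoothness cannot come from making that operator surjective. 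In the paper it comes from the vanishing of the framed-vortex obstruction space $H^2_{A,\alpha}$, whose non-zero elements are identified with points of $\overline{\cM}_{\hol}\setminus\cM_{\hol}$, i.e.\ with Fueter sections; thus the single genericity statement of Theorem \ref{thm:nofueter} removes non-compactness and non-smoothness simultaneously (Corollary \ref{cor:regular}). Relatedly, the reduction of the three-dimensional deformation complex to the two-dimensional one is carried out in the paper by the integration-by-parts/Weitzenb\"ock argument of Theorem \ref{thm:invariance} rather than a Fourier decomposition; your assertion that the non-zero modes are invertible is the same fact and would require essentially that computation.

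Second, the step you defer as ``the principal technical obstacle''---that the relative orientation on $\Ob$ agrees with the complex orientation of $\overline{T\cM}\cong T^{*}\cM$---is exactly where the sign $(-1)^{g(\Sigma)-1}$ is decided, and it is the content of Step 3 of the proof of Theorem \ref{thm:framed}: writing the extended Hessian as $L_{A,\Psi}=L_{A,0}+P$, the trivialisation $\mathrm{or}_{\Lambda}$ factors through the finite-dimensional map $P$ on $H^{1}(Y,i\R)\oplus H^{0}(Y,i\R)\oplus\ker\D_{AB}$, which is checked to be complex linear (up to conjugation in the last factor) for the natural complex structures; combined with the duality $H^{2}_{A,\alpha,\beta}\cong(H^{1}_{A,\alpha,\beta})^{*}$ of Lemma \ref{lem:cohomology}, this gives $\sign(C)=(-1)^{g(\Sigma)-1}$ for every component and hence $\SW=(-1)^{g(\Sigma)-1}\chi(\cM)$ directly from Definition \ref{defn:tau2}. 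Without this comparison the Euler-class formula only gives $\SW=\pm\chi(\cM)$ componentwise, so your proposal is incomplete precisely at the point it flags; the rest, including the treatment of the wall $\tau=d$ by perturbing the path into $\sP$ where reducibles have codimension $b_1(Y)>1$, matches the paper.
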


\begin{rem}
We should stress that here $\bp$ is generic in $\sP_{\Sigma}$ (parameters pulled-back from $\Sigma$) but not in $\sP$ (all parameters on $Y$). In particular, $\cM(\bp)$ may have positive dimension even though the expected dimension is zero. In other words, the moduli space is Zariski smooth and obstructed as in Theorem \ref{thm:moduli}. This is familiar from the study of gauge-theoretic invariants of complex surfaces \cite[Chapter 10]{donaldson-kronheimer}, \cite[Chapter IV]{friedman-morgan}, \cite{friedman-morgan2}.
\end{rem}

The main idea behind the proof of Theorem \ref{thm:generic0} is to interpret the existence of Fueter sections as a Fredholm problem. One difficulty in doing so stems from the possible non-smoothness of the singular sets  of Fueter sections. However, an argument using a Weitzenb\"{o}ck formula shows that in fact such a  set is a finite collection of circles of the form $S^1 \times \{ \mathrm{point} \}$. We then show that these singularities can be removed---in an appropriate sense---so that every Fueter section gives rise to a globally defined holomorphic object. As a result, the deformation theory of Fueter section is described by a complex-linear Fredholm operator of non-positive index. Using this, we prove that Fueter sections appear in real codimension at least two and can be avoided in a generic one-dimensional family of parameters in $\sP_{\Sigma}$.

%The isomorphism $\cM \cong \cM_{\hol}$ gives an interesting interpretation of the above discussion. Recall that $\cM_{\hol}$ can be defined purely in terms of the complex geometry of $\Sigma$. As a complex manifold, it depends on the complex structure of $\Sigma$, the holomorphic structure on $E \to \Sigma$ induced from the connection $B$, and the degree $d = \deg L$. We will see in Section \ref{sec:vortices} that the biholomorphism type of $\cM_{\hol}$ does change when we vary this data. 
%
%\begin{cor}
%The diffeomorphism type of  $\cM_{\hol}$ is independent on the choice of the complex structure on $\Sigma$ and the holomorphic structure on $E \to \Sigma$, as long as the latter is induced by a generic connection on $E \to \Sigma$. Furthermore, the Euler characteristic  $\chi\left( \cM_{\hol} \right)$ does not change when we replace $d$ by $-d$. 
%\end{cor}
%
%See Proposition \ref{prop:invariance} and Corollary \ref{cor:sign} for details. In classical Seiberg--Witten theory, the duality between $d$ and $-d$ is expressed by the identity
%\[ \chi(\Sym^{d+g-1} \Sigma) = \chi(\Sym^{-d+g-1} \Sigma).\]

Finally, in section \ref{sec:examples} we use complex-geometric methods and classical results on stable vector bundles on Riemann surfaces \cite{atiyah, narasimhan} to describe $\cM(\bp)$ and compute $\SW(\bp)$ in some cases. The generic properties of the moduli spaces are summarised in Theorem \ref{thm:moduliexamples}. We study also some non-generic cases which provide us with examples of Fueter sections and non-compact moduli spaces; see Examples \ref{exmp:fueterlimiting}, \ref{exmp_nongeneric1}, Proposition \ref{prop_genuszero}, and Remark \ref{rem_nongeneric}

\begin{exmp}
  For a genus two surface $\Sigma$ the space of parameters $\sP_{\Sigma}$ contains a copy of $\CP^3$, thought of as the moduli space of semi-stable $\SL(2,\C)$--bundles on $\Sigma$. A point $\bp$ in $\CP^3$ corresponds to a plane $H_{\bp}$ in the dual projective space $(\CP^3)^*$. Let $T^4 / \Z_2$ be the singular Kummer surface in $(\CP^3)^*$. For a generic choice of $\bp\in\CP^3$, the plane $H_{\bp}$ intersects $T^4/\Z_2$ transversely and misses all of the $16$ singular points of $T^4/\Z_2$, so that $(T^4/\Z_2) \cap H_{\bp}$ is a smooth genus three surface. The moduli space $\cM(\bp)$ is then the preimage of this intersection under the double covering $T^4 \to T^4/\Z_2$. It is a genus five Riemann surface and so $\SW(\bp) = 8$. When $\bp$ is chosen so that $H(\bp)$ passes through one of the singular points of $T^4/\Z_2$, the moduli space is necessarily singular and non-compact. 
\end{exmp}

\subsection*{Acknowledgements} 
 The work presented here is part of my PhD thesis at Stony Brook University. I am most grateful to my advisor Simon Donaldson for his guidance, support, and encouragement. Discussions with Thomas Walpuski and Andriy Haydys have contributed greatly to this paper.
I thank also Aliakbar Daemi, Cristian Minoccheri, Vicente Mu\~{n}oz, Benjamin Sibley, David Stapleton, Ryosuke Takahashi, Alex Waldron, Malik Younsi, and two anonymous referees for sharing their insights and helping me improve this article.

My research is supported by the \href{https://sites.duke.edu/scshgap/}{\emph{Simons Collaboration on Special Holonomy in Geometry, Analysis, and Physics}}.

\newpage
\subsection*{Notation and conventions}
\mbox{} \\
 
\begin{tabular}{l c l }
$\Lambda^p Y = \Lambda^p T^*Y$ & & exterior product of the cotangent bundle \\
$\Gamma(Y,V)$ or $\Gamma(V)$ & & sections of a vector bundle $V \to Y$ \\
$\Omega^p(Y,V)$ or $\Omega^p(V)$ & & differential forms with values in $V$ \\
$\cA(Y,V)$ or $\cA(V)$ & & connections on $V$ compatible with the structure group \\
$\mathrm{Cl}(\alpha)$ or $\alpha \ \cdot$ & & Clifford multiplication by a form $\alpha \in \Lambda^*Y$ \\
$\langle \cdot, \cdot \rangle$ & & Euclidean inner product \\
$(\cdot, \cdot )$ & & Hermitian inner product\\
$g(\Sigma)$ & & genus of a surface $\Sigma$\\
$J^d$ & & Jacobian of degree $d$ holomorphic line bundles\\
\end{tabular}
\mbox{} \\

When it is not likely to cause confusion, we will use the same notation for a principal $\U(n)$ or $\SU(n)$ bundle and the associated rank $n$ Hermitian vector bundle. We use the following sign convention for the Clifford multiplication: under the identification of the spinor space of $\R^3$ with the quaternions $\mathbb{H}$, with the complex structure given by right-multiplication by $i$, the action of $e^1, e^2, e^3$ is given by left-multiplication by $i$, $j$, $k$ respectively.

\section{Seiberg--Witten monopoles with multiple spinors}
\label{sec:nsw}
In this section we introduce the Seiberg--Witten equation with multiple spinors, discuss transversality and orientations, and review the compactness theorem of Haydys and Walpuski.
The main results are Propositions \ref{prop:regular} and \ref{prop:regular2} which lead to the proof of Theorem \ref{thm:moduli}.

\subsection{The Seiberg--Witten equation with multiple spinors} 
Let $(Y,g)$ be a compact, connected, oriented Riemannian three-manifold equipped with a spin structure. Let $S \to Y$ be the spinor bundle, $E \to Y$ an $\SU(n)$--bundle and $L \to Y$ a $\U(1)$--bundle. Fix a connection $B \in \cA(E)$ and a closed two-form $\eta \in \Omega^2(Y, i\R)$.

\begin{defn}
The \defined{Seiberg--Witten equation with multiple spinors} with parameters $(g,B,\eta)$\footnote{We do not need to assume $d \eta = 0$ in order to write down equation \eqref{eqn:nsw2} but Lemma \ref{lem:muderivative} below and the Bianchi identity imply that there are no solutions unless $d\eta = 0$.} is the following differential equation for a pair $(A,\Psi) \in \cA(L) \times \Gamma(\Hom(E, S \otimes L))$
\begin{equation} \label{eqn:nsw}
\left\{
\begin{array}{ll}
\D_{AB} \Psi = 0, \\
F_A = \mu(\Psi) + \eta.
\end{array}
\right.
\end{equation}
Here, $F_A \in \Omega^2(i\R)$ is the curvature and  $\mu$ is the quadratic map
\[ \mu(\Psi) := \left( \Psi \Psi^* - \frac{1}{2} | \Psi |^2  \ \mathrm{id} \right), \]
with $\Psi^* \in \Gamma(\Hom(S\otimes L, E))$ being the adjoint of $\Psi$.
Using the natural isomorphism $L \otimes L^* \cong \C$, we consider the composition $\Psi\Psi^*$ as a section of $\mathrm{End}(S)$. 
It actually takes values in the subspace $i \mathfrak{su}(S)$ of trace-free self-adjoint endomorphisms which in the equation we implicitly identify with $i \Lambda^2 Y$ using the Clifford multiplication. 

We will refer to solutions of the equation as \emph{Seiberg--Witten monopoles with multiple spinors} or, following  \cite{bryan-wentworth}, \emph{Seiberg--Witten multi-monopoles}\footnote{The name \emph{multi-monopoles} is often used in reference to solutions of the Bogomolny equation.}.
\end{defn}

The Seiberg--Witten equation with multiple spinors was introduced in \cite{bryan-wentworth} and \cite{haydys-walpuski}. If we trivialise $E$ and represent $\Psi$ by an $n$--tuple $(\Psi_1, \ldots, \Psi_n)$ with $\Psi_i \in \Gamma(S \otimes L)$, then
\[ \mu(\Psi) =  \sum_{i=1}^n \left( \Psi_i \Psi_i^* - \frac{1}{2} | \Psi_i^2  | \  \mathrm{id} \right). \]
In particular, for $n=1$ one recovers the classical three-dimensional Seiberg--Witten equation. 

\begin{defn}
Let $\cG = C^{\infty}(Y, S^1)$ be the group of unitary automorphisms of $L$. An element $u \in \cG$ acts on a pair $(A,\Psi)$ by
\[ u(A, \Psi) := (A - u^{-1}du, u \Psi). \]
$(A,\Psi)$ is called \emph{irreducible} if its $\cG$--stabiliser is trivial, i.e. $\Psi \neq 0$, and \emph{reducible} otherwise. 
\end{defn}

To make equation \eqref{eqn:nsw} elliptic modulo the action of $\cG$ we introduce an additional unknown $f \in C^{\infty}(Y,i\R)$. The modified equation for a triple $(A,\Psi,f)$ reads
\begin{equation} \label{eqn:nsw2}
\left\{
\begin{array}{l}
\D_{AB} \Psi + f \Psi = 0, \\
F_A - \ast d f - \mu(\Psi) - \eta = 0.
\end{array}\right.
\end{equation}
In the next proposition we show that this modification does not produce new solutions. 
For $\Psi, \Phi \in \Hom(E, S \otimes L)$ let
\[ \mu(\Psi, \Phi) := \frac{1}{2} \left\{ \Psi \Phi^* + \Phi \Psi^* - \Re \langle \Psi, \Phi \rangle \ \mathrm{id} \right\}. \]

\begin{lem} \label{lem:muderivative}
Considering $\mu(\Psi, \Phi)$ as an imaginary-valued two-form, we have
\[ \ast d \left( \mu(\Psi, \Phi) \right) = i \Im ( \D_{AB} \Psi, \Phi ) - i \Im ( \Psi,  \D_{AB} \Phi). \]
\end{lem}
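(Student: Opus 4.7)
The identity is pointwise and real-bilinear in $(\Psi,\Phi)$, so the plan is to verify it at an arbitrary point $p \in Y$ in a local orthonormal frame $(e_1,e_2,e_3)$ chosen synchronous at $p$, i.e.\ $\nabla_{e_i}e_j(p)=0$. The first step would be to expand the imaginary two-form $\mu(\Psi,\Phi)$ in components. Under the Clifford identification $i\Lambda^2 Y \cong i\,\su(S)$, each $\ast e^k$ corresponds to a Hermitian traceless element proportional to $i\,e_i\!\cdot\! e_j\cdot$, with $(i,j,k)$ a cyclic permutation of $(1,2,3)$. Writing $\mu(\Psi,\Phi)$ in this basis and using the trace identity $\mathrm{tr}(T\,\Psi\Phi^*)=\langle T\Psi,\Phi\rangle$ for any endomorphism $T$, a short algebraic calculation shows that
\[ \mu(\Psi,\Phi) \;=\; i\sum_{k=1}^{3} \Im\langle e_k\cdot\Psi,\Phi\rangle\;\ast e^k. \]

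The second step is to apply $\ast d$ pointwise at $p$. For a two-form $\omega=\sum_k a_k \ast e^k$ in three dimensions, $\ast d\omega$ equals (up to a universal sign) the divergence of the vector field dual to the one-form $\ast\omega=\sum_k a_k e^k$, which in a synchronous frame at $p$ is simply $\sum_k \nabla_{e_k}a_k$. Hence
\[ \ast d\,\mu(\Psi,\Phi)(p) \;=\; i\sum_{k=1}^{3}\nabla_{e_k}\!\Im\langle e_k\cdot\Psi,\Phi\rangle. \]
Expanding each term via the Leibniz rule, and noting that $\nabla_{e_k}e_k(p)=0$ in the synchronous frame, gives
\[ \nabla_{e_k}\Im\langle e_k\cdot\Psi,\Phi\rangle = \Im\langle e_k\cdot\nabla_{e_k}\Psi,\Phi\rangle + \Im\langle e_k\cdot\Psi,\nabla_{e_k}\Phi\rangle. \]
Summing over $k$ and recognising the twisted Dirac operator $\D_{AB}=\sum_k e_k\cdot\nabla_{e_k}$, the first piece becomes $\Im\langle\D_{AB}\Psi,\Phi\rangle$. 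For the second piece, Clifford multiplication by a tangent vector is skew-Hermitian on spinors, so $\langle e_k\cdot\Psi,\nabla_{e_k}\Phi\rangle = -\langle\Psi,e_k\cdot\nabla_{e_k}\Phi\rangle$, and the sum collapses to $-\Im\langle\Psi,\D_{AB}\Phi\rangle$. Combining these two contributions yields the claimed identity.

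The main obstacle is the bookkeeping of signs and normalisation constants in the Clifford identifications: the factor $\tfrac{1}{2}$ in the definition of $\mu$, the normalisation of the isomorphism $i\Lambda^2 Y \cong i\,\su(S)$, and the various cyclic-permutation sign conventions fixed at the end of the introduction. A useful consistency check is the symmetry $\Psi\leftrightarrow\Phi$: the left-hand side is manifestly symmetric, and the right-hand side is also symmetric since $\Im\langle a,b\rangle=-\Im\langle b,a\rangle$. Equivalently, one may polarise and verify the identity first for the diagonal case $\Phi=\Psi$, where $\mu(\Psi,\Psi)=\mu(\Psi)$ recovers the familiar moment-map identity from the classical $n=1$ Seiberg--Witten theory.
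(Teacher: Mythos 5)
The paper itself does not prove this lemma --- it defers to Proposition A.4 of \cite{doan2} --- and your computation is exactly the one being deferred to: expand $\mu(\Psi,\Phi)$ in a coframe via the Clifford identification, compute $\ast d$ at a point in a synchronous frame as a divergence, apply the Leibniz rule, and use skew-adjointness of Clifford multiplication to turn the two resulting sums into $\Im(\D_{AB}\Psi,\Phi)$ and $-\Im(\Psi,\D_{AB}\Phi)$. That skeleton is sound, and it does establish that $\ast d\,\mu(\Psi,\Phi)$ equals a \emph{universal real constant} times $i\Im(\D_{AB}\Psi,\Phi)-i\Im(\Psi,\D_{AB}\Phi)$.

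The gap is that the constant is the entire content of the lemma, and it sits precisely in the step you assert without derivation: the expansion $\mu(\Psi,\Phi)=i\sum_k\Im(e_k\cdot\Psi,\Phi)\,\ast e^k$. (Also, as written you use $\langle\cdot,\cdot\rangle$, which in this paper denotes the Euclidean inner product, so $\Im\langle\cdot,\cdot\rangle\equiv 0$; you need the Hermitian product $(\cdot,\cdot)$.) If one reads the identification literally --- $i\,e^a\wedge e^b\mapsto i\,\mathrm{Cl}(e^a)\mathrm{Cl}(e^b)$ with the quaternionic conventions fixed in the introduction --- then decomposing against the orthogonal basis $i\,\mathrm{Cl}(e^k)$ of $i\su(S)$ via $c_k=\tfrac12\Re\tr\bigl(\mu(\Psi,\Phi)\,i\,\mathrm{Cl}(e^k)\bigr)$ (the identity component drops out by tracelessness, and $\tr(i\,\mathrm{Cl}(e^k)\Psi\Phi^*)=(i\,e_k\cdot\Psi,\Phi)$) gives $c_k=-\tfrac12\Im(e_k\cdot\Psi,\Phi)$, i.e.\ a coefficient of modulus $\tfrac12$ rather than $1$, and your final steps would then output $-\tfrac{i}{2}\bigl(\Im(\D_{AB}\Psi,\Phi)-\Im(\Psi,\D_{AB}\Phi)\bigr)$. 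So the expansion must actually be derived against the normalisation used in the paper and in \cite{doan2}, not postponed: deferring exactly this bookkeeping, as your last paragraph does, is deferring the lemma itself. The consistency checks you propose (symmetry in $\Psi\leftrightarrow\Phi$, specialising to $\Phi=\Psi$) are insensitive to an overall constant and therefore cannot close this step.
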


This is a straightforward computation; see \cite[Proposition A.4]{doan2}. 

%\begin{proof}
%For every $\alpha \in \Lambda^1 Y \otimes i\R$ we have
%\begin{equation}
%\label{eqn_mu}
% \Re \langle \alpha \cdot \Psi, \Phi \rangle = - \langle \alpha, \ast \mu(\Psi, \Phi) \rangle.
%\end{equation}
%Let $(e^1, e^2, e^3)$ be a local orthonormal frame of $\Lambda^1 Y$ and $\nabla_1, \nabla_2, \nabla_3$ the corresponding partial covariant derivatives of $\nabla_{AB}$. Applying $\nabla_k$ to both sides of \eqref{eqn_mu} with $\alpha = i e^k$ results in
%\[ \begin{split}
%  \Re \left\{ \langle i \nabla_k e^k \cdot \Psi , \Phi \rangle + \langle i e^k \cdot \nabla_k \Psi , \Phi \rangle + \langle i e^k \cdot \Psi, \nabla_k \Phi \rangle \right\} & \\ 
% = - \langle i \nabla_k e^k , \ast \mu(\Psi, \Phi)  \rangle - \langle i e^k, \nabla_k \ast \mu(\Psi, \Phi) \rangle.
% \end{split} \]
%Summing over $k = 1, 2,3$ and cancelling the terms occurring on both sides yields
%\[ 
%\begin{split}
%\Re \left\{ \langle i \D_{AB} \Psi, \Phi \rangle + \langle \Psi, i \D_{AB} \Phi \rangle  \right\}
%&=  - \sum_{k=1}^3 \langle i e^k , \nabla_k \ast \mu(\Psi, \Phi) \rangle \\
%&=  -i \ d^* ( \ast \mu(\Psi, \Phi)) =  i  \ast d \left( \mu(\Psi, \Phi) \right).  \qedhere 
%\end{split}
%\]
%\end{proof}

\begin{prop}\label{prop:irreducible} If $(A, \Psi, f)$ is a solution of \eqref{eqn:nsw2}, then $f$ is constant and $(A,\Psi)$ is a solution of the original equation \eqref{eqn:nsw}. Moreover, if $\Psi \neq 0$, then $f = 0$.
\end{prop}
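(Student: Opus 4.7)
The plan is to derive a scalar elliptic equation for $f$ from the modified system and then extract the conclusion via integration by parts.

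First I would apply the exterior derivative to the second equation of \eqref{eqn:nsw2}. Since $F_A$ is closed by the Bianchi identity and $\eta$ is closed by hypothesis, this yields
\[ d(\ast df) + d\mu(\Psi) = 0. \]
Taking $\ast$ and using the definition $\Delta f = d^{*}df = -\ast d \ast df$ gives
\[ -\Delta f = \ast d\mu(\Psi). \]

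Next I would invoke Lemma \ref{lem:muderivative} with $\Phi = \Psi$. Since the Hermitian inner product satisfies $\Im(\Psi, \D_{AB}\Psi) = -\Im(\D_{AB}\Psi,\Psi)$, the lemma reduces to
\[ \ast d\mu(\Psi) = 2i\,\Im(\D_{AB}\Psi,\Psi). \]
Substituting the first equation of \eqref{eqn:nsw2}, namely $\D_{AB}\Psi = -f\Psi$ with $f \in i\R$, a short calculation gives $\Im(\D_{AB}\Psi,\Psi) = -(f/i)|\Psi|^2$, so
\[ \ast d\mu(\Psi) = -2f|\Psi|^2. \]
Combining the two displayed identities produces the pointwise equation
\[ \Delta f = -2f|\Psi|^2. \]

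Finally I would pair this equation with $\bar f$ and integrate. Integration by parts on the left gives $\int_Y |df|^2$, while the right-hand side contributes $-2\int_Y |f|^2 |\Psi|^2$, leading to
\[ \int_Y |df|^2 + 2\int_Y |f|^2 |\Psi|^2 = 0. \]
Both integrands are non-negative, so $df \equiv 0$ (hence $f$ is constant) and $f\Psi \equiv 0$ pointwise. The first conclusion turns the modified equation back into \eqref{eqn:nsw}; for the second, if $\Psi$ is not identically zero then $\Psi \neq 0$ on an open set, forcing $f = 0$ there, and since $f$ is constant one concludes $f \equiv 0$.

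I do not anticipate a serious obstacle: the argument is essentially a standard Bianchi/Weitzenböck-style computation. The only care needed is sign bookkeeping in the identification of imaginary-valued two-forms with skew-Hermitian endomorphisms of $S$ (so that $\ast d\mu(\Psi)$ matches the sign in Lemma \ref{lem:muderivative}) and in the convention for the positive Laplacian $\Delta = d^{*}d$.
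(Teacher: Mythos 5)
Your proof follows essentially the same route as the paper: apply $\ast d$ to the curvature equation, use the Bianchi identity, $d\eta=0$, Lemma \ref{lem:muderivative}, and $\D_{AB}\Psi=-f\Psi$ with $f$ imaginary to get the pointwise equation $\bigl(\Delta+2|\Psi|^2\bigr)f=0$; the only difference is the endgame, where the paper notes that $\Delta+2|\Psi|^2$ is invertible when $\Psi\not\equiv 0$ (and that $f$ is harmonic, hence constant, when $\Psi\equiv 0$), while you pair with $\bar f$ and integrate by parts, obtaining $df\equiv 0$ and $f\Psi\equiv 0$ in one stroke --- an equivalent argument. One bookkeeping caveat: with your stated convention $\Delta f=-\ast d\ast df$, the first displayed identity should read $\Delta f=\ast d\mu(\Psi)$, not $-\Delta f=\ast d\mu(\Psi)$; combined with $\ast d\mu(\Psi)=-2f|\Psi|^2$ this yields exactly the equation $\Delta f=-2f|\Psi|^2$ that you then use, so your final equation and positivity argument are correct, but as literally written your two displays are inconsistent with their stated combination, and the sign here is load-bearing (the equation $\Delta f=+2f|\Psi|^2$ would not give the conclusion). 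Finally, note that recovering \eqref{eqn:nsw} uses both conclusions, $df\equiv 0$ to remove $\ast df$ and $f\Psi\equiv 0$ to remove the term $f\Psi$ in the Dirac equation, which your integration identity indeed provides.
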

\begin{proof}
Applying $\ast d$ to both sides of the equation $F_A - \ast df - \mu(\Psi) - \eta = 0$ results in 
\[ \begin{split}
 0 &= i \Im ( \D_{AB} \Psi, \Psi ) - i \Im ( \Psi, \D_{AB} \Psi ) - \Delta f \\
&= -i \Im ( f \Psi, \Psi ) + i \Im ( \Psi, f \Psi )- \Delta f = -\left( 2| \Psi|^2 + \Delta \right)f, 
\end{split} \]
where we have used that $\D_{AB} \Psi = - f \Psi$ and $f$ is purely imaginary. If $\Psi = 0$, then $\Delta f = 0$ and $f$ is constant. If $\Psi \neq 0$, then $\Delta +2 | \Psi |^2$ is invertible and $f=0$.
\end{proof}

\subsection{Deformation theory} \label{subsec:moduli}
We want to study the space of solutions of  \eqref{eqn:nsw}. Although it is naturally equipped with the $C^{\infty}$--topology, it is convenient to use the Sobolev topology instead.

\begin{defn}\label{defn_sobolevspaces}
For $k\geq 2$ we consider the following Sobolev spaces:\\

\begin{tabular}{lcl}
$\cA_k(V)$ & & $W^{k,2}$ connections on a bundle $V \to Y$, \\
$\Gamma_k(V)$ & & $W^{k,2}$ sections of $V$, \\
$\Omega^p_k(V)$ & & $W^{k,2}$ differential forms with values in $V$, \\
$\cC_k = \cA_k(L) \times \Gamma_k(\Hom(E,S\otimes L))$ & & $W^{k,2}$ configurations, \\
$\cC^*_k = \cA_k(L) \times (\Gamma_k(\Hom(E,S\otimes L)) \setminus \{ 0 \})$ & & $W^{k,2}$ irreducible configurations, \\
$\cC_k(U)$ & & $W^{k,2}_{\loc}$ configurations on an open subset $U \subset Y$, \\
$\cG_{k+1}$ & & $W^{k+1,2}$ gauge transformations of $L$.
\end{tabular}
\end{defn}

$\cC_k$ and $\cC_k^*$ are Hilbert manifolds modelled on the Hilbert space $\Omega_k^1(i\R) \oplus \Gamma_k(\Hom(E, S \otimes L))$ and $\cG_{k+1}$ is a Hilbert Lie group modelled on $\Omega^0_{k+1}(i\R)$. 

\begin{prop}\label{prop:slice}
The action of $\cG$ on $\cA(L) \times \Gamma(Y, \Hom(E, S \otimes L))$ extends to a smooth, proper action of $\cG_{k+1}$ on $\cC_k$ with a metrisable, second countable quotient $\mathcal{B}_k := \cC_k / \cG_{k+1}$.  Moreover, at every point of $\cC_k$ there exists a local slice for the action and, as a result, $\mathcal{B}_k^* := \cC_k^* / \cG_{k+1}$ has the structure of a Hilbert manifold.
\end{prop}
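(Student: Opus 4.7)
The plan is to follow the standard gauge-theoretic slice theorem, tailored to this setup.

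First, I would verify smoothness of the extended action $\cG_{k+1}\times\cC_k\to\cC_k$. The action is polynomial in $u$, $u^{-1}$, $du$, $A$, and $\Psi$, so smoothness reduces to continuity of the pointwise multiplications $W^{k+1,2}\times W^{k,2}\to W^{k,2}$; these hold for $k\geq 2$ on a three-manifold by the Sobolev multiplication theorem combined with the embedding $W^{k+1,2}\hookrightarrow C^0$. For properness, suppose $(A_n,\Psi_n)\to(A_\infty,\Psi_\infty)$ and $u_n\cdot(A_n,\Psi_n)\to(A',\Psi')$ in $\cC_k$. Then $u_n^{-1}du_n$ converges in $W^{k,2}$, which together with $|u_n|\equiv 1$ yields a $W^{k+1,2}$ bound on $u_n$. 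Rellich's theorem and a standard bootstrap produce a $W^{k+1,2}$-convergent subsequence of $\{u_n\}$.

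For local slices at $(A_0,\Psi_0)\in\cC_k$, I would use the linearised gauge action
\[ L_{(A_0,\Psi_0)}\colon \Omega^0_{k+1}(i\R)\to\Omega^1_k(i\R)\oplus\Gamma_k(\Hom(E,S\otimes L)), \qquad \xi\mapsto(-d\xi,\xi\Psi_0), \]
and define the slice as the zero set of its formal $L^2$-adjoint:
\[ S_{(A_0,\Psi_0)} := \{(A_0+a,\Psi_0+\phi)\in\cC_k : L^*_{(A_0,\Psi_0)}(a,\phi)=0\}. \]
The composition $L^*L$ is a non-negative self-adjoint elliptic operator of order two on $\Omega^0(i\R)$ with principal symbol $|\zeta|^2$, and its kernel coincides with the Lie algebra of the $\cG_{k+1}$-stabiliser of $(A_0,\Psi_0)$: this is $\{0\}$ when $\Psi_0\not\equiv 0$ and $i\R$ (constants) when $\Psi_0\equiv 0$. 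The slice statement then follows from the implicit function theorem applied to the map $(u,(a,\phi))\mapsto L^*_{(A_0,\Psi_0)}\bigl(u\cdot(A_0+a,\Psi_0+\phi)-(A_0,\Psi_0)\bigr)$, whose derivative in $u$ at the identity is $L^*L$, invertible on the orthogonal complement of its finite-dimensional kernel.

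Finally, the topological assertions about $\mathcal{B}_k=\cC_k/\cG_{k+1}$ follow: metrisability and second countability are consequences of second countability of $\cC_k$ together with the Hausdorffness provided by properness. On the irreducible locus $\cC_k^*$ the stabiliser is trivial, since $u\cdot(A,\Psi)=(A,\Psi)$ with $\Psi\not\equiv 0$ forces $u$ to be constant and equal to $1$ wherever $\Psi\neq 0$; the slices $S_{(A_0,\Psi_0)}$ therefore assemble into a Hilbert manifold atlas on $\mathcal{B}_k^*$, with smooth transition maps produced by one more application of the implicit function theorem. The main technical obstacle is the bookkeeping of Sobolev regularities so that every map entering the implicit function theorem remains smooth between the correct Hilbert spaces, and that the resulting gauge fixing matches the configuration regularity imposed in Definition \ref{defn_sobolevspaces}; this is the role of the assumption $k\geq 2$.
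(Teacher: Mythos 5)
Your proposal is correct and follows essentially the same route as the paper, which does not spell out the argument but cites the standard slice-theorem proof (Morgan, sections 4.3--4.5) and notes that the slice through $(A,\Psi)$ is a neighbourhood of zero in $\ker G^*_{A,\Psi}$ --- exactly your $S_{(A_0,\Psi_0)}$, with smoothness via Sobolev multiplication, properness via the bootstrap on $u_n^{-1}du_n$, and the implicit function theorem applied to $L^*L=\Delta+|\Psi_0|^2$ (invertible off the stabiliser's Lie algebra). No gaps to report.
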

For the definition of local slices and the proof see \cite[Sections 4.3-4.5]{morgan}. We only mention that a local slice passing through $(A, \Psi) \in \cC_k$ can be chosen to have the form
\begin{equation} \label{eqn:slice}
 (A,\Psi) + K_{A,\Psi}, 
 \end{equation}
 where $K_{A,\Psi} \subset T_{(A,\Psi)} \cC_k$ is a neighbourhood of zero in $\ker G^*$, where $G$ is the linearised action of the gauge group at $(A,\Psi)$ and $G^*$ is the formal adjoint of $G$ restricted to $\cC_k$. 
 
\begin{defn}
The \emph{moduli space of solutions} $\cM(g,B, \eta)$ is the subspace of $\mathcal{B}_k$ consisting of gauge equivalence classes of pairs $(A, \Psi)$ satisfying  \eqref{eqn:nsw} with parameters $(g,B,\eta)$. We define also the \emph{moduli space of irreducible solutions} $\cM^*(g,B,\eta)  = \cM(g, B,\eta) \cap \mathcal{B}_k^*$. 
\end{defn}

\begin{rem}
We will often write $\sigma = (B,\eta)$ and denote the corresponding moduli spaces by $\cM(g,\sigma)$ and $\cM^*(g,\sigma)$. When $(g,\sigma)$ is clear from the context we write simply $\cM$ and $\cM^*$. 
\end{rem}

$\cM$ and $\cM^*$ are endowed with the subspace topology induced from $\mathcal{B}_k$; hence they are metrisable and second countable. The particular choice of $k \geq 2$ is immaterial: if the perturbation $\sigma$ is of class $W^{l,2}$, then the moduli spaces for different choices of $k \leq l+1$ are naturally homeomorphic. If moreover $l = \infty$, then they are all homeomorphic to the space of smooth solutions modulo smooth gauge transformations.

\begin{prop} \label{prop:regularity}
Suppose that $\sigma = (B,\eta)$ is of class $W^{l,2}$ for some $l \geq 1$. 
\begin{enumerate}
\item For every $(A,\Psi) \in \cC_2$ satisfying \eqref{eqn:nsw} there exists  $u \in \cG_3$ such that $u (A, \Psi) \in \cC_{l+1}$. Two such gauge transformations differ by an element of $\cG_{l+2}$. 
\item If $(A_i, \Psi_i)$ is a convergent sequence of solutions in $\cC_2$, then there is a sequence $u_i \in \cG_3$ such that $u_i(A_i, \Psi_i)$ are in $\cC_{l+1}$ and converge in $\cC_{l+1}$. 
\end{enumerate}
\end{prop}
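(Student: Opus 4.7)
The plan is the standard Coulomb gauge-fixing followed by elliptic bootstrap, in the spirit of \cite[Section 4.5]{morgan}. Fix a smooth reference connection $A_0\in\cA(L)$. Given $(A,\Psi)\in\cC_2$, I would seek $u\in\cG_3$ such that the transformed connection satisfies the Coulomb condition $d^*\bigl(u(A)-A_0\bigr)=0$. Decomposing $u = u_0\cdot e^{\xi}$, where $u_0$ is a smooth representative of the component $[u]\in\pi_0\cG = H^1(Y,\Z)$ and $\xi\colon Y\to i\R$, this reduces to the Poisson equation $\Delta\xi = d^*\bigl(A-A_0 - u_0^{-1}du_0\bigr)$, which is solvable in $W^{3,2}$ because the right-hand side lies in $W^{1,2}$ and integrates to zero against constants; hence such $u$ exists in $\cG_3$.

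Setting $(A',\Psi') := u(A,\Psi)$ and $a := A'-A_0\in\Omega^1_2(i\R)$, and fixing a smooth reference connection $B_0\in\cA(E)$ so that $b := B-B_0$ is a $W^{l,2}$ one-form, the Coulomb condition together with \eqref{eqn:nsw} gives
\[
(d+d^*)a = \bigl(\mu(\Psi')+\eta-F_{A_0},\,0\bigr), \qquad \D_{A_0 B_0}\Psi' = -\mathrm{Cl}(a)\Psi' - \mathrm{Cl}(b)\Psi',
\]
a coupled system whose principal parts are elliptic operators with smooth coefficients. I would then bootstrap: assuming inductively $a,\Psi'\in W^{k,2}$ for some $2\le k\le l$, the Sobolev multiplication theorem in three dimensions (where $W^{k,2}$ is a Banach algebra for $k\ge 2$) together with $\eta,b\in W^{l,2}$ ensures that the right-hand sides lie in $W^{k,2}$, and elliptic regularity promotes $(a,\Psi')$ to $W^{k+1,2}$. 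Iterating up to $k=l$ yields $(A',\Psi')\in\cC_{l+1}$. For the uniqueness claim, if $u_1,u_2\in\cG_3$ both bring $(A,\Psi)$ into $\cC_{l+1}$, then $v:=u_1 u_2^{-1}$ satisfies $v^{-1}dv = u_2(A)-u_1(A)\in\Omega^1_{l+1}(i\R)$, and a similar algebra-plus-bootstrap argument starting from $v\in W^{3,2}$ shows $v\in\cG_{l+2}$.

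For the sequence statement, given a convergent sequence $(A_i,\Psi_i)\to(A_\infty,\Psi_\infty)$ in $\cC_2$, I would fix a common homotopy class of Coulomb-gauge transformations (which stabilises for large $i$) and solve the Poisson equations $\Delta\xi_i = d^*(A_i-A_0-u_0^{-1}du_0)$; elliptic regularity of $\Delta$ gives $\xi_i\to\xi_\infty$ in $W^{3,2}$, so after gauge transformation the sequence converges in $\cC_2$, and applying the above bootstrap to successive differences yields convergence in $\cC_{l+1}$. The main obstacle I anticipate is the bookkeeping around the non-trivial topology of $\cG$---tracking homotopy classes during the gauge fixing---and verifying that the Sobolev multiplication estimates do not lose regularity at any step of the bootstrap. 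Once these are handled, the argument reduces to the classical Coulomb gauge plus elliptic regularity machinery familiar from the $n=1$ Seiberg--Witten theory.
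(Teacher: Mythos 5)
Your proposal is correct and is essentially the argument the paper has in mind: the paper dispenses with the proof in one line ("a direct consequence of Hodge theory and elliptic regularity for $\D$ and $d+d^*$"), and your Coulomb gauge fixing relative to a smooth reference connection followed by the elliptic bootstrap using the Sobolev algebra property in dimension three is exactly that argument written out, including the uniqueness and sequence statements. The only remark is that the homotopy-class bookkeeping is unnecessary here: since the Coulomb condition is achieved by solving $\Delta\xi = d^*(A-A_0)$, one may take $u_0 = 1$ throughout, so no stabilisation of components needs to be tracked.
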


This is a direct consequence of Hodge theory and elliptic regularity for $\D$ and $d + d^*$. Without loss of generality we fix $k=2$ and drop the subscript $k$, writing $\cC$, $\mathcal{B}$, $\cG$, and so on. The local structure of $\cM$ at a solution $(A,\Psi)$ is encoded in the \emph{deformation complex} 
\[
\begin{tikzcd} 
\Gamma_3(i\R) \ar{r}{G_{A, \Psi}} & \Omega^1_2(i\R) \oplus \Gamma_2(E^* \otimes S \otimes L) \oplus \Omega^0_2( i\R) \ar{r}{S_{A, \Psi}} & \Gamma_1(E^* \otimes S \otimes L) \oplus \Omega^2_1( i \R). 
\end{tikzcd}
\]
Here, $G_{A, \Psi}$ is the linearisation of the gauge group action
\[ G_{A,\Psi}(h) := (-dh, h\Psi, 0), \]
and $S_{A, \Psi}$ is the linearisation of the modified equation \eqref{eqn:nsw2} at  $(A, \Psi, 0)$ 
\begin{equation}\label{eqn:linearised} S_{A,\Psi}(a, \phi, v) := (\D_{AB} \phi + a\cdot \Psi + v\Psi, da - 2\mu(\Psi, \phi) - \ast d v). 
\end{equation}
The complex is well-defined by the Sobolev multiplication theorem. It is elliptic and has index zero. One way to see this is to introduce the formal adjoint
\[ G_{A,\Psi}^*(a, \phi, v) := -d^*a + i \Im ( \Psi, \phi ) \]
and consider the \emph{extended Hessian} $L_{A,\Psi} = S_{A,\Psi}  \oplus G_{A,\Psi}^*$. After rearranging the direct sums and identifying the spaces of two-forms and functions via the Hodge star, $L_{A,\Psi}$ is given by
\[ \begin{tikzcd}
 \Omega^1_2(i\R) \oplus \Omega^0_2(i \R) \oplus \Gamma_2(E^* \otimes S \otimes L) \ar{r}{L_{A,\Psi}}  & \Omega^1_1(i\R) \oplus \Omega^0_1(i \R) \oplus \Gamma_1(E^* \otimes S \otimes L), 
\end{tikzcd}
\]  
\[
L_{A,\Psi} \left( \begin{array}{c} a \\ v \\ \phi \end{array} \right) =
\left(
\begin{array}{c}
\ast da - dv - 2\ast \mu(\phi, \Psi) \\
- d^*a + i \Im ( \Psi, \phi ) \\
 a \cdot \Psi + v \Psi + \D_{AB}\phi
\end{array} \right).
\]
$L_{A,\Psi}$ is elliptic because up to terms of order zero it agrees with the direct sum of $\D_{AB}$ and the total operator of the de Rham complex. Representing $L_{A,\Psi}$ by the matrix
\[ L_{A,\Psi} = \left(
\begin{array}{ccc}
\ast d & - d & - 2\ast \mu(\Psi, \cdot) \\
- d^*a & 0 &  i \Im ( \Psi, \cdot ) \\
 \mathrm{Cl}( \cdot) \Psi & (\cdot) \Psi & \D_{AB}
\end{array} \right).
\]
we also see that it is self-adjoint, and so the deformation complex is elliptic and has index zero. 

Let $H^0_{A,\Psi}$, $H^1_{A,\Psi}$, and $H^2_{A,\Psi}$ be the homology groups of the deformation complex at $(A,\Psi)$. They are finite dimensional vector spaces. If the solution is irreducible, then $H^0_{A,\Psi} = 0$ and, since $L_{A,\Psi}$ is self-adjoint, $H^1_{A,\Psi} = \ker L_{A,\Psi}$ and $H^2_{A,\Psi} = \mathrm{coker} L_{A,\Psi}$ are naturally isomorphic. We will later need an explicit description of these groups. 

\begin{lem} \label{lem:h1} $H^1_{A,\Psi}$ consists of pairs $(a,\phi)$ solving
\[ 
\left\{
\begin{array}{l}
- d^*a + i \Im ( \Psi, \phi ) = 0, \\
\D_{AB} \phi + a \cdot \Psi = 0, \\
da - 2 \mu(\Psi, \phi) = 0. 
\end{array}
\right. \]
\end{lem}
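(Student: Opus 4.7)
The plan is to imitate Proposition \ref{prop:irreducible} at the linearised level. Since the discussion right before the lemma observed that for an irreducible solution $(A,\Psi)$ the identification $H^1_{A,\Psi} = \ker L_{A,\Psi}$ holds, it suffices to start with a triple $(a,\phi,v)$ in $\ker L_{A,\Psi}$ and show that the last component $v$ is forced to vanish; the three remaining equations are then exactly what the matrix form of $L_{A,\Psi}$ dictates for $(a,\phi,0)$.

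Concretely, I would take $(a,\phi,v) \in \ker L_{A,\Psi}$, so that
\[ \D_{AB}\phi + a\cdot\Psi + v\Psi = 0, \quad da - 2\mu(\Psi,\phi) - \ast dv = 0, \quad -d^*a + i\Im(\Psi,\phi) = 0, \]
and apply $\ast d$ to the middle equation. The term $\ast d(da)$ vanishes by $d^2=0$, while the term $\ast d(\ast dv)$ on a function equals $-\Delta v$ in our sign convention. For the remaining piece I would invoke Lemma \ref{lem:muderivative} to get
\[ \ast d \mu(\Psi,\phi) = i\Im(\D_{AB}\Psi,\phi) - i\Im(\Psi,\D_{AB}\phi), \]
use $\D_{AB}\Psi = 0$ (as $(A,\Psi)$ solves \eqref{eqn:nsw}), and substitute $\D_{AB}\phi = -a\cdot\Psi - v\Psi$ from the first equation.

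The key elementary observations needed to finish are: (i) Clifford multiplication by a purely imaginary one-form is a Hermitian endomorphism of $S\otimes L$, so $(\Psi, a\cdot\Psi)\in\R$ and contributes nothing to $\Im(\Psi,a\cdot\Psi)$; (ii) since $v$ is imaginary, $i\Im(\Psi, v\Psi) = -v|\Psi|^2$. Putting everything together collapses the $\ast d$ of the middle equation into
\[ (\Delta + 2|\Psi|^2)\, v = 0. \]
Pairing with $v$ in $L^2$ and integrating by parts yields $\int |\nabla v|^2 + 2|\Psi|^2 |v|^2 = 0$, which forces $v\equiv 0$ because $\Psi$ is not identically zero on the irreducible locus (this is precisely the argument from Proposition \ref{prop:irreducible}). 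Once $v=0$, the system in $\ker L_{A,\Psi}$ collapses to the three equations in the lemma.

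The only delicate point is bookkeeping of signs and conventions: keeping straight the formula $\ast d \ast d = -\Delta$ on functions in dimension three, and verifying that Clifford multiplication by an $i\R$--valued one-form is Hermitian rather than skew-Hermitian so that the $a\cdot\Psi$ contribution drops out of $\Im(\Psi,\cdot)$. No further analytic input is required, so I expect the proof to be essentially a two-step computation plus a reference to Proposition \ref{prop:irreducible} for the final vanishing.
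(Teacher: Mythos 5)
Your proposal is correct and follows exactly the route the paper intends: the paper's proof of this lemma is literally the remark that it is "similar to that of Proposition \ref{prop:irreducible}", i.e.\ start from $(a,\phi,v)\in\ker L_{A,\Psi}$, apply $\ast d$ to the curvature equation, use Lemma \ref{lem:muderivative} with $\D_{AB}\Psi=0$ and the substitution $\D_{AB}\phi=-a\cdot\Psi-v\Psi$ to get $(\Delta+2|\Psi|^2)v=0$, and conclude $v=0$ by irreducibility. Your sign and convention checks (Hermitian Clifford action of an $i\R$--valued one-form, $i\Im(\Psi,v\Psi)=-v|\Psi|^2$, $\ast d\ast d=-\Delta$ on functions) are consistent with the conventions used in the proof of Proposition \ref{prop:irreducible}, so no gap remains.
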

The proof is similar to that of Proposition \ref{prop:irreducible}.

%\begin{proof}
%Since $H^1_{A,\Psi}$ is the kernel of $S_{A,\Psi} \oplus G_{A,\Psi}^*$, the content of the lemma is that every triple $(a, \phi, v)$ in the kernel of  $S_{A,\Psi}$ must satisfy $v\Psi = 0$ and $dv = 0$.  The proof is similar to that of Proposition \ref{prop:irreducible}. Suppose that $S_{A,\Psi}(a,\phi,v) = 0$. Applying $\ast d$ to
%\[ da - 2 \mu(\Psi, \phi) - \ast dv = 0 \]
%results in
%\[  i \Im \langle \Psi, \D_{AB} \phi \rangle - \Delta v=0, \]
%where we have used Lemma \ref{lem:muderivative} and condition $\D_{AB} \Psi = 0$ to compute $\ast d \mu(\Psi, \phi)$. On the other hand, we have
%\[ \D_{AB} \phi = - a \cdot \Psi -  v \Psi. \]
%Plugging it into the previous equation, we obtain
%\[ - i \Im \langle \Psi, a\cdot \Psi \rangle - i \Im \langle \Psi, v \Psi \rangle - \Delta v = 0. \] 
%Now under the Clifford multiplication $a$ corresponds to a matrix in $i \mathfrak{su}(2)$, which in particular is self-adjoint. Therefore, $\langle \Psi, a \cdot \Psi \rangle$ is a real number. On the other hand, $v$ is imaginary and so we get
%\[ - ( \Delta + | \Psi |^2) v = 0, \]
%which implies that either $v = 0$ or $\Psi = 0$ and $v$ is constant.
%\end{proof}

\begin{defn} 
We call an irreducible solution $(A,\Psi)$ \emph{regular} if $H^1_{A,\Psi} = H^2_{A,\Psi} = 0$. 
\end{defn}

\begin{rem} \label{rem:regular}
By elliptic regularity for $L_{A,\Psi}$, the property of being a regular solution does not depend on the chosen Sobolev setup (compare with \cite[Proposition 3.1.10]{mcduff-salamon}). 
\end{rem}

The elliptic complex gives rise to a Kuranishi model for the moduli space.

\begin{prop}\label{prop:kuranishi}
For an irreducible solution $(A, \Psi)$ there is a smooth map 
\[\kappa \colon H^1_{A, \Psi} \to H^2_{A, \Psi}\]
 such that $\kappa(0) = 0$, $d\kappa(0) = 0$, and a neighbourhood of $(A,\Psi)$ in $\cM^*$ is homeomorphic to a neighbourhood of $0$ in $\kappa^{-1}(0)$. 
\end{prop}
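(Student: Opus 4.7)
The plan is the standard Kuranishi reduction adapted to the gauge-theoretic setting established by Proposition \ref{prop:slice}. First, I would replace a neighbourhood of $[A,\Psi]$ in $\cM^*$ by its representative inside the local slice
\[ \cS := (A,\Psi) + K_{A,\Psi}, \qquad K_{A,\Psi} \subset \ker G_{A,\Psi}^* \subset T_{(A,\Psi)}\cC, \]
so that zeros of \eqref{eqn:nsw} in $\cS$ correspond bijectively (for $K_{A,\Psi}$ sufficiently small) to $\cG$--equivalence classes of solutions near $[A,\Psi]$. On $\cS$, the equation is given by a smooth map
\[ \mathcal{F} \colon \cS \longrightarrow \Gamma_1(E^* \otimes S \otimes L) \oplus \Omega^2_1(i\R), \qquad \mathcal{F}(A,\Psi) = 0, \]
whose linearisation at $(A,\Psi)$ is the restriction $T := \restr{S_{A,\Psi}}{\ker G_{A,\Psi}^*}$.

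Because $(A,\Psi)$ is irreducible we have $H^0_{A,\Psi} = 0$, so $G_{A,\Psi}$ is injective, and the deformation complex identifies $\ker T = H^1_{A,\Psi}$ and $\mathrm{coker}\, T = H^2_{A,\Psi}$ (both finite dimensional by ellipticity of $L_{A,\Psi}$). Using the Hilbert space structure, I would decompose
\[ \ker G_{A,\Psi}^* = H^1_{A,\Psi} \oplus V, \qquad \Gamma_1(E^* \otimes S \otimes L) \oplus \Omega^2_1(i\R) = H^2_{A,\Psi} \oplus W, \]
where $V := (H^1_{A,\Psi})^\perp \cap \ker G_{A,\Psi}^*$ and $W := (H^2_{A,\Psi})^\perp = \mathrm{im}\, T$. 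The operator $T$ restricts to a bounded linear isomorphism $V \to W$. Writing $\Pi \colon \Gamma_1 \oplus \Omega^2_1 \to W$ for the orthogonal projection and decomposing a slice element as $(A,\Psi) + (\xi,\eta)$ with $\xi \in H^1_{A,\Psi}$ and $\eta \in V$, I would apply the implicit function theorem to the map $(\xi,\eta) \mapsto \Pi\,\mathcal{F}\bigl((A,\Psi)+(\xi,\eta)\bigr)$. This produces a smooth map $\eta = \eta(\xi)$ defined on a neighbourhood of $0 \in H^1_{A,\Psi}$, with $\eta(0)=0$ and $d\eta(0)=0$, solving the ``transverse'' part of the equation.

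The Kuranishi map is then
\[ \kappa(\xi) := (1-\Pi)\,\mathcal{F}\bigl((A,\Psi) + (\xi,\eta(\xi))\bigr) \in H^2_{A,\Psi}. \]
By construction $\kappa(0)=0$, and since $d\eta(0)=0$ and $(1-\Pi)\circ T = 0$, the chain rule gives $d\kappa(0)=0$. Moreover, a point $\xi$ in a small neighbourhood of $0 \in H^1_{A,\Psi}$ satisfies $\kappa(\xi)=0$ if and only if $(A,\Psi) + (\xi,\eta(\xi))$ solves $\mathcal{F}=0$, so composing with the local slice homeomorphism from Proposition \ref{prop:slice} yields the desired homeomorphism between $\kappa^{-1}(0)$ near $0$ and $\cM^*$ near $[A,\Psi]$.

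The only delicate points are (i) verifying that $T$ remains Fredholm with the stated kernel and cokernel after restriction to $\ker G_{A,\Psi}^*$, which follows by combining ellipticity of the extended Hessian $L_{A,\Psi}$ with irreducibility, and (ii) ensuring regularity of the solutions $(A,\Psi) + (\xi,\eta(\xi))$: although the implicit function theorem is applied in $W^{2,2}$, elliptic regularity for $L_{A,\Psi}$ (as in Remark \ref{rem:regular} and Proposition \ref{prop:regularity}) guarantees that the resulting solutions are as smooth as the data, and independence of the Sobolev exponent shows that the construction reproduces the correct moduli space topology.
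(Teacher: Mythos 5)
There is a genuine gap at the heart of your reduction: you apply the implicit function theorem to the \emph{unmodified} equation \eqref{eqn:nsw} restricted to the slice, and you assert that its linearisation $T = \restr{S_{A,\Psi}(\cdot,\cdot,0)}{\ker G^*_{A,\Psi}}$ is Fredholm with $\ker T = H^1_{A,\Psi}$ and $\mathrm{coker}\, T = H^2_{A,\Psi}$, "by ellipticity of $L_{A,\Psi}$". But $L_{A,\Psi}$ is elliptic only because its domain contains the extra $\Omega^0_2(i\R)$--direction $v$ coming from the auxiliary function $f$ in the modified equation \eqref{eqn:nsw2}; this is precisely why the paper introduces $f$ ("to make the equation elliptic modulo the action of $\cG$"). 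Once you delete that direction, the gauge-fixed linearisation $(a,\phi)\mapsto\bigl(\D_{AB}\phi + a\cdot\Psi,\, da - 2\mu(\Psi,\phi),\, -d^*a + i\Im(\Psi,\phi)\bigr)$ has injective but non-surjective symbol (pointwise it maps a $(3+4n)$--dimensional fibre into a $(4+4n)$--dimensional one), i.e.\ it is an overdetermined elliptic system. Concretely, the curvature component of $\mathrm{im}\,T$ lies in the space of exact two-forms up to the zeroth-order (hence compact) term $\mu(\Psi,\cdot)$, so it misses the infinite-dimensional space of coexact two-forms; equivalently, the $L^2$--annihilator of $\mathrm{im}\,T$ is the kernel of an underdetermined elliptic operator (one obtains the two conditions $d^*\omega + \mu(\Psi,\Xi) = -dh$, $\D_{AB}\Xi - 2\,\omega\cdot\Psi = h\Psi$, but \emph{not} the third condition $\ast d\omega = i\Im(\Psi,\Xi)$ that would come from varying $v$), and such a kernel is infinite-dimensional. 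So $T$ is not Fredholm, $\mathrm{coker}\,T \neq H^2_{A,\Psi}$, your splitting $\Gamma_1 \oplus \Omega^2_1 = H^2_{A,\Psi} \oplus \mathrm{im}\,T$ fails, and the implicit function theorem step cannot be run as stated.

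The repair is exactly the paper's proof: take the domain to be the slice \emph{times} $\Omega^0_2(i\R)$ and let $\mathcal{F}$ be the left-hand side of the modified equation \eqref{eqn:nsw2}. Then the linearisation together with the gauge fixing is the extended Hessian $L_{A,\Psi}$, which is elliptic and hence Fredholm, with kernel and cokernel identified with $H^1_{A,\Psi}$ and $H^2_{A,\Psi}$ (here Lemma \ref{lem:h1} and the linear analogue of Proposition \ref{prop:irreducible} show the auxiliary direction contributes nothing to the kernel, and Proposition \ref{prop:irreducible} shows the modified equation has the same solutions near an irreducible solution, so the local model still computes $\cM^*$). With that change, your finite-dimensional reduction (orthogonal splitting, solving the transverse part by the implicit function theorem, and defining $\kappa$ as the projection to $H^2_{A,\Psi}$) is a perfectly good explicit version of the abstract Fredholm normal form the paper invokes, and the rest of your argument, including the slice homeomorphism from Proposition \ref{prop:slice} and the regularity remarks, goes through.
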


\begin{proof}
The proof is standard but we need it for future reference. 
Let $U$ be a local slice passing through $(A,\Psi)$ as in \eqref{eqn:slice}. Let $V = \cC \times \Omega^0_2(i\R)$, $W = \Gamma_1(E^* \times S \times L) \times \Omega^2_1(i \R)$, and $\mathcal{F} \colon V \to W$ be given by the left-hand side of \eqref{eqn:nsw2}. The restriction of $\mathcal{F}$ to $U \times \Omega^0_2(i\R)$, which we denote by $f$, is a smooth Fredholm map. By the implicit function theorem, there is a neighbourhood of $(A,\Psi)$ in $U \times \Omega^0_2(i\R)$ and local charts in which $f$ takes the form
\[ f \colon E \times \ker d f_{A,\Psi,0} \to E \times \mathrm{coker} \ d f_{A,\Psi,0}, \]
\[ f(e, x) = (e, K(e,x)) \]
for a Banach space $E$ and a smooth map $K \colon  \ker d f_{A,\Psi,0}  \to  \mathrm{coker} \ d f_{A,\Psi,0}$. The point $(A,\Psi)$ corresponds to $(0,0)$ in $E \times \ker d f_{A,\Psi,0}$, and we have $K(0,0) = (0,0)$ and $dK(0,0) = 0$. Let $\kappa(x) = K(0,x)$. Then the zero set $f^{-1}(0)$ is locally homeomorphic to $\kappa^{-1}(0)$. This proves the proposition because $H^1_{A,\Psi} \cong \ker d f_{A,\Psi,0} $ and $H^2_{A,\Psi} \cong \mathrm{coker} \ df_{A,\Psi,0}$.
\end{proof}

\begin{rem} \label{rem:analytic}
The existence of local Kuranishi models allows us to equip $\cM^*$ with the structure of a real analytic space; since $\mathcal{F}$ is real analytic, we can choose the Kuranishi map $\kappa$ to be analytic as well. Thus, $\cM^*$ is locally homeomorphic to the real analytic set $V = \kappa^{-1}(0)$. As we vary $\kappa$ and $V$, the rings of analytic functions $\cO / I(V)$ glue to a globally defined structure sheaf making $\cM^*$ into a real analytic space; see \cite[Sections 4.1.3-4.1.4]{friedman-morgan}. 
 \end{rem}
 
  In section \ref{sec:reduction} we will face a situation in which the moduli space $\cM^*$ is not regular, but it still has the structure of a smooth manifold. 
  
\begin{defn} \label{defn:zariski}
The moduli space $\cM$ is said to be \emph{Zariski smooth} if $\cM = \cM^*$ and for every $[A,\Psi] \in \cM$ there is a local Kuranishi model with the map $\kappa \colon H^1_{A,\Psi} \to H^2_{A,\Psi}$ being zero. 
\end{defn}

For a Zariski smooth moduli space the homology group $H^1_{A,\Psi}$ is the tangent space to $\cM$ at $[A,\Psi]$. The groups $H^2_{A,\Psi}$ form a vector bundle over the moduli space. To see this, consider the trivial bundles over $\cC^*$
\begin{align*}
 \mathcal{V}_0 & := \cC^* \times (\Gamma_2(E^* \otimes S \otimes L) \oplus \Omega^2_2(i\R)) \\
 \mathcal{W}_0 & := \cC^* \times (\Omega^1_1(i \R) \oplus \Gamma_1(E^* \otimes S \otimes L) \oplus \Omega^0_1(i \R)) 
\end{align*}
and the section $s_0 \in \Gamma(\Hom(\mathcal{V}_0, \mathcal{W}_0))$  which at a point $(A,\Psi)$ is given by $S^*_{A,\Psi}$. The triple $(\mathcal{V}_0, \mathcal{W}_0, s_0)$ is $\cG$-equivariant and descends to a triple $(\mathcal{V}, \mathcal{W}, s)$ of vector bundles $\mathcal{V}$ and $\mathcal{W}$ over $\mathcal{B}^*$ and a section $s \in \Gamma(\Hom ( \mathcal{V}, \mathcal{W}))$. By construction, $\ker s([A,\Psi]) \cong H^2_{A,\Psi}$. 
 
\begin{defn} \label{defn:obstruction}
The \emph{obstruction bundle} is the subspace of $\mathcal{V}$ given by
\[ \Ob := \left\{ ([A,\Psi], v) \ | \ [A,\Psi] \in \cM^*, \ v \in \ker s([A,\Psi])\right\} \]
together with the map $\Ob \to \cM^*$ induced from the bundle projection $\mathcal{V} \to \mathcal{B}^*$. 
\end{defn}

\begin{lem} \label{lem:zariski}
If $\cM$ is Zariski smooth, then it is a submanifold of $\mathcal{B}^*$ and $\Ob \to \cM$ is a vector bundle isomorphic to the tangent bundle $T \cM$. 
\end{lem}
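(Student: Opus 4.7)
The plan is to argue in three steps: first establish the submanifold structure using the Kuranishi model, then check that $\Ob \to \cM$ is a vector bundle of rank equal to $\dim\cM$, and finally use the self-adjointness of the extended Hessian to produce the bundle isomorphism.

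For the first step I would apply Proposition \ref{prop:kuranishi} at an arbitrary $[A,\Psi] \in \cM$. In the Kuranishi chart from the proof of that proposition, the equation takes the form $f(e,x) = (e, K(e,x))$ with $\kappa(x) = K(0,x)$, so
\[
f^{-1}(0) \;=\; \{(e,x) : e = 0,\ K(0,x) = 0\} \;=\; \{0\} \times \kappa^{-1}(0).
\]
The Zariski smoothness assumption gives $\kappa \equiv 0$, so $f^{-1}(0) = \{0\} \times \ker df_{A,\Psi,0}$ is a linear subspace of the ambient Banach-space chart. This supplies a submanifold chart for $\cM$ inside $\mathcal{B}^*$, and tracing through the identifications in Proposition \ref{prop:kuranishi} one reads off $T_{[A,\Psi]}\cM = \ker df_{A,\Psi,0} = H^1_{A,\Psi}$.

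For the second step, observe that $\Ob_{[A,\Psi]} = \ker s([A,\Psi]) = \ker S^*_{A,\Psi} = H^2_{A,\Psi}$, which already comes equipped with a natural isomorphism to $H^1_{A,\Psi}$ via the self-adjointness of $L_{A,\Psi}$ noted earlier in the text (using also $H^0_{A,\Psi} = 0$ in the irreducible case). Hence $\dim \Ob_{[A,\Psi]} = \dim H^1_{A,\Psi} = \dim_{[A,\Psi]}\cM$ is locally constant. Since $s$ is a smooth section of $\Hom(\mathcal{V},\mathcal{W})$ and its kernel has locally constant rank along $\cM$, the standard constant-rank argument for Fredholm families implies that $\Ob = \ker s|_\cM$ is a smooth subbundle of $\mathcal{V}|_\cM$ of rank $\dim \cM$.

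For the third step I would promote the pointwise isomorphism $H^1_{A,\Psi} \cong H^2_{A,\Psi}$ to a global bundle isomorphism $T\cM \to \Ob$. The operator $L_{A,\Psi}$ depends smoothly on the configuration, so---since its kernel and cokernel have constant dimension along $\cM$---they assemble into smooth bundles over $\cC^*$; these are $\cG$-equivariant because gauge transformations act only on the spinor factors and commute with the Hodge-star identification on forms, so they descend to bundles over $\mathcal{B}^*$, and the self-adjoint identification between them descends likewise. Restriction to $\cM$ then yields the required isomorphism $T\cM \to \Ob$. The main technical point will be verifying smooth dependence of the self-adjoint identification across the moduli space, but this reduces to the explicit matrix description of $L_{A,\Psi}$ already displayed in this section.
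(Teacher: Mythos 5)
Your argument is correct, and it is exactly the standard proof that the paper deliberately omits ("The proof is standard and we omit it"): Kuranishi charts with vanishing obstruction map give the submanifold structure with $T_{[A,\Psi]}\cM = H^1_{A,\Psi}$, local constancy of $\dim H^1 = \dim H^2$ makes $\ker s$ a subbundle, and the self-adjointness of $L_{A,\Psi}$ furnishes the identification with $T\cM$. Two cosmetic points only: the fibrewise operators $s([A,\Psi]) = S^*_{A,\Psi}$ are not Fredholm (their cokernels are infinite-dimensional) but only have finite-dimensional kernel and closed range, which in the Hilbert-space setting is still enough for the constant-rank kernel-bundle argument; and the kernels and cokernels of $L_{A,\Psi}$ assemble into bundles only over the locus of irreducible solutions lying above $\cM$ (component by component), not over all of $\cC^*$, since their dimensions jump elsewhere.
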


More precisely, $\cM$ is a disjoint union of smooth manifolds of possibly different dimensions. If $C$ is a connected component containing a point $[A,\Psi]$, then $C$ is a smooth submanifold of $\mathcal{B}^*$ of dimension $\dim H^1_{A,\Psi}$ and the restriction of $\Ob \to \cM$  to $C$ is a smooth vector bundle isomorphic to $TC$ as unoriented real vector bundles. The proof is standard and we omit it.

\subsection{Perturbations and transversality}\label{subsec:transversality}
The moduli space $\cM(g, \sigma)$ depends on the choice of a Riemannian metric $g$ and a parameter $\sigma = (B,\eta)$. In this subsection we fix $g$ and study the properties of $\cM(g,\sigma)$ for a generic choice of $\sigma$.

\begin{defn}
\label{Def_Parameters}
We introduce the following Fr\'echet manifolds of $C^\infty$ parameters \\

\begin{tabular}{lcl}
$\Met$ & & the space of Riemannian metrics on $Y$, \\
$\cZ$  & & the space of closed imaginary-valued two-forms, \\
$\cP := \cA(E) \times \cZ$ & & the \emph{space of perturbations},
\end{tabular}\\

and their Sobolev completions $\cZ_k$ and $\cP_k$ with respect to the $W^{k,2}$-topology.
The \emph{space of parameters of the equation}, appearing in the introduction and Theorem \ref{thm:moduli}, is 
\begin{equation*}
\sP := \Met\times\cP.
\end{equation*} However, typically we will fix a Riemannian metric and perturb only $\sigma\in\cP$. 
\end{defn}

Recall that  a subset of a topological space is called \emph{residual} if it contains a countable intersection of open and dense subsets. Baire’s theorem asserts that a residual subset of a complete metric space is dense.

\begin{prop} \label{prop:transversality}
For every $g \in \Met$, the set
\[
  \{  \sigma \in \cP \ | \ \textnormal{all solutions in } \cM^*(g, \sigma)\textnormal{ are regular} \}
\]
 is residual in $\cP$.
\end{prop}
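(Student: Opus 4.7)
The plan is a Sard--Smale argument on a universal moduli space. I would first enlarge $\cP$ to its Sobolev completion $\cP_l$ for $l$ sufficiently large, and consider the universal zero set
\begin{equation*}
  \widetilde{\cM}^* := \{ ([A,\Psi], \sigma) \in \mathcal{B}_2^* \times \cP_l \mid (A, \Psi) \textrm{ solves \eqref{eqn:nsw} with parameters } (g, \sigma) \}.
\end{equation*}
Working in a local slice, this is cut out by a smooth map $\widetilde{\mathcal{F}} \colon \cC_2^* \times \cP_l \to \Gamma_1(E^* \otimes S \otimes L) \oplus \Omega^2_1(i\R)$. To apply the implicit function theorem and then Sard--Smale to the projection $\widetilde{\cM}^* \to \cP_l$, the essential step is to verify that $d\widetilde{\mathcal{F}}$ is surjective at every zero.

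The differential splits as $S_{A,\Psi}$ on the $(A,\Psi)$-factor plus the parameter contributions: varying $\eta$ by $\dot\eta \in \cZ_l$ contributes $-\dot\eta$ to the curvature equation, and varying $B$ by $\dot B \in \Omega^1_l(\su(E))$ contributes a term $-\mathrm{Cl}(\dot B) \circ \Psi$ arising from the $E^*$-part of the induced connection on $\Hom(E, S \otimes L)$. Suppose $(\phi, a) \in L^2$ is orthogonal to the image. Orthogonality to $S_{A,\Psi}$ gives $L^*_{A,\Psi}(\phi, a, v) = 0$ for some $v$, so by ellipticity of $L_{A,\Psi}$ the pair $(\phi, a)$ is smooth and satisfies the adjoint system. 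Orthogonality to all $\dot\eta \in \cZ_l$ forces $a$ to be $L^2$-orthogonal to every closed two-form. Orthogonality to all $\dot B$ yields a pointwise identity of the form $\mathrm{Cl}(\dot B)\Psi \perp \phi$.

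The crux is a unique continuation argument. On the open set $\{ \Psi \neq 0 \}$, which is dense whenever $\Psi \not\equiv 0$ by unique continuation for $\D_{AB}\Psi = 0$, the algebraic freedom in $\dot B \in \su(E)$ pointwise combined with the Clifford action on $S$ spans enough of the fibre of $\Hom(E, S \otimes L)$ to detect all components of $\phi$ where $\Psi$ is nondegenerate, forcing $\phi \equiv 0$ on $\{\Psi \neq 0 \}$. The Aronszajn--Cordes unique continuation theorem, applied to the first-order elliptic equation $\D_{AB}\phi + a \cdot \Psi = 0$ from the adjoint system, then propagates $\phi \equiv 0$ to all of $Y$. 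With $\phi = 0$ the adjoint equations reduce to $da = 0$ and $d^*a = 0$, so $a$ is a harmonic two-form; but orthogonality to $\cZ_l$ in particular gives orthogonality to the finite-dimensional space of harmonic forms, hence $a = 0$.

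Once surjectivity is established, the irreducible part of $\widetilde{\cM}^*$ is a Banach submanifold and the projection $\pi \colon \widetilde{\cM}^* \to \cP_l$ is Fredholm of index zero. The Sard--Smale theorem, applicable once $l$ is large enough relative to the index, shows that the regular values of $\pi$---which are exactly the $\sigma \in \cP_l$ for which every solution in $\cM^*(g,\sigma)$ is regular in the sense of Remark \ref{rem:regular}---form a residual subset of $\cP_l$. A standard Taubes-style diagonal argument, intersecting the $C^l$-residual sets over all $l$ and using density of smooth perturbations, transfers the residual property from $\cP_l$ to $\cP$ equipped with the $C^\infty$ topology. The main obstacle is the unique continuation step of the previous paragraph; everything else is a routine functional-analytic package.
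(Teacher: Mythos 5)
Your overall architecture is the paper's: a universal moduli space over the parameter space, surjectivity of the parametrised linearisation proved by analysing the $L^2$-orthogonal complement, Sard--Smale, and a Taubes-style transfer from Sobolev to $C^\infty$ perturbations. Your reordering (kill the spinor component $\phi$ first by the pointwise argument on $\{\Psi \neq 0\}$, then dispose of the two-form $a$) is in principle a legitimate streamlining of the paper's route, which first kills the two-form by an integration-by-parts identity for $\ast d^*\mu(\Psi,\Xi)$ so that $\Xi$ becomes harmonic. But the step you yourself call the crux is exactly where the proposal has a genuine gap: the claim that ``the algebraic freedom in $\dot B \in \su(E)$ pointwise combined with the Clifford action \dots detects all components of $\phi$ where $\Psi$ is nondegenerate'' is precisely the paper's Lemma \ref{lem:transversality}, a non-obvious linear-algebra statement whose proof is a genuine computation, and you neither prove it nor state it with the correct hypothesis. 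What the argument needs is detection at \emph{every} point where $\Psi(x) \neq 0$ (merely nonzero as an element of $\Hom(E, S\otimes L)_x$); if your ``nondegenerate'' means anything stronger (e.g.\ full rank of $\Psi(x)$), the proof breaks, because density of that locus is not provided by unique continuation, and then $\phi \equiv 0$ on a non-dense set gives nothing. Note also that for $n=1$ the $\su(E)$-freedom is empty and one must instead use the $\mathfrak{u}(1)$ (connection) direction, as the paper does with the term $a\cdot\Psi$.

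Two secondary points. First, the Aronszajn--Cordes step is both inapplicable as stated (the equation $\D_{AB}\phi = -a\cdot\Psi$ has a right-hand side not controlled by $\phi$, so it is not a unique-continuation-type inequality) and unnecessary: once $\phi$ vanishes on the dense open set $\{\Psi\neq 0\}$, continuity alone gives $\phi \equiv 0$. Second, your disposal of $a$ is not available in the setup you chose: you work with the unmodified equation \eqref{eqn:nsw}, whose linearisation contributes only $d^* a = 0$ (from the connection variation) and $a \cdot \Psi = 0$ (from the spinor variation, once $\phi = 0$) to the adjoint system; the equation $da = 0$ that you invoke comes from the auxiliary function $f$ in the modified equation \eqref{eqn:nsw2}, which your map $\widetilde{\mathcal{F}}$ omits. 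The conclusion is salvageable either by including $f$ as the paper does, or by observing that $a\cdot\Psi = 0$ together with invertibility of Clifford multiplication by a nonzero two-form forces $a = 0$ on $\{\Psi \neq 0\}$, hence everywhere; but as written the harmonicity claim is not derived. Finally, the transfer to the $C^\infty$ topology is not just ``intersecting the residual sets over all $l$'': the Taubes trick requires exhausting by sets of solutions with $1/N \le \|\Psi\|_{L^\infty} \le N$ and using the compactness theorem (Theorem \ref{thm:compactness}) to show these sets of good parameters are open, then dense.
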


\begin{proof}
We first prove the corresponding statement for $\cP_k$ in place of $\cP$, with solutions in  the configuration space $\cC_{k+1}^*$. Consider the smooth map
\begin{align*}
\mathcal{S} \colon \cP_k \times \cC_{k+1}^* \times \Omega^0_{k+1}(i\R) &\longrightarrow \Gamma_k(E^* \otimes S \otimes L) \times \Omega^2_k(i \R) \\
 \mathcal{S}(B, \eta, A, \Psi, f) &= \left( \D_{A B} \Psi + f \Psi, F_A - \ast d f- \mu(\Psi) - \eta \right). 
 \end{align*}
We will show that $\mathcal{S}$ is a submersion at all irreducible solutions.  
Let $(A,\Psi) \in \cC_{k+1}^*$ be such that $\mathcal{S}(B,\eta, A, \Psi, 0) = 0$. The derivative at $(B, \eta, A, \Psi, 0)$ is 
\[ d \mathcal{S}(b, \xi, a, \phi, v) = S_{A,\Psi} (a, \phi, v) + (b \cdot \Psi, -\xi). \]
Recall that $S_{A,\Psi}$ is the linearisation operator \eqref{eqn:linearised} introduced in the previous subsection. Since  $S_{A,\Psi} \oplus G^*_{A,\Psi}$ is Fredholm, so is $d \mathcal{S} \oplus G^*_{A,\Psi}$, and the image of $d \mathcal{S}$ is closed and of finite codimension. Suppose that $(\Xi, \omega)$ is $L^2$--orthogonal to the image of $d \mathcal{S}$. We will prove $\omega = 0$. 
First, observe that $(\Xi,\omega)$ satisfies 
\begin{align*}
- i\Im ( \Psi, \Xi )+ \ast d \omega &= 0, \\
\D_{AB} \Xi - 2 \omega \cdot \Psi &= 0, \\
d^* \omega + \mu(\Psi,\Xi) &= 0.
\end{align*}
The first equation is a consequence of varying $a$ alone, the second of varying $\phi$ alone, and the third of varying $v$ alone.
On the other hand, $\omega$ is $L^2$--orthogonal to $\cZ_k$, as a result of varying $\xi \in \cZ_k$ alone,  so in particular $d^* \omega = 0$. Applying $\ast d^*$ to $\mu(\Psi,\Xi) = 0$ and using \cite[Proposition A.4]{doan2}, we obtain
\begin{align*}
0 &= \ast d^*\mu(\Psi,\Xi) = \mu(\Psi, \D_{AB}\Xi) - \ast \frac{i}{2}  \Im(\nabla_{AB} \Psi, \Xi) - \ast \frac{i}{2} i \Im (\Psi, \nabla_{AB} \Xi) \\
&= \mu(\Psi, 2\omega \cdot \Psi) + d^* d \omega.
\end{align*}
Taking the $L^2$-inner product with $\omega$ gives us
\[
   2 \| \omega \cdot \Psi \|_{L^2}^2 + \| d \omega \|_{L^2}^2 = 0,
\]
so $d \omega = 0$. We conclude that $\omega = 0$, $\D_{AB} \Xi = 0$, and $\Im (\Psi, \Xi )= 0$.  Next, we prove $\Xi = 0$. Suppose by contradiction that $\Xi$ is not identically zero. By the unique continuation theorem for harmonic spinors \cite[Corollary 3]{bar}, the set $\{ | \Xi | > 0  \}$ is open and dense in $Y$. The same is true for $\Psi$ and so there exists $x \in Y$ such that $| \Psi(x) | > 0$ and $| \Xi(x) | > 0$. On the other hand, for all $a \in \Omega^1_{k+1}(i \R)$ and $b \in \Omega^1_k( \mathfrak{su}(E) )$ 
\[ 0 = \langle d \mathcal{S}( b, 0, a, 0 , 0 ) , \Xi \rangle_{L^2} = \Re ((a+b) \cdot \Psi , \Xi )_{L^2}. \]
However, we can find $a(x) \in \Lambda^1_x \otimes i \R$ and $b(x) \in \Lambda^1_x \otimes \mathfrak{su}(E_x)$ such that
\[ \Re ( (a(x) + b(x)) \cdot \Psi(x), \Xi(x))> 0. \]
This is an elementary fact of linear algebra; it is obvious when $\rank E =1$ and for $\rank E \geq 2$ see Lemma \ref{lem:transversality} below. We extend $a(x)$, $b(x)$  to differential forms $a$ and $b$ satisfying
\[ \Re ( (a+b) \cdot \Psi , \Xi )_{L^2} > 0. \]
This is a contradiction; we conclude that $\Xi = 0$.

We have shown that $\mathcal{S}$ is a submersion at all irreducible solutions. By the implicit function theorem, $\mathcal{S}^{-1}(0)$ is a submanifold of $\cP_k \times \cC_{k+1}^* \times \Omega^0_{k+1}(i\R)$. The gauge group $\cG_{k+2}$ acts freely on $\mathcal{S}^{-1}(0)$. Denote the quotient space by $\mathcal{X}$. The projection on $\cP_k$ induces a smooth map $\pi \colon \mathcal{X} \to \cP_k$ whose fibre over $\sigma \in \cP_k$ is $\cM^*(g,\sigma)$. The derivative $d\pi$ is Fredholm and at a point $(A, \Psi) \in \cM^*(g,\sigma)$ there is a natural identification $\mathrm{coker} \ d\pi_{A,\Psi} = H^{2}_{A,\Psi}$. By the Sard-Smale theorem, the set of regular values of $\pi$ is residual in $\cP_k$. For every such regular value $\sigma$ all solutions in $\cM^*(g,\sigma)$ are regular. This proves the statement for $\cP_k$. 

The final part of the proof is to replace $\cP_k$ by $\cP$. We follow an argument due to Taubes used in the context of pseudo-holomorphic curves \cite[Theorem 3.1.5]{mcduff-salamon}. For $N > 0$ define $\cU_N \subset \cP$ as the set of all $\sigma$ such that all solutions $(A,\Psi)$ in $\cM^*(g,\sigma)$ satisfying 
\begin{equation} \label{eqn:transversality1}
 \frac{1}{N} \leq \| \Psi \|_{L^{\infty}} \leq N 
 \end{equation}
are regular. We need to show that the intersection $\bigcap_{N \geq 1} \cU_N$ is residual. First, we show that $\cU_N$ is open. Let $\sigma_i$ be a sequence in $\cP \setminus \cU_N$ converging to $\sigma$ in $C^{\infty}$. By the definition of $\cU_N$, there is a sequence of solutions $(A_i, \Psi_i)$ solving equations \eqref{eqn:nsw2} with parameters $\sigma_i$, satisfying \eqref{eqn:transversality1} and $H^2_{A_i, \Psi_i} \neq 0$. By the first part of Theorem \ref{thm:compactness} from the next subsection, after passing to a subsequence $(A_i,\Psi_i)$ converges smoothly modulo gauge. The limit $(A,\Psi)$ satisfies condition \eqref{eqn:transversality1} and the equations with parameter $\sigma$. Since $L_{A_i, \Psi_i}$ converges to $L_{A,\Psi}$,  the latter operator is not surjective and $\sigma \notin \cU_N$. We conclude that $\cP \setminus \cU_N$ is closed.
The last step is to show that $\cU_N$ is dense in $\cP$. We use the statement for $\cP_k$ proved earlier. Let $\cU_{k,N}$ be the subspace of $\cP_k$ defined analogously to $\cU_N$. Since $\cU_{k,N}$ contains the set of regular values of $\pi$ discussed earlier, it is dense in $\cP_k$. It is also open in $\cP_k$ by the same argument as before. By Proposition \ref{prop:regularity} and Remark \ref{rem:regular}, $\cU_N = \cU_{k,N} \cap  \cP$ for all $k \geq 2$. Every element $\sigma$ of $\cP$ can be approximated in the $W^{k,2}$--topology by elements of $\cU_{k,N}$. In fact, it can be approximated by elements of the intersection $\cU_{k,N} \cap \cP$ because $\cU_{k,N}$ is open in $\cP_k$ and $\cP$ is dense in $\cP_k$. Therefore, we have shown that $\sigma$ can be approximated by elements in $\cU_N$ in any Sobolev norm. By the usual diagonal argument and the Sobolev embedding theorem, we can find a sequence in $\cU_N$ converging to $\sigma$ in $C^{\infty}$. This shows that $\cU_N$ is dense in $\cP$.
\end{proof}

\begin{lem}
  \label{lem:transversality}
  Let $n \geq 2$ and $V_n = \C^n$ be the standard representation of $\SU(n)$.
  For every pair of non-zero vectors $v, w \in V_2 \otimes V_n$ there exists $b \in \su(2) \otimes \su(n)$ such that 
  \begin{equation*}
    ( bv, w )> 0.
  \end{equation*} 
\end{lem}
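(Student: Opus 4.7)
The plan is to rewrite the pairing in matrix form as $(bv,w)=\operatorname{tr}(b\cdot vw^*)$ and then analyse the $\R$-linear functional $b\mapsto\operatorname{tr}(b\cdot vw^*)$ on $\su(2)\otimes\su(n)$. A key first observation is that every element of $\su(2)\otimes\su(n)$ is Hermitian: $(X\otimes Y)^*=(-X)\otimes(-Y)=X\otimes Y$ for $X\in\su(2)$, $Y\in\su(n)$. Consequently $\Re(bv,w)=\tfrac12\operatorname{tr}(b\cdot M)$ with $M=vw^*+wv^*\in H(2n)$, and since $\su(2)\otimes\su(n)$ is closed under $b\mapsto -b$ and under real scaling, it will suffice to produce a single $b$ with $\operatorname{tr}(bM)\neq 0$; after rescaling, the resulting value of $(bv,w)$ can then be arranged to be a positive real number.

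The first step is to identify the orthogonal complement of $\su(2)\otimes\su(n)$ inside $H(2n)$ under the trace inner product. A dimension count gives $H(2n)=\u(2)\otimes\u(n)$ as real vector spaces, and the splitting $\u(k)=\su(k)\oplus \R\cdot iI_k$ yields
\begin{equation*}
\u(2)\otimes\u(n)=\su(2)\otimes\su(n)\ \oplus\ (i\su(2))\otimes I_n\ \oplus\ I_2\otimes(i\su(n))\ \oplus\ \R I_{2n}.
\end{equation*}
Using $\operatorname{tr}(X)=0$ for $X\in\su(k)$ and cyclicity of trace, one checks that the last three summands are orthogonal to $\su(2)\otimes\su(n)$, so they form the complement. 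In $2\times 2$ block form with $n\times n$ blocks, this complement consists precisely of those Hermitian matrices whose off-diagonal blocks are scalar multiples of $I_n$ and whose two diagonal blocks differ by a scalar multiple of $I_n$.

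The decisive step is then block analysis. Splitting $\C^{2n}=\C^n\oplus\C^n$ and writing $v=(v_1,v_2)$, $w=(w_1,w_2)$, each block of $M$ has the form $v_iw_j^*+w_iv_j^*$, a sum of two outer products of rank at most two. For $n\geq 3$, a rank-$\leq 2$ matrix can equal $cI_n$ only if $c=0$, so the off-diagonal equation collapses to $v_1 w_2^*=-w_1 v_2^*$ and the diagonal difference $v_1w_1^*+w_1v_1^*-v_2w_2^*-w_2v_2^*$ to zero. A short case analysis — in which vanishing of any of the $v_i,w_j$ is chased through, and the standard identity "$uw^*+wu^*=0$ with $w\neq 0$ implies $u\in i\R\cdot w$" is applied to the diagonal blocks — then contradicts $v,w\neq 0$. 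The case $n=2$ is handled by the same mechanism, except that the off-diagonal scalar need not vanish; the equation $v_1w_2^*+w_1v_2^*=\lambda I_2$ pins down explicit proportionalities $w_1=\alpha v_1$, $w_2=-\bar\alpha v_2$, and substituting these into the diagonal equation leads back to the same contradiction.

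The main obstacle I foresee is the $n=2$ sub-case, where the presence of nonzero scalar off-diagonal blocks compels the more delicate proportionality bookkeeping just described; by contrast, the $n\geq 3$ case is driven almost entirely by the rank obstruction $2<n$, and the block analysis closes quickly once the orthogonal complement of Step~1 is in hand.
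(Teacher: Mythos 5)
There is a genuine gap, and it sits at the very first reduction. Your argument only ever sees the real part $\Re(bv,w)=\tfrac12\operatorname{tr}(bM)$ with $M=vw^*+wv^*$, and the real-part statement is false: take $w=iv$ for any $v\neq 0$. As you yourself note, every element of $\su(2)\otimes\su(n)$ acts as a Hermitian operator, so $(bv,v)$ is real and $(bv,iv)=\pm i\,(bv,v)$ is purely imaginary; hence $\Re(bv,w)=0$ for \emph{every} $b$, i.e.\ $M=v(iv)^*+(iv)v^*=0$ lies in your orthogonal complement $W$. Concretely, the pair $(v,iv)$ satisfies both of your block equations ($v_1w_2^*=-w_1v_2^*$ and vanishing diagonal difference), so the promised ``short case analysis \dots contradicts $v,w\neq0$'' cannot exist: the solution set of those equations contains every pair with $w\in i\R\, v$, $v\neq 0$. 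The same example also kills the inference ``$\operatorname{tr}(bM)\neq0$, hence after rescaling $(bv,w)$ is a positive real number'' -- real rescaling of $b$ only controls the sign of the real part, never the imaginary part. A secondary, fixable issue: the diagonal difference is a difference of two rank-$\leq 2$ Hermitian blocks, hence only of rank $\leq 4$, so for $n=3,4$ your ``rank $\leq 2$'' shortcut does not apply to it and must be replaced by, e.g., an inertia argument (each block $xy^*+yx^*$ has at most one positive and one negative eigenvalue).

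The paper's proof avoids this trap by working with the full complex-valued pairing: plugging in elementary off-diagonal and diagonal elements of $\su(n)$ for the $b_k$, it deduces the complex equations $(\sigma_k v_i,w_j)=0$ for all $i,j,k$ and hence $w=0$. In other words, what is actually proved (and what survives the example $w=iv$, where $(bv,w)$ is purely imaginary but nonzero) is that $(bv,w)\neq 0$ for some $b$; the literal ``$>0$'' requires the extra information available in the transversality argument where the lemma is applied (there one also knows $\Im(\Psi,\Xi)=0$ and $\mu(\Psi,\Xi)=0$). If you want to keep your trace-duality framework, you must track both Hermitian matrices $vw^*+wv^*$ and $i(wv^*-vw^*)$ -- equivalently both $\Re(bv,w)$ and $\Im(bv,w)$ -- and show that they cannot simultaneously lie in $W$ for $v,w\neq 0$; that statement is equivalent to the nonvanishing the paper proves, and your orthogonal-complement computation is a reasonable starting point for it, but as written your proposal proves a strictly weaker (indeed false) real-part statement.
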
 

\begin{proof}
  The proof is similar to that of \cite[Theorem 1.5]{anghel}. 
  Let $e_1, \ldots, e_n$ be an orthonormal basis of $V_n$. 
  Write $v$ and $w$ as
  \begin{equation*}
    v = \sum_{i=1}^n v_i \otimes e_i \quad \textnormal{and} \quad
    w = \sum_{i=1}^n w_i \otimes e_i,
  \end{equation*}
  for $v_i, w_i \in V_2$. 
  Likewise, denoting by $\sigma_1, \sigma_2, \sigma_3$ the standard basis of $\su(2)$, we can write $b$ as
  \begin{equation*}
    b = \sum_{k=1}^3 \sigma_k \otimes b_k
  \end{equation*}
  for some $b_k \in \su(n)$, so that
  \begin{align*}
    ( bv, w ) &= \sum_{k=1}^3 \sum_{i,j} ( \sigma_k v_i, w_j ) ( e_i, b_k e_j).
  \end{align*}
  Suppose by contradiction that $( bv, w ) = 0$ for all $b \in \su(2) \otimes \su(n)$. 
  In particular, setting $b_k$ to be elementary off-diagonal anti-Hermitian matrices, we see that for $k=1,2,3$ and $i \neq j$
  \begin{align*}
    ( \sigma_k v_i, w_j ) -  ( \sigma_k v_j, w_i ) &= 0, \\
    i ( \sigma_k v_i, w_j ) + i ( \sigma_k v_j, w_i ) &= 0.
  \end{align*} 
  Hence,
  \begin{equation*}
    ( \sigma_k v_i, w_j ) = 0
  \end{equation*}
  for $k=1,2,3$ and $i\neq j$. 
  Suppose without loss of generality that $v_1 \neq 0$. 
  Then $\sigma_1 v_1, \sigma_2 v_1, \sigma_3 v_1$ are linearly independent over $\R$ and thus span $V_2$ over $\C$. 
  It follows that $w_j = 0$ for $j = 2, 3, \ldots n$. 
  On the other hand, setting $b_k = \mathrm{diag}(1,-1,0,\ldots, 0) \in \su(n)$, we obtain that for $k=1,2,3$ 
  \begin{equation*}
    ( \sigma_k v_1,  w_1 ) = 0,
  \end{equation*}
  which shows that $w_1 = 0$ and so $w = 0$, yielding a contradiction.
\end{proof}

\subsection{Reducible solutions} 
The moduli space $\cM$ might contain reducible solutions at which it develops singularities. In this paper we focus on the favourable case when reducibles can be avoided. As in the classical setting \cite[Lemma 14]{lim}, this is guaranteed by $b_1(Y) > 1$. 

\begin{prop} \label{prop:reducibles} For a fixed $g \in \Met$ the subset of parameters $\sigma \in \cP$ for which $\cM(g, \sigma)$ contains a reducible solution is contained in a closed affine subspace of $\cP$ of codimension $b_1(Y)$.  
\end{prop}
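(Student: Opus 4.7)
A solution $(A,\Psi)$ is reducible precisely when $\Psi = 0$, in which case the Dirac equation in \eqref{eqn:nsw} is automatic and $\mu(\Psi) = 0$, so the second equation collapses to $F_A = \eta$. Hence reducible solutions exist for the parameter $\sigma = (B,\eta)$ if and only if there is some $A \in \cA(L)$ with $F_A = \eta$; in particular, the reducibility locus is independent of $B \in \cA(E)$.

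Standard Chern--Weil theory identifies the affine image $\{ F_A : A \in \cA(L) \}$ with the space of closed imaginary two-forms representing the fixed de Rham cohomology class $-2\pi i \, c_1(L) \in H^2(Y;i\R)$. Thus the condition that $F_A = \eta$ be solvable in $A$ is purely cohomological: it is equivalent to $[\eta] = -2\pi i\, c_1(L)$ in $H^2(Y;i\R)$. The set of $\eta \in \cZ$ satisfying this identity is a closed affine subspace of $\cZ$ whose underlying vector space is the kernel of the (continuous, surjective) projection $\cZ \to H^2(Y;i\R)$, so it has codimension $\dim_\R H^2(Y;\R) = b_2(Y)$.

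Since $Y$ is a closed oriented three-manifold, Poincar\'e duality gives $b_2(Y) = b_1(Y)$. Therefore the reducibility locus
\begin{equation*}
   \{\, \sigma = (B,\eta) \in \cP \ :\ [\eta] = -2\pi i\, c_1(L) \,\} \ = \ \cA(E) \times \{\eta \in \cZ : [\eta] = -2\pi i\, c_1(L)\}
\end{equation*}
is a closed affine subspace of $\cP = \cA(E) \times \cZ$ of codimension $b_1(Y)$, as claimed. No step here presents a genuine obstacle; the only point requiring a little care is the Hodge-theoretic statement that $\eta \mapsto [\eta]$ is a split continuous surjection onto $H^2(Y;i\R)$, so that prescribing the cohomology class cuts out an honest closed affine subspace of the stated codimension rather than merely a dense subset.
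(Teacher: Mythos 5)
Your argument is correct and is exactly the standard one the paper has in mind (it omits the proof, pointing to the classical case in \cite[Lemma 14]{lim}): a reducible solution has $\Psi=0$, so the equation forces $[\eta]=[F_A]=-2\pi i\,c_1(L)$, and prescribing this class cuts out a closed affine subspace of $\cZ$ of codimension $b_2(Y)=b_1(Y)$. Your closing remark about $\eta\mapsto[\eta]$ being a split continuous surjection is precisely the Hodge-theoretic point that makes the codimension count honest, so there is nothing to add.
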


\begin{proof}
If $(A, 0)$ is a reducible solution of equations \eqref{eqn:nsw} with a parameter $\sigma = (B, \eta)$, then $F_A = \eta$ and passing to the de Rham cohomology we obtain
\[ [ \eta ] = - 2\pi i c_1(L) \in H^2(Y, i\R). \]
Consider the affine subspace of $\cZ$ given by
\[ V = \left\{ \eta \in \cZ \ | \ [ \eta] = - 2\pi i c_1(L) \right\}. \]
In other words, $V$ is the preimage of $-2\pi i c_1(L)$ under the projection $\pi \colon \cZ \to H^2(Y, \R)$ associating to each closed form its de Rham class. The map $\pi$ is linear and surjective. It is also continuous because by the Hodge decomposition theorem it is continuous with respect to the $W^{k,2}$--topology for all $k$. Therefore, $V$ is a closed affine subspace of codimension $\dim H^2(Y, \R) = \dim H^1(Y, \R) = b_1$. 
\end{proof}

\subsection{Compactness and Fueter sections}\label{subsec:compactness}
The moduli space of multi-monopoles needs not be compact. 
%Recall that in the classical setting compactness is a consequence of the Weitzenb\"ock formula and algebraic identity 
%\[ \langle \mu(\Psi) \Psi , \Psi \rangle = | \mu(\Psi) |^2. \]
%Both formulae hold in the generalised setting, independently on $n = \mathrm{rk} E$. However, when $n=1$, the map $\mu$ satisfies the additional relation $| \mu(\Psi) | = | \Psi |^2$. This allows us to control the $L^4$ norm of $\Psi$ in terms of the expression $|\mu(\Psi)|^2$ appearing in the Weitzenb\"ock formula. Together with elliptic regularity, this leads to $C^{\infty}$ bounds for solutions. For $n>1$ the map $\mu$ is no longer proper and there is no \emph{a priori} bound for the spinor part of a solution of \eqref{eqn:nsw}. In particular, the condition $\mu(\Psi) = 0$ does not imply the vanishing of $\Psi$. 
Haydys and Walpuski \cite{haydys-walpuski} studied sequences of solutions whose $L^2$--norms diverge to infinity. They proved that in the limit we obtain a harmonic spinor taking values in the zero set $\mu^{-1}(0)$. In general, it is defined only on the complement of a closed, nowhere dense subset of $Y$. 

\begin{defn} \label{defn:fueter}
Let $Z \subsetneq Y$ be a closed, proper subset and $(A, \Psi) \in \cC_k(Y \setminus Z)$ (see Definition \ref{defn_sobolevspaces}). A triple $(A, \Psi, Z)$ is called a \emph{Fueter section}\footnote{For the explanation of this name see \cite[Remark 3.4]{haydys2}, \cite[Appendix A]{haydys-walpuski}, \cite[Section 1.3]{doan2}.} \emph{with singular set} $Z$ if it satisfies 
\begin{enumerate}
  \item $\D_{AB} \Psi = 0$ and $\mu(\Psi) = 0$,
  \item $\int_{Y \setminus Z} |\Psi|^2 =1$ and $\int_{Y\setminus Z} | \nabla_A \Psi |^2 < \infty$,
  \item  $| \Psi |$ extends to a H\"older continuous function on $Y$ such that $Z = | \Psi |^{-1}(0)$.
\end{enumerate}
\end{defn}

\begin{rem}
The definition of a Fueter section depends, through the Dirac equation, on the choice of the metric $g$ and the background connection $B$. We will call $(A,\Psi, Z)$ a \emph{Fueter section with respect to} $(g,B)$ when we want to stress this dependence. 
\end{rem}

There is elliptic regularity theory for Fueter sections \cite[Appendix A]{haydys-walpuski}, which implies that if $B$ is of class $W^{l,2}$, then after a gauge transformation $(A,\Psi)$ is in $\cC_{l+1}(Y \setminus Z)$. Taubes \cite{taubes2} proved that the singular set of a Fueter section has Hausdorff dimension at most one. 

\begin{thm}[Haydys--Walpuski \cite{haydys-walpuski} \footnote{In the original statement  \cite[Theorem 1.5]{haydys-walpuski} the sequence $(g_i, \sigma_i)$ is constant. As has been communicated to me by Thomas Walpuski, the proof can be easily adapted to the more general setting of Theorem \ref{thm:compactness}.
}] \label{thm:compactness}
Let $(g_i, \sigma_i) \in \Met \times \cP$ be a sequence converging to $(g, \sigma)$ and $[A_i, \Psi_i]$ a sequence in $\mathcal{B}$ such that $[A_i, \Psi_i] \in \cM(g_i, \sigma_i)$. 
\begin{enumerate}
\item If $\limsup_{i \to \infty} \| \Psi_i \|_{L^2} < \infty$, then after passing to a subsequence and applying gauge transformations  the sequence $(A_i, \Psi_i)$ converges smoothly to a solution $(A,\Psi)$ representing a point in $\cM(g, \sigma)$. 
\item If $\limsup_{i \to \infty} \| \Psi_i \|_{L^2} = \infty$, then there exists a Fueter section $(A, \Psi, Z)$ with respect to $(g,B)$ such that after passing to a subsequence and applying gauge transformations $A_i \to A$ weakly in $W^{1,2}_{\loc}$ and $\Psi_i / \| \Psi_i \|_{L^2} \to \Psi$ weakly in $W^{2,2}_{\loc}$ over $Y \setminus Z$. 
\end{enumerate}
\end{thm}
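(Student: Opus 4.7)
The plan is to derive both statements from a common a priori estimate on $|\Psi|$. Starting from the Weitzenb\"ock formula $\D_{AB}^2 = \nabla_{AB}^*\nabla_{AB} + \mathcal{R} + \mathrm{Cl}(F_A)$, substituting the curvature equation $F_A = \mu(\Psi) + \eta$, and exploiting the pointwise inequality $\langle \mathrm{Cl}(\mu(\Psi))\Psi, \Psi\rangle \geq c|\Psi|^4$, the Kato inequality applied at a maximum of $|\Psi|^2$ yields a bound $\|\Psi\|_{L^\infty}^2 \leq C_1 + C_2 \|\Psi\|_{L^2}^2$, with constants depending continuously on $(g,\sigma)$ and hence uniform along the converging sequence $(g_i,\sigma_i) \to (g,\sigma)$.

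For part (1), the hypothesis $\limsup \|\Psi_i\|_{L^2} < \infty$ gives a uniform $L^\infty$ bound on $\Psi_i$ and therefore on $F_{A_i} = \mu(\Psi_i) + \eta_i$. I would then apply Uhlenbeck's local Coulomb gauge theorem on a fixed finite cover of $Y$ by small geodesic balls to obtain local gauge transformations placing each $A_i$ in Coulomb gauge with uniform $W^{1,p}$ bounds for all $p < \infty$. Patching these by a standard transition argument produces global gauges $u_i \in \cG_3$ so that $u_i(A_i,\Psi_i)$ is uniformly bounded in $W^{1,p}$. Alternating elliptic bootstrapping between the Dirac equation for $\Psi_i$ and the Coulomb-slice system for $A_i$ upgrades weak $W^{1,p}$ compactness to $C^\infty$ convergence modulo gauge after passing to a subsequence, in the spirit of Proposition \ref{prop:regularity}.

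For part (2), set $\epsilon_i := \|\Psi_i\|_{L^2}^{-1} \to 0$ and $\tilde\Psi_i := \epsilon_i \Psi_i$, so that $(A_i,\tilde\Psi_i)$ satisfies the rescaled system $\D_{A_iB_i}\tilde\Psi_i = 0$ and $\epsilon_i^2 F_{A_i} = \mu(\tilde\Psi_i) + \epsilon_i^2 \eta_i$. The same Weitzenb\"ock argument applied to the rescaled Dirac equation gives $\|\tilde\Psi_i\|_{L^\infty} \leq C$, and integrating $\langle \D_{A_iB_i}^2 \tilde\Psi_i, \tilde\Psi_i\rangle = 0$ over $Y$ together with the rescaled curvature equation yields both a uniform $L^2$-bound on $\nabla_{A_iB_i}\tilde\Psi_i$ and, as $\epsilon_i \to 0$, the convergence $\mu(\tilde\Psi_i) \to 0$ in $L^2$. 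On the open set $U := \{x : \liminf_i |\tilde\Psi_i(x)| > 0\}$ one can construct local Coulomb gauges in which the algebraic control that $\mu(\tilde\Psi_i) \to 0$ imposes on $\tilde\Psi_i$ translates, via the Dirac equation, into uniform control on $A_i$; a diagonal extraction then gives weak $W^{1,2}_{\loc}$ limits for $A_i$ and $W^{2,2}_{\loc}$ limits for $\tilde\Psi_i$ on $U$. Setting $Z := Y \setminus U$, a Morrey-type estimate on $|\tilde\Psi_i|$ forces $|\Psi|$ to extend continuously across $Z$ with $Z = |\Psi|^{-1}(0)$.

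The hardest step will be analysing the limit across the singular set $Z$, where no uniform gauge bounds on $(A_i,\tilde\Psi_i)$ are available and the limit spinor is merely locally $W^{2,2}$. The essential ingredient is a monotonicity-type inequality for the rescaled energy density $|\nabla_{A_iB_i}\tilde\Psi_i|^2 + \epsilon_i^{-2}|\mu(\tilde\Psi_i)|^2$ on geodesic balls, which propagates pointwise lower bounds on $|\tilde\Psi_i|$ between concentric balls and prevents $\tilde\Psi_i$ from losing all of its $L^2$-mass to $Z$; this guarantees that the limit $\Psi$ is not identically zero and that $Z \neq Y$. The finite-energy condition $\int_{Y\setminus Z}|\nabla_A\Psi|^2 < \infty$ then follows from Fatou's lemma applied to the uniform $L^2$-bound established earlier. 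The generalisation from a fixed pair $(g,\sigma)$ to a converging sequence $(g_i,\sigma_i) \to (g,\sigma)$ is routine, as all constants in the estimates depend continuously on the parameters.
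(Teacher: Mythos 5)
Your first paragraph rests on a false inequality. For $n\geq 2$ there is no pointwise bound $\langle \mathrm{Cl}(\mu(\Psi))\Psi,\Psi\rangle \geq c|\Psi|^4$: the correct identity is $\langle \mu(\Psi)\Psi,\Psi\rangle = |\mu(\Psi)|^2$, and $\mu^{-1}(0)$ contains non-zero spinors (e.g.\ two orthogonal spinors of equal norm), which is precisely why Fueter sections and non-compactness can occur at all. If your inequality held, the maximum principle would give an $L^\infty$ bound on $\Psi$ independent of $\|\Psi\|_{L^2}$, the moduli space would always be compact, and the entire subject of this paper would be empty. The estimate $\|\Psi\|^2_{L^\infty}\leq C_1+C_2\|\Psi\|^2_{L^2}$ that you want is nevertheless true, but it is obtained differently: the Weitzenb\"ock formula gives $\tfrac12\Delta|\Psi|^2+|\nabla_{AB}\Psi|^2+|\mu(\Psi)|^2\leq C(1+|\Psi|^2)$, one discards the non-negative terms and applies Moser iteration (or the mean value inequality for subsolutions) to $\Delta|\Psi|^2\leq C(1+|\Psi|^2)$. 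With that repair, your part (1) is the standard chain (uniform curvature bound, Uhlenbeck local Coulomb gauges, patching, elliptic bootstrap) and is fine.

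For part (2) the genuinely hard content is exactly what you postpone and then assume: uniform H\"older control of $|\tilde\Psi_i|$ (needed even to know that your set $U=\{\liminf_i|\tilde\Psi_i|>0\}$ is open and that $|\Psi|$ extends continuously with $Z=|\Psi|^{-1}(0)$), curvature control of $F_{A_i}$ in terms of $|\tilde\Psi_i|^{-2}$ away from the zero set, and the frequency-function/monotonicity argument that rules out total collapse of the renormalised $L^2$ mass onto $Z$ and gives $\|\Psi\|_{L^2}=1$. These are the core of Haydys--Walpuski's theorem (building on Taubes' techniques), and invoking an unproved ``monotonicity-type inequality'' at that point is not a proof but a citation of the result you are trying to establish. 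Note also that the paper itself does not reprove this theorem: it is quoted from \cite{haydys-walpuski}, and the only new observation recorded is the footnote that the argument goes through for a convergent sequence $(g_i,\sigma_i)\to(g,\sigma)$ because the constants in the a priori estimates depend continuously on the parameters --- which is the content of your closing sentence. So the honest options are either to cite \cite{haydys-walpuski} (and, for the Hausdorff dimension of $Z$, \cite{taubes2}) and verify only the parameter dependence, or to actually carry out the Taubes-style analysis near $Z$; the proposal as written does neither.
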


\begin{cor}  \label{cor:compactness}
Let $g \in \Met$ and $ \sigma = (B, \eta) \in \cP$.  If there are no Fueter sections with respect to $(g,B)$, then there is an open neighbourhood of $(g,\sigma)$ in $\Met \times \cP$ such that for all $(g',\sigma')$ from this neighbourhood the moduli space $\cM(g', \sigma')$ is compact.
\end{cor}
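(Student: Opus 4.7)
\medskip

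\noindent\emph{Proof proposal.} The plan is to argue by contradiction, combining the two parts of the Haydys--Walpuski compactness theorem (Theorem \ref{thm:compactness}) through a diagonal argument. Suppose no such open neighbourhood exists. Since $\Met \times \cP$ is a Fr\'echet manifold and hence first countable, we may choose a sequence $(g_i, \sigma_i) \to (g, \sigma)$ such that $\cM(g_i, \sigma_i)$ fails to be compact for every $i$. As each moduli space is a metrisable topological space (Proposition \ref{prop:slice} together with the comment following the definition of $\cM$), failure of compactness yields, for every $i$, a sequence $\{[A_{i,j}, \Psi_{i,j}]\}_{j\in\mathbb{N}}$ in $\cM(g_i, \sigma_i)$ admitting no convergent subsequence.

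Now apply part (1) of Theorem \ref{thm:compactness} to the constant sequence of parameters $(g_i, \sigma_i)$: if the $L^2$-norms $\|\Psi_{i,j}\|_{L^2}$ stayed bounded as $j \to \infty$, a subsequence would converge to a point of $\cM(g_i, \sigma_i)$, contradicting the choice of $\{[A_{i,j},\Psi_{i,j}]\}$. Hence $\limsup_{j\to\infty} \|\Psi_{i,j}\|_{L^2} = \infty$. For each $i$ pick an index $j(i)$ with $\|\Psi_{i,j(i)}\|_{L^2} \geq i$, and set $(A_i', \Psi_i') := (A_{i,j(i)}, \Psi_{i,j(i)}) \in \cM(g_i, \sigma_i)$, so that $\|\Psi_i'\|_{L^2} \to \infty$.

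We then apply part (2) of Theorem \ref{thm:compactness} to the sequence of solutions $[A_i', \Psi_i']$ living above the parameters $(g_i, \sigma_i) \to (g, \sigma)$. After passing to a subsequence and applying gauge transformations we obtain a Fueter section $(A, \Psi, Z)$ with respect to $(g, B)$, contradicting the hypothesis that no such Fueter section exists. This contradiction proves the corollary. The argument is essentially a bookkeeping exercise; the only mild point to check is that the diagonal extraction is compatible with the statement of Theorem \ref{thm:compactness}, which requires one solution per value of the varying parameter, and this is arranged by the choice of $j(i)$ above. No genuinely hard step is expected.
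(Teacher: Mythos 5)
Your proposal is correct: the contradiction you reach is genuine, each step (first countability of $\Met\times\cP$, metrisability of the moduli spaces so that non-compactness gives a sequence with no convergent subsequence, part (1) of Theorem \ref{thm:compactness} applied with constant parameters to force $\limsup_j\|\Psi_{i,j}\|_{L^2}=\infty$, and the diagonal choice $j(i)$ feeding part (2)) is legitimate, and smooth convergence modulo gauge does imply convergence in the $W^{2,2}$ quotient topology, so the contradiction with ``no convergent subsequence'' is valid.

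The route differs from the paper's in its bookkeeping rather than its substance, and the difference is worth noting. The paper first sets $K=\sup\{\|\Psi\|_{L^2}: [A,\Psi]\in\cM(g,\sigma)\}$, which is finite precisely because there are no Fueter sections, and then proves the quantitative statement that $\sup_{[A,\Psi]\in\cM(g',\sigma')}\|\Psi\|_{L^2}<K+1$ for all $(g',\sigma')$ in a neighbourhood: a violating sequence has $\|\Psi_i\|_{L^2}\geq K+1$, alternative (1) of Theorem \ref{thm:compactness} is excluded because the limit would lie in $\cM(g,\sigma)$ with norm exceeding $K$, and alternative (2) is excluded by the no-Fueter hypothesis; compactness of each nearby moduli space then follows from part (1). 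You bypass the uniform bound entirely: by exploiting sequential compactness at each fixed bad parameter you force the norms to blow up along your diagonal sequence, so only alternative (2) can occur and you never need the reference constant $K$. Your version is marginally more economical (part (1) is used only at fixed parameters, and the case analysis at the end disappears), while the paper's version produces the a priori $L^2$ bound valid uniformly on a neighbourhood, which is the kind of quantitative statement that is reusable elsewhere (e.g.\ in wall-crossing or one-parameter family arguments). Both are complete proofs of the corollary.
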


\begin{proof}
Let $K = \sup\{ \| \Psi \|_{L^2} \ | \ [A,\Psi] \in \cM(g, \sigma)\}$. We have $K < \infty$ by Theorem \ref{thm:compactness} and the assumption that there are no $(g, B)$-Fueter sections. We claim that there is an open neighbourhood of $(g, \sigma)$ such that for all $(g',\sigma')$ from this neighbourhood 
\begin{equation} \label{eqn:compactness}
\sup_{[A,\Psi] \in \cM(g',\sigma')} \| \Psi \|_{L^2} < K + 1,
\end{equation}
and the proposition follows by Theorem \ref{thm:compactness}.  Assume by contradiction that there is a sequence $(g_i, \sigma_i)$ converging to $(g,\sigma)$ and violating \eqref{eqn:compactness}. There is a corresponding sequence $(A_i, \Psi_i)$ representing points in $\cM(g_i, \sigma_i)$ and satisfying $\| \Psi_i \|_{L^2} \geq K+1$. The first alternative of Theorem \ref{thm:compactness} cannot hold because otherwise we would extract a subsequence converging modulo gauge to a solution $(A,\Psi)$ with $[A,\Psi] \in \cM(g,\sigma)$ and $\| \Psi \|_{L^2} \geq K +1 > K$. On the other hand, the second alternative would imply the existence of a $(g, B)$-Fueter section. 
\end{proof}

\subsection{Orientations} 
\label{subsec:orientations}
If $\cM^*$ is regular, one prescribes an orientation to every point in $\cM^*$. We briefly outline this construction as we will need it later. For $(A,\Psi, f) \in \cC \times \Omega^0_2(i\R)$ let $S_{A,\Psi,f}$ be the linearisation of \eqref{eqn:nsw2} at $(A,\Psi, f)$\footnote{Previously we have only dealt with the linearisation at irreducible solutions for which $f = 0$.} and let $L_{A,\Psi, f} = G_{A,\Psi}^* \oplus S_{A,\Psi,f}$. Let $\mathrm{Det} \to \cC^* \times \Omega^0_2(i \R)$ be the determinant line bundle of the family $L_{A,\Psi,f}$; see \cite[Appendix A.2]{mcduff-salamon}. As explained in \cite[Section 2.4]{lim}, $\mathrm{Det}$ descends to a real line bundle $\Lambda \to \mathcal{B}^* \times \Omega^0_2(i \R)$ which is shown to be trivial by considering the $\cG$--equivariant homotopy $\left\{ L_{A, t\Psi, tf} \right\}_{t \in [0,1]}$ connecting $L_{A,\Psi,f}$ with $L_{A,0,0} = (d + d^*) \oplus \D_{AB}$. The kernel and cokernel of $d + d^*$ are naturally identified with $H^0(Y,\R)\oplus H^1(Y,\R)$, while $\D_{AB}$ is complex-linear and has trivial determinant. It follows that the determinant bundle of the family of operators $L_{A,0,0}$ is trivial, and therefore so is $\Lambda$. Denote the global orientation on $\Lambda$ given by this trivialisation by $\mathrm{or}_{\Lambda}$. 

On the other hand, $L_{A,\Psi, f}$ is formally self-adjoint, so $\mathrm{coker} \ L_{A,\Psi, f}$ and $\ker L_{A,\Psi,f}$ are naturally isomorphic. More precisely, for a self-adjoint Fredholm operator $T$ we define the \emph{Knudsen--Mumford isomorphism}  $\det (\ker T) \otimes \det (\mathrm{coker} \ T)^* \to \R$ by
\[ \omega \otimes \eta \mapsto (-1)^{ \frac{ \dim \ker T  ( \dim \ker T + 1) }{ 2}} \eta ( \omega), \]
where $\mathrm{coker} \  T = \ker T^* = \ker T$. These isomorphisms do not yield a global trivialisation of $\mathrm{Det}$ because the dimension of the kernel jumps. However, they induce a natural trivialisation of $\mathrm{Det}$ over every stratum of $\mathcal{B}^* \times \Omega^0_2(i\R)$ over which $\ker L_{A,\Psi, f}$ has constant rank \cite[Appendix B]{bohn}. We denote the induced orientation on $\Lambda$, restricted to every stratum, by $\mathrm{or}_0$.

\begin{defn}
If $\cM^*$ is regular, we define for every $x \in \cM^*$
\[
\sign(x) := \left\{
\begin{array}{ll}
+1 & \textnormal{if } \mathrm{or}_{\Lambda}(x) = \mathrm{or}_0(x) \\
-1 & \textnormal{if } \mathrm{or}_{\Lambda}(x) \neq \mathrm{or}_0(x)
\end{array}
\right.
\] 
%Equivalently,  if $x = [ A, \Psi]$, then $\sign(x)$ is given by the orientation transport along the path $\{ L_{A, t\Psi, 0} \}_{t \in [0,1]}$. For the proof see, for example, \cite[Appendix C]{bohn}. 
\end{defn}

The definition can be extended to the case when  $\cM$ is Zariski smooth since for every connected component $C$ of $\cM$ the dimension of $\ker L_{A,\Psi,f}$ is constant along $C$. 
%Thus, we have two trivialisations of $\restr{\Lambda}{C}$: the global trivialisation $\mathrm{or}_{\Lambda}$ and the trivialisation $\mathrm{or}_{0}$.

\begin{defn}
If $\cM$ is Zariski smooth and $C$ is a connected component of $\cM$, set
\[
\sign(C) := \left\{
\begin{array}{ll}
+1 & \textnormal{if } \mathrm{or}_{\Lambda}(x) = \mathrm{or}_0(x) \\
-1 & \textnormal{if } \mathrm{or}_{\Lambda}(x) \neq \mathrm{or}_0(x)
\end{array}
\right.
\] 
where $x$ is any point in $C$.
%Alternatively, $\sign(C)$ is equal to the orientation transport along the path $\{ L_{A, t\Psi, 0} \}_{t \in [0,1]}$ for any $[A,\Psi] \in C$.
\end{defn}

Alternatively, we have $\Lambda = \det T\cM \otimes ( \det \Ob )^*$, so the trivialisation $\mathrm{or}_{\Lambda}$ induces a \emph{relative orientation} on $\Ob \to \cM$ which orients  the zero set of every transverse section of $\Ob$.

\subsection{Counting solutions} \label{subsec:counting} 
We come to the main point of this section, that is defining the signed count of Seiberg--Witten multi-monopoles. The next two propositions are deduced in the standard way from Proposition \ref{prop:transversality}, Proposition \ref{prop:reducibles}, and the discussion of orientations.

\begin{prop}\label{prop:regular}
For every $g\in\Met$ there is a residual subset $\cP^{reg}(g) \subset \cP$ such that for every $\sigma \in \cP^{reg}(g)$ the moduli space $\cM^*(g, \sigma)$ is an oriented zero-dimensional manifold. If $b_1(Y) > 0$, we may additionally assume that there are no reducible solutions and $\cM^*(g, \sigma) = \cM(g, \sigma)$.
\end{prop}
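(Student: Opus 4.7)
The plan is to assemble Propositions \ref{prop:transversality} and \ref{prop:reducibles} with the orientation procedure of subsection \ref{subsec:orientations}; the two propositions do essentially all the work.

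First I would take the residual subset $\cP_1^{reg}(g) \subset \cP$ produced by Proposition \ref{prop:transversality}, on which every solution in $\cM^*(g,\sigma)$ is regular. At such a regular $[A,\Psi]$ both $H^1_{A,\Psi}$ and $H^2_{A,\Psi}$ vanish, so the Kuranishi map of Proposition \ref{prop:kuranishi} has trivial domain and codomain. A neighbourhood of $[A,\Psi]$ in $\cM^*(g,\sigma)$ is therefore a single point, and $\cM^*(g,\sigma)$ is a zero-dimensional smooth submanifold of $\mathcal{B}^*$. Since $\ker L_{A,\Psi,0}=0$ at every such point, the trivialisations $\mathrm{or}_{\Lambda}$ and $\mathrm{or}_0$ of the determinant line are both available fibrewise, and their comparison in subsection \ref{subsec:orientations} assigns a sign $\pm 1$ to each point; this is the required orientation.

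Next I would address the absence of reducibles. By Proposition \ref{prop:reducibles}, the locus $\cP^{red}(g) \subset \cP$ of parameters admitting a reducible solution is contained in a closed affine subspace of codimension $b_1(Y)$. When $b_1(Y) > 0$ this subspace is nowhere dense in the Fr\'echet space $\cP$: translating by any non-zero transverse vector produces a disjoint parallel copy, so no point of the subspace has a neighbourhood contained in it. Hence $\cP \setminus \cP^{red}(g)$ is open and dense, and
\[
  \cP^{reg}(g) := \cP_1^{reg}(g) \cap (\cP \setminus \cP^{red}(g))
\]
is residual as the intersection of a residual set with an open dense set. For $\sigma \in \cP^{reg}(g)$ every point of $\cM(g,\sigma)$ is irreducible and regular, so $\cM^*(g,\sigma)=\cM(g,\sigma)$ and the conclusion of the proposition holds.

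The argument is essentially bookkeeping; no step poses a serious obstacle, as the heavy analytic content (the submersion argument for the universal moduli space, the Sard--Smale and Taubes density arguments, and the determinant-line construction of orientations) has already been carried out in the cited results. The only point requiring a small remark is the topological observation that a closed affine subspace of positive finite codimension in a topological vector space has empty interior, which upgrades ``codimension $b_1(Y)$'' in Proposition \ref{prop:reducibles} to ``nowhere dense'' and thereby to an open dense complement.
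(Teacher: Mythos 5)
Your proposal is correct and is exactly the "standard" deduction the paper itself invokes (the paper omits the details, stating only that Propositions \ref{prop:transversality} and \ref{prop:reducibles} together with the orientation discussion yield the result): regularity from the residual set of Proposition \ref{prop:transversality} gives isolated points via the Kuranishi model, the determinant-line comparison of subsection \ref{subsec:orientations} orients them, and the codimension-$b_1(Y)$ affine locus of Proposition \ref{prop:reducibles} has open dense complement when $b_1(Y)>0$, so intersecting preserves residuality. No gaps.
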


%\begin{proof}
%Let $\cP^{reg}(g)$ be the set of those $\sigma \in \cP$ for which all solutions in $\cM^*(g,\sigma)$ are regular. By Proposition \ref{prop:transversality} it is of the second Baire category in $\cP$. From the local Kuranishi model described in Proposition \ref{prop:kuranishi} it follows that for all $\sigma \in \cP^{reg}(g)$ the moduli space is a zero-dimensional manifold. A way of assigning orientations is described in the previous subsection. Let $V \subset \cP$ be the set of those $\sigma$ for which $\cM(g,\sigma)$ contains a reducible solution. If $b_1(Y) >0$, then $V$ is nowhere dense by Proposition \ref{prop:reducibles}. Thus, replacing $\cP^{reg}(g)$ by its intersection with $\cP \setminus V$, we can guarantee that for all $\sigma \in \cP^{reg}(g)$ we have $\cM(g,\sigma) = \cM^*(g,\sigma)$.
%\end{proof}

\begin{defn} Let $\sigma_0$ and $\sigma_1$ be perturbations in $\cP$. Denote by $\cP(\sigma_0, \sigma_1)$ the space of smooth paths $\{ \sigma_t \}_{t \in [0,1]}$ of elements in $\cP$ connecting $\sigma_0$ and $\sigma_1$. The subspace topology induced from the space $C^{\infty}([0,1], \cP)$ makes $\cP(\sigma_0, \sigma_1)$ into a Fr\'echet manifold.  
\end{defn}

\begin{prop}\label{prop:regular2}
For every smooth path $\{ g_t \}_{t \in [0,1]}$ in $\Met$ and $\sigma_0 \in \cP^{reg}(g_0)$, $\sigma_1 \in \cP^{reg}(g_1)$ there is a residual subset of  $\cP(\sigma_0, \sigma_1)$ such that for all smooth paths  $\{ \sigma_t \}_{t\in[0,1]}$ from this subset the one-parameter moduli space
\[ \bigcup_{t \in [0,1]} \cM^*(g_t, \sigma_t) \]
is an oriented one-dimensional cobordism between $\cM^*(g_0, \sigma_0)$ and $\cM^*(g_1, \sigma_1)$. If $b_1(Y) > 1$, we may additionally assume that $\cM^*(g_t, \sigma_t) = \cM(g_t, \sigma_t)$ for all $t \in [0,1]$. 
\end{prop}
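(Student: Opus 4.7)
The plan is to prove Proposition \ref{prop:regular2} by a parametric version of the universal moduli space argument of Proposition \ref{prop:transversality}, with the interval $[0,1]$ playing the role of an additional parameter and with the endpoints of the path already arranged so that the non-parametric argument closes. I would first work in the Sobolev category: let $\cP_k(\sigma_0,\sigma_1)$ denote the Banach manifold of $W^{k,2}$ paths in $\cP_k$ with prescribed endpoints $\sigma_0,\sigma_1$, and consider the parametric Seiberg--Witten map
\[
  \widetilde{\mathcal{S}}\colon \cP_k(\sigma_0,\sigma_1)\times [0,1]\times\cC^*_{k+1}\times\Omega^0_{k+1}(i\R) \longrightarrow \Gamma_k(E^*\otimes S\otimes L)\times\Omega^2_k(i\R),
\]
\[
  \widetilde{\mathcal{S}}(\{\sigma_s\},t,A,\Psi,f) = \mathcal{S}(g_t,\sigma_t,A,\Psi,f),
\]
where $\mathcal{S}$ is the map appearing in the proof of Proposition \ref{prop:transversality}.

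The key point is submersivity of $\widetilde{\mathcal{S}}$ at irreducible zeros. For interior $t\in(0,1)$, any trial variation $(b,\xi)$ of $\sigma_t$ used in the non-parametric argument can be realised as the derivative of a path variation $\chi(s)(b,\xi)$ with $\chi$ a bump function supported in a small subinterval around $t$; so the computations involving Lemma \ref{lem:muderivative}, unique continuation for $\D_{AB}$, and the linear algebra Lemma \ref{lem:transversality} go through verbatim from the proof of Proposition \ref{prop:transversality} and force any $L^2$--orthogonal cokernel direction to vanish. At the endpoints $t=0,1$ path variations are pinned to zero, but by hypothesis $\sigma_i\in\cP^{reg}(g_i)$ for $i=0,1$, so the non-parametric extended Hessian $L_{A,\Psi,0}$ is already surjective at every point of $\cM^*(g_i,\sigma_i)$. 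Dividing out the gauge group action $\cG_{k+2}$ and applying Sard--Smale to the projection onto $\cP_k(\sigma_0,\sigma_1)$ then produces a residual set of $W^{k,2}$ paths whose associated one-parameter moduli space is a smooth $1$--manifold with boundary $\cM^*(g_0,\sigma_0)\sqcup\cM^*(g_1,\sigma_1)$.

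To replace $\cP_k(\sigma_0,\sigma_1)$ with the Fr\'echet path space $\cP(\sigma_0,\sigma_1)$ I would repeat the Taubes--type argument at the end of the proof of Proposition \ref{prop:transversality}: for each $N$ define $\cU_N\subset\cP(\sigma_0,\sigma_1)$ as the set of paths for which every point of the parametric moduli space satisfying $N^{-1}\le\|\Psi\|_{L^\infty}\le N$ is regular, deduce openness of $\cU_N$ from the first alternative of Theorem \ref{thm:compactness}, and its density from the Sobolev version together with mollification approximations of the deviation from the linear interpolant $\sigma_0+t(\sigma_1-\sigma_0)$ by smooth paths vanishing near $t=0,1$. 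When $b_1(Y)>1$, Proposition \ref{prop:reducibles} confines the reducible locus at fixed $t$ to an affine subspace of codimension $b_1(Y)\geq 2$ in $\cP$, so in the space of paths reducibles appear in codimension $b_1(Y)-1\geq 1$ and a separate parametric Sard--Smale argument shows that a generic path avoids them entirely. Orientation of the $1$--manifold is induced by pulling back the trivialisation of the determinant line bundle $\Lambda\to\mathcal{B}^*\times\Omega^0(i\R)$ from subsection \ref{subsec:orientations} along the parametric zero set, and it restricts correctly to the pointwise signs on the boundary. The principal technical obstacle is the combination of the Taubes smoothing with the endpoint constraint---one must verify that smooth paths with prescribed endpoints are $W^{k,2}$--dense in the Sobolev path space with the same endpoints, and that the residual subsets $\cU_N$ can be intersected in a compatible way---but this reduces to standard mollification arguments for functions vanishing at the boundary of the interval, leaving the rest of the proof a direct parametric adaptation of Propositions \ref{prop:transversality} and \ref{prop:reducibles}.
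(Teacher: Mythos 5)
Your proposal is correct and is exactly the "standard way" the paper invokes: a parametric Sard--Smale argument over the path space, using Proposition \ref{prop:transversality}'s submersivity computation with bump functions in $t$, endpoint regularity from $\sigma_i\in\cP^{reg}(g_i)$, the Taubes trick to pass from $W^{k,2}$ to smooth paths, Proposition \ref{prop:reducibles} to avoid reducibles when $b_1(Y)>1$, and the determinant-line trivialisation for orientations. The paper omits these details, so your write-up simply fills in the argument it refers to.
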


%\begin{proof}
%First we remark that the spinor bundles for metrics $g_t$ can be identified in a natural way with the spinor bundle $S$ of the initial metric $g_0$  \cite[section 3]{lim}. Then each moduli space $\cM(g_t, \sigma_t)$ can be seen as a subspace of the same Banach manifold $\mathcal{B}$. The statement is proved in the same way as Proposition \ref{prop:regular}. It relies on the fact that a generic path in $\cP$ is transverse to the projection $\pi \colon \mathcal{X} \to \cP$ from the proof of Proposition \ref{prop:transversality}. By Proposition \ref{prop:reducibles} the subspace $V \subset \cP$ of parameters for which the moduli space contains a reducible solution is of codimension $b_1$, thus for $b_1 > 1$ a generic path in $\cP$ is disjoint from $V$.
%\end{proof}

\begin{defn} \label{defn:tau}
If $\cM(g,\sigma)$ is compact and consists of irreducible and regular solutions, set
\[ \SW(g,\sigma) :=  \sum_{x \in \cM(g,\sigma)} \sign(x). \]
\end{defn}

The definition of $\SW$ can be extended to the case when $\cM$ is compact and Zariski smooth.
\begin{defn} \label{defn:tau2}
If $\cM(g,\sigma)$ is compact and Zariski smooth, set
\[ \SW(g,\sigma) := \sum_{C} \sign(C) \chi(C), \]
where the sum is taken over all connected components of $\cM$ and $\chi$ is the Euler characteristic.
\end{defn}

The Gauss--Bonnet theorem and the Poincar\'{e}--Hopf index theorem give us two equivalent descriptions of $\SW$.  

\begin{prop}
Assume that $\cM(g,\sigma)$ is compact and Zariski smooth.
\begin{enumerate}
\item If $s$ is a transverse section of $\Ob \to \cM(g,\sigma)$, then 
 \[ \SW(g,\sigma) = \sum_{x \in s^{-1}(0)} \sign(x), \]
 where $\sign(x)$ is obtained by comparing the orientation on $s^{-1}(0)$ induced fom the relative orientation on $\Ob \to \cM$ with the natural orientation of a point.
\item Suppose that all components of $\cM$ are orientable and orient them arbitrarily. The relative orientation on $\Ob \to \cM$ makes $\Ob$ into an oriented real vector bundle. If $e(\Ob)$ is the Euler class of $\Ob$, then
\[ \SW(g,\sigma) =  \int_{\cM} e( \Ob). \]
\end{enumerate}
\end{prop}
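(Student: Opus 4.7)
The plan is to reduce both statements to the classical Poincar\'e--Hopf theorem applied componentwise, using Lemma \ref{lem:zariski} to convert sections of $\Ob$ into vector fields on $\cM$.

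First I recall that, by Lemma \ref{lem:zariski}, there is an isomorphism $\phi \colon T\cM \to \Ob$ of unoriented real vector bundles, constructed on each component from the self-adjointness identification $H^1_{A,\Psi} \cong H^2_{A,\Psi}$. Given a transverse section $s$ of $\Ob$, the composition $X := \phi^{-1} \circ s$ is a smooth vector field on $\cM$ with zero set equal to $s^{-1}(0)$ and satisfying $dX_x = \phi_x^{-1} \circ ds_x$ at each zero $x$; in particular the zeros of $X$ are non-degenerate. The classical Poincar\'e--Hopf theorem, which applies on any closed smooth manifold---orientable or not---since the intrinsic index $\mathrm{index}_x(X) := \mathrm{sign}\,\det(dX_x)$ is independent of any local choice of orientation, then yields
\[
  \sum_{x \in X^{-1}(0) \cap C} \mathrm{index}_x(X) \;=\; \chi(C)
\]
for every connected component $C$ of $\cM$.

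Next I will match $\sign(x)$ (as in the Proposition) with $\sign(C)\cdot\mathrm{index}_x(X)$. By the construction in subsection \ref{subsec:orientations}, the orientation $\mathrm{or}_0$ on $\Lambda = \det T\cM \otimes (\det\Ob)^*$ is induced by the Knudsen--Mumford isomorphism applied to the self-adjoint operator $L_{A,\Psi,f}$, which is precisely the same self-adjointness identification $H^1_{A,\Psi} \cong H^2_{A,\Psi}$ underlying $\phi$. Hence the relative orientation on $\Ob \to \cM$ induced by $\mathrm{or}_\Lambda$ agrees with that induced by $\phi$ exactly when $\sign(C) = +1$. Consequently, at each zero $x \in C$ the sign in (1), defined by comparing $ds_x$ with the relative orientation, equals $\sign(C)\cdot\mathrm{sign}\,\det(\phi_x^{-1}\circ ds_x) = \sign(C)\cdot\mathrm{index}_x(X)$. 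Summing,
\[
  \sum_{x \in s^{-1}(0)} \sign(x) \;=\; \sum_{C} \sign(C)\,\chi(C) \;=\; \SW(g,\sigma),
\]
which proves (1).

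For (2), I will orient each component $C$ arbitrarily; together with the relative orientation this equips $\Ob|_C$ with an orientation. The standard relation between the Euler class and the zeros of a transverse section of a rank-$n$ oriented real vector bundle over a closed oriented $n$-manifold gives $\int_C e(\Ob|_C) = \sum_{x \in s^{-1}(0)\cap C}\sign(x)$, with the sign determined by the chosen orientations---equivalently by the relative orientation---so these signs agree with those in (1). Summing over components and invoking (1) yields $\int_\cM e(\Ob) = \SW(g,\sigma)$.

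The main obstacle is the sign bookkeeping in the middle paragraph: one must verify that the self-adjointness isomorphism entering Lemma \ref{lem:zariski} is literally the same one used to define $\mathrm{or}_0$, so that the discrepancy between $\mathrm{or}_\Lambda$ and the $\phi$-induced orientation is exactly $\sign(C)$. The combinatorial factor $(-1)^{\dim\ker T(\dim\ker T+1)/2}$ appearing in the Knudsen--Mumford convention is locally constant on each component and hence contributes only an overall sign already absorbed into $\sign(C)$.
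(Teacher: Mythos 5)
Your overall route---using Lemma \ref{lem:zariski} to turn a transverse section of $\Ob$ into a vector field, applying the orientation-free Poincar\'e--Hopf theorem componentwise, and then the standard zero-counting description of the Euler class for part (2)---is exactly the justification the paper intends: the proposition is stated there without proof, with only an appeal to Gauss--Bonnet and Poincar\'e--Hopf. The reduction itself is sound (the intrinsic index of a non-degenerate zero needs no orientation, and $dX_x=\phi_x^{-1}\circ ds_x$ at a zero because $s(x)=0$).

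The gap is in the sign-matching paragraph, at exactly the point you yourself flag as the main obstacle. The Knudsen--Mumford trivialisation of $\det(\ker L)\otimes\det(\mathrm{coker}\, L)^*$ is the self-adjointness identification $H^1_{A,\Psi}\cong H^2_{A,\Psi}$ \emph{multiplied by} $(-1)^{k(k+1)/2}$ with $k=\dim\ker L_{A,\Psi,f}=\dim C$, so $\mathrm{or}_0$ differs from the orientation of $\Lambda$ induced by the naive identification $\phi$ of Lemma \ref{lem:zariski} by precisely this factor. Consequently your argument, as written, yields $\sum_{x\in s^{-1}(0)\cap C}\sign(x)=\sign(C)\,(-1)^{k(k+1)/2}\,\chi(C)$ rather than $\sign(C)\,\chi(C)$. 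The extra factor is $-1$ exactly when $k\equiv 1,2\pmod 4$; for odd $k$ this is harmless since $\chi(C)=0$, but components with $k\equiv 2\pmod 4$ and $\chi(C)\neq 0$ do occur in this paper (e.g.\ the two-dimensional moduli space in Theorem \ref{thm:moduliexamples}), so the factor cannot be waved away. Saying it is ``absorbed into $\sign(C)$'' is circular: $\sign(C)$ is already fixed by Definition \ref{defn:tau2} via the comparison of $\mathrm{or}_\Lambda$ with $\mathrm{or}_0$, independently of which isomorphism $T\cM\cong\Ob$ you choose. To close the proof you must either show that the identification entering the relative-orientation comparison is the Knudsen--Mumford one (so that the orientation induced by $\phi$ really does agree with $\mathrm{or}_0$, giving $\sign(x)=\sign(C)\cdot\mathrm{index}_x(X)$ on the nose), or carry the factor $(-1)^{k(k+1)/2}$ through explicitly and verify it is $+1$ on every component of non-zero Euler characteristic; the one-line dismissal does neither.
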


%\begin{thm} \label{thm:count}
%Let $g$ be a Riemannian metric on $Y$ and $\sigma = (B, \eta) \in \cP$. Suppose that $b_1(Y) > 1$ and there exist no $(g,B)$-Fueter sections. Then there is an open neighbourhood $U$ of $(g, \sigma)$ in $\Met \times \cP$ with the following significance.  
%\begin{enumerate}
%\item For all $(g',\sigma') \in U$ the moduli space $\cM(g',\sigma')$ is compact.
%\item The subset $U_s = \{ (g', \sigma') \in U \ | \ \cM(g',\sigma') \ \textnormal{is Zariski smooth} \}$ is dense in $U$.
%\item The number $\SW(g',\sigma')$ does not depend on the choice of $(g',\sigma') \in U_s$.
%\end{enumerate}
%\end{thm}
\begin{proof}[Proof of Theorem \ref{thm:moduli}]
For $(g,\sigma) \in \cU$ such that $\cM(g,\sigma)$ consists of irreducible and regular solutions $\SW(g,\sigma)$ is well-defined as in Definition \ref{defn:tau}. If $(g,\sigma) \in \cU$ is an arbitrary point, define $\SW(g,\sigma)$ to be equal to $\SW(g',\sigma')$ for any $(g',\sigma')$ in the same connected component of $\cU$ and such that $\cM(g',\sigma')$ consists of irreducible and regular solutions. For two such pairs $(g_0, \sigma_0)$ and $(g_1, \sigma_1)$, by Proposition \ref{prop:regular2} there is a path $(g_t, \sigma_t)$ in $\cU$ such that the union
\[ \mathcal{W} := \bigcup_{t \in [0,1]} \cM(g_t, \sigma_t) \]
is an oriented compact cobordism from $\cM(g_0, \sigma_0)$ to $\cM(g_1, \sigma_1)$. It follows that  $\SW(g_0, \sigma_0) = \SW(g_1, \sigma_1)$ and so $\SW(g,\sigma)$ is well-defined for any $(g,\sigma) \in \cU$. It remains to show that if $\cM(g,\sigma)$ is Zariski smooth, then the integer obtained in this way is equal to that from Definition \ref{defn:tau2}. This is a general fact; see \cite[Lemma 3.3]{friedman-morgan2}. 
%Let $\cE \to \mathcal{X}$ is a Hilbert vector bundle over a Hilbert manifold. Let $s_0$ and $s_1$ be two Fredholm sections of $\cE$ which are sufficiently $C^1$-close. Suppose that the zero sets $Z_0 = s^{-1}_0(0)$ and $Z_1 = s_1^{-1}(0)$ are compact, $s_0$ is transverse to zero, and $Z_1$ is Zariski smooth. Let $\Ob \to Z_1$ be the obstruction bundle. Under these assumptions, there is a transverse section $\alpha \in \Gamma(Z_1, \Ob)$ such that $\alpha^{-1}(0)$ is isotopic to $s_0^{-1}(0)$ as oriented submanifolds of $\mathcal{X}$. To apply the lemma to our situation we choose the sections $s_0$ and $s_1$ to be given by the Seiberg--Witten equations \eqref{eqn:nsw2} for two choices of $(g, \sigma)$ such that the $\cM(g,\sigma)$ is irreducible and regular for one of them and Zariski smooth for the other.  
\end{proof}

%In order to use Theorem \ref{thm:moduli} we need to guarantee that there are no Fueter sections for some choice of $(g,\sigma) \in \Met \times \cP$. This assumption is in general hard to verify. In the next two sections we exhibit examples of $(g,\sigma)$ for which this assumption is satisfied.

\section{A dimensional reduction} \label{sec:reduction}
We focus on the case $Y = S^1 \times \Sigma$. Assuming that the parameters of the equation are invariant in the circle direction, we  show that irreducible Seiberg--Witten multi-monopoles on $Y$ are pulled back from configurations on $\Sigma$ obeying a generalised vortex equation. This is analogous to the correspondence between classical Seiberg--Witten monopoles and vortices, which in turn correspond to effective divisors \cite{morgan-et.al}, \cite{mrowka-et.al}, \cite{munoz}. 

%As a consequence, the moduli space $\cM$ is described in terms of holomorphic data on $\Sigma$. We end the section with a discussion of the compactifications of $\cM$ and Fueter sections. 

\subsection{Seiberg--Witten equations and quaternionic representations} \label{subsec:generalised}
Equation \eqref{eqn:nsw} is an example of the \emph{Seiberg--Witten equation associated with a quaternionic representation} introduced in \cite{taubes} and \cite{pidstrygach}; see also  \cite{haydys}, \cite{haydys2}, \cite[Section 6]{nakajima2}, \cite{doan2}. The language of quaternionic representations is well-suited for proving the circle-invariance of solutions.

Let $M$ be a quaternionic vector space equipped with a Euclidean inner product $\langle \cdot , \cdot \rangle$ compatible with the complex structures $i$, $j$, $k$. Let $\Sp(M)$ be the group of quaternionic automorphisms of $M$ preserving $\langle \cdot , \cdot \rangle$. Suppose that $G$ and $H$ are compact connected Lie groups together with a representation $G \times H \to \Sp(M)$. There is an associated \emph{hyperk\"ahler moment map} $\mu \colon M \to \mathfrak{su}(2) \otimes \mathfrak{g}$, where $\mathfrak{g}$ is the Lie algebra of $G$, determined by the identity
\[ \langle a, \mu(x) \rangle = \langle a x, x \rangle \]
for all $x \in M$ and $a \in \mathfrak{su}(2) \otimes \mathfrak{g}$. Here, we think of $\mathfrak{su}(2)$ as the space of imaginary quaternions.

%If we split $\mu$ into three components
%\[ \mu = i \mu_i + j \mu_j + k \mu_k \in \mathfrak{su}(2) \otimes \mathfrak{g}, \]
%then $\mu_i \colon M \to \mathfrak{g}$ is given by
%\[ \langle \mu_i(x), \xi \rangle = \langle i \xi x, x \rangle \quad \textnormal{for every } \xi \in \mathfrak{g} \]
%and similarly for $\mu_j$ and $\mu_k$. 
 
Let $Y$ be a Riemannian spin  three-manifold as before.  Let $P \to Y$ and $Q \to Y$ be principal bundles with structure groups $G$ and $H$, respectively. Define
\[ \mathbb{M} = (P \times_Y Q \times_Y S) \times_{G \times H \times SU(2)} M, \]
$\mathbb{M}$ has the structure of a left module over the Clifford bundle of $Y$. The moment map descends to a fibre-preserving map $\mu \colon \mathbb{M} \to \Lambda^2 Y\otimes \mathfrak{g}_P$, where $\mathfrak{g}_P$ is the adjoint bundle of $P$.
Fix a connection $B$ on $Q$. For any connection $A$ on $P$, denote by $\nabla_{AB}$ the covariant derivative on $\mathbb{M}$ induced from $A$, $B$, and the Levi--Civita connection. The pair $(\mathbb{M}, \nabla_{AB})$ is a \emph{Dirac bundle} in the sense of \cite[Definition 5.2]{michelsohn-lawson}, and as such is equipped with the Dirac operator $\D_{AB} \colon \Gamma(Y,\mathbb{M}) \to  \Gamma(Y,\mathbb{M})$. The \emph{Seiberg--Witten equation associated with the quaternionic representation} $(G\times H,M)$ is the following differential equation for a pair $(A,\Psi) \in \cA(P) \times \Gamma(\mathbb{M})$: 
\begin{equation} \label{eqn:generalised}
\left\{
\begin{array}{l}
\D_{AB} \Psi = 0, \\
F_A = \mu(\Psi).
\end{array}
\right.
\end{equation}

\begin{defn} \label{defn:irreducible}
A pair $(A,\Psi) \in \cA(P) \times \Gamma(Y, \mathbb{M})$ is called \emph{irreducible} if there exists $x \in Y$ such that the $G$--stabiliser of $\Psi(x)$ in $M$ is trivial. 
\end{defn}

\begin{exmp}
The Seiberg--Witten equation with multiple spinors \eqref{eqn:nsw} corresponds to 
\[ M = \H \otimes \C^n, \qquad H = \SU(n), \qquad G = \U(1) \]
with the standard representations.
\end{exmp}

\subsection{Circle-invariance of solutions} \label{subsec:invariance}
The goal of the next few paragraphs, which will be achieved in Proposition \ref{prop:invariance}, is to describe solutions of \eqref{eqn:generalised} under the following assumptions:
\begin{enumerate}[label=(\Alph*)]
\item $Y = S^1 \times \Sigma$ for a closed Riemann surface $\Sigma$; we endow $Y$ with the product metric.
\item The spin structure on $Y$ is induced from a spin structure $K^{1/2}$ on $\Sigma$ \footnote{Recall that a spin structure on $\Sigma$ is equivalent to a complex line bundle $K^{1/2} \to \Sigma$ together with an isomorphism of $K^{1/2} \otimes K^{1/2}$ with the canonical bundle $K = \Lambda^{1,0} \Sigma$. As a principal bundle, $S \to Y$ is pulled back from the $\SU(2)$--bundle associated to $K^{-1/2}$, the dual of $K^{1/2}$, via the inclusion $\U(1) \hookrightarrow \SU(2)$. As a vector bundle, $S = K^{1/2} \oplus K^{-1/2}$.}.
\item $Q$ and $B$ are pulled back from a bundle and a connection on $\Sigma$.
\item $P$ is pulled back from a bundle over $\Sigma$ \footnote{This will eventually follow from the existence of an irreducible solution of \eqref{eqn:generalised}.}.
\end{enumerate}

To keep the notation simple we use the same symbols $K^{1/2}$, $P$, $Q$, $B$, and so forth for the corresponding objects over $\Sigma$ and its pull-back to $Y$. Observe that $\mathbb{M} \to Y$ is pulled back from 
\[ \mathbb{M} = ( P \times_{\Sigma} Q \times_{\Sigma} K^{-1/2} ) \times_{G \times H \times \U(1) } M. \]
The action of unit quaternions $\SU(2)$ on $M$ rotates the sphere of complex structures with $\U(1) \subset \SU(2)$ being the stabiliser of $i$; thus, $\mathbb{M} \to \Sigma$ is a complex vector bundle. Consider the quaternionic vector bundle $V = (P \times_{\Sigma} Q) \times_{G \times H} M$; then $\mathbb{M} = V \otimes_{\C} K^{-1/2}$, where the complex structure on $V$ is given by $i$. The remaining part of the quaternionic structure is encoded in an anti-linear involution $j \colon V \to V$. Taking the tensor product of $j$ with the anti-linear map $K^{1/2} \to K^{-1/2}$ given by the metric, we obtain an anti-linear isomorphism
\[ \sigma \colon V \otimes K^{1/2} \to V \otimes K^{-1/2}. \]
We define similarly a map in the opposite direction, also denoted by $\sigma$, so that $\sigma^2 = -1$. Equivalently, $\sigma$ can be seen as a map $\sigma \colon \mathbb{M} \otimes K \to \mathbb{M}$,  which is a two-dimensional manifestation of the Clifford multiplication.  

Next, we relate sections and connections on $Y$ to those on $\Sigma$. Write $\cA(\Sigma, P)$ and $\cA(Y, P)$ for the spaces of connections on $P \to \Sigma$ and its pull-back to $Y$. Let $t \in [0,1]$ be the coordinate on $S^1$ in $S^1 \times \Sigma$. Any connection $A_Y \in \cA(Y,P)$ can be uniquely written in the form
\[ A_Y = A(t) + b(t) dt \]
for one-periodic families $A(t)$ of connections in $\cA(\Sigma,P)$ and sections $b(t)$ of $\mathfrak{g}_P$.  $\cA(\Sigma, P)$ embeds in $\cA(Y,P)$ by pulling-back connections. Its image consists of those $A_Y = A(t) + bdt$  for which $b(t) = 0$ and $A(t)$ is independent of $t$. Likewise, any $\Psi \in \Gamma(Y, \mathbb{M})$ can be identified with a one-periodic family $\Psi(t) \in\Gamma(\Sigma, \mathbb{M})$ and $\Gamma(\Sigma, \mathbb{M})$ embeds into $\Gamma(Y, \mathbb{M})$ as sections independent of $t$. The gauge group $\cG(\Sigma, P)$ is naturally a subgroup of $\cG(Y, P)$.
  
\begin{defn}
A \emph{circle-invariant configuration} is an element of the image of 
\[
\cA(\Sigma, P) \times \Gamma(\Sigma, \mathbb{M}) \hookrightarrow \cA(Y,P) \times \Gamma(Y, \mathbb{M}).
\]
We will identify a circle-invariant configuration on $Y$ with the corresponding pair on $\Sigma$.
\end{defn}

\begin{lem}\label{lem:inv-gauge}
If two circle-invariant configurations differ by $g \in \cG(Y,P)$, then $g \in \cG(\Sigma, P)$. In particular, $\cA(\Sigma, P) \times \Gamma(\Sigma, \mathbb{M}) / \cG(\Sigma, P)$ is a submanifold of $\cA(Y,P) \times \Gamma(Y, \mathbb{M}) / \cG(Y,P)$. 
\end{lem}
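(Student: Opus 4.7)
The plan is to split the claim into two parts. The first and key part is the statement that any $g \in \cG(Y,P)$ relating two circle-invariant configurations must itself be pulled back from $\Sigma$; the submanifold structure then follows from the slice theorem already recorded in Proposition~\ref{prop:slice}.

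For the first part, let $(A_0, \Psi_0)$ and $(A_1, \Psi_1) = g \cdot (A_0, \Psi_0)$ be circle-invariant and write $g \colon S^1 \times \Sigma \to G$. In the product coordinates the exterior derivative on $Y$ splits as $d = d_\Sigma + \partial_t\, dt$. Since $A_0$ is pulled back from $\Sigma$ it has vanishing $dt$--component, so the transformation rule
\[ g(A_0) = g A_0 g^{-1} - (dg) g^{-1} \]
shows that the $dt$--component of $g(A_0)$ equals $-(\partial_t g) g^{-1}\, dt$. Because $A_1$ is also pulled back, its $dt$--component vanishes identically, so $\partial_t g \equiv 0$ and hence $g \in \cG(\Sigma, P)$. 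This already settles the set-theoretic part: the natural map $\cA(\Sigma,P)\times\Gamma(\Sigma,\mathbb{M})/\cG(\Sigma,P) \to \cA(Y,P)\times\Gamma(Y,\mathbb{M})/\cG(Y,P)$ is injective.

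For the submanifold statement, I would use local slices. At a circle-invariant point $(A,\Psi)$ the slice on $Y$ is modelled on $\ker G^*_{A,\Psi}$ computed with the metric on $Y$. Pulling back from $\Sigma$ embeds $\ker G^*_{A,\Psi,\Sigma}$ as a closed subspace of $\ker G^*_{A,\Psi,Y}$, and a direct computation with the $t$--independence of $A$ and $\Psi$ shows that this is exactly the intersection of $\ker G^*_{A,\Psi,Y}$ with the closed subspace of forms and sections that are pulled back from $\Sigma$ (i.e.\ $t$--independent and with no $dt$--component). Because $S^1$ acts on $Y$ by isometries commuting with $G^*$, a Fourier decomposition in $t$ produces an $L^2$--orthogonal direct sum complement of the $\Sigma$--slice inside the $Y$--slice, giving the required linear splitting of the local chart. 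Combined with the injectivity above and the Hausdorffness of both quotients, this exhibits the embedding as a closed submanifold.

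The main obstacle is purely technical: one must verify that the Fourier splitting and the pullback embedding are compatible with the Sobolev completions chosen in Definition~\ref{defn_sobolevspaces}. This is routine because the circle acts by isometries and the spectral decomposition respects the $W^{k,2}$--inner products. A secondary subtlety is the treatment of reducible circle-invariant configurations, where the stabiliser in $\cG(Y,P)$ has to be shown to coincide with the stabiliser in $\cG(\Sigma,P)$; but this is immediate from the first part applied with $(A_0,\Psi_0) = (A_1,\Psi_1)$.
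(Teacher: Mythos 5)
The paper states this lemma without proof, so the only comparison available is with the argument the author evidently had in mind, and for the first (and main) assertion your computation is exactly that: since $A_0$ and $A_1$ are pulled back from $\Sigma$ they have no $dt$--component, the $dt$--component of $g(A_0)$ is $-(\partial_t g)g^{-1}$, and its vanishing forces $\partial_t g=0$, i.e.\ $g\in\cG(\Sigma,P)$. This is correct (and, as you note, it uses only the connections, so it also identifies the stabilisers of circle-invariant configurations).

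The \emph{in particular} clause is where your outline has a genuine gap. What you actually establish is that the map $\cA(\Sigma,P)\times\Gamma(\Sigma,\mathbb{M})/\cG(\Sigma,P)\to\cA(Y,P)\times\Gamma(Y,\mathbb{M})/\cG(Y,P)$ is injective and that, at a circle-invariant irreducible point, the pulled-back slice $\ker G^*_{A,\Psi,\Sigma}$ sits inside $\ker G^*_{A,\Psi,Y}$ as a closed complemented subspace. Injectivity plus a split tangent inclusion plus Hausdorffness yields only an injective immersion; to get an embedded submanifold you must also show that, in the $Y$--slice chart $(A,\Psi)+K_{A,\Psi}$ centred at a circle-invariant configuration, every slice element whose $\cG(Y,P)$--orbit meets the image of the $\Sigma$--quotient is itself pulled back from $\Sigma$. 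The first part of the lemma does not give this, because the slice representative of a nearby circle-invariant orbit is not known in advance to be circle-invariant; and your own Fourier decomposition highlights the danger, since the rotation-invariant part of the $Y$--slice contains the directions $b\,dt$ with $b$ a $t$--independent section of $\mathfrak{g}_P$ (these do lie in $\ker G^*_{A,\Psi,Y}$), so rotation-invariance of a slice element is strictly weaker than being pulled back. The step can be filled: because the metric is a product and the centre is circle-invariant, the slice is preserved by the rotations $r_s$ and the slice chart is locally injective into the quotient, so a small slice element gauge-equivalent to a circle-invariant configuration satisfies $r_s^*x=x$; its residual component $b\,dt$ then vanishes because holonomy along the fibres $S^1\times\{z\}$ is a gauge invariant, pulled-back connections have trivial fibre holonomy, and $b$ is small. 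Finally, ``closed submanifold'' is more than the lemma asserts, and closedness of the image is not established by (and does not follow from) your argument.
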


%\begin{proof}
%Let $A_1$ and $A_2$ be two circle-invariant connections. We identify $g$ with a one-periodic family $g(t)$ of gauge transformations in $\cG(\Sigma, P)$. If $A_2 = g( A_1)$, then
%\[ A_2 - A_1 = g(t)^{-1} d_{A_1} (g(t)) + \frac{\partial g}{\partial t} dt. \]
%Since the one-form $A_2 - A_1$ does not have a $dt$ part, it follows that $\partial g / \partial t = 0$ and $g(t)$ does not depend on $t$. We conclude that $g \in \cG(\Sigma, P)$. 
%\end{proof}

The Dirac operator on $Y$ can be expressed in terms of the Dolbeault operator on $\Sigma$. In the simplest case $Y = \R^3 = \R \times \C$ and $M = \mathbb{H}$, denoting coordinates on $\R \times \C$ by $t$ and $z = x + iy$, we have for a map $\Psi \colon \mathbb{R}^3 \to \mathbb{H}$ 
\begin{equation}\label{eqn:dirac0}
 \D \Psi =  i \frac{\partial \Psi}{\partial t} + j \frac{\partial \Psi}{\partial x} + k \frac{\partial \Psi}{\partial y} =   \frac{\partial \Psi}{\partial t} + j \left( \frac{\partial \Psi}{\partial x} - i \frac{\partial \Psi}{\partial y} \right) = i \frac{\partial \Psi}{\partial t} + 2 j \frac{\partial \Psi}{\partial z}. 
 \end{equation}
 In general, $\partial / \partial z$ is replaced by the Dolbeault operator 
 \[ \partial_{AB} \colon \Gamma(\Sigma, \mathbb{M}) \to \Gamma(\Sigma, \mathbb{M} \otimes K) \]
 or equivalently
  \[ \partial_{AB} \colon \Gamma(\Sigma, V \otimes K^{-1/2}) \to \Gamma(\Sigma, V \otimes K^{1/2}), \]
which is defined as the $(1,0)$--part of the covariant derivative on $\mathbb{M}$. The proof of the next lemma is a simple calculation in conformal coordinates, almost the same as \eqref{eqn:dirac0} \footnote{The difference between the constants $2$ in \eqref{eqn:dirac0} and  $\sqrt{2}$ in Lemma \ref{lem:dirac} comes from the fact that $|dz| = \sqrt{2}$ with respect to the Euclidean metric on $\C$.}.

\begin{lem}\label{lem:dirac}
Let $A_Y = A(t) + b(t)dt$ be a connection in $\cA(Y, P)$ and $\Psi = \Psi(t)$ a section in $\Gamma(Y, \mathbb{M})$. The Dirac operator $\D_{A_Y, B}$ acting on $\Gamma(Y, \mathbb{M})$ is given by
\[ \D_{A_Y B} \Psi = i \left( \frac{\partial \Psi}{\partial t}  + b(t) \Psi \right)+ \sqrt{2} \sigma \left( \partial_{A(t)B} \Psi  \right). \]
\end{lem}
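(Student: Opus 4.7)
\smallskip

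\noindent\emph{Proof proposal.} The statement is local, and since both sides of the claimed identity are first-order differential operators on $\mathbb{M}$, it suffices to compare them in an arbitrary convenient local frame. The plan is to pick a point $p \in \Sigma$, choose a holomorphic coordinate $z = x+iy$ centred at $p$ which is also isothermal, i.e.\ such that the metric on $\Sigma$ takes the form $e^{2\phi}(dx^2+dy^2)$ with $\phi(p)=0$ and $d\phi(p)=0$, and trivialise $K^{1/2}$ over a neighbourhood of $p$ compatibly with this coordinate. Together with the $S^1$--coordinate $t$ this yields a local orthonormal frame $\{\partial_t,\partial_x,\partial_y\}$ of $TY$ at the fibre over $p$ in which one can read off the Dirac operator; because the derivatives in the statement are first-order, all terms involving $\phi$ or its first derivatives vanish at $p$, and it is enough to check the identity pointwise at an arbitrary $p$.

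In such a frame the pull-back spinor bundle $S\to Y$ is modelled on $\mathbb{H}$, with Clifford multiplication by $dt,dx,dy$ given by left multiplication with $i,j,k$ respectively by the sign convention fixed in the Notation. Since the metric on $Y$ is the product metric and the bundle $Q$ together with the connection $B$ are pulled back from $\Sigma$, the Levi-Civita and spin connections split: the $t$--component of the covariant derivative on $\mathbb{M}$ associated with $A_Y=A(t)+b(t)dt$ is simply $\partial_t+b(t)$ (the $B$--contribution in the $t$--direction vanishes by assumption on $B$). Thus
\begin{equation*}
  \mathrm{Cl}(dt)\,\nabla^{A_Y B}_t \Psi \;=\; i\bigl(\partial_t \Psi + b(t)\Psi\bigr),
\end{equation*}
which is the first term on the right-hand side. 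For the two $\Sigma$--directions the calculation is exactly the Euclidean one in \eqref{eqn:dirac0}: using $k=-ji$,
\begin{equation*}
  \mathrm{Cl}(dx)\,\nabla_x + \mathrm{Cl}(dy)\,\nabla_y \;=\; j\nabla_x + k\nabla_y \;=\; j\bigl(\nabla_x - i\nabla_y\bigr) \;=\; 2j\,\nabla^{(1,0)}_{A(t)B},
\end{equation*}
where $\nabla^{(1,0)}_{A(t)B}$ is the $(1,0)$--part of the covariant derivative with respect to $z$, i.e.\ the local coefficient of $\partial_{A(t)B}\Psi$.

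It then remains to identify the combination "left multiplication by $j$ on the coefficient of $\partial/\partial z$" with the operator $\sigma \colon \mathbb{M}\otimes K \to \mathbb{M}$. By construction $\sigma$ is the tensor product of the quaternionic anti-linear involution $j$ on $V$ with the metric anti-linear isomorphism $K\to \mathbf{1}$; writing $\partial_{A(t)B}\Psi = \psi \otimes dz$ locally, one has $\sigma(\psi\otimes dz) = j\psi \cdot |dz|^{-1}\cdot \overline{dz}/|dz|$ viewed in $\mathbb{M}$, so the coefficient is $j\psi$ up to the factor $|dz| = \sqrt{2}$ arising because $dz=dx+idy$ has norm $\sqrt{2}$ in the flat metric. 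This converts the factor $2$ of the Euclidean formula into $\sqrt{2}\,\sigma(\cdot)$, yielding the second term and completing the identity.

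The substantive content of the proof is therefore not analytic but notational: carefully matching the three bookkeeping conventions—Clifford action by $(i,j,k)$, identification $\mathbb{M}=V\otimes K^{-1/2}$ with $\sigma$ encoding the quaternionic structure together with the metric, and $\partial_{A(t)B}$ defined as the $(1,0)$--part—to make sure the constants $\sqrt{2}$ versus $2$ and the signs $ji=-k$ line up. Once this is done, the identity is pointwise at a preferred-coordinate centre, hence holds globally.
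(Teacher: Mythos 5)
Your proposal is correct and follows essentially the same route as the paper, which proves the lemma by exactly this reduction to the flat model computation \eqref{eqn:dirac0} in conformal coordinates, with the footnoted observation that the discrepancy between the constants $2$ and $\sqrt{2}$ comes from $|dz|=\sqrt{2}$. Your write-up just fleshes out the paper's one-line argument (pointwise check at a normalised point, splitting off the $\mathrm{Cl}(dt)\nabla_t$ term, and matching $2j\,\partial/\partial z$ with $\sqrt{2}\,\sigma\circ\partial_{A(t)B}$), so no further comment is needed beyond tidying the normalisation formula for $\sigma(\psi\otimes dz)$.
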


In the dimensionally-reduced setting, we use the splitting of the hyperk\"ahler moment map $\mu \colon M \to \mathfrak{g} \otimes \R^3$ into the real and complex parts: $\mu_{\R} \colon M \to \mathfrak{g}$ and $\mu_{\C} \colon M \to \mathfrak{g} \otimes \C$. If $\mu = (\mu_i,\mu_j,\mu_k)$ are the three components of $\mu$, then $\mu_{\R} = \mu_i$ and $\mu_{\C} = \mu_j + i \mu_k$. The following identity will be useful later:
\begin{equation} \label{eqn:moment}
\langle \mu_{\C}(x) j x, x \rangle = | \mu_{\C}(x) |^2 
\end{equation}
Under the reduction of the structure group of $Y$ from $\SO(3)$ to $\U(1)$, the splitting $\R^3 = \R \oplus \C$ gives us $\mathfrak{su}(S) = \underline{\R} \oplus K^{-1}$. Accordingly, $\mu \colon \mathbb{M} \to \mathfrak{su}(S) \otimes \mathfrak{g}_{P}$  splits into the direct sum of
 \[ \mu_{\R} \colon \mathbb{M} \to \mathfrak{g}_P \qquad \textnormal{and} \qquad  \mu_{\C} \colon \mathbb{M} \to K^{-1} \otimes \mathfrak{g}_P. \]
$\mu_{\C}$ is holomorphic when restricted to fibres. Similarly, we have the conjugate maps
\[ \mu_{\R} \colon \overline{\mathbb{M}} \to \mathfrak{g}_P \qquad \textnormal{and} \qquad \mu_{\C} \colon \overline{\mathbb{M}} \to K \otimes \mathfrak{g}_P, \]
which satisfy 
\[ \mu_{\R} \circ \sigma = - \mu_{\R} \qquad \textnormal{and} \qquad \mu_{\C} \circ \sigma = \overline{\mu_{\C}}. \]

%which are the global versions of $\mu_{\R} \circ j = - \mu_{\R}$ and $\mu_{\C}\circ j = \overline{\mu_{\C}}$.

\begin{prop}\label{prop:invariance}
Let $A_Y = A(t) + b(t)dt$ a connection in $\cA(Y,P)$ and $\Psi = \Psi(t)$ a section in $\Gamma(Y, \mathbb{M})$.  The generalised Seiberg--Witten equation \eqref{eqn:generalised} for $(A_Y, \Psi)$ is equivalent to 
\begin{equation}\label{eqn:generalised2}
\left\{
\begin{array}{l}
i \left(\frac{\partial \Psi}{\partial t} + b \Psi \right) + \sqrt{2} \sigma \left( \partial_{AB} \Psi \right) = 0, \\
 \left( \frac{\partial A}{\partial t} + d_{A} b \right)^{0,1} = - \frac{i}{2} \mu_{\C}(\Psi), \\
\ast F_A  = \mu_{\R}(\Psi).
\end{array}
\right.
\end{equation}
In particular, for a circle-invariant configuration $(A,\Psi)$ the equation simplifies to
\begin{equation} \label{eqn:reduction}
\left\{
\begin{array}{l}
\partial_{AB} \Psi = 0, \\
\mu_{\C} (\Psi) = 0, \\
\ast F_A = \mu_{\R} (\Psi).
\end{array}
\right.
\end{equation}
\end{prop}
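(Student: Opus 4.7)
The plan is to separate the $S^1$ and $\Sigma$ directions in the product $Y=S^1\times\Sigma$ and show that the two parts of equation \eqref{eqn:generalised} decompose accordingly. The first equation of \eqref{eqn:generalised2} is an immediate restatement of the Dirac equation $\D_{A_Y B}\Psi = 0$ via Lemma \ref{lem:dirac}, which expresses $\D_{A_Y B}$ in terms of the covariant $t$-derivative of $\Psi$, the pointwise action of $b$, and the Dolbeault operator $\partial_{A(t)B}$ on $\Sigma$.

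For the curvature equation $F_{A_Y} = \mu(\Psi)$, I substitute $A_Y = A(t)+b(t)\,dt$ into $F_{A_Y} = dA_Y + A_Y \wedge A_Y$ and separate the result into two summands according to the splitting $\Lambda^2 T^*Y = \Lambda^2 T^*\Sigma \oplus (dt \wedge T^*\Sigma)$. A direct computation gives
\begin{equation*}
F_{A_Y} = F_{A(t)} + dt \wedge \left(\frac{\partial A}{\partial t} + d_A b\right),
\end{equation*}
where $F_{A(t)}$ is the curvature on $\Sigma$ and $d_A b$ is the covariant derivative of the $\mathfrak{g}_P$-valued function $b$. On the other side, the reduction of the structure group of $Y$ from $\mathrm{SO}(3)$ to $\U(1)$ induced by the product structure decomposes $\mathfrak{su}(S) = \underline{\R} \oplus K^{-1}$; under the Clifford identification $\Lambda^2 T^*Y \otimes i\R \cong i\mathfrak{su}(S)$, the $\Lambda^2 T^*\Sigma$ summand matches the $\underline{\R}$ factor, while $dt \wedge T^*\Sigma$ matches $K^{-1}$ (of real rank two with the complex structure inherited from $\Sigma$). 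Under this correspondence the two components of $\mu(\Psi)$ are exactly $\mu_{\R}$ and $\mu_{\C}$. Matching the real part of $F_{A_Y} = \mu(\Psi)$ yields $\ast F_{A(t)} = \mu_{\R}(\Psi)$; matching the complex part identifies the $(0,1)$-component of $\partial A / \partial t + d_A b$ with $-(i/2)\mu_{\C}(\Psi)$, the equation on the $(1,0)$-component being the complex conjugate and carrying no new information.

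For a circle-invariant configuration $b \equiv 0$, $\partial A / \partial t = 0$ and $\partial \Psi / \partial t = 0$, so \eqref{eqn:generalised2} collapses to $\sigma(\partial_{AB}\Psi) = 0$, $\mu_{\C}(\Psi) = 0$, and $\ast F_A = \mu_{\R}(\Psi)$; since $\sigma$ is an $\R$-linear isomorphism, the first is equivalent to $\partial_{AB}\Psi = 0$, and we recover \eqref{eqn:reduction}. The main obstacle is pinning down the numerical factor $-i/2$ in the $\mu_{\C}$ equation and correctly matching orientations between $dt \wedge e^i$ and the $K^{-1}$ summand of $\mathfrak{su}(S)$; both reduce to local algebraic computations in an orthonormal frame adapted to the product, guided by the sign convention for Clifford multiplication fixed at the end of the \emph{Notation and conventions} section.
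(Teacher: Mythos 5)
Your outline follows the paper's proof exactly: Lemma \ref{lem:dirac} disposes of the Dirac equation, the curvature splits as $F_{A_Y} = F_{A} + dt \wedge \left( \frac{\partial A}{\partial t} + d_A b \right)$, and the rest is a matter of matching the splitting $\Lambda^2 Y = \Lambda^2\Sigma \oplus (\Lambda^1 S^1 \otimes \Lambda^1 \Sigma)$ with $\mathfrak{su}(S) = \underline{\R}\oplus K^{-1}$ under the Clifford identification. The one point you defer as "the main obstacle", however, is precisely where all the content of the paper's proof lies: one must actually run the flat-model computation (with the stated conventions $dx\wedge dy \mapsto i$, $dy\wedge dt\mapsto j$, $dt\wedge dx\mapsto k$ and $ai+bj+ck\mapsto(a,b+ic)$) to see that a two-form $\alpha + dt\wedge\beta$ goes to $\left(\ast\alpha,\, 2i\beta^{0,1}\right)$, i.e.\ that the identification is the Hodge star on the first summand and $\beta\mapsto 2i\beta^{0,1}$ on the second; this is what produces the coefficient $-\tfrac{i}{2}$ in the $\mu_{\C}$ equation. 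Since that constant is part of the statement being proved (and the Weitzenb\"ock-type manipulations in the proof of Theorem \ref{thm:invariance} depend on it), asserting that it "reduces to a local algebraic computation" without performing it leaves the decisive step unverified; carry out that short computation and your proof is complete.
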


\begin{proof}
By Lemma \ref{lem:dirac}, the first equation in \eqref{eqn:generalised2} is equivalent to $\D_{A_Y B} \Psi = 0$. The remaining two equations are obtained from the identifications
\begin{equation}\label{eqn:proofreduction}
\mathfrak{su}(S) \cong \Lambda^0 \Sigma \oplus K^{-1} \quad \textnormal{and} \quad
 \mu \cong  \mu_{\R} \oplus \mu_{\C}
 \end{equation}
discussed earlier. Under the decomposition
\begin{equation}\label{eqn:proofreduction2}
 \Lambda^2  Y =  \left( \Lambda^2  \Sigma \right) \oplus \left( \Lambda^1 S^1 \otimes \Lambda^1 \Sigma \right)
\end{equation}
the curvature $F_{A_Y}$ decomposes into
\[ F_{A_Y} = F_{A} + dt \wedge \left( \frac{\partial A}{\partial t} + d_A b \right). \]
We need to identify the splittings \eqref{eqn:proofreduction} and \eqref{eqn:proofreduction2}  under the isomorphism  $\Lambda^2 Y \cong \mathfrak{su}(S)$. For simplicity, consider the flat case $Y = \R \times \C$, with coordinates $t$ and $z = x + i y$---the general case differs from it by a conformal factor. The isomorphism $\Lambda^2 \R^3 \cong \mathfrak{su}(2)$ is given by
\[ dx \wedge dy \mapsto i , \quad dy \wedge dt \mapsto j, \quad dt \wedge dx \mapsto k. \]
On the other hand, $\mathfrak{su}(2)$ is identified with $ \R \oplus \C$ via the map
\[ ai  + bj + ck \mapsto (a, b+ic). \]
Let $\alpha + dt \wedge \beta $ be a two-form on $\R^3$, where 
\[ \alpha = a dx \wedge dy, \quad \beta = b_1 dx + b_2 dy. \]
Under the identifications $\Lambda^2 \R^3 = \mathfrak{su}(2) = \R \oplus \C$,
\[ \alpha + dt \wedge \beta  \mapsto a i - b_2 j + b_1 k \mapsto (a, -b_2 + i b_1). \]
Observe that $a = \ast \alpha$ and $(- b_2 + ib_1) d\bar{z} = 2i \beta^{0,1}$, where $\beta^{0,1}$ is $(0,1)$-part of $\beta$. It follows that under the splittings \eqref{eqn:proofreduction} and \eqref{eqn:proofreduction2} the isomorphism
\[  \left( \Lambda^2  \Sigma \right) \oplus \left( \Lambda^1 S^1 \otimes \Lambda^1 \Sigma \right) \cong \Lambda^0 \Sigma \oplus K^{-1} \]
is the direct sum of the Hodge star $\Lambda^2 \Sigma \to \Lambda^0 \Sigma$ and the map $\Lambda^1 \Sigma \to \Lambda^{0,1}  \Sigma$ taking a one-form $\beta$ to $ 2i \beta^{0,1}$. Thus, $F_{A_Y} = \mu(\Psi)$ is equivalent to the last two equations in \eqref{eqn:generalised2}.
\end{proof}

\begin{rem}\label{rem:invariance} 
Since it is more common to consider holomorphic rather than aholomorphic sections, we can complete the picture by considering the conjugate bundle
\[ \overline{\mathbb{M}} = (Q \times P \times K^{1/2}) \times_{G \times H \times U(1)} M = V \otimes K^{1/2} = \mathbb{M} \otimes K.\]
We have the Dolbeault operators
\[ \partial_{AB} \colon \Gamma(\Sigma, \mathbb{M}) = \Gamma(\Sigma, V \otimes K^{-1/2}) \longrightarrow \Gamma(\Sigma, V \otimes K^{1/2} ) = \Gamma(\Sigma, \overline{\mathbb{M}}), \]
\[ \del_{AB} \colon \Gamma(\Sigma, \overline{\mathbb{M}}) = \Gamma(\Sigma, V \otimes K^{1/2}) \longrightarrow \Gamma(\Sigma, V \otimes K^{-1/2} ) = \Gamma(\Sigma, \mathbb{M}), \]
and the maps $\sigma \colon \overline{\mathbb{M}} \to \mathbb{M}$ and $\sigma \colon \mathbb{M} \to \overline{ \mathbb{M}}$ that intertwine them:
\[ \sigma \partial_{AB} = \del_{AB} \sigma . \]
Thus, $\sigma$ maps aholomorphic sections of $\mathbb{M}$ to holomorphic sections of $\overline{\mathbb{M}}$ and vice versa. It follows from the K\"ahler identities that
\[
\del_{AB} = - \partial_{AB}^*
\]
where $\partial_{AB}^*$ is the formal adjoint of $\partial_{AB}$.

Using $\sigma$, we can rewrite \eqref{eqn:reduction} as a system of equations for $\overline{\Psi} := \sigma(\Psi) \in \Gamma(\Sigma,\overline{\mathbb{M}})$:
\begin{equation}\label{eqn:reduction2}
\left\{
\begin{array}{l}
\del_{AB} \overline{\Psi} = 0, \\
\mu_{\C}(\overline{\Psi}) = 0, \\
\ast F_A + \mu_{\R}(\overline{\Psi}) = 0.
\end{array}
\right.
\end{equation}
This is an example of a \emph{symplectic vortex equation} discussed in \cite{cieliebak-et.al}. The target singular symplectic space\footnote{For example, for the classical Seiberg--Witten equation $\mu^{-1}_{\C}(0) = \{ (x,y) \in \C^2 \ | \ xy = 0 \}$.} is the zero locus $\mu_{\C}^{-1}(0) \subset M$.
\end{rem}

We now proceed to the main result of this section.

\begin{thm}\label{thm:invariance}
Suppose that conditions $(A)$, $(B)$, $(C)$ listed at the beginning of this subsection are satisfied. If $(A_Y, \Psi)$ is an irreducible solution of the generalised Seiberg--Witten equation \eqref{eqn:generalised}, then  $P \to Y$ is pulled-back from a bundle over $\Sigma$ (that is: condition $(D)$ is satisfied) and $(A_Y, \Psi)$ is gauge-equivalent to a circle-invariant configuration obeying equation \eqref{eqn:reduction}.
\end{thm}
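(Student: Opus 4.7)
The approach mirrors the classical case $n=1$ treated in \cite{morgan-et.al, mrowka-et.al, munoz}: put $(A_Y, \Psi)$ in a temporal gauge, exhibit the resulting evolution of configurations on $\Sigma$ as a stationary flow via an energy/gradient-flow identity, and use irreducibility to pass from circle-invariance modulo gauge to circle-invariance on the nose. Pulling $(A_Y, \Psi)$ back to the universal cover $\R \times \Sigma$ and applying a gauge transformation, we may assume $A_Y = A(t)$ has vanishing $dt$-component; the original periodic data is then recovered modulo a residual gauge transformation $g \in \cG(\Sigma, P)$ satisfying $(A(t+1), \Psi(t+1)) = g \cdot (A(t), \Psi(t))$. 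By Proposition~\ref{prop:invariance}, $(A(t),\Psi(t))$ satisfies
\begin{equation*}
i\,\partial_t \Psi = -\sqrt{2}\,\sigma(\partial_{AB}\Psi), \qquad (\partial_t A)^{0,1} = -\tfrac{i}{2}\,\mu_{\C}(\Psi), \qquad \ast F_A = \mu_{\R}(\Psi).
\end{equation*}

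\textbf{Energy identity.} The heart of the proof is to view the first two equations as the $L^2$-gradient flow of a real-valued, gauge-invariant functional $\mathcal{F}$ on $\cA(\Sigma, P) \times \Gamma(\Sigma, \mathbb{M})$, restricted to the constraint submanifold $\{ \ast F_A = \mu_{\R}(\Psi) \}$. Concretely, $\mathcal{F}$ is a quadratic combination of $\int_\Sigma |\partial_{AB}\Psi|^2$ and $\int_\Sigma |\mu_{\C}(\Psi)|^2$, and the computation of its $L^2$-gradient relies on the K\"ahler identity $\del_{AB} = -\partial_{AB}^*$ together with the intertwining $\sigma \partial_{AB} = \del_{AB} \sigma$ recorded in Remark~\ref{rem:invariance}. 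Gauge-invariance of $\mathcal{F}$ makes $t \mapsto \mathcal{F}(A(t), \Psi(t))$ periodic, while the gradient-flow structure makes it non-increasing. The composition is therefore constant, its gradient vanishes identically, and hence $\partial_t A \equiv 0$ and $\partial_t \Psi \equiv 0$; the equations then reduce to \eqref{eqn:reduction}.

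\textbf{Descent of $P$ and conclusion.} Once $(A(t), \Psi(t))$ is stationary, the residual gauge transformation $g$ stabilises the irreducible configuration $(A, \Psi)$; by Definition~\ref{defn:irreducible} combined with unique continuation for the Dirac operator as in \cite{bar}, this stabiliser is trivial, so $g = \mathrm{id}$ and the configuration descends to a genuine circle-invariant representative on $Y$ itself. Parallel transport by $A_Y$ along $\partial_t$ now provides a canonical trivialisation of $P$ in the $t$-direction, identifying $P \to Y$ as the pullback of $P|_{\{0\} \times \Sigma} \to \Sigma$.

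\textbf{Main obstacle.} The delicate point is the gradient-flow identification: one must pin down the correct functional $\mathcal{F}$ and verify that the constraint $\ast F_A = \mu_{\R}(\Psi)$ is preserved along the flow, equivalently that the $\cG(\Sigma, P)$-action is Hamiltonian for the ambient K\"ahler structure on $\cA(\Sigma, P) \times \Gamma(\Sigma, \mathbb{M})$. This is routine in the theory of symplectic vortex equations \cite{cieliebak-et.al} but requires careful bookkeeping of signs and complex conjugations arising from the quaternionic structure on $\mathbb{M}$.
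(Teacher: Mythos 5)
Your overall scheme (temporal gauge on the cover, a monotone gauge-invariant quantity forced to be periodic, then killing the monodromy via triviality of the stabiliser of an irreducible configuration) is a legitimate alternative to the paper's argument, which instead differentiates the reduced Dirac equation in $t$, pairs with $\Phi$, and integrates by parts over $S^1\times\Sigma$ to get the identity $0=\|\partial_t\Phi\|_{L^2}^2+\tfrac{\sqrt2}{2}\|\mu_{\C}(\Phi)\|_{L^2}^2+2\|\partial_{C}\Phi\|_{L^2}^2$, with no flow interpretation needed. However, the step you yourself single out as the heart of the proof is wrong as stated. The reduced equations \eqref{eqn:generalised2} in temporal gauge read $\partial_t\Phi = \sqrt2\, i\,\sigma(\partial_{AB}\Phi)$ and $(\partial_t A)^{0,1}=-\tfrac{i}{2}\mu_{\C}(\Phi)$: the $\Phi$-equation is \emph{first order} (Dirac/Schr\"odinger type) and the $A$-equation is \emph{zeroth order}. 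The $L^2$-gradient flow of any functional built quadratically from $\int_\Sigma|\partial_{AB}\Psi|^2$ and $\int_\Sigma|\mu_{\C}(\Psi)|^2$ is a second-order heat-type equation in $\Psi$ (its $\Psi$-gradient is $2\partial_{AB}^*\partial_{AB}\Psi+\ldots$) and first order in $\Psi$ on the connection side, so it cannot coincide with the system above for either component. The functional you name is essentially the squared norm of the gradient (it is what shows up in the paper's final vanishing identity), not the potential of the flow. The correct monotone quantity, if you want to run your argument, is the indefinite Dirac-type pairing (a reduced Chern--Simons--Dirac functional), roughly $\mathcal{W}(A,\Psi)=-\tfrac12\Re\int_\Sigma\langle \sqrt2\, i\,\sigma(\partial_{AB}\Psi),\Psi\rangle$; one must then check that $i\sigma\partial_{AB}$ is symmetric for the \emph{real} $L^2$-pairing (this uses that the anti-linear $\sigma$ anticommutes with $i$, so the factor of $i$ does not turn the flow into a Hamiltonian one), that $\mathcal{W}$ is invariant under the full unitary gauge group (so that twisted periodicity $(A,\Psi)(t+1)=g\cdot(A,\Psi)(t)$ still gives periodicity of $t\mapsto\mathcal{W}$), and that its $A$-gradient reproduces the $\mu_{\C}$ term with the right constants. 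Without replacing your $\mathcal{F}$ by such a functional, the monotonicity-plus-periodicity step has no ground to stand on, so as written this is a genuine gap.

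Two smaller points. First, in the endgame the appeal to unique continuation from \cite{bar} is both unnecessary and not quite the relevant tool in the generalised setting: irreducibility in the sense of Definition~\ref{defn:irreducible} already provides a point $x$ with trivial $G$-stabiliser of $\Psi(x)$; what you must add is that $g(A)=A$ forces $d_Ag=0$, so $g$ is covariantly constant and $g(x)=\mathrm{id}$ propagates to $g\equiv\mathrm{id}$ (this is exactly how the paper argues, with $k=h(1)g$). Second, the descent of $P$ is fine once the monodromy in your temporal-gauge trivialisation is literally the identity, but note that the paper additionally records the homotopy $h(t)^{-1}$ from the original clutching map to the identity, which is what justifies that the bundle $P\to Y$ itself is pulled back from $\Sigma$; you should make that identification explicit rather than relying on the phrase ``parallel transport provides a canonical trivialisation''.
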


\begin{rem}
Theorem \ref{thm:invariance} asserts that there is a natural one-to-one correspondence between gauge equivalence classes of solutions of the Seiberg--Witten equation on $Y = S^1 \times \Sigma$ with the hyperk\"ahler target $M$ and gauge equivalence classes of solutions of the symplectic vortex equation on $\Sigma$ with the K\"ahler target $\mu_{\C}^{-1}(0)$.
\end{rem}

\begin{proof}
Assume for simplicity that the flavour symmetry $H$ is trivial---the general proof is the same after adjusting the notation. Identify $S^1$ with $[0,1]$ with the endpoints glued together. Pull-back the data on $Y=S^1 \times \Sigma$ to one-periodic data on $[0,1] \times \Sigma$. Since $[0,1] \times \Sigma$ is homotopy equivalent to $\Sigma$, there is a principal $G$-bundle $P_{\Sigma} \to \Sigma$ and a gauge transformation $g \in \cG(\Sigma, P_{\Sigma})$  such that $P$ is the quotient of $[0,1] \times P_{\Sigma}$ by the relation $(0,p) \sim (1, g(p))$. The isomorphism class of $P$ depends only on the homotopy class of $g$. Similarly,  $\mathbb{M} = (P \times S) \times_{G \times SU(2)} M$ over $Y$ is obtained from pulling-back  $\mathbb{M}_{\Sigma} =  (P_{\Sigma} \times K^{-1/2})_{G \times U(1)} M$ to $[0,1] \times \Sigma$ and identifying the fibres over $0$ and $1$ using $g$. As before there is an anti-linear map $\sigma \colon \mathbb{M}_{\Sigma} \otimes K \to \mathbb{M}_{\Sigma}$.  
 
For $A_Y \in \cA(Y,P)$ and $\Psi \in \Gamma(Y, \mathbb{M})$ we have
 \[ A_Y = A(t) + b(t) dt, \quad \Psi = \Psi(t), \]
 where $A(t), b(t)$, and $\Psi(t)$ are families of connections and sections on $\Sigma$, as discussed earlier. The only difference now is that the families are periodic with respect to the action of $g$:
 \[ A(1) = g \left( A(0) \right), \quad b(1) = g \left(b(0) \right), \quad \Psi(1) = g\left( \Psi(0) \right). \]
Define a gauge transformation $h$ over $[0,1] \times \Sigma$ by
\begin{equation} \label{eqn:temporal}
 h(t) = \exp\left( \int_0^t b(s) ds \right). 
 \end{equation}
$h$ does not necessarily descend to an automorphism of $P \to Y$; this happens if and only if $h(1) = \mathrm{Ad}_g(h_0) = \textnormal{id}$.  In any case, $h$ is well-defined over $[0,1]\times \Sigma$ and the new connection
\[  C \coloneqq h(A_Y) = A_Y - h^{-1} d_{A_Y} h = h(t)\left( A(t) \right) \]
does not have a $dt$ part---it is in a \emph{temporal gauge}. Thus, it is identified with a path of connections $\{ C(t) \}_{t \in [0,1]}$ on $P_{\Sigma}$ satisfying $C(1) = h(1)g(C(0))$. Likewise, we identify the section $h(\Psi)$ with a path $\{ \Phi(t) \}_{t\in[0,1]}$ of sections of $\mathbb{M}_{\Sigma}\to\Sigma$ satisfying $\Phi(1) = h(1)g(\Phi(0))$. 

By Proposition \ref{prop:invariance}, the Seiberg--Witten equation for $(C,\Phi)$ is equivalent to
\begin{equation} \label{eqn:invarianceproof}
\left\{
\begin{array}{l}
i \frac{\partial \Phi}{\partial t} + \sqrt{2} \sigma \left( \partial_C \Phi \right) = 0, \\
\left( \frac{\partial C}{\partial t} \right)^{0,1} = - \frac{i}{2}\mu_{\C}( \Phi ), \\
\ast F_C = \mu_{\R}(\Phi). 
\end{array}
\right. 
\end{equation}
Differentiating the first equation with respect to $t$ and using \eqref{eqn:invarianceproof}, we obtain
\[
\begin{split}
 0 & = i \frac{\partial^2 \Phi}{\partial t^2} + \sqrt{2} \sigma \left\{ \left( \frac{\partial C}{\partial t} \right)^{1,0} \Phi + \partial_C \left( \frac{\partial \Phi}{\partial t} \right)   \right\} \\
 & = i \frac{\partial^2 \Phi}{\partial t^2} + \frac{\sqrt{2}}{2} \sigma i\overline{\mu_{\C}} (\Phi) \Phi + 2  \sigma  i\partial_C \sigma \partial_C \Phi \\
 & = i \frac{\partial^2 \Phi}{\partial t^2} -  \frac{\sqrt{2}}{2} i \mu_{\C}(\Phi) \sigma \Phi - 2 i  \sigma  \partial_C \sigma \partial_C \Phi.
 \end{split}
\]
We have used the anti-linearity of $\sigma$ and the fact that  $\partial C / \partial t$ is a real $\mathfrak{g}$--valued one-form, so its $(1,0)$ part is conjugate to the $(0,1)$ part. Multiplying the obtained identity by $i$ and taking the pointwise inner product with $\Phi$ yields
\begin{equation} \label{eqn:invarianceproof2}
 0 = \left\langle - \frac{\partial^2 \Phi}{\partial t^2}, \Phi \right\rangle +  \frac{\sqrt{2}}{2}  \left\langle  \mu_{\C}(\Phi) \sigma \Phi, \Phi \right\rangle + 2 \left\langle \sigma \partial_C \sigma \partial_C \Phi , \Phi \right\rangle. 
 \end{equation}
 By formula \eqref{eqn:moment} the second term simplifies to $\sqrt{2}/2 | \mu_{\C}(\Phi)|^2$.  Remark \ref{rem:invariance} implies that
\[ \sigma \partial_C \sigma \partial_C = \del_C \left( \sigma^2 \right) \partial_C = - \del_C \partial_C = \partial_C^* \partial_C. \] 
We conclude that 
\[ \left\langle \sigma \partial_C \sigma \partial_C \Phi , \Phi \right\rangle_{L^2(\Sigma)} =  \left\langle \partial_C^* \partial_C \Phi, \Phi \right\rangle_{L^2(\Sigma)} = \left\| \partial_C \Phi \right\|_{L^2(\Sigma)}^2. \]
For a fixed value of $t$ integration of \eqref{eqn:invarianceproof2} over $\Sigma$ yields
\[ 0 = \int_{\Sigma} \left\langle - \frac{\partial^2 \Phi}{\partial t^2} , \Phi \right\rangle \mathrm{vol}_{\Sigma} +   \frac{\sqrt{2}}{2} \| \mu_{\C}( \Phi) \|_{L^2(\Sigma)}^2 + 2 \| \partial_C \Phi \|_{L^2(\Sigma)}^2. \]
Integrate the last equality by parts with respect to $t \in [0,1]$. The boundary terms vanish because  $\Phi$ is periodic up to the action of $h(1)g$ which preserves the inner product. We obtain
\[ 0 = \left\| \frac{\partial \Phi}{\partial t} \right\|^2_{L^2} + \frac{\sqrt{2}}{2} \left\| \mu_{\C}(\Phi) \right\|^2_{L^2} + 2 \left\| \partial_C \Phi \right\|^2_{L^2}, \]
which shows that 
\[ \frac{\partial \Phi}{\partial t} = 0, \quad  \frac{\partial C}{\partial t} = 0. \]
Thus, the families $C(t) = C$ and $\Phi(t) = \Phi$ are constant and
\[ C = C(1) = k \left( C(0) \right) = k(C), \quad \Phi = \Phi(1) = k\left( \Phi(0) \right) = k(\Phi) \]
for the gauge transformation $k = h(1)g$ over $\Sigma$. The first equality implies $d_C k = 0$, so $k$ is covariantly constant. On the other hand, by irreducibility, there exists a point $x \in \Sigma$ such that the $G$--stabiliser of $\Phi(x)$ is trivial. Hence, $k(x) = \textnormal{id}$, so $k = \textnormal{id}$ everywhere and $g = h(1)^{-1}$. The path $h(t)^{-1}$ is a homotopy of gauge transformations connecting $g$ with $h(0)^{-1} = \textnormal{id}$ and so $P \to Y$ is pulled-back from $P_{\Sigma} \to \Sigma$. In particular, we could have chosen $g=\mathrm{id}$, then $h(1) = \mathrm{id}$ and $h$ descends to a gauge transformation of $P$ mapping $(A_Y, \Psi)$ to the circle-invariant solution $(C,\Phi)$. By Proposition \ref{prop:invariance}, $(C,\Phi)$ satisfies equation \eqref{eqn:reduction}.  
\end{proof}

\begin{rem}
Much of this discussion can be extended to the setting when $M$ is a hyperk\"ahler manifold with an isometric $\SU(2)$--action rotating the sphere of complex structures. The Dirac operator $\D_{AB}$ and equation \eqref{eqn:generalised} have natural generalisations \cite{haydys, haydys2}. For $Y = S^1 \times \Sigma$ one introduces the non-linear Dolbeault operator $\partial_{AB}$ as in \cite{cieliebak-et.al} so that Lemma \ref{lem:dirac} and Proposition \ref{prop:invariance} hold. However, our proof of Theorem \ref{thm:invariance} makes use of the vector space structure on $M$ and does not immediately generalise to the non-linear setting. We expect the result to be true but in the proof one should use the Weitzenb\"ock formulae for non-linear Dirac operators \cite{taubes, pidstrygach, callies}.
\end{rem}

\subsection{An abelian vortex equation}
We apply Theorem \ref{thm:invariance} to the Seiberg--Witten equation with multiple spinors. In this case, $M = \H \otimes \C^n$, $H = \SU(n)$, and $G = \U(1)$. The action of $\SU(n)$ on $M$ is the transpose of the standard representation. We identify $\H$ with $\C \oplus \overline{\C}$ via
\[ a + b i + cj + dk = (a + b i) + (c - d i) j \mapsto ( a + bi, c + di) , \]
so that the action of $\U(1)$ from the left preserves the complex structure given by the left multiplication by $i$. Then $M$ is identified with $\C^n \oplus \overline{\C}^n$ and the moment maps are
\[ \mu_{\R}(x,y) = i ( |x|^2 - |y|^2), \qquad \mu_{\C}(x,y) = y^* x. \] 
Suppose that $E \to Y$ and $L \to Y$ are pulled back from bundles over $\Sigma$, and so is the background connection $B$. We form the associated bundle
\[ \overline{\mathbb{M}} = (E \times L \times K^{1/2}) \times_{SU(n) \times U(1)} M = (E \otimes L^* \otimes K^{1/2}) \oplus (E^* \otimes L \otimes K^{1/2}). \]
Every circle-invariant section $\Psi$ of $\overline{\mathbb{M}}$ can be written in the form $\Psi = (\beta, \alpha)$ for 
\[ \alpha \in \Gamma(\Sigma, E^* \otimes L \otimes K^{1/2})  \quad \textnormal{and} \quad \beta \in \Gamma(\Sigma, E \otimes L^* \otimes K^{1/2}) . \]
The real moment map is $\mu_{\R}(\beta, \alpha) = i( |\beta|^2 - |\alpha|^2)$ and the complex moment map is
\[ \Gamma(\Sigma, E^* \otimes L \otimes K^{1/2} ) \times \Gamma(\Sigma, E \otimes L^* \otimes K^{1/2}) \to \Gamma(\Sigma, K), \]
\[ (\alpha, \beta) \mapsto \alpha \beta. \]
Suppose that the closed one-form $\eta$ used to perturb \eqref{eqn:nsw} is pulled back from $\Sigma$. By Remark \ref{rem:invariance}, for circle-invariant configurations equation \eqref{eqn:nsw} reduces to
\begin{equation} \label{eqn:nsw-invariant}
\left\{
\begin{array}{l}
\del_{AB} \alpha = 0, \\
\del_{AB} \beta = 0, \\
\alpha \beta = 0, \\
i \ast  F_A + | \alpha |^2 - |\beta|^2 - i \ast \eta = 0.
\end{array}
\right.
\end{equation}

Now is a good point to introduce the space $\sP_{\Sigma}$  appearing in Theorem \ref{thm:generic0}.

\begin{defn}
  \label{Def_ParameterSpaceSigma}
  Let $Y = S^1 \times \Sigma$ be equipped with a spin structure pulled-back from $\Sigma$. Let $E \to Y$ be an $\SU(n)$--bundle pulled-back from $\Sigma$.
  
  We introduce the following Fr\'echet manifolds equipped with the $C^{\infty}$--topology \\

\begin{tabular}{lcl}
$\Met_{\Sigma}$ & & the space of Riemannian metrics on $\Sigma$, \\
$\cZ_{\Sigma}$  & & the space of closed imaginary-valued two-forms on $\Sigma$ \\
$\cP_{\Sigma} := \cA(\Sigma, E) \times \cZ_{\Sigma}$ & & the \emph{space of perturbations pulled-back from} $\Sigma$,
\end{tabular}\\

The \emph{space of parameters of the equation pulled-back from $\Sigma$} is 
\begin{equation*}
\sP_{\Sigma} := \Met_{\Sigma} \times\cP_{\Sigma}.
\end{equation*} 
We consider it as a subspace of the full parameter space $\sP$ introduced in \ref{Def_Parameters}.
\end{defn}

\begin{prop} \label{prop:nsw-invariant}
Assume the situation described in Definition \ref{Def_ParameterSpaceSigma}.
If $(A, \Psi)$ is an irreducible solution of the Seiberg--Witten equation with multiple spinors \eqref{eqn:nsw} with respect to a Riemannian metric $g \in \Met_{\Sigma}$ and a parameter $\sigma = (B,\eta) \in \cP_{\Sigma}$, then 
\begin{enumerate}
\item $L$ is pulled back from a bundle over $\Sigma$, and 
\item $(A,\Psi)$ is gauge-equivalent to a circle-invariant configuration satisfying equation \eqref{eqn:nsw-invariant}.
\end{enumerate}
\end{prop}

\section{A holomorphic description of the moduli space}
\label{sec_holomorphic}

The main result of this section, Theorem \ref{thm:holomorphic} below, identifies the moduli space of Seiberg--Witten monopoles on $Y = S^1 \times \Sigma$ with a certain moduli space of holomorphic data on $\Sigma$. 

\subsection{Bryan--Wentworth moduli spaces}
In the situation of Proposition \ref{prop:nsw-invariant}, the moduli space $\cM^*=\cM^*(g,\sigma)$ has the following description. Let
\[ \cC_{\Sigma} = \cA(\Sigma, L) \times \Gamma(\Sigma, E^* \otimes L \otimes K^{1/2}) \times \Gamma(\Sigma, E \otimes L^* \otimes K^{1/2}). \]
Consider the subspace of $\cC_{\Sigma}$ consisting triples $(A,\alpha,\beta)$ satisfying equations \eqref{eqn:nsw-invariant} and condition $(\alpha,\beta) \neq (0,0)$. The gauge group $\cG(\Sigma) = C^{\infty}(\Sigma, S^1)$ acts freely on this subspace. By Proposition \ref{prop:nsw-invariant} and Lemma \ref{lem:inv-gauge}, the quotient is homeomorphic to $\cM^*$. 

The next result extends the work of Bryan and Wentworth \cite{bryan-wentworth} who described Seiberg--Witten multi-monopoles on K\"ahler surfaces under the assumption that the background bundle $E$ is trivial and $B$ is the product connection.  Before stating the theorem, we introduce
\[ d = \deg(L) := \langle c_1(L), [\Sigma] \rangle \qquad \textnormal{and} \qquad \tau := \int_{\Sigma} \frac{i \eta}{2\pi}. \]
If $d - \tau < 0$, then the last equation of \eqref{eqn:nsw-invariant} forces $\alpha$ to be non-zero for
\[ 0 = \int_{\Sigma} \left\{ i F_A - i \eta + (| \alpha |^2 - | \beta |^2) \mathrm{vol}_{\Sigma} \right\} = 2\pi( d - \tau ) + \| \alpha \|_{L^2}^2 - \| \beta \|_{L^2}^2. \] 
Likewise, if $d - \tau > 0$, then $\beta$ must be non-zero. In both cases there are no reducible solutions and $\cM = \cM^*$. When $d - \tau = 0$, either both $\alpha$ and $\beta$ are non-zero or both of them vanish yielding a reducible solution. 

Recall that in dimension two every unitary connection equips the underlying vector bundle with a holomorphic structure. In particular, $K^{1/2}$ and $K$ are holomorphic line bundles.

\begin{defn}
 Denote by $\cE_B \to \Sigma$ the holomorphic $\SL(n,\C)$--bundle obtained from $E^*$ with the dual connection $B^*$. We will write simply $\cE$ in a context in which $B$ is fixed. 
\end{defn}

\begin{thm} \label{thm:holomorphic}
If $d - \tau< 0$, then $\cM^*$ is isomorphic as real analytic spaces to the moduli space $\cM_{\hol}$ of triples $(\mathcal{L}, \alpha, \beta)$ consisting of
\begin{itemize}
\item a degree $d$ holomorphic line bundle $\mathcal{L} \to \Sigma$, 
\item holomorphic sections
\[ 
\alpha \in H^0(\Sigma, \cE_B \otimes \mathcal{L} \otimes K^{1/2} ) \quad \textnormal{and} \quad \beta \in H^0(\Sigma, \cE_B^* \otimes \mathcal{L}^* \otimes K^{1/2})
\]
satisfying $\alpha \neq 0$ and $\alpha \beta = 0 \in H^0(\Sigma, K)$.
\end{itemize}
Two such triples $(\mathcal{L},\alpha,\beta)$ and $(\mathcal{L}', \alpha', \beta')$ correspond to the same point in the $\cM_{\hol}$ if there is a holomorphic isomorphism $\mathcal{L} \to \mathcal{L}'$ mapping $\alpha$ to $\alpha'$ and $\beta$ to $\beta'$. 

The statement still holds when $d - \tau \geq 0$ with the difference that for $d - \tau  > 0$ it is $\beta$ instead of $\alpha$ that is required to be non-zero and for $d - \tau = 0$ both $\alpha$ and $\beta$ are required to be non-zero.
\end{thm}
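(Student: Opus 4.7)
The plan is to construct the bijection $\cM^* \to \cM_{\hol}$ in the usual Kobayashi--Hitchin style: the forward map records a holomorphic structure and holomorphic sections, while the inverse map amounts to solving the real moment-map equation inside a complex gauge orbit. First I would set up the forward direction. Given a solution $(A,\alpha,\beta)$ of \eqref{eqn:nsw-invariant}, the $(0,1)$-part of $A$ (combined with the fixed connection $B$ on $E$ and the Chern connection of $K^{1/2}$) equips $L$ with a holomorphic structure making it into a degree $d$ holomorphic line bundle $\mathcal{L}$. The equations $\del_{AB}\alpha = 0$ and $\del_{AB}\beta = 0$ say that $\alpha \in H^0(\Sigma, \cE_B \otimes \mathcal{L} \otimes K^{1/2})$ and $\beta \in H^0(\Sigma, \cE_B^* \otimes \mathcal{L}^* \otimes K^{1/2})$, and $\alpha\beta = 0$ pointwise gives $\alpha\beta = 0$ in $H^0(\Sigma,K)$. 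Two solutions related by a real gauge transformation $u \in \cG(\Sigma) = C^\infty(\Sigma, S^1)$ give isomorphic triples, so the assignment descends to a continuous map $\cM^* \to \cM_{\hol}$.

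For the inverse, the key observation is that two holomorphic triples are isomorphic if and only if the corresponding data $(\del_A, \alpha, \beta)$ lie in the same orbit of the complexified gauge group $\cG^{\C}(\Sigma) = C^\infty(\Sigma,\C^*)$. Since every element of $\cG^{\C}$ factors as the product of a real gauge transformation and one of the form $e^f$ with $f \colon \Sigma \to \R$, the correspondence will follow once I show that every complex gauge orbit of holomorphic data (satisfying the non-vanishing condition of the theorem) contains a \emph{unique} real gauge orbit of solutions to the full equations~\eqref{eqn:nsw-invariant}. The only part of \eqref{eqn:nsw-invariant} not invariant under $\cG^{\C}$ is the real moment-map equation $i\ast F_A + |\alpha|^2 - |\beta|^2 - i\ast\eta = 0$, and under $e^f$ with $f$ real the curvature transforms by $F_A \mapsto F_A - 2i\,\partial\del f$ while $|\alpha|^2$ and $|\beta|^2$ get multiplied by $e^{2f}$ and $e^{-2f}$ respectively. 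Substituting into the moment-map equation yields the scalar PDE
\begin{equation*}
  2\Delta f + e^{2f}|\alpha|^2_0 - e^{-2f}|\beta|^2_0 = i\ast\eta - i\ast F_{A_0}
\end{equation*}
for $f$, with $A_0$ a fixed background unitary connection representing the given holomorphic structure.

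The analytic heart of the proof is therefore the unique solvability of this Kazdan--Warner type equation. I would handle existence by minimising the strictly convex energy functional whose Euler--Lagrange equation is the PDE above, namely
\begin{equation*}
  \mathcal{E}(f) = \int_{\Sigma}\left\{ |df|^2 + e^{2f}|\alpha|_0^2 + e^{-2f}|\beta|_0^2 + 2f\bigl(i\ast F_{A_0} - i\ast\eta\bigr)\right\}\mathrm{vol}_{\Sigma}.
\end{equation*}
Integrating the PDE over $\Sigma$ shows the only topological obstruction to existence is $2\pi(d - \tau) + \|\alpha\|^2 - \|\beta\|^2 = 0$, which is compatible with the sign of $d-\tau$ exactly under the non-vanishing conditions stated in the theorem. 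Coercivity and weak lower semicontinuity of $\mathcal{E}$ then deliver a minimiser; strict convexity gives uniqueness up to the constants killed by the real gauge group. I expect this existence step to be the main obstacle: when $d-\tau < 0$ one must verify coercivity using only $\alpha \neq 0$ (allowing $\beta \equiv 0$), and the case $d-\tau = 0$ with both sections nonzero requires more care to rule out $f \to \pm\infty$.

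Finally, to upgrade the set-theoretic bijection to a homeomorphism of moduli spaces and an isomorphism of real analytic structures, I would note that both $\cM^*$ and $\cM_{\hol}$ carry Kuranishi-type local models (cf. Proposition~\ref{prop:kuranishi} and Remark~\ref{rem:analytic}) cut out by real analytic equations, the forward map is manifestly continuous and real analytic in local slices, and the inverse is obtained from the implicit function theorem applied to the (elliptic, positive self-adjoint) linearisation of the scalar PDE above, which depends real analytically on $(\del_{A_0},\alpha,\beta)$. Comparing the Kuranishi models of the two moduli spaces then shows the map is an isomorphism of real analytic spaces.
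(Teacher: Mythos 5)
Your proposal correctly reproduces one half of the paper's argument: the forward map recording holomorphic data, and the inverse obtained by solving the abelian Kazdan--Warner equation inside a complex gauge orbit. The paper does this in Proposition \ref{prop_homeomorphism} via the Bryan--Wentworth lemma (Lemma \ref{lem:bryan-wentworth}, proved by sub- and super-solutions, with continuity of the inverse from uniform estimates and Arzel\`a--Ascoli), whereas you propose a variational proof (strict convexity plus coercivity) and continuity via the implicit function theorem applied to the invertible operator $\Delta + 2(e^{2f}|\alpha|^2 + e^{-2f}|\beta|^2)$; that is a legitimate alternative, though your remark that uniqueness holds ``up to constants killed by the real gauge group'' is off --- real constants $e^C$ are not unitary gauge transformations, and the correct statement is that after the normalisation $\int(|\alpha|^2-|\beta|^2)>0$ the solution $f$ is genuinely unique. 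Your sketch of the comparison between the unitary quotient (with moment map) and the $\cG^c$-quotient also matches the paper's Step 2, where the slice for the $C^\infty(\Sigma,\R)$-part of $\cG^c(\Sigma)$ is provided by the invertibility of $\Delta + 2(|\alpha|^2+|\beta|^2)$.

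The genuine gap is that you treat $\cM^*$ as if it were the moduli space of solutions of \eqref{eqn:nsw-invariant} on $\Sigma$. The theorem asserts an isomorphism of \emph{real analytic spaces}, and the analytic structure on $\cM^*$ is the one defined by the three-dimensional Kuranishi models on $Y=S^1\times\Sigma$ (Proposition \ref{prop:kuranishi}, Remark \ref{rem:analytic}), whose linearisation sees all deformations, not only circle-invariant ones. Proposition \ref{prop:nsw-invariant} identifies the \emph{points} of $\cM^*$ with circle-invariant solutions, but to identify the analytic structures you must prove the linearised statement: every element of the three-dimensional deformation space $H^1_{A,\Psi}$ (Lemma \ref{lem:h1}) at a circle-invariant solution is itself circle-invariant, so that the 3D and 2D Kuranishi models coincide --- this is equality \eqref{eq_deformationspaces} in the paper, proved by a Weitzenb\"ock-type integration by parts parallel to Theorem \ref{thm:invariance} (differentiate in $t$, pair with the solution, integrate over $S^1\times\Sigma$ to kill $\partial_t a$, $\partial_t\phi$ and the $dt$-component $b$). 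Without this step your argument only establishes that the two-dimensional vortex moduli space $\cM_\Sigma$ is isomorphic to $\cM_{\hol}$; the claim that ``the forward map is manifestly real analytic in local slices'' of $\cM^*$ presupposes exactly the circle-invariance of the 3D Zariski tangent and obstruction spaces that is being omitted.
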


The next few paragraphs are occupied with a construction of $\cM_{\hol}$. A related construction was considered in \cite[Section 1]{friedman-morgan2}. The first three equations in  \eqref{eqn:nsw-invariant},
\begin{equation} \label{eqn:holomorphic}
\left\{
\begin{array}{l}
\del_{AB} \alpha = 0, \\
\del_{AB} \beta = 0, \\
\alpha \beta = 0,
\end{array}
\right.
\end{equation}
are invariant under the action of the \emph{complexified gauge group} $\cG^c(\Sigma) := C^{\infty}(\Sigma, \C^*)$ of complex automorphisms of $L$. The action of $g \colon \Sigma \to \C^*$ on $(A,\alpha,\beta) \in \cC_{\Sigma}$ is given by
\[ g(A, \alpha, \beta) = \left(A + \overline{g}^{-1} \partial \overline{g} - g^{-1} \del g, g \alpha, g^{-1} \beta \right).    \] 
In terms of the associated Dolbeault operators we have
\[
\begin{array}{ll}
 \del_{g(A)B} = g  \del_{B A} g^{-1} & \textnormal{on } \Gamma(\Sigma, E^* \otimes L \otimes K^{1/2}), \\
 \del_{g(A)B} = g^{-1} \del_{BA} g & \textnormal{on }  \Gamma(\Sigma, E \otimes L^* \otimes K^{1/2}).
 \end{array}
  \]
  
\begin{defn} \label{defn:holomorphic}
Consider the subspace of $\cC_{\Sigma}$ consisting of triples $(A,\alpha,\beta)$ satisfying equations \eqref{eqn:holomorphic} and subject to the condition
\[ \left\{ \begin{array}{ll}
\alpha \neq 0 & \textnormal{if } d - \tau < 0, \\
\beta \neq 0 & \textnormal{if } d - \tau > 0, \\
\alpha \neq 0 \textnormal{ and } \beta \neq 0 & \textnormal{if } d - \tau = 0.
\end{array}  \right. \]
We define $\cM_{\hol}$ to be the quotient of this subspace by the action of $\cG^c(\Sigma)$. The points of $\cM_{\hol}$ parametrise the isomorphism classes of triples $( \mathcal{L}, \alpha, \beta )$ considered in Theorem \ref{thm:holomorphic}.
\end{defn}

\begin{rem}
The moduli space $\cM_{\hol}$ depends on the conformal class of the metric $g$ on $\Sigma$, the holomorphic bundle $\cE_B$, the degree $d$ of $L$, and the sign of $d - \tau$.
\end{rem}

Employing the methods of subsection \ref{subsec:moduli}, one shows that $\cM_{\hol}$ is metrisable, second countable, and has a natural complex analytic structure given by local Kuranishi models as in Remark \ref{rem:analytic}. The discussion is almost the same as that for the Seiberg--Witten equation, so we only outline the details. To set up the Fredholm theory, consider the modified equation
\begin{equation} \label{eqn:holomorphic2}
\left\{
\begin{array}{l}
\del_{AB} \alpha + i f \bar{\beta} = 0, \\
\del_{AB} \beta - i f \bar{\alpha} = 0, \\
\alpha \beta + \partial f = 0.
\end{array}
\right.
\end{equation}
A solution of \eqref{eqn:holomorphic2} is a quadruple $(A, \alpha, \beta, f)$ where $A$, $\alpha$, and $\beta$ are as before and $f \in C^{\infty}(\Sigma, \C)$. The equation is elliptic modulo the action of $\cG^c(\Sigma)$. An analogue of Proposition \ref{prop:irreducible} is

\begin{prop}
If $(A,\alpha,\beta,f)$ is a solution of \eqref{eqn:holomorphic2} with $(\alpha,\beta) \neq 0$, then $f = 0$.
\end{prop}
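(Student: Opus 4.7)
The plan is to mimic the argument of Proposition \ref{prop:irreducible}: use the third equation to produce a second-order elliptic equation for $f$ whose zeroth-order term is a non-negative real potential built from $|\alpha|^2$ and $|\beta|^2$, and then conclude by an integration-by-parts identity. Concretely, I would apply $\bar\partial$ to the third equation of \eqref{eqn:holomorphic2} to get
\[
\bar\partial(\alpha\beta) \;+\; \bar\partial\partial f \;=\; 0.
\]
On the left, Leibniz gives $\bar\partial(\alpha\beta) = (\bar\partial_{AB}\alpha)\cdot\beta + \alpha\cdot(\bar\partial_{AB}\beta)$; substituting from the first two equations produces the $(1,1)$-form $-if\,\bar\beta\cdot\beta \,+\, if\,\alpha\cdot\bar\alpha$. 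On the right, $\bar\partial\partial = -\partial\bar\partial$ and the Kähler identity $\partial\bar\partial f = \tfrac{i}{2}(\Delta f)\,\omega$ on functions convert the term into a multiple of $\Delta f\cdot\omega$. Applying $\Lambda$ to the identity (equivalently, using the Kähler identity $\partial^* = i\Lambda\bar\partial$ applied to the third equation directly) reduces it to a scalar second-order equation in $f$.

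After carefully tracking the anti-linear identifications used to interpret $\bar\alpha$ and $\bar\beta$ as $(0,1)$-form-valued sections of the same bundles that contain $\bar\partial_{AB}\alpha$ and $\bar\partial_{AB}\beta$ --- identifications that use the Hermitian metrics on $K^{1/2}$, $L$, $E$ together with the canonical smooth isomorphism $\bar K \cong K^{-1}$ --- the pointwise contractions $\bar\beta\cdot\beta$ and $\alpha\cdot\bar\alpha$ are non-negative real multiples of $|\beta|^2\omega$ and $|\alpha|^2\omega$, and the resulting equation takes the form
\[
\Delta f \;+\; c\,(|\alpha|^2 + |\beta|^2)\,f \;=\; 0
\]
for some positive constant $c$. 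Pairing with $\bar f$ and integrating by parts yields
\[
\|df\|_{L^2}^2 \;+\; c\int_\Sigma (|\alpha|^2 + |\beta|^2)\,|f|^2\,\omega \;=\; 0,
\]
and since both terms are non-negative, they vanish separately. The hypothesis $(\alpha,\beta) \not\equiv 0$ ensures that the potential $|\alpha|^2 + |\beta|^2$ is not identically zero, so the vanishing of the second term combined with the vanishing of $\|df\|$ forces $f \equiv 0$.

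The main obstacle is obtaining the correct sign in the zeroth-order term: a naive application of Leibniz produces the sign-indefinite combination $|\alpha|^2 - |\beta|^2$, and one must be careful with the factors of $i$ coming from the Hermitian identifications $\overline{K^{1/2}} \cong K^{-1/2}$ and $\bar K \cong K^{-1}$ in order to arrive at the positive combination $|\alpha|^2 + |\beta|^2$. This sign bookkeeping is the two-dimensional counterpart of the role played by Lemma \ref{lem:muderivative} in the three-dimensional proof of Proposition \ref{prop:irreducible}, where the identity $\Delta f + 2|\Psi|^2 f = 0$ appears with the correct sign and $|\Psi|^2$ reduces precisely to $|\alpha|^2 + |\beta|^2$ upon circle-invariant dimensional reduction.
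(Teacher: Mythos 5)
Your overall strategy is the intended one: the paper offers no separate argument for this proposition beyond calling it an analogue of Proposition \ref{prop:irreducible}, and differentiating the third equation of \eqref{eqn:holomorphic2}, substituting the first two, and integrating by parts is exactly that analogue. The gap is that the single step on which the whole proof turns --- the claim that the zeroth-order term comes out as the \emph{definite} combination $|\alpha|^2+|\beta|^2$ --- is asserted rather than computed, and with the most natural reading of the notation it goes the other way. Writing $\alpha=a\,(dz)^{1/2}$, $\beta=b\,(dz)^{1/2}$ locally and interpreting $\bar\alpha$, $\bar\beta$ via the plain Hermitian identifications $\overline{E}\cong E^*$, $\overline{L^*}\cong L$, $\overline{K^{1/2}}\cong K^{1/2}\otimes\Lambda^{0,1}$, the first two equations become $\nabla_{\bar z}a=-c\,if\,b^{\dagger}$ and $\nabla_{\bar z}b=+c\,if\,a^{\dagger}$ with $c>0$, so Leibniz gives $\partial_{\bar z}(a\cdot b)=c\,if\,(|a|^2-|b|^2)$: the printed relative sign between $+if\bar\beta$ and $-if\bar\alpha$ is precisely what produces the indefinite combination. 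In that case your final identity only yields $df=0$ together with $|f|^2\int_\Sigma(|\alpha|^2-|\beta|^2)=0$, which does not force $f=0$; and since $\cE$ has rank $n\geq 2$ in this section, one cannot rescue it by unique continuation, because $\alpha\beta\equiv 0$ does not force $\alpha\equiv 0$ or $\beta\equiv 0$. So the proof stands or falls with the bookkeeping you postpone: you must specify the anti-linear identifications actually implicit in \eqref{eqn:holomorphic2} and check that with them the two zeroth-order terms pair against $\beta$ and $\alpha$ with the \emph{same} sign. The compensating sign is not visible in the bare metric conjugates; morally it comes from the quaternionic structure $\sigma$ (which satisfies $\sigma^2=-1$), the same mechanism by which identity \eqref{eqn:moment} produces the non-negative term $|\mu_{\C}(\Phi)|^2$ in the proof of Theorem \ref{thm:invariance}. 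Exhibiting this is the actual content of the proposition.

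Two smaller corrections. Even in the favourable case the constant multiplying the potential need not be a positive real number: since $\del\partial f=-\tfrac{i}{2}(\Delta f)\,\mathrm{vol}$ while the zeroth-order terms carry an explicit $i$, one generically arrives at an equation of the form $\Delta f+i\lambda\,(|\alpha|^2+|\beta|^2)f=0$ with $\lambda$ real and nonzero. This is harmless, but your sentence ``both terms are non-negative'' must then be replaced by taking real and imaginary parts of the paired identity separately (recall $f$ is complex-valued here, unlike the imaginary-valued $f$ of Proposition \ref{prop:irreducible}): the real part gives $df=0$, the imaginary part gives $\int(|\alpha|^2+|\beta|^2)|f|^2=0$, and then $f$ is a constant vanishing on the nonempty open set where $(\alpha,\beta)\neq 0$, hence $f\equiv 0$. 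Finally, the heuristic that \eqref{eqn:holomorphic2} is the circle-invariant reduction of \eqref{eqn:nsw2} is not literally true (the latter has imaginary $f$ and retains the curvature equation), so it cannot substitute for the sign computation.
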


Using the linearisation of \eqref{eqn:holomorphic2} together with the complex Coulomb gauge fixing we represent $\cM_{\hol}$ as the zero set of a Fredholm section. The local structure of the moduli space is encoded in the elliptic complex at a solution $(A, \alpha, \beta, 0)$:
\[ 
\begin{tikzcd}
\Omega^0(\C) \ar{r}{G^c_{A,\alpha,\beta}} & \Omega^{0,1} \oplus \Gamma(E^* \otimes L \otimes K^{1/2}) \oplus \Gamma(E \otimes L^* \otimes K^{1/2}) \oplus \Omega^0(\C) \ar{r} & \hphantom{0} \\
 \hphantom{0}  \ar{r}{F_{A,\alpha,\beta}} & \Gamma(E^* \otimes L \otimes K^{-1/2}) \oplus \Gamma(E \otimes L^* \otimes K^{-1/2}) \oplus \Omega^{1,0}  
\end{tikzcd} 
\]
where $G^c_{A,\alpha,\beta}$ is the linearised action of the complexified gauge group
\[ G^c_{A,\alpha,\beta}(h) = (- \del h, h\alpha, -h\beta, 0), \]
and $F_{A,\alpha,\beta}$ is the linearisation of equations \eqref{eqn:holomorphic2}
\[ F_{A,\alpha,\beta}(a^{0,1}, u, v, t) = 
\left(
\begin{array}{l}
\del_{AB} u + a^{0,1} \alpha + i t \bar{\beta} \\
\del_{AB} v - a^{0,1} \beta - i t \bar{\alpha}  \\
u \beta + \alpha v + \partial t 
\end{array}
\right). \]
Even though the map given by the left-hand side of \eqref{eqn:holomorphic2} is not holomorphic, its derivative $F_{A,\alpha,\beta}$ at a solution $(A,\alpha,\beta,0)$ is complex linear and so the cohomology groups $H^0_{A,\alpha,\beta}$, $H^1_{A,\alpha,\beta}$, $H^2_{A,\alpha,\beta}$ are complex vector spaces. If the solution is irreducible, then $H^0_{A,\alpha,\beta} = 0$. We are left with complex vector spaces $H^1_{A,\alpha,\beta}$ and $H^2_{A,\alpha,\beta}$ of the same dimension. They have the following description---the proof is yet another variation of that of Proposition \ref{prop:irreducible}.

\begin{lem} \label{lem:cohomology}
Let $(A,\alpha, \beta, f)$ be a solution of \eqref{eqn:holomorphic} with $(\alpha, \beta) \neq 0$ and $f = 0$. Then the deformation space $H^1_{A,\alpha,\beta}$ is the quotient of the space of solutions 
\[  (a^{0,1}, u, v) \in \Omega^{0,1}(\C) \oplus \Gamma(E^* \otimes L \otimes K^{1/2} ) \oplus \Gamma(E \otimes L^* \otimes K^{1/2}), \]
\[ \left\{
\begin{array}{l}
\del_{AB} u + a^{0,1} \alpha = 0, \\
\del_{AB} v - a^{0,1} \beta = 0, \\
u \beta + \alpha v = 0. 
\end{array}
\right.\] 
by the subspace generated by $( - \del h, h\alpha, - h\beta)$ for $h \in \Omega^0(\C)$. The obstruction space $H^2_{A, \alpha, \beta}$ is canonically isomorphic to the dual space  $(H^1_{A, \alpha, \beta})^*$ as complex vector spaces. 
\end{lem}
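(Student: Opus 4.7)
The plan is to prove the two claims separately.

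For the description of $H^1$, I propose to show that the projection $(a^{0,1}, u, v, t) \mapsto (a^{0,1}, u, v)$ from $\ker F_{A,\alpha,\beta}$ to the space of triples satisfying the three displayed equations induces an isomorphism on cohomology. Surjectivity is immediate, since setting $t = 0$ in $F_{A,\alpha,\beta}$ recovers exactly those three equations, so any triple $(a^{0,1}, u, v)$ solving them lifts to $(a^{0,1}, u, v, 0) \in \ker F_{A,\alpha,\beta}$. Because the image of $G^c_{A,\alpha,\beta}$ has zero last component, the projection descends to the quotient, and injectivity on $H^1$ reduces to checking that any element of the form $(0, 0, 0, t) \in \ker F_{A,\alpha,\beta}$ forces $t = 0$. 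Substituting into $F_{A,\alpha,\beta} = 0$ yields the pointwise conditions $it \bar\beta = 0$, $-it \bar\alpha = 0$, and $\partial t = 0$. The last says that $t$ is antiholomorphic on the compact connected Riemann surface $\Sigma$ and is therefore constant; if this constant were non-zero, the first two would force $\alpha$ and $\beta$ to vanish identically, contradicting the irreducibility hypothesis $(\alpha, \beta) \neq (0,0)$. This is the promised variation of Proposition \ref{prop:irreducible}: the auxiliary variable introduced to render the equation elliptic modulo the complexified gauge action is automatically killed in the linearised kernel.

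For the duality $H^2 \cong (H^1)^*$, I would invoke Serre duality on $\Sigma$. At the cochain level, define a bilinear pairing
\[
\langle (a^{0,1}, u, v), (\tilde u, \tilde v, \tilde a^{1,0}) \rangle := \int_\Sigma \left( u \cdot \tilde v + v \cdot \tilde u + a^{0,1} \wedge \tilde a^{1,0} \right),
\]
where each of the first two products uses the fibrewise contraction $(E^* \otimes L \otimes K^{1/2}) \otimes (E \otimes L^* \otimes K^{1/2}) \to K$ followed by $\Omega^{0,1} \otimes K \to \Omega^{1,1}$, so that every integrand is a $(1,1)$-form on $\Sigma$. Termwise this is the Serre pairing between a holomorphic bundle and its Serre dual. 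The plan is to check that this descends to a pairing $H^1_{A,\alpha,\beta} \otimes H^2_{A,\alpha,\beta} \to \C$ via integration by parts, using $\del_{AB}\alpha = \del_{AB}\beta = 0$ and $\alpha\beta = 0$ to verify that it vanishes whenever the first argument lies in $\mathrm{im}\,G^c_{A,\alpha,\beta}$ or the second in $\mathrm{im}\,F_{A,\alpha,\beta}$. Non-degeneracy then follows from Serre duality applied to each summand separately.

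The main obstacle is the second part: matching the pairing with the equation-imposed constraints. The conceptually cleanest route is to reinterpret the deformation complex as the hypercohomology of a length-two complex of holomorphic sheaves on $\Sigma$, for which Serre--Grothendieck duality on a Riemann surface immediately yields $H^2 \cong (H^1)^*$; translating this into the Dolbeault language used here, and identifying the hypercohomology pairing with the one written above, constitutes the bulk of the technical work.
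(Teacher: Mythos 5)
There is a genuine gap in your treatment of $H^1$. The map you want to use, the projection $(a^{0,1},u,v,t)\mapsto(a^{0,1},u,v)$ from $\ker F_{A,\alpha,\beta}$ to the space of triples solving the three displayed equations, is not even well-defined until one knows that the $t$-dependent terms $it\bar\beta$, $it\bar\alpha$, $\partial t$ vanish on \emph{every} element of $\ker F_{A,\alpha,\beta}$. Equivalently: since $G^c_{A,\alpha,\beta}(h)=(-\del h,h\alpha,-h\beta,0)$ has zero last component, the linearised gauge action never changes $t$, so the natural map from the $t$-free description into $H^1_{A,\alpha,\beta}$ is surjective precisely when every $(a^{0,1},u,v,t)\in\ker F_{A,\alpha,\beta}$ has $t=0$. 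That is the actual content of the lemma, and it is what the paper means by "yet another variation of Proposition \ref{prop:irreducible}". Your argument only establishes the decoupled special case $(0,0,0,t)\in\ker F_{A,\alpha,\beta}$ (which settles injectivity but is trivial), not the coupled statement. The coupled statement needs the linearised analogue of the argument of Proposition \ref{prop:irreducible}: apply $\del$ to the third linearised equation $u\beta+\alpha v+\partial t=0$, use $\del_{AB}\alpha=0$, $\del_{AB}\beta=0$ to write $\del(u\beta+\alpha v)=(\del_{AB}u)\beta+\alpha(\del_{AB}v)$, substitute the first two equations so that the $a^{0,1}$-terms cancel, and obtain an identity of the form $\del\partial t + \bigl(\text{terms quadratic in }(\alpha,\beta)\bigr)\,t=0$; pairing with $t$ and integrating over $\Sigma$ yields an equation of the type $\bigl(\Delta+2(|\alpha|^2+|\beta|^2)\bigr)t=0$, and $(\alpha,\beta)\neq(0,0)$ then forces $t=0$. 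Without this step the identification of $H^1_{A,\alpha,\beta}$ with the $t$-free system is unproven.

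For the duality $H^2_{A,\alpha,\beta}\cong(H^1_{A,\alpha,\beta})^*$, what you offer is a plan rather than a proof: you define a candidate pairing but defer both the verification that it descends to cohomology and the identification with the hypercohomology pairing. Moreover, the closing claim that "non-degeneracy follows from Serre duality applied to each summand separately" is not sound as stated: the deformation complex does not split into line-bundle Dolbeault complexes, since the differentials $G^c_{A,\alpha,\beta}$ and $F_{A,\alpha,\beta}$ mix the summands through multiplication by $\alpha$ and $\beta$. Serre (or Serre--Grothendieck) duality must be applied to the full complex $\cO\to(\cE\otimes\mathcal{L}\otimes K^{1/2})\oplus(\cE^*\otimes\mathcal{L}^*\otimes K^{1/2})\to K$, whose Serre dual twisted by $K$ is isomorphic to itself (this uses $\det\cE\cong\cO$), and it is this self-duality of the total complex, not summand-wise duality, that produces the canonical isomorphism $H^2\cong(H^1)^*$. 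The hypercohomology route is the right idea, but as written the key steps are missing.
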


The analytic structure on a neighbourhood of $[A,\alpha,\beta]$ in $\cM_{\hol}$ is induced from a Kuranishi map $\kappa \colon H^1_{A,\alpha,\beta} \to H^2_{A,\alpha,\beta}$. Since the derivative of $F_{A,\alpha,\beta}$ is complex linear at a solution, $\kappa$ can be taken to be complex analytic which shows that $\cM_{\hol}$ is a complex analytic space.

\subsection{A homeomorphism between the moduli spaces}

Since equation \eqref{eqn:holomorphic} is part of \eqref{eqn:nsw-invariant} and $\cG(\Sigma)$ is a subgroup of $\cG^c(\Sigma)$, every point of $\cM^*$ gives rise to a point in $\cM_{\hol}$.

\begin{prop}
\label{prop_homeomorphism}
The natural map $ \cM^* \to \cM_{\hol}$ is a homeomorphism.
\end{prop}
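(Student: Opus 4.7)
The natural map $\cM^* \to \cM_{\hol}$ is induced by regarding a $\cG(\Sigma)$-orbit of solutions of \eqref{eqn:nsw-invariant} as a $\cG^c(\Sigma)$-orbit of solutions of \eqref{eqn:holomorphic}, so it is clearly well defined and continuous. The plan is to produce an inverse via a Hitchin--Kobayashi type correspondence: every $\cG^c(\Sigma)$-orbit in $\cM_{\hol}$ contains a representative satisfying also the moment map equation---the fourth equation in \eqref{eqn:nsw-invariant}---and the $\cG(\Sigma)$-orbit of this representative is unique.

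For the surjectivity step, use the polar decomposition $\cG^c(\Sigma) = \cG(\Sigma) \cdot \{ e^s : s \in C^{\infty}(\Sigma,\R) \}$. Since unitary gauge transformations preserve the moment map equation, it is enough to find a real $s$ such that $e^s\cdot(A,\alpha,\beta)$ satisfies it. A direct computation with the formulas for the complex gauge action reduces this to the nonlinear scalar elliptic PDE
\[ \Delta s + e^{2s}|\alpha|^2 - e^{-2s}|\beta|^2 + i\ast F_A - i\ast\eta = 0, \]
which is the Euler--Lagrange equation of the strictly convex functional
\[ J(s) = \int_\Sigma \left( \tfrac{1}{2}|ds|^2 + \tfrac{1}{2}e^{2s}|\alpha|^2 + \tfrac{1}{2}e^{-2s}|\beta|^2 + s(i\ast F_A - i\ast\eta) \right) \vol_\Sigma. \]
Splitting $s = \bar s + (s - \bar s)$ into its mean and fluctuating parts, the linear term contributes $2\pi(d-\tau)\bar s$ while the fluctuating part is controlled by the Dirichlet term through the Poincar\'e inequality. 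Coercivity then follows case by case, exactly matching Definition \ref{defn:holomorphic}: when $d - \tau < 0$, divergence $\bar s \to -\infty$ is blocked by the linear term and $\bar s \to +\infty$ by $e^{2s}|\alpha|^2$ (using $\alpha \neq 0$); the case $d - \tau > 0$ is symmetric with $\beta$; when $d-\tau = 0$ both $\alpha\neq 0$ and $\beta \neq 0$ are required to dominate in both directions. The direct method then yields a unique smooth minimiser.

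Injectivity follows from uniqueness of the minimiser: if two solutions of \eqref{eqn:nsw-invariant} are related by $g = u\,e^s \in \cG^c(\Sigma)$, the same PDE with the same data forces $s \equiv 0$, so $g = u \in \cG(\Sigma)$ and the two $\cG(\Sigma)$-orbits coincide. For continuity of the inverse, the assignment $(A,\alpha,\beta)\mapsto s$ is smooth: the linearisation of the PDE at a solution is $\Delta + 2 e^{2s}|\alpha|^2 + 2 e^{-2s}|\beta|^2$, a positive self-adjoint operator with nontrivial zero-order term (again by the non-vanishing hypothesis), hence invertible. The implicit function theorem in Sobolev spaces together with elliptic bootstrapping then delivers the required continuous dependence.

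The main obstacle is the coercivity of $J$ and the verification that the non-vanishing conditions in Definition \ref{defn:holomorphic} are exactly what is needed; the three cases distinguished by the sign of $d-\tau$ each demand their own asymptotic analysis of $J(s)$ as $\bar s \to \pm\infty$, and confirming the matching is the heart of the Hitchin--Kobayashi correspondence in this dimensionally reduced setting.
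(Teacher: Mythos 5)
Your argument is correct in outline but takes a genuinely different route from the paper at the analytic core. The paper does not solve the scalar equation variationally: it rewrites it as \eqref{eqn:bryan-wentworth2} and quotes the Kazdan--Warner-type result of Bryan--Wentworth (Lemma \ref{lem:bryan-wentworth}, proved by sub- and super-solutions), after first applying a constant complex gauge transformation $e^C$ to arrange $\int_\Sigma(P-Q)>0$ so that the hypotheses of that lemma hold, and it handles the borderline case $d-\tau=0$ via the extension in Remark \ref{rem:bryan-wentworth}; continuity of the inverse is then obtained from a priori $C^k$ bounds (quoted from the proof of \cite[Proposition 3.1]{doan}) together with Arzel\`a--Ascoli. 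You instead minimise the strictly convex functional $J$, get uniqueness for free from convexity (which also gives your injectivity step, matching the paper's ``unique up to $\cG(\Sigma)$''), and obtain continuity of the inverse from the implicit function theorem at the minimiser, whose linearisation $\Delta + 2e^{2s}|\alpha|^2+2e^{-2s}|\beta|^2$ is invertible precisely because $(\alpha,\beta)\neq 0$; combined with global uniqueness this does yield continuous dependence, and it avoids both the preliminary rescaling and the external compactness input. The one place where your sketch is too quick is coercivity: Poincar\'e alone does not control the exponential terms, since $\int e^{2s}|\alpha|^2$ couples $\bar s$ to the fluctuation. You need, e.g., Jensen's inequality with respect to the probability measure $|\alpha|^2\vol_\Sigma/\|\alpha\|_{L^2}^2$ to get a lower bound of the form $\|\alpha\|_{L^2}^2\,e^{2\bar s - C\|ds\|_{L^2}}$, and then a short case analysis comparing $\|ds\|_{L^2}^2$, this exponential, and the linear term $2\pi(d-\tau)\bar s$; one should also work in $W^{1,2}(\Sigma,\R)$, where Moser--Trudinger makes $J$ finite and weakly lower semicontinuous, and recover smoothness of the minimiser by elliptic bootstrapping. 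These are standard additions, and with them your proof goes through; the case distinction you make by the sign of $d-\tau$ indeed matches exactly the non-vanishing conditions in Definition \ref{defn:holomorphic}.
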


The proof relies on a generalisation of a classical theorem of Kazdan and Warner.

\begin{lem}[Bryan--Wentworth; Lemma 3.4 in \cite{bryan-wentworth}] \label{lem:bryan-wentworth}
Let $X$ be a compact Riemannian manifold and let $P$, $Q$, and $w$ be smooth functions on $X$ with $P$ and $Q$ non-negative, and
\[ \int_X P - Q > 0, \qquad \int_X w > 0. \]
Then the equation 
\[ \Delta u + P e^u - Q e^{-u} = w \]
has a unique solution $u \in C^{\infty}(X)$.
\end{lem}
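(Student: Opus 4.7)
My plan is to treat uniqueness and existence separately. Uniqueness will follow from a short monotonicity argument, while existence is obtained by minimising a strictly convex functional, the main technical point being coercivity.

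\textbf{Uniqueness.} Let $u_1, u_2$ be two smooth solutions, set $v = u_1 - u_2$, and subtract the equations. Pairing with $v$ and integrating by parts gives
\[
\int_X |\nabla v|^2 \;+\; \int_X P(e^{u_1}-e^{u_2})(u_1-u_2) \;+\; \int_X Q(e^{-u_2}-e^{-u_1})(u_1-u_2) \;=\; 0.
\]
Each summand is non-negative, since $t\mapsto e^t$ and $t\mapsto -e^{-t}$ are increasing and $P,Q\geq 0$. Hence each vanishes, forcing $v \equiv c$ constant. If $c \neq 0$ then the integrands $Pe^{u_2}(e^c-1)c$ and $Qe^{-u_2}(1-e^{-c})c$ are strictly positive wherever $P$, respectively $Q$, is positive, so $P \equiv 0$ and $Q \equiv 0$ a.e.\ on $X$, contradicting $\int_X (P-Q) > 0$. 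Therefore $c = 0$ and $u_1 = u_2$.

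\textbf{Existence.} I would minimise the strictly convex functional
\[
J(u) \;=\; \int_X \Bigl(\tfrac{1}{2}|\nabla u|^2 + Pe^u + Qe^{-u} - wu\Bigr),
\]
whose Euler--Lagrange equation is exactly the PDE in question. To apply the direct method, the crucial step is coercivity of $J$ on $W^{1,2}(X)$ (in dimension $\leq 2$, Moser--Trudinger makes the exponentials integrable; in higher dimensions one works in a larger Sobolev space). Decomposing $u = \bar u + u_0$ with $\int_X u_0 = 0$, Poincar\'e gives $\|u_0\|_{L^2} \leq C\|\nabla u\|_{L^2}$, and Jensen's inequality applied to the probability measure $P\,dV/\int P$ (valid since $\int(P-Q)>0$ forces $\int P > 0$) yields
\[
\int_X Pe^u \;\geq\; e^{\bar u}\Bigl(\int_X P\Bigr)\exp\bigl(-C\|\nabla u\|_{L^2}\bigr),
\]
and analogously for $\int_X Qe^{-u}$ (or simply $\geq 0$ if $\int Q = 0$). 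A case analysis on whether $\bar u \to +\infty$, $\bar u \to -\infty$, or $\|\nabla u\|_{L^2} \to \infty$ then shows $J \to +\infty$ in each regime: the first is handled by the lower bound above and $\int P > 0$, the second by the hypothesis $\int w > 0$ (so that $-\bar u \int w \to +\infty$), and the third by the $\tfrac{1}{2}\|\nabla u\|_{L^2}^2$ term. Given coercivity, a minimising sequence is $W^{1,2}$-bounded; weak compactness combined with lower semicontinuity of $J$ (weak l.s.c.\ of Dirichlet energy; strong $L^p$ convergence of the nonlinear terms via Rellich together with equi-integrability from Moser--Trudinger) produces a minimiser $u_\infty$. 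This $u_\infty$ weakly solves the equation, and standard elliptic bootstrap on $\Delta u_\infty = w - Pe^{u_\infty} + Qe^{-u_\infty}$ promotes $u_\infty$ to $C^\infty(X)$.

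\textbf{Main obstacle.} The delicate step is the coercivity estimate for $J$. The difficulty is that $P$ and $Q$ may vanish on large subsets of $X$, so the exponential integrals cannot be bounded below by pointwise quantities; one must carefully combine both integral hypotheses, $\int(P-Q)>0$ and $\int w > 0$, to rule out every direction of escape in $W^{1,2}$. A secondary technical point is the function-space set-up in dimensions $\geq 3$, where $W^{1,2}$ does not automatically control $e^{\pm u}$ and one needs a suitable Orlicz or higher-Sobolev framework.
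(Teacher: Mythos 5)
Your argument is correct, but it takes a genuinely different route from the one the paper relies on. The paper does not reprove this lemma: it cites Bryan--Wentworth, whose proof (as the paper's Remark 4.6 makes explicit when adapting it to the case $\int_X w = 0$) is the classical Kazdan--Warner scheme of constructing a sub-solution $u_-$ and a super-solution $u_+$ with $u_- \leq u_+$ and running a monotone iteration between them. You instead minimise the convex functional $J(u) = \int_X \bigl(\tfrac12|\nabla u|^2 + Pe^u + Qe^{-u} - wu\bigr)$ by the direct method. Your uniqueness argument is complete and is essentially the same monotonicity computation both approaches would use. Your coercivity analysis is also sound: the three regimes $\bar u \to +\infty$, $\bar u \to -\infty$, $\|\nabla u\|_{L^2}\to\infty$ are exactly where the hypotheses $\int_X P > \int_X Q \geq 0$ and $\int_X w > 0$ enter, and the Jensen lower bound $\int_X Pe^u \geq (\int_X P)\exp(\bar u - C\|\nabla u\|_{L^2})$ does beat the linear term $-\bar u\int_X w$ after the case split $\bar u \lessgtr 2C\|\nabla u\|_{L^2}$. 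The trade-off between the two methods is instructive: the variational route makes the role of the two integral hypotheses transparent and gives uniqueness almost for free from convexity, whereas the sub/super-solution route keeps every iterate uniformly bounded between the barriers, so it never confronts the integrability of $e^{\pm u}$ on $W^{1,2}$ and it adapts painlessly to the borderline case $\int_X w = 0$ needed in the paper's Remark. The one place your sketch is genuinely thin is the step you flag yourself: in dimension $\geq 3$ you must still extract the Euler--Lagrange equation from a minimiser for which a priori only $Pe^{u_\infty}, Qe^{-u_\infty} \in L^1$, and then bootstrap from an $L^1$ right-hand side. This is fixable (e.g., a Stampacchia truncation argument testing with $(u-k)^{\pm}$ gives an $L^\infty$ bound on the minimiser, after which elliptic regularity is routine), but it should be carried out rather than deferred to "a suitable Orlicz framework"; note that for the paper's application $X = \Sigma$ is two-dimensional, so Moser--Trudinger disposes of the issue entirely there.
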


\begin{rem} \label{rem:bryan-wentworth}
We refer to \cite{bryan-wentworth} for the proof of Lemma \ref{lem:bryan-wentworth}, but let us remark that it can be easily extended to the case when $\int w = 0$ and both $P$ and $Q$ are not identically zero (without the assumption on the sign of $\int P - Q$). First, observe that if we replace $P$ and $Q$ by $P' = e^C P$ and $Q' = e^{-C} Q$ respectively, then solving the corresponding equation
\[ \Delta u + P' e^u - Q' e^{-u} = w \]
is equivalent to solving the original equation  with $P$ and $Q$. Indeed,  if $u$ solves the former, then the function $u + C$ is a solution the latter. Since both $P$ and $Q$ are not identically zero, their integrals are positive and by choosing the constant
\[ C = \frac{1}{2} \log \left( \frac{ \int_X Q }{ \int_X P} \right), \]
we can guarantee that $\int P' - Q' = 0$. Thus, we may as well assume that this holds for the original functions $P$ and $Q$. After this adjustment, we simply repeat the proof from \cite{bryan-wentworth}. The only difference is the construction of sub- and super-solutions, that is functions $u_-$ and $u_+$ satisfying $u_- \leq u_+$ and
\[ \Delta u_- + Pe^{u_-} - Q e^{-u_-} - w \leq 0, \]
\[ \Delta u_+ P e^{u_+} - Q e^{-u_+} - w \geq 0. \]
Let $v_1$ and $v_2$ be solutions of $\Delta v_1 = w$ and $\Delta v_2 = - P + Q$. Choose a constant $M$ such that $M \geq \sup | v_1 + v_2| $  and set 
\[ u_+ = v_1 + v_2 + M \qquad \textnormal{and} \qquad u_- = v_1 + v_2 - M. \]
Then clearly $u_- \leq u_+$ and
\[\Delta u_- + Pe^{u_-} - Q e^{-u_-} - w = P \left( e^{v_1 + v_2 - M} -1 \right) + Q \left(1- e^{-v_1 - v_2 + M}   \right) \leq 0,  \]
\[\Delta u_+ + Pe^{u_+} - Q e^{-u_+} - w = P\left( e^{v_1 + v_2 + M} - 1\right) + Q \left( 1 - e^{-v_1 - v_2 -M} \right) \geq 0. \] 
The remaining part of the proof from \cite{bryan-wentworth} goes through unchanged.
\end{rem}

\begin{proof}[Proof of Proposition \ref{prop_homeomorphism}]
It is clear that the map $\cM^* \to \cM_{\hol}$ is continuous, so it remains to construct a continuous inverse $\cM_{\hol} \to \cM^*$.
Let $(A, \alpha, \beta)$ be a solution of \eqref{eqn:holomorphic}. As in \cite{bryan-wentworth}, we seek $h \in \cG^c(\Sigma)$ such that $h (A, \alpha, \beta) = (A', \alpha', \beta')$ satisfies also the third equation of \eqref{eqn:nsw-invariant}. We can assume $h = e^f$ for $f \colon \Sigma \to \R$. We have 
\[ 
(A', \alpha', \beta') = (A - \del f + \partial f, e^f \alpha, e^{-f} \beta)
\]
so the curvature of $A'$ is 
\[ F_{A'} = F_{A} - 2 \partial \del f = F_{A} - i \ast \Delta f, \]
where $\Delta$ is the positive definite Hodge Laplacian. Thus, \eqref{eqn:nsw-invariant} for $(A', \alpha', \beta')$ is equivalent to
\[ 
\begin{split}
0 &= i \ast F_{A'} + |\alpha'|^2 - |\beta'|^2 - i\eta  \\
&=  \Delta f + e^{2f} | \alpha |^2 - e^{-2f} | \beta |^2 + i (\ast F_{A} - \eta).
\end{split}
\]
Assume $d - \tau< 0$ and set $P = | \alpha |^2$, $Q = | \beta|^2$ and $w = -i \ast (F_{A} - \eta)$. We need to solve
\begin{equation} \label{eqn:bryan-wentworth2}  \Delta f + P e^{2f} - Q e^{-2f}  = w. 
\end{equation}
If $d - \tau < 0$, then $\alpha$ is assumed to be non-zero. After applying a gauge transformation of the form $h = e^C$ for $C$ constant, we may assume that
\[ \int_{\Sigma} (P-Q)  = \int_{\Sigma} | \alpha |^2 - | \beta|^2 > 0. \]
Moreover, we have
\[
 \int_{\Sigma} w =  - \int_{\Sigma} \left( i F_{A} - \eta \right) = -2\pi (d - \tau) > 0.
\]
The hypotheses of Lemma \eqref{lem:bryan-wentworth} are satisfied and there is a unique solution $f \in C^{\infty}(\Sigma)$ to \eqref{eqn:bryan-wentworth2}. This shows that there exists $h \in \cG^c(\Sigma)$, unique up to an element of $\cG(\Sigma)$, mapping $(A,\alpha, \beta)$ to a solution of \eqref{eqn:nsw-invariant}. If $d - \tau> 0$ the proof is the same with $P = | \beta |^2$, $B = |\alpha |^2$, $w = i \ast( F_A - \eta)$ and $f$ replaced in the equation by $-f$. The case $d = 0$ follows from Remark \ref{rem:bryan-wentworth}.

This gives us an inverse to $\cM^* \to \cM_{\hol}$; it remains to show that it is continuous.  Let $[A_i, \alpha_i, \beta_i]$ be a convergent sequence of points in $\cM_{\hol}$. Let $(A_i', \alpha_i', \beta_i')$ be the corresponding solutions of \eqref{eqn:nsw-invariant}. There is a sequence $h_i = u_i e^{f_i}$ such that $h_i (A_i', \alpha_i', \beta_i')$ converges in $\cC_{\Sigma}$. The functions $f_i$ satisfy \eqref{eqn:bryan-wentworth2} with coefficients $P_i$, $Q_i$, $w_i$ converging in $C^{\infty}(\Sigma)$. It follows from the proof of \cite[Proposition 3.1]{doan} that for every $k$ there is a $C^k$ bound for $f_i$, independent of $i$. By the Arzel\`{a}--Ascoli theorem, after passing to a subsequence, $f_i$ converges in $C^{\infty}(\Sigma)$. It follows that $[A_i',\alpha_i',\beta_i']$ converges in $\cM^*$, which proves the continuity of $\cM_{\hol}\to\cM^*$.
\end{proof}

\subsection{Proof of Theorem \ref{thm:holomorphic}}
It remains to compare the deformation theories of the two moduli spaces to show that the homeomorphism $\cM^* \to \cM_{\hol}$ is an isomorphism of real analytic spaces.

\begin{step}
$\cM^*$ is isomorphic to the moduli space $\cM_{\Sigma}$ of solutions of \eqref{eqn:nsw-invariant}.
\end{step}
Let $\cM_{\Sigma}$ be the space of $\cG(\Sigma)$--orbits of triples
\[ (A, \alpha, \beta) \in \cA(\Sigma,L) \times \Gamma(\Sigma, E^* \otimes L \otimes K^{1/2}) \times \Gamma(\Sigma, E \otimes L^* \otimes K^{1/2}) \]
 satisfying $(\alpha,\beta) \neq 0$ and
\[
\left\{
\begin{array}{l}
\del_{AB} \alpha = 0, \\
\del_{AB} \beta = 0, \\
\alpha \beta = 0, \\
i \ast  F_A + | \alpha |^2 - |\beta|^2 - i \ast \eta = 0
\end{array}
\right.
\]
Endow $\cM_{\Sigma}$ with a real analytic structure using local Kuranishi models\footnote{As in the constructions of $\cM$ and $\cM_{\hol}$ one introduces an extra term $f \in C^{\infty}(\Sigma, i\R)$ to make the equation elliptic modulo gauge.We can ignore this because analogues of Proposition \ref{prop:irreducible} and Lemma \ref{lem:h1} hold.}. Let $\cC^*_{\Sigma}$ and $\cC^*$ be the  spaces of irreducible configurations $(A,\Psi)$ over $\Sigma$ and $Y$ respectively. It follows from Proposition \ref{prop:nsw-invariant} and Lemma \ref{lem:inv-gauge} that the inclusion
\[ \mathcal{B}^*_{\Sigma} = \cC^*_{\Sigma} / \cG(\Sigma) \hookrightarrow \cC^* / \cG(\Sigma)  = \mathcal{B}^* \]
induces a homeomorphism $\cM_{\Sigma} \to \cM^*$. The Seiberg--Witten moduli space $\cM^*$ is, at least locally, given as the zero set of a Fredholm section $s$ of a bundle over $\mathcal{B}^*$. On the other hand, the restriction of $s$ to $\mathcal{B}^*_{\Sigma}$ gives a Fredholm section defining $\cM_{\Sigma}$. In order to show that the induced real analytic structures agree we need to prove
\begin{equation} \label{eq_deformationspaces}
 H^1_{A,\Psi} = \ker ds_{(A,\Psi)} = \ker d (\restr{s}{\mathcal{B}^*_{\Sigma}})_{(A,\Psi)}
\end{equation}
 for every $[A, \Psi] \in  \cM_{\Sigma} = \cM^*$. The corresponding equality of cokernels follows then from the natural isomorphism between $H^1_{A,\Psi}$ and $H^2_{A,\Psi}$ (and likewise for the equations over $\Sigma$). 

Equality \eqref{eq_deformationspaces} is the linearised version of Theorem \ref{thm:invariance}.  Let $(A,\Psi)$ be a circle-invariant solution.  According to Lemma \ref{lem:h1}, $H^1_{A,\Psi} = \ker S_{A,\Psi} \oplus G_{A,\Psi}^*$, where $S_{A,\Psi}$ is the linearisation of the equation without the extra term $f$ and $G_{A,\Psi}$ the infinitesimal gauge group action. Using Proposition \ref{prop:invariance}, we identify $H^1_{A,\Psi}$ with the space of pairs
\[ (a(t) + b(t) dt, \phi(t)) \in \Gamma(S^1 \times \Sigma, \Lambda^1(i \R) \oplus \Lambda^0( i \R) \oplus (E^* \otimes S \otimes L ) ) \]
satisfying
\[
\left\{
\begin{array}{l}
i \left( \frac{ \partial \phi}{\partial t} + b \Psi \right) + \sqrt{2} \sigma \left( \partial_{AB} \phi + a^{1,0} \Psi \right) = 0, \\
 \frac{ \partial a^{1,0} }{\partial t} + \partial b - i \mu_{\C}(\Psi, \phi) = 0, \\
 \ast da + 2\mu_{\R} ( \Psi, \phi) = 0, \\
 - d^* a - \frac{\partial b}{\partial t} + i \Im\langle \Psi, \phi \rangle = 0.
\end{array}
\right.
\]
Equality \eqref{eq_deformationspaces} will be established by showing that any solution $(a + bdt, \phi)$ satisfies
\[ \frac{\partial a}{\partial t} = 0, \quad \frac{\partial \phi}{\partial t} = 0, \quad b = 0. \]
This is done in the same way as in the proof of Theorem \ref{thm:invariance}. First,  apply $\partial / \partial t$ to the first two equations, then get rid of the terms $\partial \phi / \partial t$, $\partial b / \partial t$, and $\partial a^{1,0} / \partial t$.  This results in 
\[ 
- \frac{\partial^2 \phi}{\partial t^2} + 2 \partial_{AB}^* \partial_{AB} \phi - i \Im \langle \Psi, \phi \rangle \Psi + d^* a \cdot \Psi + 2 (\partial^* a^{1,0} ) \Psi + \sqrt{2} \sigma \mu_{\C}(\Psi, \phi) \Psi = 0, \]
\[ 
- \frac{\partial^2 a^{1,0}}{\partial t^2} +  \partial \partial^* a^{1,0} +  i \partial \Im \langle \Psi, \phi \rangle + \sqrt{2} \mu_{\C} ( \Psi, \sigma \partial \phi) + \sqrt{2} \mu_{\C} ( \Psi, \sigma a^{1,0} \Psi ) = 0. 
  \]
  Take the real $L^2$--product of the first equation with $\phi$ and the second equation with $a^{1,0}$. Integrating by parts as in the proof of Theorem \ref{thm:invariance}, we obtain
\[ \left\| \frac{ \partial \phi}{ \partial t} \right\|^2_{L^2} + \left\| \frac{ \partial a}{ \partial t} \right\|^2_{L^2} + 2 \left\| \partial_{AB} \phi + a^{1,0} \Psi \right\|_{L^2}^2 + \left\| - d^*a  + i \Im \langle \Psi, \phi \rangle \right\|^2_{L^2} + \sqrt{2} \left\| \mu_{\C}(\Psi, \phi) \right\|_{L^2}^2 = 0. \]
We have used identity  \eqref{eqn:moment} to relate $\mu_{\C}$ to the inner product. Thus, we have proved that $b = 0$, $\phi$ and $a$ are pulled-back from $\Sigma$ and satisfy
\begin{equation} \label{eqn:deformation}
\left\{
\begin{array}{l}
 \partial_{AB} \phi + a^{1,0} \Psi  = 0, \\
\mu_{\C}(\Psi, \phi) = 0, \\
 \ast da + 2\mu_{\R} ( \Psi, \phi) = 0, \\
 - d^* a -  i \Im\langle \Psi, \phi \rangle = 0.
\end{array}
\right.
\end{equation}
Recall that with the conventions of subsection \ref{subsec:invariance} we identify $\overline{\Psi}$ with a pair $(\alpha, \beta)$. After a conjugation equation \eqref{eqn:deformation} translates to the following equation for $a$ and $\overline{\phi} = (u,v)$
\begin{equation} \label{eqn:deformation2}
\left\{
\begin{array}{l}
\del_{AB} u + a^{0,1} \alpha = 0, \\
\del_{AB} v - a^{0,1} \beta = 0, \\
\alpha v + u \beta = 0, \\
 \ast i da + 2 \Re \langle \alpha, u \rangle - 2 \Re \langle \beta, v \rangle = 0, \\
 - d^* a -  i \Im\langle \alpha, u \rangle - i \Im \langle \beta, v \rangle = 0.
\end{array}
\right.
\end{equation}
This is the linearisation of \eqref{eqn:nsw-invariant} together with the Coulomb gauge fixing condition. We conclude that \eqref{eq_deformationspaces} holds and $\cM^*$ is isomorphic to $\cM_{\Sigma}$ as real analytic spaces.

\begin{step} $\cM_{\Sigma}$ is isomorphic to $\cM_{\hol}$.
\end{step}
The proof is similar to that of \cite[Theorem 2.6]{friedman-morgan}, so we only outline the argument. As before, the main point is to show an isomorphism of the deformation spaces for  $\cM_{\Sigma}$ and $\cM_{\hol}$. The former is given by \eqref{eqn:deformation2} and the latter consists of solutions of the first three equations together with a choice of a local slice for the action of $\cG^c(\Sigma)$. The Lie algebra of $\cG^c(\Sigma)$ splits is the direct sum of $C^{\infty}(\Sigma, \R)$ and the Lie algebra of $\cG(\Sigma)$. Under this splitting, we can choose a slice of $\cG^c(\Sigma)$--action imposing the standard Coulomb gauge condition for $\cG(\Sigma)$, which is the last equation of \eqref{eqn:deformation2}, together with a choice of a slice for the action of $C^{\infty}(\Sigma,\R)$:
\[ e^f (A, \alpha, \beta) = (A + \partial f - \del f, e^f \alpha, e^{-f} \beta). \]
The linearisation of this action at $(A,\alpha,\beta)$ is 
\begin{equation} \label{eqn:linearisedmap}
 f \mapsto (- \del f + \partial f, f \alpha, -f \beta). 
 \end{equation}
A local slice for the action of $C^{\infty}(\Sigma,\R)$ can be obtained from any subspace of 
\[\Omega^1(i\R) \oplus \Gamma(E^* \otimes L \otimes K^{1/2}) \oplus \Gamma(E \otimes L^* \otimes K^{1/2})\]
 which is complementary to the image of \eqref{eqn:linearisedmap}. Hence, to show that the deformation spaces of $\cM_{\Sigma}$ and $\cM_{\hol}$ are  isomorphic it is enough to prove that the subspace given by 
\[ i \ast da + 2 \Re \langle \alpha, u \rangle - 2 \Re \langle \beta, v \rangle = 0 \]
is complementary to the image of \eqref{eqn:linearisedmap}. In other words, we need to know that for any triple $(a,u,v)$ there is a unique function $f \in C^{\infty}(\Sigma,\R)$ such that
\[ \begin{split} 0 &= i \ast d(a - \del f + \partial f) + 2 \Re\langle \alpha, u + f\alpha \rangle - 2 \Re \langle \beta, v - f \beta \rangle \\ 
&= \left\{ \Delta + 2( | \alpha |^2 + | \beta |^2) \right\} f + i \ast da + 2 \Re \langle \alpha, u \rangle - 2 \Re \langle \beta, v \rangle.
\end{split}
 \] 
This is true because $(\alpha, \beta) \neq 0$ and so the operator $\Delta + 2 ( |\alpha|^2 + | \beta|^2)$ is invertible on $C^{\infty}(\Sigma,\R)$. In the same way as in \cite[Theorem 2.6]{friedman-morgan} we conclude that \eqref{eqn:deformation2} provides a local Fredholm model for both $\cM_{\Sigma}$ and $\cM_{\hol}$ and so the two spaces have isomorphic analytic structures.
\qed

\section{A tale of two compactifications}
\label{sec_compactifications}
The goal of this section is to define natural compactifications of $\cM^*$ and $\cM_{\hol}$ and to extend the isomorphism $\cM^* \cong \cM_{\hol}$ to a homeomorphism between these compactifications, thus completing the proof of Theorem \ref{thm:homeo0}. We assume $d - \tau < 0$ so in particular $\cM = \cM^*$. The discussion can be easily adapted to the cases $d - \tau = 0$ and $d - \tau > 0$.

\subsection{A complex-geometric compactification}
$\cM_{\hol}$ has a natural compactification analogous to the one described in \cite{bryan-wentworth}. Consider the subspace $S \subset \cC_{\Sigma} \times \C$ given by
\[ S := \left\{ (A, \alpha, \beta, t) \ | \  (A,\alpha,\beta) \textnormal{ satisfies equations \eqref{eqn:holomorphic}}, \alpha \neq 0, \textnormal{ and } (\beta, t) \neq (0,0) \right\}. \]
The group $\cG^c(\Sigma) \times \C^*$ acts freely on $S$ by the standard action of the first factor on $\cC_{\Sigma}$ and
\[ \lambda(A, \alpha, \beta, t) = (A, \alpha, \lambda \beta, \lambda t) \quad \textnormal{for } \lambda \in \mathbb{C}^{\times}. \]

\begin{defn}
We define $\overline{ \cM_{\hol}}$ to be the quotient of $S$ by $\cG^c(\Sigma) \times \C^*$. 
\end{defn}

This is analogous to compactifying $\C^N$ by $\mathbb{CP}^N$ which is the quotient of $(\C^{N} \times \C) \setminus \{ (0,0) \}$ by the free action of $\C^*$; in fact, $\overline{\cM}_{\hol}$ is obtained by applying this construction fibrewise.

\begin{defn} \label{defn:vortexmoduli}
Let $\cN$ be the subspace of $\cM_{\hol}$ consisting of triples of the form $(A, \alpha, 0)$. Equivalently, $\cN$ is the space of $\cG^c(\Sigma)$--orbits of pairs $(A,\alpha)$ satisfying $\del_{AB} \alpha = 0$ and $\alpha \neq 0$. 
\end{defn}

We will see momentarily that $\cN$ is compact. The natural projection $(A, \alpha, \beta) \mapsto (A, \alpha)$ induces a surjective map $\pi \colon \cM_{\hol} \to \cN$. Let $[A, \alpha] \in \cN$ and denote by $\mathcal{L}_A$ the holomorphic structure on $L$ induced by $A$. The fibre $\pi^{-1}([A, \alpha])$ is the kernel of the homomorphism
\[
\begin{tikzcd}
 H^0(\Sigma, \cE_B^* \otimes \mathcal{L}_A \otimes K^{1/2}) \arrow{r}{\alpha} & H^0(\Sigma, K)
\end{tikzcd}
\]
given by pairing with $\alpha$. The compactification $\overline{\cM}_{\hol}$ is obtained by replacing each fibre $\ker \alpha$ with the projective space $\mathbb{P}( \ker \alpha \oplus \C)$ containing it.

\begin{prop}\label{prop:compactification}
The space $\overline{\cM}_{\hol}$ is metrisable, compact, and contains $\cM_{\hol}$ as an open dense subset. Moreover, the complex analytic structure on $\cM_{\hol}$ extends to a complex analytic structure on $\overline{\cM}_{\hol}$ with respect to which $\cM_{\hol}$ is Zariski open.
\end{prop}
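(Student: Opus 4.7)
The plan is to build everything on the natural projection $\bar\pi\colon\overline{\cM}_{\hol}\to\cN$ given by $[A,\alpha,\beta,t]\mapsto[A,\alpha]$, whose fibre over $[A,\alpha]$ is by construction $\P(\ker\alpha\oplus\C)$, where $\ker\alpha\subset H^0(\Sigma,\cE_B^*\otimes\mathcal{L}_A\otimes K^{1/2})$ is as in the statement preceding Proposition \ref{prop:compactification}. Once $\cN$ is shown to be compact and metrisable and $\bar\pi$ is shown to be proper, compactness and metrisability of $\overline{\cM}_{\hol}$ follow immediately; all remaining assertions are local and reduce to a Kuranishi construction on $S\subset\cC_\Sigma\times\C$. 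To handle $\cN$, I would apply the $\beta=0$ case of Proposition \ref{prop_homeomorphism} (the argument of Bryan--Wentworth extends verbatim, using Remark \ref{rem:bryan-wentworth} if $d-\tau=0$): this identifies $\cN$ with the moduli space of solutions of the abelian vortex equation $\del_{AB}\alpha=0$, $i\ast F_A+|\alpha|^2-i\ast\eta=0$, $\alpha\ne 0$. Integrating the curvature equation yields the a priori bound $\|\alpha\|_{L^2}^2=2\pi(\tau-d)>0$, and Uhlenbeck compactness then gives smooth convergence modulo $\cG(\Sigma)$, so $\cN$ is compact and metrisable, with a complex analytic structure induced by the usual Kuranishi models.

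Next I would prove properness of $\bar\pi$. Take $[A_i,\alpha_i,\beta_i,t_i]\in\overline{\cM}_{\hol}$ with $\bar\pi[A_i,\alpha_i,\beta_i,t_i]\to[A_\infty,\alpha_\infty]$ in $\cN$. Using the free $\C^\times$-action I normalise $\|\beta_i\|_{L^2}^2+|t_i|^2=1$, and using a $\cG(\Sigma)$-gauge transformation I arrange smooth convergence $(A_i,\alpha_i)\to(A_\infty,\alpha_\infty)$. Since $\del_{A_iB}\beta_i=0$ and $\{A_i\}$ is smoothly bounded, elliptic regularity for $\del$ together with the $L^2$-bound on $\beta_i$ gives uniform $C^k$ bounds for every $k$; after passing to a subsequence $\beta_i\to\beta_\infty$ smoothly, $t_i\to t_\infty\in\C$, and the limit satisfies $\del_{A_\infty B}\beta_\infty=0$ and $\alpha_\infty\beta_\infty=0$. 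The normalisation forces $(\beta_\infty,t_\infty)\ne 0$, so the limit defines a point of $\overline{\cM}_{\hol}$ in the fibre over $[A_\infty,\alpha_\infty]$. Combined with compactness of $\cN$ this yields compactness of $\overline{\cM}_{\hol}$; metrisability and second countability descend from $S$ because $\cG^c(\Sigma)\times\C^\times$ acts freely with closed orbits.

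For the analytic structure I would copy the Kuranishi construction of Section \ref{sec_holomorphic}. Near a representative $(A_0,\alpha_0,\beta_0,t_0)\in S$ the defining equations are $\del_{AB}\alpha=\del_{AB}\beta=0$ and $\alpha\beta=0$, which are complex-analytic in $(A,\alpha,\beta,t)$. Imposing a local slice for the free action of $\cG^c(\Sigma)\times\C^\times$ --- obtained by combining the slice already used for $\cG^c(\Sigma)$ with any complex hyperplane transverse to the $\C^\times$-orbit through $(\beta_0,t_0)$ --- reduces the moduli problem to the vanishing of a Fredholm holomorphic map, whose Kuranishi zero locus endows $\overline{\cM}_{\hol}$ with a complex analytic structure. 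On the open subset $\{t\ne 0\}$ the chart $\lambda=1/t$ recovers exactly the Kuranishi chart of Definition \ref{defn:holomorphic} for $\cM_{\hol}$, so the two analytic structures agree there; the complement is cut out by the holomorphic equation $t=0$, exhibiting $\cM_{\hol}$ as a Zariski open subset. Density follows because $[A_0,\alpha_0,\beta_0,\varepsilon]\to[A_0,\alpha_0,\beta_0,0]$ in $\overline{\cM}_{\hol}$ as $\varepsilon\to 0$.

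The most delicate point is the properness argument in the second paragraph: $\dim\ker\alpha$ is only upper semicontinuous on $\cN$, so a priori the limiting $\beta_\infty$ could fail to lie in the expected fibre of $\bar\pi$. The coupled conditions $\del_{A_iB}\beta_i=0$ and $\alpha_i\beta_i=0$, however, confine each $\beta_i$ to the closed subspace annihilating pairing with $\alpha_i$, and smooth convergence $\alpha_i\to\alpha_\infty$ forces the limit into the (possibly strictly larger) kernel of pairing with $\alpha_\infty$; this is precisely the semicontinuity property that makes $\bar\pi$ proper rather than a fibration, but is enough for compactness.
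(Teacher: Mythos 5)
Your proposal is correct in substance, and the second half (the Fredholm/Kuranishi description over the quotient of $\{\alpha\neq 0,\ (\beta,t)\neq(0,0)\}$ by $\cG^c(\Sigma)\times\C^*$, with the boundary cut out by the holomorphic equation $t=0$ and density via $[A_0,\alpha_0,\beta_0,\varepsilon]\to[A_0,\alpha_0,\beta_0,0]$) is essentially the argument the paper gives for the analytic structure and Zariski openness. Where you genuinely diverge is compactness: the paper simply quotes Step 1 of the proof of Theorem 2.2 in \cite{doan} (and, for $\cN$, Step 1 of the proof of Theorem 1.3 there), whereas you factor compactness through the projection $\bar\pi\colon\overline{\cM}_{\hol}\to\cN$: compactness of $\cN$ via the Kazdan--Warner correspondence with framed vortices plus the a priori identity $\|\alpha\|_{L^2}^2=2\pi(\tau-d)$, and then a fibrewise properness argument using the $\C^*$-normalisation $\|\beta\|_{L^2}^2+|t|^2=1$ and linear elliptic estimates for $\del_{A_iB}\beta_i=0$ with smoothly converging coefficients. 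This is a legitimate and more self-contained route (it needs only classical abelian vortex compactness and linear elliptic theory, and it isolates exactly why nothing beyond $(\beta,t)\neq(0,0)$ can degenerate at the boundary), at the cost of importing the standard vortex package that the paper outsources to \cite{doan}. Three small points to tighten: (i) for compactness of $\cN$ you only need the \emph{forward} Kazdan--Warner direction (existence of a vortex representative in each $\cG^c$-orbit) together with vortex compactness, since smooth convergence of representatives already implies convergence of $\cG^c$-classes -- this avoids invoking continuity of the inverse of Proposition \ref{prop_homeomorphism}; also ``Uhlenbeck compactness from the $L^2$ bound'' compresses the usual chain ($C^0$ bound on $\alpha$ from the maximum principle, hence $C^0$ bound on $F_A$, then gauge fixing and bootstrapping). (ii) In the properness step the gauge transformations $h_i\in\cG^c(\Sigma)$ that make $(A_i,\alpha_i)$ converge also rescale $\beta_i$, so the normalisation of $(\beta_i,t_i)$ must be performed after gauge-fixing (harmless, but the order matters); and extracting smoothly convergent representatives from convergence in $\cN$ uses the local slices, which is fine but worth saying. (iii) ``Metrisability descends because the action is free with closed orbits'' is not by itself an argument; as in the paper one should appeal to the slice/Banach-manifold structure of the quotient -- the paper treats this as clear, so this is a matter of wording rather than a gap.
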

\begin{proof}
It is clear that $\overline{\cM}_{\hol}$ is metrisable and $\cM_{\hol} \subset \overline{\cM}_{\hol}$ is open and dense. In order to show that $\overline{\cM}_{\hol}$ is compact, consider a sequence $[A_i, \alpha_i, \beta_i, t_i] \in \overline{\cM}_{\hol}$; we need to argue that there are sequences $h_i \in \cG^c(\Sigma)$ and $\lambda_i \in \C^*$ such that after passing to a subsequence $h_i \lambda_i (A_i, \alpha_i, \beta_i, t_i)$ converges smoothly in $S$. This is the content of Step $1$ in \cite[Proof of Theorem 2.2]{doan}.

Let $\cC_{\Sigma}^* \subset \cC_{\Sigma}$ be the subset of configurations $(A, \alpha, \beta)$ with $\alpha \neq 0$. The group $\cG^c(\Sigma)$ acts freely on $\cC_{\Sigma}^*$ with quotient $\mathcal{B}_{\Sigma}^*$, a complex Banach manifold. There is a holomorphic vector bundle $\mathcal{W} \to \mathcal{B}_{\Sigma}^*$ such that $\cM_{\hol}$ is the zero set of a holomorphic Fredholm section $\mathcal{S} \colon \mathcal{B}_{\Sigma}^* \to \mathcal{W}$. The zero set of such a section carries a natural complex analytic structure  \cite[Sections 4.1.3-4.1.4]{friedman-morgan}. The complex analytic structure on $\cM_{\hol}$ is extended to $\overline{\cM}_{\hol}$ by extending $\mathcal{S}$ to a Fredholm section whose zero set is $\overline{\cM}_{\hol}$. Replace $\cC_{\Sigma}^*$ by the subspace of $\cC_{\Sigma} \times \C$ consisting of quadruples $(A, \alpha, \beta, t)$ for which $\alpha \neq 0$ and $(\beta, t) \neq (0, 0)$. Let $\overline{\mathcal{B}}_{\Sigma}^*$ be the quotient of this space by the action of $\cG^c(\Sigma) \times \C^*$: it contains $\mathcal{B}_{\Sigma}^*$ as an open subset. Let $\overline{\mathcal{W}} \to \overline{\mathcal{B}}^*_{\Sigma}$ be the vector bundle obtained as the quotient of $\mathcal{W} \times \C^*$ by the lifted action of $\cG^c(\Sigma) \times \C^*$. There is a holomorphic Fredholm section $\overline{\mathcal{S}}$ extending $\mathcal{S}$ so that $\overline{\cM}_{\hol} = \overline{\mathcal{S}}^{-1}(0)$. When restricted to the open subset $\mathcal{B}_{\Sigma}^*$, this reduces to the construction of $\cM_{\hol}$ described above, so the inclusion $\cM_{\hol} \subset \overline{\cM}_{\hol}$ is compatible with the induced analytic structures. Moreover, $\overline{\cM}_{\hol} \setminus \cM_{\hol}$ is the intersection of $\overline{\mathcal{S}}^{-1}(0)$ with the analytic subset $\overline{\mathcal{B}}_{\Sigma}^*\setminus \mathcal{B}_{\Sigma}^*$ given by the equation $t = 0$. We conclude that  $\overline{\cM}_{\hol} \setminus \cM_{\hol}$ is an analytic subset of $ \overline{\cM}_{\hol}$, and so $\cM_{\hol}$ is Zariski open.
\end{proof}

\begin{cor} \label{cor:subspace}
$\cN \subset \cM_{\hol}$ is compact. Furthermore, $\cM_{\hol}$ is compact if and only if $\cN = \cM_{\hol}$.
\end{cor}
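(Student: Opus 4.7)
The plan is to reduce both claims to Proposition~\ref{prop:compactification} by means of the ``forgetful'' map
\[
  \pi \colon \overline{\cM}_{\hol} \longrightarrow \cN, \qquad [A,\alpha,\beta,t] \longmapsto [A,\alpha],
\]
which is well-defined and continuous because the $\C^*$-action affects only $(\beta,t)$, while the $\cG^c(\Sigma)$-action descends to the gauge action defining $\cN$. Since the restriction of $\pi$ to $\cN \subset \cM_{\hol} \subset \overline{\cM}_{\hol}$ is the identity, the map $\pi$ is surjective. The compactness of $\cN$ then follows at once from the compactness of $\overline{\cM}_{\hol}$, as a continuous image of a compact space.

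The ``if'' direction of the equivalence is immediate from the previous paragraph. For the ``only if'' direction, I would argue by contradiction. Suppose $\cM_{\hol}$ is compact and pick $[A,\alpha,\beta] \in \cM_{\hol}$ with $\beta \neq 0$. For every $\mu \in \C^*$ the triple $(A,\alpha,\mu\beta)$ still satisfies \eqref{eqn:holomorphic} with $\alpha \neq 0$, hence represents a point of $\cM_{\hol}$. Using the $\C^*$-equivalence
\[
  (A,\alpha,\mu\beta,1) \;\sim\; (A,\alpha,\beta,\mu^{-1})
\]
inside the defining set $S$ of $\overline{\cM}_{\hol}$, one sees that the sequence $[A,\alpha,\mu_n \beta] \in \cM_{\hol}$ for $\mu_n \to \infty$ converges in $\overline{\cM}_{\hol}$ to $[A,\alpha,\beta,0]$. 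Since $\beta \neq 0$, every representative of this limit class has last coordinate $0$; in particular, the limit lies in $\overline{\cM}_{\hol} \setminus \cM_{\hol}$. This contradicts the fact that the compact subset $\cM_{\hol}$ is closed in the metrisable (hence Hausdorff) space $\overline{\cM}_{\hol}$, forcing $\beta = 0$ throughout and therefore $\cN = \cM_{\hol}$.

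Beyond the routine verification that the limit point in the above argument really lies outside $\cM_{\hol}$---which only requires tracking how the $\C^*$-action affects the coordinate $t$---I do not anticipate any substantive obstacle; the entire argument is formal given the compactness of $\overline{\cM}_{\hol}$ already established in Proposition~\ref{prop:compactification}.
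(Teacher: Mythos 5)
Your proof is correct, and it deviates from the paper's in one half. For the compactness of $\cN$, the paper appeals directly to the analytic compactness result of \cite{doan} (Step 1 of the proof of Theorem 1.3 there), whereas you deduce it formally from Proposition \ref{prop:compactification}: since $\cG^c(\Sigma)\times\C^*$ acts on the $(A,\alpha)$-part only through $\cG^c(\Sigma)$, the assignment $[A,\alpha,\beta,t]\mapsto[A,\alpha]$ (equivalently $[A,\alpha,0]$, if you prefer to land honestly in the subspace $\cN\subset\cM_{\hol}$ rather than in the space of orbits of pairs -- the two descriptions are identified in Definition \ref{defn:vortexmoduli}) descends to a continuous surjection from the compact space $\overline{\cM}_{\hol}$, so $\cN$ is compact. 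This is a genuinely different, purely formal route for that part; the analytic input is the same but is invoked only once, packaged inside Proposition \ref{prop:compactification}, instead of being cited separately for $\cN$. For the equivalence, your argument is essentially the paper's: a class $[A,\alpha,\beta]$ with $\beta\neq0$ produces the boundary point $[A,\alpha,\beta,0]\in\overline{\cM}_{\hol}\setminus\cM_{\hol}$ (well-defined since neither group factor can make $t$ nonzero); the paper then concludes via the density of $\cM_{\hol}$ in $\overline{\cM}_{\hol}$, while you make the density concrete at that point by the explicit sequence $[A,\alpha,\mu_n\beta,1]=[A,\alpha,\beta,\mu_n^{-1}]\to[A,\alpha,\beta,0]$, contradicting closedness of a compact subset of the metrisable space $\overline{\cM}_{\hol}$. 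Both wrap-ups are sound; yours only uses Hausdorffness rather than the density statement. I see no gaps.
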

\begin{proof}
$\cN$ consists of equivalence classes $[A, \alpha, \beta]$ for which $\beta = 0$; it is compat by Step $1$ in \cite[Proof of Theorem 1.3]{doan}. If $\cN = \cM_{\hol}$, then $\cM_{\hol}$ is compact. To prove the converse, observe that if $\cM_{\hol}$ is non-compact, then $\overline{\cM}_{\hol} \setminus \cM_{\hol}$ is non-empty by Proposition \ref{prop:compactification}. On the other hand, $\overline{\cM}_{\hol} \setminus \cM_{\hol}$ consists of $\cG^c(\Sigma) \times \C^*$--orbits of the form $[A, \alpha, \beta, 0]$ with $\beta \neq 0$ so every element of $\cM_{\hol} \setminus \cN$ gives rise to an element of $\overline{\cM}_{\hol} \setminus \cM_{\hol}$.
\end{proof}

\subsection{A gauge-theoretic compactification}
For an arbitrary three-manifold a good compactification of $\cM$ is yet to be constructed---see \cite[Introduction]{doan} for a discussion of analytical difficulties involved in such a construction. 
%The main technical issue  is the formation of singularities described in subsection \ref{subsec:compactness}. One would wish to form the moduli space of Fueter sections with singularities. This is closely related to the work of Takahashi \cite{takahashi}, but more needs to be done when the singular set is not a smooth submanifold. Another problem is the convergence of Seiberg--Witten multi-monopoles to a Fueter section. So far it has only been shown in weak Sobolev norms, whereas for a meaningful compactification we need strong convergence (or equivalently $C^{\infty}_{\loc}$). 
For $Y = S^1 \times \Sigma$ we can overcome these obstacles thanks to a refined compactness theorem \cite[Theorems 1.4 and 2.2]{doan}.

\begin{rem}
For the remaining part of the paper we make the assumption that $E$ is an $\SU(2)$--bundle. In this case the description of Fueter section simplifies \cite[Proposition A.3]{haydys-walpuski}, \cite{haydys3}. The discussion below should easily generalise to the higher rank case.
\end{rem}

\begin{thm}[\cite{doan}] \label{thm:inv-compactness} 
If $(A_i,\Psi_i = (\alpha_i, \beta_i))$ is a sequence of solutions of \eqref{eqn:nsw-invariant} with $\| \Psi_i \|_{L^2} \to \infty$, then after passing to a subsequence and applying gauge transformations $(A_i, \Psi_i / \| \Psi_i \|_{L^2})$ converges in $C^{\infty}_{\loc}$ on the complement of a finite set $D = \{ x_1, \ldots, x_N \}$. The limiting configuration $(A, \Psi = (\alpha, \beta))$ is defined on $\Sigma \setminus D$ and satisfies
\begin{itemize}
\item $\| \Psi \|_{L^2} = 1$ and $| \Psi | > 0$ on $\Sigma \setminus D$,
\item $\del_{AB} \alpha = 0$, $\del_{AB} \beta = 0$, $\alpha\beta = 0$, and $| \alpha | = | \beta|$ on $\Sigma \setminus D$.
\item $A$ is flat on $\Sigma \setminus D$ and has holonomy contained in $\Z_2$. 
\item There are non-zero integers $a_1, \ldots, a_N$ such that $\sum_{k=1}^N a_k = 2d$ and
\[   \ast \frac{i}{2\pi} F_{A_i} \longrightarrow \frac{1}{2}\sum_{k=1}^N a_k \delta_{x_k} \]
in the sense of measures.
\item For each $k=1, \ldots , N$ we have  
\[ | \Psi (x) | = O( \mathrm{dist}(x_k,x)^{|a_k|/2} ). \]
\end{itemize}
\end{thm}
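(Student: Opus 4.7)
Since the claim is quoted from \cite{doan}, my plan is to indicate how the statement follows by combining a uniform $L^\infty$ bound for the rescaled spinors with the structure of the dimensionally reduced equation \eqref{eqn:nsw-invariant}. Set $\tilde\Psi_i := \Psi_i/\|\Psi_i\|_{L^2}$, so that $\|\tilde\Psi_i\|_{L^2} = 1$ and the equations become
\[
  \del_{AB}\tilde\alpha_i = 0,\quad \del_{AB}\tilde\beta_i = 0,\quad \tilde\alpha_i\tilde\beta_i = 0,\quad
  i\ast F_{A_i} + \|\Psi_i\|_{L^2}^2\bigl(|\tilde\alpha_i|^2 - |\tilde\beta_i|^2\bigr) = i\ast\eta.
\]
The first step is to derive an a priori bound $\|\tilde\Psi_i\|_{L^\infty}\leq C$ independent of $i$. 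This comes from the Weitzenb\"ock formula applied to $\del_{AB}$: computing $\tfrac{1}{2}\Delta |\tilde\Psi_i|^2$ produces a gradient term plus a curvature pairing, and the offending $F_{A_i}$--contribution is killed by the identity $\langle F_{A_i}\cdot\tilde\Psi_i,\tilde\Psi_i\rangle = \langle\mu(\tilde\Psi_i),\mu(\Psi_i)\rangle$ coming from the last line of \eqref{eqn:nsw-invariant}. The resulting differential inequality $\Delta |\tilde\Psi_i|^2 \leq C|\tilde\Psi_i|^2$ yields a uniform $L^\infty$ bound by Moser iteration.

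Once $\tilde\Psi_i$ is bounded in $L^\infty$, standard elliptic bootstrapping for $\del_{AB}$ in a Coulomb slice gives uniform $C^k_{\loc}$ bounds for $(A_i,\tilde\Psi_i)$ on any open set where the curvature $F_{A_i}$ is $L^p_{\loc}$-bounded. Thus the real question is where the curvature blows up: from the fourth equation, $|F_{A_i}|$ is bounded precisely on regions where $\|\Psi_i\|_{L^2}^2\bigl||\tilde\alpha_i|^2-|\tilde\beta_i|^2\bigr|$ is bounded, i.e.\ where $|\tilde\alpha_i|^2-|\tilde\beta_i|^2\to 0$. An $\varepsilon$-regularity argument (testing the Weitzenb\"ock inequality against a cutoff, as in the Taubes/Haydys--Walpuski analysis) shows that the bad set---the points $x\in\Sigma$ near which $\int |F_{A_i}|$ fails to stay below a fixed small $\varepsilon$---is discrete, hence finite on the compact surface. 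Call this set $D=\{x_1,\ldots,x_N\}$. Over $\Sigma\setminus D$ we extract a subsequential $C^\infty_{\loc}$ limit $(A,\Psi=(\alpha,\beta))$ after gauge transformations, and the limit clearly inherits $\del_{AB}\alpha = 0$, $\del_{AB}\beta = 0$, $\alpha\beta = 0$, $\|\Psi\|_{L^2}=1$, $|\alpha|=|\beta|$, and $F_A = 0$.

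The next step is to analyze the limit at the punctures. Since $F_A=0$ on $\Sigma\setminus D$ while $\alpha,\beta$ are nonzero holomorphic sections satisfying $\alpha\beta=0$ and $|\alpha|=|\beta|$, the bundle $L$ (restricted to $\Sigma\setminus D$) admits a parallel splitting compatible with $\alpha$ and $\bar\beta$; the relation $|\alpha|=|\beta|$ together with $\alpha\beta=0$ makes $\alpha$ and $\sigma(\beta)$ coincide up to a sign, and this sign ambiguity is exactly what forces the holonomy of $A$ to lie in $\Z_2$. For the curvature concentration, weak-$\ast$ compactness of measures gives a limit $\tfrac{i}{2\pi}\ast F_{A_i} \rightharpoonup \tfrac12\sum a_k\delta_{x_k}$, and integrality of the $a_k$ follows from the local model: near each $x_k$ the configuration $(A_i,\tilde\Psi_i)$ resembles a rescaled vortex on a disk with half-integer degree (because of the $\Z_2$--reducibility), and $|\tilde\Psi_i|$ vanishes at order $|a_k|/2$ in the limit, which combined with harmonicity of $\log|\Psi|$ on the punctured disk gives the stated vanishing rate. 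Finally the total degree identity $\sum a_k = 2d$ reflects $\tfrac{i}{2\pi}\int F_{A_i} = d$ together with the factor $1/2$ coming from passing to the $\Z_2$ double cover.

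The technical heart, and the main obstacle, is the $\varepsilon$-regularity for the discreteness of $D$ and the extraction of integer weights: one must rule out a continuum of curvature concentration and pin down the local degrees. This is where the argument truly relies on the full strength of \cite[Theorems 1.4 and 2.2]{doan}, which in turn build on Taubes' techniques \cite{taubes2} adapted to the two-dimensional setting by exploiting the holomorphic structure provided by $\del_{AB}\alpha = \del_{AB}\beta = 0$.
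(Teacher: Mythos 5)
This theorem is not proved in the paper at all: it is imported verbatim from \cite{doan}, and your write-up likewise ends by deferring ``the technical heart'' to that reference, so what you have is an outline rather than a proof. Judged as an outline, it follows the three-dimensional Haydys--Walpuski/Taubes pattern (uniform $L^\infty$ bound for the rescaled spinor, then $\varepsilon$-regularity for the curvature, then a bubbling set), whereas the argument actually used in \cite{doan} --- and visibly relied on elsewhere in this paper --- is complex-geometric: one first passes to the holomorphic data $(\mathcal{L},\alpha,\beta)$ via the vortex/Hitchin--Kobayashi correspondence, where compactness, finiteness of $D$, and integrality of the weights come from divisor theory, and then returns to the unitary picture by solving a Kazdan--Warner equation whose degeneration as $\|\Psi_i\|_{L^2}\to\infty$ produces the limiting configuration with $|\alpha|=|\beta|$, the flat $\Z_2$ connection (via the unit-norm holomorphic section $\varphi_{\alpha\beta}$ of $\mathcal{L}^2$ as in Lemmas \ref{lem:varphi} and \ref{lem:extend}, not via your ``$\alpha$ and $\sigma(\beta)$ coincide up to sign'' heuristic), and the stated vanishing rates. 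The citations in this paper to \cite[Proposition 3.1]{doan} (uniform $C^k$ bounds for Kazdan--Warner solutions, used in Proposition \ref{prop_homeomorphism}) and to Steps of \cite[Theorem 2.2]{doan} (used in Proposition \ref{prop:compactification}) reflect exactly this route.

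Beyond the deferral, the specific $\varepsilon$-regularity route you sketch has a concrete problem: an $\varepsilon$-regularity argument shows discreteness of the bad set only if each bad point costs a definite amount of some quantity that is bounded along the sequence, and here the natural quantity is the total variation $\int_\Sigma |F_{A_i}|$, which is \emph{not} a priori bounded --- only the signed integral $\tfrac{i}{2\pi}\int_\Sigma F_{A_i}=d$ is fixed, while $i\ast F_{A_i}=\|\Psi_i\|_{L^2}^2\bigl(|\tilde\beta_i|^2-|\tilde\alpha_i|^2\bigr)+i\ast\eta$ can be large with either sign in different regions. In the two-dimensional setting this bound is obtained from holomorphy, not from conservation of degree: it is the statement that the zero divisors of $\alpha_i$ and $\beta_i$ have total degree bounded by the geometry of $\cE$ (compare Lemma \ref{lem:bounded}, which bounds $\deg D_1+\deg D_2$ via degree bounds for line subbundles of $\cE\otimes K^{1/2}$, and its use in Step 4 of the proof of Proposition \ref{prop:homeofueter}). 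Relatedly, your claim that uniform $C^k_{\loc}$ bounds hold ``on any open set where $F_{A_i}$ is $L^p_{\loc}$-bounded'' begs the question of where that is; identifying the set where $\|\Psi_i\|_{L^2}^2\bigl(|\tilde\alpha_i|^2-|\tilde\beta_i|^2\bigr)$ stays bounded is precisely the degenerate Kazdan--Warner analysis, and the weights' integrality then falls out of the local holomorphic model rather than needing a separate quantization argument. So as written the proposal both leaves the decisive steps to \cite{doan} and, where it does commit to a mechanism, chooses one that does not close without the holomorphic input.
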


%$\cM$ will be compactified by adding to it the moduli space of limiting data appearing in Theorem \ref{thm:inv-compactness}. Two sets of such limiting data will be equivalent if they differ by a gauge transformation of a particular type.

\begin{defn}
Let $D \subset \Sigma$ be a finite set. We say that a gauge transformation in $\cG(\Sigma \setminus D)$ or $\cG^c(\Sigma \setminus D)$ is \emph{simple} if it has degree zero around each point of $D$. Denote by $\cG_0(\Sigma \setminus D)$ and $\cG^c_0(\Sigma \setminus D)$ the subgroups of simple gauge transformations.
\end{defn}

%The limiting data in Theorem \ref{thm:inv-compactness} involves a measure concentrated at $D$. In fact, any flat connection on $\Sigma \setminus D$ gives rise to such a measure.

\begin{defn} \label{defn:measure}
Let $D \subset \Sigma$ be a finite subset. With any flat connection $A \in \cA(\Sigma \setminus D, L)$ we associate a measure $i \ast F_A$ on $\Sigma$ as follows. For $x \in D$ let $B \subset \Sigma$ be a small disc centred at $x$ and not containing other points of $D$. In a unitary trivialisation of $\restr{L}{B}$ we have $A = d + a$ for a one-form $a \in \Omega^1( B \setminus \{ x \}, i \R)$. Denote
\[ q_x = \int_{\partial B} ia. \]
The measure $i \ast F_A$ is defined by
\[ i \ast F_A := \sum_{x \in D} q_x \delta_x. \]
One easily checks that $i\ast F_A$ is well-defined and invariant under simple gauge equivalences.
\end{defn}

\begin{defn} \label{defn:inv-fueter}
A \emph{limiting configuration} is a triple $(A, \Psi, D)$ comprising of a finite subset $D = \{ x_1, \ldots , x_N \} \subset \Sigma$, a connection $A \in \cA(\Sigma \setminus D,L)$, and a pair $\Psi = (\alpha, \beta)$ of nowhere-vanishing sections $\alpha \in \Gamma( \Sigma \setminus D, E^* \otimes L \otimes K^{1/2})$ and $\beta \in \Gamma(\Sigma \setminus D, E \otimes L^* \otimes K^{1/2})$ satisfying
\begin{itemize}
\item $\| \Psi \|_{L^2} = 1$ and $|\Psi| > 0$ on $\Sigma \setminus D$
\item $\del_{AB} \alpha = 0$, $\del_{AB} \beta = 0$, $\alpha\beta = 0$, and $| \alpha | = | \beta|$ on $\Sigma \setminus D$.
\item $A$ is flat on $\Sigma \setminus D$ and has holonomy contained in $\Z_2$.
\item There are non-zero integers $a_1, \ldots, a_N$ such that $\sum_{k=1}^N a_k = 2d$ and
\[   \ast \frac{i}{2\pi} F_{A} = \frac{1}{2}\sum_{k=1}^N a_k \delta_{x_k} \quad \textnormal{as measures}. \]
\item For each $k=1, \ldots , N$ we have  
\[ | \Psi (x) | = O( \mathrm{dist}(x_k,x)^{|a_k|/2} ). \]
\end{itemize}
$(A,\Psi,D)$ and $(A',\Psi',D')$ are \emph{simple gauge equivalent} if $D = D'$ and they differ by an element $u \in \cG_0(\Sigma \setminus D)$. Let $\mathcal{F}$ be the set of simple gauge equivalence classes of limiting configurations.
\end{defn}

The space $\mathcal{F}$ can be equipped with a natural topology in which a sequence $[A_i, \Psi_i, D_i]$ converges to $[A,\Psi,D]$ if and only if $i \ast F_{A_i'} \to i \ast F_A$ weakly as measures and after applying a sequence in $\cG_0(\Sigma \setminus D)$ we have $A_i \to A$ and $\Psi_i \to \Psi$ in $C^{\infty}_{\loc}$ on $\Sigma \setminus D$. 

\begin{defn} \label{defn:limiting}
Let $[A,\Psi,D]$ be an equivalence class in $\mathcal{F}$. For $\epsilon > 0$, $\delta > 0$, an integer $k \geq 0$, and a continuous function $f \colon \Sigma \to \R$ we define $V_{\epsilon,\delta,k,f}(A,\Psi,D) \subset \mathcal{F}$ to be the set of the elements of $\mathcal{F}$ which have a representative $(A',\Psi',D')$ satisfying
\begin{itemize}
\item $D' \subset D_{\epsilon}$ where $D_{\epsilon} :=   \{ x \in \Sigma \ | \ \mathrm{dist}(x, D) < \epsilon \}$,
\item $\| A' - A \|_{C^k(\Sigma \setminus D_{\epsilon})} < \delta$,
\item $\| \Psi' - \Psi \|_{C^k(\Sigma \setminus D_{\epsilon})} < \delta$. 
\item $\left| \int_{\Sigma} (i \ast F_{A'}) f - \int_{\Sigma} ( i \ast F_A) f \right| < \delta$.
\end{itemize}
\end{defn}

\begin{lem} \label{lem:fuetertopology}
The family of subsets
\[ \left\{ V_{\epsilon,\delta,k,f}(A,\Psi,D) \right\} \]
forms a base of a Hausdorff topology on $\mathcal{F}$. 
\end{lem}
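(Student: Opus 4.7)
The plan is to verify the two axioms for a topological base---coverage and the intersection property---and then separate distinct classes by disjoint basic sets (Hausdorffness). Coverage is immediate: for any parameters, the representative $(A,\Psi,D)$ itself satisfies the four defining inequalities with $D'=D$ and all differences equal to zero, so $[A,\Psi,D]\in V_{\epsilon,\delta,k,f}(A,\Psi,D)$.

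For the intersection property at a point $[A,\Psi,D]\in V_{\epsilon_1,\delta_1,k_1,f_1}(A_1,\Psi_1,D_1)\cap V_{\epsilon_2,\delta_2,k_2,f_2}(A_2,\Psi_2,D_2)$, note that each listed set involves only one test function, so strictly the family is a subbase; closing under finite intersections yields a genuine base of the same topology, and this is what suffices. Shrinking $\epsilon$ so that $D_\epsilon\subset (D_i)_{\epsilon_i}$, taking $k=\max\{k_1,k_2\}$, and applying the triangle inequality for $C^k$ norms on $\Sigma\setminus D_\epsilon$ together with linearity of the pairings $\int f_i(i\ast F_{\cdot})$, one finds a sufficiently small $\delta>0$ with $V_{\epsilon,\delta,k,f_1}(A,\Psi,D)\cap V_{\epsilon,\delta,k,f_2}(A,\Psi,D)\subset V_1\cap V_2$. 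The necessary slack exists because the inequalities defining $V_i$ are strict at the representative $(A,\Psi,D)$.

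For the Hausdorff axiom, take distinct classes $[A_1,\Psi_1,D_1]\neq [A_2,\Psi_2,D_2]$. If the finite atomic measures $i\ast F_{A_i}$ differ, a smooth bump function $f$ supported near a point of differing support or weight gives $c:=\bigl|\int f(i\ast F_{A_1})-\int f(i\ast F_{A_2})\bigr|>0$, and basic neighborhoods using $f$ with $\delta<c/3$ are then disjoint in the integral coordinate. Otherwise the measures coincide, so $D_1=D_2=:D$, the weights $a_k$ agree, and $(A_1,\Psi_1)$ and $(A_2,\Psi_2)$ lie in distinct $\cG_0(\Sigma\setminus D)$-orbits. Suppose for contradiction that every pair of basic neighborhoods intersects: diagonalising over shrinking $\epsilon_n,\delta_n$ and growing $k_n$, extract common representatives $(A_n,\Psi_n,D_n)$ together with simple gauge transformations $u^{(i)}_n\in\cG_0(\Sigma\setminus D_n)$ bringing them close in $C^{k_n}$ to $(A_i,\Psi_i)$. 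The composition $v_n=u^{(2)}_n(u^{(1)}_n)^{-1}$ then sends the sequence $u^{(1)}_n(A_n,\Psi_n)$---which converges in $C^{\infty}_{\loc}$ to $(A_1,\Psi_1)$ on $\Sigma\setminus D$---to one converging to $(A_2,\Psi_2)$. A Coulomb-gauge plus elliptic-regularity argument applied to $v_n^{-1}dv_n$ yields $C^{\infty}_{\loc}$-subconvergence of $v_n$ on $\Sigma\setminus D$ to a limit $v_\infty\in\cG_0(\Sigma\setminus D)$ with $v_\infty(A_1,\Psi_1)=(A_2,\Psi_2)$, contradicting the inequivalence.

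The technical heart is this last compactness step: one must show $v_\infty$ is genuinely simple, i.e.\ has trivial winding around each $x_k\in D$. The $v_n$ are simple by construction, but the basic neighborhoods only control the configurations on $\Sigma\setminus D_{\epsilon_n}$, so one needs the prescribed polynomial decay $|\Psi|=O(\mathrm{dist}(\cdot,D)^{|a_k|/2})$ near the punctures---together with the matching decay for $v_n^{-1}dv_n$---to transport the vanishing of the winding of the $u^{(i)}_n$ around each $x_k$ to the limit. I expect this passage from local convergence on $\Sigma\setminus D$ to a genuine element of $\cG_0(\Sigma\setminus D)$ to be the main obstacle, being an instance of the general subtlety in analysing gauge transformations across a point singularity.
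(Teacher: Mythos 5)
Your overall strategy is the right one, and your separation argument is exactly what the paper intends: its entire proof is a citation of \cite[Proposition 2.3.15]{donaldson-kronheimer}, which is precisely the ``Coulomb gauge plus elliptic regularity gives $C^\infty_{\loc}$ subconvergence of the comparing gauge transformations'' statement you invoke for $v_n$. But as written your proposal has two genuine gaps. First, you do not prove the statement of the lemma. The lemma asserts that the sets $V_{\epsilon,\delta,k,f}(A,\Psi,D)$ themselves form a base; you concede that with a single test function they may only form a subbase and declare that ``this is what suffices.'' It does not: producing $V_{\epsilon,\delta,k,f_1}(A,\Psi,D)\cap V_{\epsilon,\delta,k,f_2}(A,\Psi,D)$ inside $V_1\cap V_2$ is proving a different (weaker) assertion. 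The point you are missing is that the $f$--condition is nearly redundant: if $D'\subset D_\epsilon$ and $A'$ is $C^0$--close to $A$ on $\Sigma\setminus D_\epsilon$, then since both measures have integer weights the aggregated weight of $i\ast F_{A'}$ in each $\epsilon$--ball about a point of $D$ is forced to equal the corresponding weight of $i\ast F_A$ (this is the content of Lemma \ref{lem:measures}); hence $\bigl|\int f_2\,(i\ast F_{A'})-\int f_2\,(i\ast F_A)\bigr|$ is bounded by the oscillation of $f_2$ on the $\epsilon$--balls times the total variation $\|i\ast F_{A'}\|$, and the total variation is uniformly bounded on $\mathcal{F}$ (via the holomorphic extension of limiting configurations and Lemma \ref{lem:bounded}). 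With this, a single basic set $V_{\epsilon,\delta,k,f_1}(A,\Psi,D)$ with $\epsilon,\delta$ small and $k\geq\max(k_1,k_2)$ lands inside $V_1\cap V_2$, so the family really is a base; some such argument is needed and is absent from your proposal.

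Second, you flag as ``the main obstacle'' the simplicity of the limit $v_\infty$, and suggest it requires transporting winding numbers through the punctures using the decay $|\Psi|=O(\mathrm{dist}(\cdot,D)^{|a_k|/2})$. No such analysis is needed, and leaving this step open leaves the Hausdorff argument incomplete. Fix a circle $\gamma_k$ about each $x_k\in D$ at a definite positive distance from $D$. For large $n$ one has $\gamma_k\subset\Sigma\setminus D_{\epsilon_n}$, and since $v_n\in\cG_0^{}(\Sigma\setminus D_n)$ is smooth on the disc bounded by $\gamma_k$ minus $D_n$, its degree along $\gamma_k$ equals the sum of its (zero) degrees about the enclosed points of $D_n$, hence vanishes. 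Degrees along the fixed loop $\gamma_k$ are integers and are preserved under the $C^0$ convergence $v_n\to v_\infty$ on $\gamma_k$, so $v_\infty$ has degree zero about each $x_k$ and is simple. With these two repairs your argument coincides with the paper's: the compactness of the gauge transformations is \cite[Proposition 2.3.15]{donaldson-kronheimer}, and the rest is the bookkeeping above.
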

The proof is a simple application of \cite[Proposition 2.3.15]{donaldson-kronheimer}. 
The next step is to combine $\cM$ and $\mathcal{F}$ into one topological space. For this purpose it is convenient to identify points of $\cM$ with gauge equivalence classes of triples $(A, \Psi, t)$, where $t \in (0, \infty)$, $\| \Psi \|_{L^2} = 1$, and
\begin{equation} \label{eqn:nsw-scaled} 
\left\{
\begin{array}{l}
\D_{AB} \Psi = 0, \\
t^2 F_A = \mu(\Psi). 
\end{array} \right.
\end{equation}
The map $(A, \Psi, t) \mapsto (A, t^{-1} \Psi)$ gives a homeomorphism between the space of such classes and the moduli space $\cM^*$. Recall that in our setting there are no reducibles so there is no boundary at $t \to \infty$. The boundary at $t \to 0$ is obtained by gluing in the space of limiting configurations.

\begin{defn}
Let $[A,\Psi,D]$ be an equivalence class in $\mathcal{F}$. For $\epsilon > 0$, $\delta > 0$, an integer $k \geq 0$, and a continuous function $f \colon \Sigma \to \R$ we define
\[ W_{\epsilon, \delta, k, f}(A,\Psi,D) \subset \cM \]
to be the set of the elements of $\cM$ that have a representative $(A', \Psi', t)$ satisfying
\begin{itemize} 
\item $t < \delta$,
\item $\| A' - A \|_{C^k(\Sigma \setminus D_{\epsilon})} < \delta$,
\item $\| \Psi' - \Psi \|_{C^k(\Sigma \setminus D_{\epsilon})} < \delta$,
\item $\left| \int_{\Sigma} ( i \ast F_{A'})f - \int_{\Sigma} (i \ast F_A)f \right| < \delta$.
\end{itemize}
\end{defn}

\begin{defn} \label{defn:compactifiedmoduli}
The \emph{compactified moduli space} is
\[ \overline{\cM} \coloneqq \cM \cup \mathcal{F} \]
equipped with the topology whose basis are the subsets
\[ \overline{W}_{\epsilon,\delta,k,f}(A,\Psi,D) \coloneqq W_{\epsilon, \delta, k,f}(A,\Psi,D)  \cup V_{\epsilon, \delta, k,f}(A,\Psi,D). \]
\end{defn}

%We can now endow the compactified moduli space with a Hausdorff topology with respect to which $\cM$ is an open subset. As before, we have
%The proof of the lemma below is almost the same as that of Lemma \ref{lem:fuetertopology}.

\begin{lem} \label{lem:hausdorff}
Let $\cU$ be a base of the topology on $\cM$. The family of subsets 
\[ \left\{ \overline{W}_{\epsilon,\delta,k}(A,\Psi,D) \right\} \cup \cU \]
 forms a base of a Hausdorff topology on $\overline{\cM}$. 
\end{lem}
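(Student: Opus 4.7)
The plan is to verify the two base axioms and then the Hausdorff property, treating separately the combinations of points in $\cM$ and $\mathcal{F}$.

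First I would establish the base axioms. Every point of $\cM$ lies in some member of $\cU$ and every $[A,\Psi,D]\in\mathcal{F}$ lies in its own neighbourhood $\overline{W}_{\epsilon,\delta,k,f}(A,\Psi,D)$, so the family covers $\overline{\cM}$. For the intersection axiom the key observation is that
\begin{equation*}
\overline{W}_{\epsilon,\delta,k,f}(A,\Psi,D) \cap \cM = W_{\epsilon,\delta,k,f}(A,\Psi,D)
\end{equation*}
is open in the original topology on $\cM$: the bound $t<\delta$ is open since $t = \| \Psi \|_{L^2}^{-1}$ is continuous, the two $C^k$--bounds on $\Sigma \setminus D_{\epsilon}$ are open because the relevant configurations are genuinely smooth on this compact set, and the integral condition is a continuous linear functional of the curvature. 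Once this is in hand, the intersection of any two basis elements can be refined locally inside $\cM$ by a $\cU$--element and locally inside $\mathcal{F}$ by a $V$--set via Lemma \ref{lem:fuetertopology}, and then patched through a single $\overline{W}$--set.

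Next I would check Hausdorffness in three cases. Two points in $\cM$ are separated by $\cU$--elements because $\cU$ is a base of a Hausdorff topology on $\cM$. Two points in $\mathcal{F}$ are separated by $V$--sets, hence by $\overline{W}$--sets, using Lemma \ref{lem:fuetertopology}. For the mixed case, where $x=[A',\Psi',t]\in\cM$ and $y=[A,\Psi,D]\in\mathcal{F}$, the observation is that $t>0$ is strictly positive on any monopole, while $\overline{W}_{\epsilon,\delta,k,f}(A,\Psi,D)$ imposes $t<\delta$ on its elements from $\cM$. Choosing $\delta<t/2$ renders the $\overline{W}$--set disjoint from the open subset $\{[A'',\Psi'',t'']\in\cM \mid t''>t/2\}$ of $\cM$, which contains $x$ and therefore a $\cU$--neighbourhood of $x$.

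The main obstacle is this last separation: it crucially uses the one-sided nature of the Fueter degeneration, namely that any approach to $\mathcal{F}$ forces $t\to 0$, which is exactly the content of Theorem \ref{thm:inv-compactness}. Without this asymmetry a monopole could accumulate on a Fueter limit while $t$ stays bounded away from zero, and one could not rule out that every $\cU$--neighbourhood of $x$ meets every $\overline{W}$--neighbourhood of $y$.
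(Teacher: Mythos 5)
The paper itself states this lemma without proof, so your attempt has to stand on its own. Your skeleton (covering, intersection axiom, Hausdorffness in three cases) is the right one, and the easy pieces are fine: the covering property, the openness of $W_{\epsilon,\delta,k,f}(A,\Psi,D)=\overline{W}_{\epsilon,\delta,k,f}(A,\Psi,D)\cap\cM$ in $\cM$ (using that convergence in $\cM$ is smooth convergence of gauge representatives), the separation of two monopoles, and the separation of a monopole from a limiting configuration. Note, though, that this last case does not "crucially use" Theorem \ref{thm:inv-compactness}: the clause $t<\delta$ is built into the definition of $W_{\epsilon,\delta,k,f}$, so disjointness follows from that clause together with continuity of $t=\|\Psi\|_{L^2}^{-1}$ on $\cM$ alone; no compactness statement is needed for Hausdorffness.

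The genuine gaps are exactly at the two places where the $\cM$-part and the $\mathcal{F}$-part of an $\overline{W}$-set interact, and these are the non-routine content of the lemma. First, the intersection axiom at a point $x=[A,\Psi,D]\in\mathcal{F}$ lying in $\overline{W}_1\cap\overline{W}_2$: you must exhibit a single $\overline{W}_{\epsilon,\delta,k,f}(A,\Psi,D)$ contained in the intersection. Lemma \ref{lem:fuetertopology} only refines the $V$-parts; for the $W$-parts you must show that a monopole $(A',\Psi',t)$ with $t<\delta$ and $A'$ close to $A$ in $C^k(\Sigma\setminus D_{\epsilon})$ automatically satisfies the measure clauses of $\overline{W}_1$ and $\overline{W}_2$, whose test functions $f_1,f_2$ differ from the single $f$ your new basis element carries — so "patching through a single $\overline{W}$-set" is not an argument. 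This is where the work lies: one compares $\int_{B_\epsilon(x_k)} iF_{A'}$ with the weight $q_{x_k}$ via Stokes' theorem on small discs (the monopole connection is smooth across $D$, the limiting one is not, so Lemma \ref{lem:measures} as stated does not apply directly), and controls the oscillation terms using continuity of $f_i$ and a mass bound in the spirit of Lemma \ref{lem:bounded}. Second, your $\mathcal{F}$–$\mathcal{F}$ separation is not justified: disjointness of $V$-neighbourhoods of $y_1\neq y_2$ given by Lemma \ref{lem:fuetertopology} does not imply disjointness of the corresponding $\overline{W}$-neighbourhoods, because a monopole could a priori lie in $W(y_1)\cap W(y_2)$ even when $V(y_1)\cap V(y_2)=\emptyset$. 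One has to rule this out, e.g.\ by showing that if for every $\delta>0$ some monopole is $\delta$-close to representatives of both $y_1$ and $y_2$, then the comparison gauge transformation (defined on all of $\Sigma$, hence simple) forces $y_1$ and $y_2$ to be simple gauge equivalent with equal measures, a contradiction. Without these two arguments the proposal only establishes that the family is a subbase of a topology separating $\cM$-points from everything else, not the lemma as stated.
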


%\begin{thm} \label{thm:compactification}
%The isomorphism $\cM \cong \cM_{\hol}$ extends to a homeomorphism of the compactifications $\overline{\cM} \cong \overline{\cM}_{\hol}$. 
%\end{thm}
%
%As a consequence of Proposition \ref{prop:compactification} and Theorem \ref{thm:homeo0} we obtain some topological information about the compactified moduli space. 
%
%\begin{cor} \label{cor:compactification}
%$\overline{\cM}$ is metrisable, compact, and contains $\cM$ as an open dense subset. 
%\end{cor}

%\begin{rem} \label{rem:topology}
%It would be interesting to prove Corollary \ref{cor:compactification} directly from the definition of the compactified moduli space. Showing metrisability should be similar to the case of the Uhlenbeck compactification of the instanton moduli space \cite[Section 4.4.1]{donaldson-kronheimer}. To prove that $\overline{\cM}$ is sequentially compact we use Theorem \ref{thm:inv-compactness}. As stated it implies that the closure of $\cM$ is contained in $\overline{\cM}$. In order to show the compactness of $\mathcal{F}$ we need to extend Theorem \ref{thm:inv-compactness} so that it deals with sequences of Fueter sections. Such an extension is implicit in the proofs of Proposition \ref{prop:compactification} and Theorem \ref{thm:homeo0}. Finally, to show that $\cM$ is dense one needs a gluing theorem which would guarantee that any limiting configuration is the limit of a sequence of solutions of \eqref{eqn:nsw-invariant}.  See \cite{mazzeo-et.al} for a related result for the Hitchin equations. 
%\end{rem}

\subsection{A homeomorphism at infinity}
The main ingredient in the proof of Theorem \ref{thm:homeo0} is

\begin{prop} \label{prop:homeofueter}
The spaces $\overline{\cM}_{\hol} \setminus \cM_{\hol}$ and $\mathcal{F}$ are homeomorphic. 
\end{prop}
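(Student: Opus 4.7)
The plan is to construct a continuous bijection $\Phi \colon \overline{\cM}_{\hol}\setminus\cM_{\hol} \to \mathcal{F}$ with continuous inverse, reproducing the structure of Proposition \ref{prop_homeomorphism} in a ``degenerate'' form. A point of $\overline{\cM}_{\hol}\setminus\cM_{\hol}$ is represented by a quadruple $(A,\alpha,\beta,0)$ with $\alpha\neq 0$, $\beta\neq 0$, and $\alpha\beta = 0$. I define $\Phi$ by the Kazdan--Warner formula taken in the $t\to 0$ limit: let $D\subset \Sigma$ be the (finite) union of the zero loci of $\alpha$ and $\beta$, put $f = \tfrac{1}{2}\log(|\beta|/|\alpha|) \in C^\infty(\Sigma\setminus D,\R)$, apply the complex gauge transformation $e^f \in \cG^c_0(\Sigma\setminus D)$, and rescale by a constant in $\C^*$ so that the resulting $\Psi' = (e^f \alpha, e^{-f}\beta)$ has $\|\Psi'\|_{L^2} = 1$. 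The image $\Phi([A,\alpha,\beta,0])$ is the class of $(A', \Psi', D)$ where $A' = A + \partial f - \del f$.

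The verification that $\Phi$ lands in $\mathcal{F}$ follows the axioms of Definition \ref{defn:inv-fueter}. The holomorphic equations $\del_{A'B}\alpha' = 0$, $\del_{A'B}\beta' = 0$, and $\alpha'\beta' = 0$ are $\cG^c$-invariant; $|\alpha'| = |\beta'|$ holds by construction, forcing $|\Psi'| > 0$ on $\Sigma\setminus D$; and the Poincar\'e--Lelong formula applied to $\log|\alpha|^2$ and $\log|\beta|^2$ identifies the smooth part of $F_A$ away from $D$ with $2\partial\del f$, so that $F_{A'}$ vanishes identically on $\Sigma\setminus D$. The $\tfrac{1}{2}$ in the definition of $f$ produces $\Z_2$ holonomy around each $x \in D$, and the leading-order behaviour $|\alpha| \sim |z|^{k_x}$ at a zero of order $k_x$ yields the prescribed decay $|\Psi'| = O(\mathrm{dist}(x_k,\cdot)^{|a_k|/2})$, with $a_k$ equal to the difference between the orders of vanishing of $\alpha$ and $\beta$. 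The weighted sum $\sum a_k = 2d$ matches the degree condition since the total vanishing contributions of $\alpha$ and $\beta$ are determined by the first Chern classes of the bundles they live in.

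For the inverse $\Psi \colon \mathcal{F} \to \overline{\cM}_{\hol}\setminus\cM_{\hol}$, given a limiting configuration $(A,(\alpha,\beta),D)$, I seek a real function $f$ on $\Sigma\setminus D$ such that $e^{-f}$ transforms $A$ into a smooth unitary connection across $D$ and $\alpha$, $\beta$ into smooth holomorphic sections on all of $\Sigma$. This reduces to solving the distributional equation $\Delta f = i{\ast} F_A$ on $\Sigma$, where the right-hand side is the atomic measure from Definition \ref{defn:measure}, balanced against a constant chosen so the resulting line bundle has degree $d$. Standard Hodge-theoretic existence on $\Sigma$ supplies such an $f$, unique up to a constant, and the local Frobenius-type analysis near each $x\in D$ (using the $\Z_2$ holonomy and the power-law bound on $|\Psi|$) shows that $e^{-f}\alpha$, $e^f \beta$ extend smoothly and holomorphically across $D$. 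That $\Phi$ and $\Psi$ are mutual inverses is then a direct verification from the defining formulae.

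The main obstacle will be continuity of $\Psi$, since the topology on $\mathcal{F}$ from Lemma \ref{lem:fuetertopology} allows singular sets $D_i$ to collide as the weights $a_k$ add up. A converging sequence in $\mathcal{F}$ can therefore produce wildly different solutions $f_i$ to the distributional Laplace equation---developing concentrations dual to the colliding divisors---and one must show that, after absorbing a diverging multiplicative constant into the $\C^*$-factor of $\overline{\cM}_{\hol}$, the images $[A_i', \alpha_i', \beta_i', 0]$ still converge in the complex-analytic topology of Proposition \ref{prop:compactification}. This is essentially the mirror of the compactness argument extracted from \cite[Theorem 2.2]{doan} used in constructing $\overline{\cM}_{\hol}$, applied in reverse; the key ingredients are uniform $C^k_{\loc}$ estimates on $f_i$ away from $D$ together with tracking the $L^2$-norms of $e^{\pm f_i}\alpha_i$ and $e^{\mp f_i}\beta_i$ as the normalisation constants degenerate. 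Continuity of $\Phi$ is comparatively routine, since the explicit formula $f = \tfrac{1}{2}\log(|\beta|/|\alpha|)$ depends continuously on $(\alpha,\beta)$ away from $D$, and the weak convergence of $i{\ast}F_{A_i}$ to a sum of delta measures is exactly the content of the topology in Definition \ref{defn:limiting}.
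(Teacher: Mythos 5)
Your construction of the forward map is essentially the paper's: the complex gauge factor $h=e^{f}=\sqrt{|\beta|/|\alpha|}$ is exactly the choice made there, and the verification that the result is a limiting configuration (flatness, $\Z_2$ holonomy, the weights $a_k=\mathrm{ord}_x(\alpha)-\mathrm{ord}_x(\beta)$ summing to $2d$, the decay rate of $|\Psi|$) follows the same lines, with your Poincar\'e--Lelong argument playing the role of the paper's canonical isomorphism $\varphi_{\alpha\beta}$ of Lemma \ref{lem:varphi} together with the extension Lemma \ref{lem:extend}. The genuine gap is in the second half: you set yourself the task of constructing a continuous inverse and you correctly identify its continuity as ``the main obstacle'' --- colliding singular sets, degenerating solutions of the distributional equation $\Delta f = i\ast F_A$, diverging normalisation constants --- but you never actually resolve it; you only list the ingredients one ``must show''. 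As written, the proposal therefore does not prove the proposition. The paper avoids this difficulty entirely by a point-set argument: $\overline{\cM}_{\hol}\setminus\cM_{\hol}$ is compact (it is closed in the compact space $\overline{\cM}_{\hol}$ of Proposition \ref{prop:compactification}) and $\mathcal{F}$ is Hausdorff (Lemma \ref{lem:fuetertopology}), so a \emph{continuous bijection} $\overline{\cM}_{\hol}\setminus\cM_{\hol}\to\mathcal{F}$ is automatically a homeomorphism. One therefore only needs injectivity, surjectivity (your distributional-Laplacian construction is a reasonable substitute for the paper's use of Lemma \ref{lem:extend} here), and continuity in the one direction you call routine. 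Invoking this fact would close your gap immediately.

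One further caution on the direction you do address: continuity of the forward map is not quite as routine as you state. Convergence of the measures $i\ast F_{A_i'}$ of the \emph{image} configurations is not part of the hypothesis; it has to be deduced, and the paper does this via Lemma \ref{lem:measures}, whose hypotheses require a uniform bound on the total variations $\|i\ast F_{A_i'}\|$, i.e.\ on $\deg D_1^i+\deg D_2^i$. That bound is supplied by Lemma \ref{lem:bounded} (degrees of line subbundles of the fixed holomorphic bundle $\cE\otimes K^{1/2}$ are bounded above). Your sketch should cite or reprove this uniform bound; without it the weights at colliding points could a priori grow without control even though their signed sums are constrained.
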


The proof of Proposition is preceded by auxiliary results about limiting configurations. The first of them is a complex-geometric analogue of the statement that a limiting configuration induces a flat connection with $\Z_2$ holonomy.

\begin{lem} \label{lem:varphi}
Let $(A, \alpha, \beta)$ be a solution of \eqref{eqn:holomorphic} with $\alpha \neq 0$ and $\beta \neq 0$. Denote by $\mathcal{L}$ the holomorphic line bundle $(L, \del_A)$. Let $D_1$ and $D_2$ be the zero divisors of $\alpha$ and $\beta$ respectively, and $\cO(D_1 - D_2)$ the holomorphic line bundle associated to $D_1 - D_2$. There is a canonical holomorphic isomorphism  
\[ \varphi_{\alpha \beta} \colon \cO(D_1 - D_2) \longrightarrow \mathcal{L}^2. \]
\end{lem}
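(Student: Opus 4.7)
The plan is to show that $\alpha$ and $\beta$ each give rise to a saturated line subbundle of $\cE := \cE_B$, and that the equation $\alpha\beta = 0$ forces these two subbundles to coincide; the isomorphism $\varphi_{\alpha\beta}$ will then drop out by reading off their two expressions and taking tensor product.

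First I would rewrite $\alpha \in H^0(\Sigma, \cE \otimes \mathcal{L} \otimes K^{1/2})$ via its zeros: dividing by the canonical section of $\cO(D_1)$ produces a nowhere-vanishing section of $\cE \otimes \mathcal{L} \otimes K^{1/2} \otimes \cO(-D_1)$, equivalently a saturated line subbundle inclusion
\[ \iota_\alpha \colon \mathcal{L}^{-1} \otimes K^{-1/2} \otimes \cO(D_1) \hookrightarrow \cE. \]
Dually, $\beta \in H^0(\Sigma, \cE^* \otimes \mathcal{L}^{-1} \otimes K^{1/2})$ viewed as a morphism $\cE \to \mathcal{L}^{-1} \otimes K^{1/2}$ vanishes along $D_2$, so it factors through a surjection
\[ \pi_\beta \colon \cE \twoheadrightarrow \mathcal{L}^{-1} \otimes K^{1/2} \otimes \cO(-D_2). \]
Since $\cE$ has rank two and $\det \cE = \cO$ (the latter because $\cE_B$ comes from an $\SU(2)$--connection), the kernel of $\pi_\beta$ is canonically identified with the inverse of its codomain, namely $\ker(\pi_\beta) \cong \mathcal{L} \otimes K^{-1/2} \otimes \cO(D_2)$, and is a saturated line subbundle of $\cE$.

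Second, I would unpack the equation $\alpha\beta = 0 \in H^0(\Sigma, K)$. The product $\alpha\beta$ is the natural trace pairing $\cE \otimes \cE^* \to \cO$ applied to $\alpha \otimes \beta$; a short computation identifies it, up to the canonical section $s_{D_1}$ of $\cO(D_1)$, with the composition $\pi_\beta \circ \iota_\alpha$ regarded as a section of $\Hom(\mathcal{L}^{-1} K^{-1/2}(D_1), \mathcal{L}^{-1} K^{1/2}(-D_2)) = K \otimes \cO(-D_1 - D_2)$. Hence $\alpha\beta = 0$ is equivalent to $\mathrm{image}(\iota_\alpha) \subseteq \ker(\pi_\beta)$ inside $\cE$.

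The key rank-two input is then the elementary fact that two saturated line subbundles of a rank two vector bundle on a Riemann surface which are contained in one another must coincide; otherwise their quotient would be a nonzero torsion subsheaf of the torsion-free sheaf $\cE/\iota_\alpha$. Applying this gives the equality $\mathrm{image}(\iota_\alpha) = \ker(\pi_\beta)$, which supplies a canonical holomorphic isomorphism $\mathcal{L}^{-1} K^{-1/2}(D_1) \xrightarrow{\sim} \mathcal{L} K^{-1/2}(D_2)$; tensoring with $\mathcal{L} \otimes K^{1/2} \otimes \cO(-D_2)$ gives the desired $\varphi_{\alpha\beta} \colon \cO(D_1 - D_2) \xrightarrow{\sim} \mathcal{L}^2$. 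No real obstacle appears; the only care required is bookkeeping of twists and consistent use of $\det \cE = \cO$ to convert between saturated subbundles and quotients of $\cE$.
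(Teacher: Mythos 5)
Your proposal is correct and is essentially the paper's argument: the paper packages the two maps you build from $\alpha$ and $\beta$ into the short exact sequence \eqref{eqn:varphi-exact} and obtains $\varphi_{\alpha\beta}$ by taking determinants together with $\det\cE \cong \cO$, which is the same computation as your identification $\ker(\pi_\beta) \cong \mathcal{L}\otimes K^{-1/2}\otimes\cO(D_2)$ followed by tensoring. The only difference is that you spell out the middle exactness (image of $\iota_\alpha$ equals $\ker \pi_\beta$, via saturated line subbundles) which the paper asserts in one line from $\alpha\beta=0$ and $\rank\cE=2$.
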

\begin{proof}
Recall that $\alpha$ is a holomorphic section of $\cE \otimes \mathcal{L} \otimes K^{1/2}$ and $\beta$ is a holomorphic section of $\cE^* \otimes \mathcal{L}^* \otimes K^{1/2}$. Since $\alpha \beta = 0$ and the rank of $\cE$ is two, we have the short exact sequence
\begin{equation} \label{eqn:varphi-exact}
\begin{tikzcd} 
0 \ar{r} & \mathcal{L}^{-1} \otimes K^{-1/2} \otimes \cO(D_1) \ar{r}{\alpha} & \cE \ar{r}{\beta} & \mathcal{L}^{-1} \otimes K^{1/2} \otimes \cO(-D_2) \ar{r} & 0. 
\end{tikzcd}
\end{equation}
The associated isomorphism of the determinant line bundles is
\[ \mathcal{L}^{-2} \otimes \cO(D_1 - D_2) \cong \det \cE \cong \cO, \]
where the last isomorphism follows from the fact that $\cE$ is a holomorphic $SL(2,\C)$-bundle. Tensoring both sides with $\mathcal{L}^2$, we obtain the desired isomorphism $\varphi_{\alpha \beta}$. It is canonically determined by $\alpha$ and $\beta$.
\end{proof}

The lemma below provides an upper bound on the number of components, counted with multiplicities, of the singular set of a limiting configuration. 

\begin{lem} \label{lem:bounded}
There exists $M \geq 0$, depending only on the holomorphic bundle $\cE$, with the following significance. If $(\mathcal{L}, \alpha, \beta)$ is a triple as in Lemma \ref{lem:varphi}, then 
\[ \deg D_1 + \deg D_2 \leq M. \]
\end{lem}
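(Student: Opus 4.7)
The plan is to combine the short exact sequence \eqref{eqn:varphi-exact} already produced in the proof of Lemma \ref{lem:varphi} with the standard fact that a fixed holomorphic rank--two bundle $\cE \to \Sigma$ admits sub-line-bundles of only bounded degree. Concretely, set
\[
\mu_{\max}(\cE) := \max\bigl\{ \deg \mathcal{M} \ \bigl| \ \mathcal{M} \hookrightarrow \cE \text{ a holomorphic sub-line-bundle}\bigr\}.
\]
This maximum is finite: either $\cE$ is semistable, in which case $\deg \mathcal{M} \le \deg(\cE)/2 = 0$ for every such $\mathcal{M}$, or $\cE$ is unstable and $\mu_{\max}(\cE)$ is attained by the unique maximal destabilising sub-line-bundle produced by the Harder--Narasimhan filtration.

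First I would read off from \eqref{eqn:varphi-exact} that $\mathcal{L}^{-1} \otimes K^{-1/2} \otimes \cO(D_1)$ is a sub-line-bundle of $\cE$, which yields
\[
-d - (g-1) + \deg D_1 \le \mu_{\max}(\cE),
\]
where $g = g(\Sigma)$. Next, the determinant identity $\mathcal{L}^2 \cong \cO(D_1 - D_2)$, which was already used in the proof of Lemma \ref{lem:varphi} to produce $\varphi_{\alpha\beta}$, gives $\deg D_2 = \deg D_1 - 2d$. Substituting these two facts produces
\[
\deg D_1 + \deg D_2 \ = \ 2\deg D_1 - 2d \ \le \ 2(g-1) + 2\mu_{\max}(\cE) \ =: \ M,
\]
which is the required bound depending only on $\cE$ and the fixed Riemann surface $\Sigma$.

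The only nontrivial input is the finiteness of $\mu_{\max}(\cE)$, which I do not anticipate as an obstacle since it is standard holomorphic bundle theory; everything else is a direct degree count using the determinant identity and the exact sequence already in hand. Symmetrically, one could have bounded $\deg D_2$ directly by dualising \eqref{eqn:varphi-exact} and using the canonical isomorphism $\cE^* \cong \cE$ (which follows from $\det \cE = \cO$), arriving at the same conclusion via a slightly more even-handed argument.
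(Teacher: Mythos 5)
Your proof is correct and is essentially the paper's argument: both extract the sub-line-bundle of (a twist of) $\cE$ from the exact sequence \eqref{eqn:varphi-exact}, invoke the standard fact that degrees of line subbundles of a fixed bundle are bounded above, and convert this into a bound on $\deg D_1 + \deg D_2$ via the determinant identity $\mathcal{L}^2 \cong \cO(D_1 - D_2)$. The only cosmetic differences are that the paper twists by $K^{1/2}$ and quotes the explicit bound $\deg A \leq h^0(\Sigma,\mathcal{F}) + 2g(\Sigma) - 2$ (which it reuses later), whereas you keep the subbundle inside $\cE$ and justify boundedness via $\mu_{\max}(\cE)$ and the Harder--Narasimhan filtration.
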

\begin{proof}
Tensoring exact sequence \eqref{eqn:varphi-exact} with $K^{1/2}$ we see that $\mathcal{L}^{-1} \otimes \cO(D_1)$ is a holomorphic subbundle of $\cE \otimes K^{1/2}$. It is an elementary fact that the degrees of line subbundles of a given holomorphic bundle are bounded above \cite[Corollary 10.9]{mukai}.  In fact, if $\mathcal{F}$ is a holomorphic vector bundle and $A \subset \mathcal{F}$ a line subbundle, then
\begin{equation} \label{eqn:bound}
\deg A \leq h^0(\Sigma, \mathcal{F}) + 2g(\Sigma)-2
\end{equation}
where $g(\Sigma)$ is the genus of $\Sigma$. (We will use this bound later.) Thus, we have an upper bound on the degree of $\mathcal{L}^{-1} \otimes \cO(D_1)$. On the other hand, $\mathcal{L}^2$ is isomorphic to $\cO(D_1 - D_2)$, so
\[ 
\deg \left( \mathcal{L}^{-1} \otimes \cO(D_1) \right) = - \deg \mathcal{L} + \deg D_1 = \frac{1}{2} \left( \deg D_1 + \deg D_2 \right),
\]
which proves the lemma.
\end{proof}

The next result will be useful in proving the convergence of measures.

\begin{lem} \label{lem:measures}
Let $f \colon \Sigma \to \R$ be a continuous function, $\gamma > 0$, and $D \subset \Sigma$ a finite subset. Then there exist $\epsilon> 0$ and $\delta > 0$ with the following property. Suppose that $D' \subset \Sigma$ is another finite subset, and $A$ and $A'$ are two flat connections over $\Sigma \setminus D$ and $\Sigma \setminus D'$ respectively, satisfying
\begin{itemize}
\item $D' \subset D_{\epsilon}$,
\item $\| A' - A \|_{C^0(\Sigma \setminus D_{\epsilon})} < \delta$,
\item the measures $\ast i F_A$ and $\ast i F_{A'}$ have integer weights.
\end{itemize}
Then we have
\[ \left| \int_{\Sigma} ( i\ast F_{A'} )f - \int_{\Sigma}  (i \ast F_A)f \right| \leq \gamma \| i \ast F_{A'} \|. \]
where $\| i \ast F_{A'} \|$ is the \emph{total variation} of the measure $i \ast F_{A'}$ given by
\[  \| i \ast F_{A'} \|  = \sum_{x \in D'} | q_x | \]
 for   $i \ast F_{A'} = \sum_{x \in D'} q_x \delta_x$ . 
\end{lem}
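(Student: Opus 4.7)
The plan is to exploit two ingredients: the fact that the weights of $i\ast F_A$ and $i\ast F_{A'}$ lie in a discrete subgroup of $\R$ (by the integrality hypothesis), and the fact that $A$, $A'$ are flat off their respective singular sets, so that Stokes' theorem pins down the total charge enclosed in each disc.

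First, since $f$ is continuous, I pick $\epsilon$ small enough that the $\epsilon$-discs $B_x$ around the finitely many $x\in D$ are pairwise disjoint and $\sup_{y\in B_x}|f(y)-f(x)| < \gamma$. Next, I take $\delta$ so small that $2\pi\epsilon\delta$ is strictly below the minimal positive distance between distinct elements of the weight lattice (for weights in $\Z$ this is $\delta\epsilon<1/(2\pi)$). The hypothesis $D'\subset D_\epsilon$ then allows me to partition $D' = \bigsqcup_{x\in D} D'_x$ with $D'_x := D'\cap B_x$.

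For each $x\in D$, I fix a unitary trivialisation of $L|_{B_x}$ and write $A = d+a$, $A' = d+a'$ on $B_x$. Applying Stokes' theorem to the punctured disc $B_x\setminus(\{x\}\cup D'_x)$, using flatness of $A$ on $B_x\setminus\{x\}$ and of $A'$ on $B_x\setminus D'_x$, gives $\int_{\partial B_x} ia = q_x$ and $\int_{\partial B_x} ia' = \sum_{y\in D'_x} q'_y$. Since $a' - a$ is the restriction of the globally defined one-form $A' - A$ and $\partial B_x \subset \Sigma\setminus D_\epsilon$, I obtain
\begin{equation*}
\Bigl|\sum_{y\in D'_x} q'_y - q_x\Bigr| = \Bigl|\int_{\partial B_x} i(a' - a)\Bigr| \leq 2\pi\epsilon\delta.
\end{equation*}
This is the rigidity step: the left-hand side lies in the discrete weight lattice while the right-hand side is strictly below the lattice spacing, so $\sum_{y\in D'_x} q'_y = q_x$ \emph{exactly}.

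Substituting this identity into
\begin{align*}
\int_\Sigma (i\ast F_{A'})f - \int_\Sigma (i\ast F_A)f
&= \sum_{x\in D}\Bigl[\sum_{y\in D'_x} q'_y f(y) - q_x f(x)\Bigr] \\
&= \sum_{x\in D}\sum_{y\in D'_x} q'_y\bigl(f(y)-f(x)\bigr),
\end{align*}
and bounding $|f(y)-f(x)|<\gamma$ on each $B_x$ yields
\begin{equation*}
\Bigl|\int_\Sigma (i\ast F_{A'})f - \int_\Sigma (i\ast F_A)f\Bigr| \leq \gamma \sum_{y\in D'} |q'_y| = \gamma\,\|i\ast F_{A'}\|,
\end{equation*}
as required. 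The main point (and the reason the lemma is stated this way) is that without the integer-weight assumption, the $C^0$-closeness of the connections would only give a bound of size $O(\delta\epsilon |D|)$, which is a \emph{constant} independent of $\|i\ast F_{A'}\|$; the latter can be arbitrarily small (even zero, when $D' = \emptyset$). Integrality converts the quantitative boundary estimate into an exact matching of charges, after which the only remaining error comes from the modulus of continuity of $f$, which is freely small.
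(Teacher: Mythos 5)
Your proof is correct and follows essentially the same route as the paper's: partition $D'$ among $\epsilon$-discs about the points of $D$, use flatness and Stokes' theorem in a unitary trivialisation to compare the boundary integrals of $a$ and $a'$ over $\partial B_x$, invoke integrality of the weights to upgrade the $O(\epsilon\delta)$ estimate to the exact charge identity $\sum_{y\in D'_x} q'_y = q_x$, and then bound the remaining error by the modulus of continuity of $f$. The only differences are cosmetic (an explicit choice of $\delta$ where the paper says "for sufficiently small $\delta$", plus your closing remark on why integrality is essential), so nothing further is needed.
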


\begin{proof} 
Let $D = \{ x_1, \ldots, x_N \}$ and $a_1, \ldots, a_N$ be the integer weights of the measure $i \ast F_A$ as in Definition \ref{defn:measure}. Suppose that $\epsilon$ is small enough so that the discs $B_i$ of radius $\epsilon$ centred $x_i$ are pairwise disjoint.  Partition the set $D'$ into disjoint subsets $D_1', \ldots, D_N'$ consisting of points within $\epsilon$--distance from $x_1, \ldots, x_N$ respectively. For each $i$ choose small disjoint discs $E_{i1}, E_{i2}, \ldots $ centred at points of $D_i'$ and contained in $B_i$. Let $b_{i1}, b_{i2} \ldots $ be the weights of the points in $D_i'$ in the measure $i \ast F_{A'}$. 

In a unitary trivialisation of $L$ over each $B_i$ we have
\[ A = d + a \qquad \textnormal{and} \qquad A' = d + a', \]
where one-form $a$ is defined on $B_i \setminus \{ x_i \}$ and $a'$ is defined on $B_i \setminus D_i'$. By the hypothesis of the lemma $\| a - a' \|_{C^0(\partial B_i)} < \delta$. Thus, for sufficiently small $\delta$,
\[ \left| a_i - \sum_j b_{ij}\right| =\left| \int_{\partial B_i} i a - \sum_j \int_{\partial E_{ij}} ia' \right| = 
 \ \left| \int_{\partial B_i} ia - \int_{\partial B_i} ia' \right| < 1. \]
 Since all numbers $a_i$, $b_{ij}$ are integers, so we conclude that
\[ a_i = \sum_j b_{ij}. \]
For each $i$ denote the points of $D_i'$ by $\{ x_{ij} \}$. Then
\[ \begin{split}
 \left| \int_{\Sigma} ( i\ast F_{A'} )f - \int_{\Sigma}  (i \ast F_A)f \right| &=  \left| \sum_{i} a_i f(x_i) - \sum_{i j} b_{ij} f(x_{ij}) \right| \\
 & \leq \sum_{ij} |b_{ij}| \left| f(x_i) - f(x_{ij}) \right|.
 \end{split}
 \]
By the continuity of $f$ we can choose $\epsilon$ sufficiently small so that 
\[ \sup_{x \in B_i} | f(x_i) - f(x) | \leq \gamma \]
for all $i = 1, \ldots, N$. Then
 \[   \left| \int_{\Sigma} ( i\ast F_{A'} )f - \int_{\Sigma}  (i \ast F_A)f \right|  \leq \gamma \| i \ast F_{A'}.\|  \qedhere \]
 \end{proof}

The last lemma allows us to extend a limiting configuration to a holomorphic section. The proof is a minor variation of \cite[Proof of Lemma 2.1]{doan}.

\begin{lem} \label{lem:extend}
Let $L$ be a Hermitian line bundle over the unit disc $B \subset \C$. Suppose that $A$ is a unitary connection on $\restr{L}{B \setminus \{ 0 \}}$ and $\varphi$ a section of $L$ over $B \setminus \{ 0 \}$ satisfying $\del_{A} \varphi = 0$ and $| \varphi | = 1$. Denote by $\deg \varphi$ the degree of $\restr{\varphi}{\partial B}$. Then
\begin{enumerate}
\item $F_A = 0$ on $B \setminus \{ 0 \}$ and $i \ast F_A =( 2\pi \deg \varphi )\delta_0$ as measures.
\item There exists a complex gauge transformation $h \colon B \setminus \{ 0 \} \to \C^*$ such that $h$ has degree zero around zero and in some trivialisation of $L$ around zero $h(A)$ is the trivial connection and $h\varphi = z^k$.
\end{enumerate}
\end{lem}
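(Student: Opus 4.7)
The core idea is to use $\varphi$ itself as a holomorphic frame for $L$ on $B\setminus\{0\}$. Because $|\varphi|=1$, the section $\varphi$ trivialises $L$ over the punctured disc, and both assertions reduce to simple computations in two compatible trivialisations: the one given by $\varphi$ on $B\setminus\{0\}$, and any smooth unitary trivialisation $e$ of $L$ over the full disc $B$. Write $\varphi = \psi e$ with $\psi\colon B\setminus\{0\}\to S^1$; then $k := \deg\varphi$ is the winding number of $\psi$ around $0$.

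For part (1), in the $\varphi$-frame the connection takes the form $A = d + \alpha$ with $\alpha$ purely imaginary (since $A$ is unitary), and the equation $\del_A \varphi = 0$ becomes simply $\alpha^{0,1}=0$. Combining this with $\alpha \in i\R$ forces $\alpha = 0$, so $A$ is the flat trivial connection in this frame and $F_A = 0$ on $B\setminus\{0\}$. Comparing the two trivialisations via the transition function $\psi$ gives, in the $e$-frame, the connection one-form $a = -\psi^{-1}d\psi$. A direct computation on $\partial B$, using $\psi^{-1}d\psi = i\, d(\arg\psi)$, yields $\int_{\partial B} i a = 2\pi \deg \psi = 2\pi\deg\varphi$, which is the content of Definition \ref{defn:measure}.

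For part (2), I define the complex gauge transformation $h := z^k \bar\psi$ in the $e$-trivialisation. It is smooth and nowhere zero on $B\setminus\{0\}$ and has winding number $k-k = 0$ around the origin, as required; moreover $h\varphi = z^k \bar\psi \psi e = z^k e$, so $h\varphi$ is represented by the function $z^k$ in the $e$-frame, which extends smoothly across $0$. It remains to check that $h(A) = d$ in this trivialisation, i.e.\ that $a + \bar h^{-1}\partial \bar h - h^{-1}\del h = 0$. Using $\log h = k\log z - \log \psi$ and the holomorphicity of $\log z$, we get $h^{-1}\del h = -\psi^{-1}\del\psi$ and similarly $\bar h^{-1}\partial\bar h = -\bar\psi^{-1}\partial\bar\psi$. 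The identity $\bar\psi = \psi^{-1}$ (since $|\psi|=1$) gives $\bar\psi^{-1}\partial\bar\psi = -\psi^{-1}\partial\psi$, and substituting into $a = -\psi^{-1}\del\psi - \psi^{-1}\partial\psi$ causes all terms to cancel, confirming $h(A) = d$.

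The proof involves no analytic difficulty; the only non-trivial step is the algebraic manipulation of (1,0)/(0,1) components together with the complex-gauge action formula $g(A) = A + \bar g^{-1}\partial\bar g - g^{-1}\del g$. The potential obstacle is simply keeping sign conventions consistent between the Dolbeault decomposition, the definition of $q_x$ in Definition \ref{defn:measure}, and the winding orientation for $\deg \varphi$; everything else is forced by the unitarity of $A$ and the constraint $|\varphi|=1$.
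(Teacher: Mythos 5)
Your proof is correct: taking $\varphi$ itself as a unitary frame (legitimate because $|\varphi|=1$), the unitarity of $A$ forces the connection form there to be purely imaginary while $\del_A\varphi=0$ kills its $(0,1)$-part, so $A$ is flat with $a=-\psi^{-1}d\psi$ in a unitary trivialisation $e$ of $L$ over $B$, giving $q_0=\int_{\partial B}ia=2\pi\deg\varphi$ exactly as in Definition \ref{defn:measure}; and $h=z^k\bar\psi$ has winding number zero, satisfies $h\varphi=z^k e$, and the cancellation you verify in $a+\bar h^{-1}\partial\bar h-h^{-1}\del h$ shows $h(A)$ is the product connection. The paper itself gives no argument for this lemma, deferring to \cite[Proof of Lemma 2.1]{doan}, so there is nothing to compare against in the text; your computation is self-contained, matches the paper's complex gauge-action and measure conventions, and suffices as a proof.
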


\begin{proof}[Proof of Proposition \ref{prop:homeofueter}]
Set $\mathcal{X} = \overline{\cM}_{\hol} \setminus \cM_{\hol}$. We will construct a continuous bijection $\mathcal{X} \to \mathcal{F}$. Since the domain is compact by Proposition \ref{prop:compactification} and the target space is Hausdorff by Lemma \ref{lem:fuetertopology}, such a map is necessarily a homeomorphism.

\setcounter{step}{0}
\begin{step} 
The construction of $\mathcal{X} \to \mathcal{F}$.
\end{step}
Let $[A, \alpha, \beta, t] \in \mathcal{X}$. By definition, $t=0$, $\alpha \neq 0$, $\beta \neq 0$, and \eqref{eqn:holomorphic} is satisfied. Let $D_1$, $D_2$ be the zero divisors of $\alpha$, $\beta$, respectively. We will interchangeably consider them as divisors or as subsets of $\Sigma$. Set $D = D_1 \cup D_2$. We claim that there is a simple complex gauge transformation $h \in \cG^c_0(\Sigma \setminus D)$ such that $(hA, h\alpha, h^{-1} \beta)$ is a limiting configuration. A necessary condition is
\begin{equation} \label{eqn:complexified}
 | h \alpha | = | h^{-1} \beta | \quad \textnormal{on } \Sigma \setminus D. 
 \end{equation}
 A transformation satisfying \eqref{eqn:complexified} exists since $\alpha$ and $\beta$ are both non-zero on $\Sigma \setminus D$ and we can set $h = \sqrt{ | \beta| / | \alpha| }$; any other choice of $h$ will differ from that one by an element of $\cG_0(\Sigma \setminus D)$. 

The map  $\mathcal{X} \to \mathcal{F}$ is defined by $[A,\alpha,\beta,0] \mapsto (A', \alpha', \beta') := (h(A), h\alpha, h^{-1} \beta)$. We need to show that $(A', \alpha', \beta')$ represents a class in $\mathcal{F}$. We clearly have 
\[ 
\left\{
\begin{array}{l}
\del_{A'B} \alpha' = 0, \\
\del_{A'B} \beta' = 0, \\
| \alpha' | = | \beta' |.
\end{array} \right. \]
Moreover, for $\Psi' := (\alpha', \beta')$
\[ | \Psi' | = \sqrt{ | \alpha' |^2 + | \beta' |^2 } = \sqrt{  | h|^2 |\alpha|^2 + |h|^{-2} | \beta |^2 } = \sqrt{ 2 | \alpha | | \beta |}. \]
Integrating over $\Sigma$ yields
\[ \| \Psi' \|_{L^2} = \sqrt{\int_{\Sigma} 2 | \alpha |  | \beta | }. \]
After rescaling $\beta$, which does not change the class of $[A,\alpha,\beta,0]\in\mathcal{X}$, we can assume $\| \Psi \|_{L^2} = 1$. We also see that  $| \Psi | > 0$ on $\Sigma \setminus D$ and in a neighbourhood of every $x \in D$ 
\begin{equation} \label{eqn:vanishing} | \Psi(y) | = O\left( \mathrm{dist}(y,x)^{\frac{\mathrm{ord}_x(\alpha) + \mathrm{ord}_x(\beta)}{2}} \right), 
\end{equation}
where $\mathrm{ord}_x(\alpha)$ and $\mathrm{ord}_x(\beta)$ denote the order of vanishing of $\alpha$ and $\beta$ at $x$. 

It remains to prove that $A'$ is flat and the measure $i \ast F_{A'}$ is as in Definition \ref{defn:inv-fueter}. Let $\varphi_{\alpha \beta} \colon \cO(D_1 - D_2) \to L^2$ be the $A$--holomorphic isomorphism from Lemma \ref{lem:varphi}. The construction of Lemma \ref{lem:varphi} is local, so we can also define an analogous map $\varphi_{\alpha' \beta'}$ corresponding to sections $\alpha'$ and $\beta'$. Since they are both nowhere vanishing, $\varphi_{\alpha' \beta'}$ is an $A'$--holomorphic isomorphism of the trivial bundle over $\Sigma \setminus D$ and $\restr{L^2}{\Sigma \setminus D}$; thus, $\varphi_{\alpha' \beta'}$ is a nowhere vanishing $A'$--holomorphic section of $L^2$ over $\Sigma \setminus D$. Moreover, on $\Sigma\setminus D$
\[ | \varphi_{\alpha' \beta'} | = \frac{ | \alpha' | }{ | \beta' |} = 1 \quad \textnormal{and} \quad \varphi_{\alpha' \beta'} = h^2 \varphi_{\alpha \beta} \]
By Lemma \ref{lem:extend}, the tensor product connection $A \otimes A$ on $L^2$ is flat; so $A$ itself is flat and for every $x \in D$ the weight of the measure $i/2\pi \ast F_A$ at $x$ is equal to half of the degree of $\varphi_{\alpha' \beta'}$ around $x$.  Since $h$ has degree  zero around each point $x \in D$, the degrees of $\varphi_{\alpha \beta}$ and $\varphi_{\alpha' \beta'}$ around $x$ agree. Denote this degree by $q_x \in \Z$. Since the zero divisor of $\varphi_{\alpha\beta}$ is $D_1 - D_2$, 
\[ \sum_{x \in D} q_x = \deg D_1 - \deg D_2 = \deg ( L^2) = 2d. \]

Finally, observe that for every $x \in D$ we have
\[ q_x  = \mathrm{ord}_x( \alpha )- \mathrm{ord}_x (\beta) \leq \mathrm{ord}_x (\alpha) + \mathrm{ord}_x (\beta). \]
Together with \eqref{eqn:vanishing} this shows that $(A', \alpha', \beta', D)$ is a limiting configuration. It is easy to check that if we replace $(A,\alpha,\beta,0)$ by a different quadruple in the same orbit of the $\cG^c(\Sigma) \times \C^*$--action, then the resulting limiting configurations are simple gauge-equivalent.

\begin{step}
$\mathcal{X} \to \mathcal{F}$ is injective.
\end{step}
Suppose that $(A_1, \alpha_1, \beta_1, 0)$ and $(A_2, \alpha_2, \beta_2, 0)$ give rise to limiting configurations that are simple gauge equivalent. In particular, they have the same singular set, $D$ say. Suppose that $\beta_1$ and $\beta_2$ are scaled so that
\[ \int_{\Sigma} 2 | \alpha_1 | | \beta_1 | = \int_{\Sigma} 2 | \alpha_2 | | \beta_2 | = 1. \]
Composing the simple gauge equivalence of the limiting configurations with complex gauge transformations satisfying \eqref{eqn:complexified}, we obtain $t \in \cG^c_0(\Sigma \setminus D)$ such that on $\Sigma \setminus D$
\[ A_2 = t(A_1), \quad \alpha_2 = t \alpha_1, \quad \beta_2 = t^{-1} \beta_1, \quad \varphi_{\alpha_2\beta_2} =  t^2 \varphi_{\alpha_1\beta_1}. \]
Even though $t$ is not defined at the points of $D$, the holomorphic data is. In particular,  $\varphi_{\alpha_1 \beta_1}$ and $\varphi_{\alpha_2 \beta_2}$ have zeroes at every point of $D$. Moreover, the zeroes are of the same order---this is equivalent to  the measures of the corresponding limiting configurations being equal. We conclude that $t$ is bounded around $D$. Since it is also $(A_1,A_2)$--holomorphic it extends to a holomorphic isomorphism between $(L, \del_{ A_1})$ and $(L, \del_{A_2})$ and so $(A_1, \alpha_1, \beta_1, 0)$ and $(A_2, \alpha_2, \beta_2, 0)$ give rise to the same point in $\mathcal{X}$ and the map $\mathcal{X} \to \mathcal{F}$ is injective.

\begin{step}
$\mathcal{X} \to \mathcal{F}$ is surjective.
\end{step}
 Let $(A', \Psi' = (\alpha', \beta'), D) \in \mathcal{F}$. We need to find $h \in \cG^c_0(\Sigma \setminus D)$ such that $(A, \alpha, \beta) \coloneqq (h(A'), h\alpha', h^{-1} \beta')$ extends smoothly to the whole of $\Sigma$. Furthermore, we should have $D = D_1 \cup D_2$ where $D_1$ and $D_2$ are the zero divisors of $\alpha$ and $\beta$ respectively. 

Let $\varphi_{\alpha' \beta'} \in \Gamma(\Sigma \setminus D, L^2)$ be as before. Then $\del_{A'} \varphi_{\alpha' \beta'} = 0$ and $| \varphi_{\alpha' \beta'}| = 1$ on $\Sigma \setminus D$. By Lemma \ref{lem:extend}, applied to the connection $A' \otimes A'$ and section $\varphi_{\alpha' \beta'}$, there exists $k \in \cG^c_0(\Sigma \setminus D, L)$ such that $C \coloneqq k(A' \otimes A')$ extends to a connection on a line bundle $T \to \Sigma$ and $k \varphi_{\alpha' \beta'}$ extends to a meromorphic section of $(T, \del_C)$. We claim that $T =L^2$ as unitary bundles, that $k = h^2$ for some $h \in \cG^c_0(\Sigma \setminus D)$, and that $C = A \otimes A$ for $A = h(A')$. This follows from the assumption on the measure $i \ast F_{A'}$ induced by the limiting configuration $(A', \alpha', \beta', D)$; by Lemma \ref{lem:extend} for every point $x \in D$ the meromorphic section $h^2 \varphi_{\alpha' \beta'}$ vanishes to the order $q_x$ defined by
\[ \frac{i}{2\pi} F_{A'} = \frac{1}{2} \sum_{x \in D} q_x \delta_x. \]
($x$ is a pole of order $| q_x|$ if $q_x < 0$.) The claim is then a consequence of the assumption $\sum_{x \in D} q_x = 2d = \deg( L^2)$ and the fact that $k$ has degree zero around the points of $D$. Thus, $A = h(A')$ extends. We need to show that $\alpha = h \alpha'$ and $\beta = h^{-1} \beta'$ extend. Observe that 
\[ | \alpha | | \beta | = | \alpha' | | \beta' | = \frac{1}{2} | \Psi' |^2, \]
where we have used $| \alpha' | = | \beta'|$ and $\Psi' = (\alpha', \beta')$. As a consequence, for every $x \in D$
\[ |\alpha(y) | | \beta(y) | = O\left( \mathrm{dist}(x,y)^{|q_x|} \right).\]
On the other hand, we have
\[ \frac{ | \alpha | }{ | \beta | } = | h^2 | \frac{ | \alpha' | } { | \beta' | } = \left| h^2 \varphi_{\alpha' \beta'} \right|, \]
so around $x \in D$
\[ \frac{ | \alpha (y) | }{ | \beta(y) | } = O \left( \mathrm{dist}(x,y)^{q_x} \right). \]
We conclude that
\[ | \alpha(y) | = O\left( \mathrm{dist}(x,y)^{\frac{ |q_x| + q_x}{2}} \right)\qquad \textnormal{and} \qquad  | \beta(y) | = O\left( \mathrm{dist}(x,y)^{\frac{ |q_x| - q_x}{2} } \right), \]
which shows that both $\alpha$ and $\beta$ are bounded over $\Sigma \setminus D$. Since they are also holomorphic, they extend to globally defined sections. Hence, $(A, \alpha, \beta, 0)$ represents a point in $\mathcal{X}$ corresponding to $(A', \alpha', \beta', D)$ under $\mathcal{X} \to \mathcal{F}$.

\begin{step}
$\mathcal{X} \to \mathcal{F}$ is continuous.
\end{step}
Suppose that $[A_i, \alpha_i, \beta_i] \to [A, \alpha, \beta]$ in $\mathcal{X}$. Let $(A_i', \alpha_i', \beta_i', D_i)$ and $(A', \alpha', \beta', D)$ be the corresponding points in $\mathcal{F}$. We will prove that $(A_i', \alpha_i', \beta_i', D_i)$ converges to $(A', \alpha', \beta', D)$ as limiting configurations. We easily check that the points of $D_i$ concentrate around $D$ and modulo simple gauge transformations
\[ A_i' \to A', \qquad \alpha_i' \to \alpha', \qquad \beta_i' \to \beta' \]
in $C^{\infty}_{\loc}$ on  $\Sigma \setminus D$. By Lemma \ref{lem:measures}, $i \ast F_{A_i'} \to i \ast F_{A'}$ as measures provided that the sequence of total variations $\| i \ast F_{A_i'} \|$ is bounded. $\| i \ast F_{A_i'} \|$ is up to a constant equal to the degree of $D_1^i + D_2^i$ where $D_1^i$ and $D_2^i$ are the zero divisors of $\alpha_i$ and $\beta_i$. By Lemma \ref{lem:bounded} this degree is bounded above. We conclude that $[A_i', \alpha_i', \beta_i', D_i] \to [A', \alpha', \beta', D]$ in $\mathcal{F}$.
\end{proof} 

\subsection{Proof of Theorem \ref{thm:homeo0}}
We construct a bijective map  $\overline{\cM}_{\hol} \to \overline{\cM}$ from the homeomorphisms $\cM_{\hol} \to \cM$ from Theorem \ref{thm:holomorphic} and $\overline{\cM}_{\hol} \setminus \cM_{\hol} \to \mathcal{F}$ from Proposition \ref{prop:homeofueter}. This map is continuous when restricted to $\cM$ and its complement. It remains to show that it is continuous; indeed,  $\overline{\cM}_{\hol}$ is compact by Proposition \ref{prop:compactification} and $\overline{\cM}$ is Hausdorff by Lemma \ref{lem:hausdorff}, so a continous bijection $\overline{\cM}_{\hol} \to \overline{\cM}$ is a homeomorphism.

Let $(A_i, \alpha_i, \beta_i, t_i)$ be a sequence representing points in $\cM_{\hol}$ and $(A_i', \Psi_i', t_i')$ the corresponding sequence of solutions of \eqref{eqn:nsw-scaled}. Suppose that $t_i \to 0$ and $(A_i, \alpha_i, \beta_i)$ converges in $C^{\infty}$ to $(A, \alpha, \beta)$ with $\alpha \neq 0$ and $\beta \neq 0$. This limit represents an element of $\overline{ \cM_{\hol}} \setminus \cM_{\hol}$ and thus corresponds to a limiting configuration $(A',\Psi', D)$. We need to show that after applying gauge transformations the sequence of Seiberg--Witten solutions $(A_i', \Psi_i', t_i')$ converges in the sense of Definition \ref{defn:compactifiedmoduli} to a limiting configuration which is simple gauge-equivalent to $(A', \Psi', D)$. By Theorem \ref{thm:inv-compactness}, the sequence converges and by Proposition \ref{prop:homeofueter}, the limiting configuration is simple gauge-equivalent to $(A',\Psi',D)$. This shows that the map $\overline{\cM}_{\hol} \to \overline{\cM}$ is continuous.
\qed

\section{Fueter sections and complex geometry} 
\label{sec_fuetersections}
The main result of this section will ensure the compactness of $\cM(g,\sigma)$ for any product metric $g$ on $Y=S^1\times\Sigma$ and $\sigma$ generic among the parameters pulled-back from $\Sigma$.
%We are still in the setup of the previous subsection, that is $Y = S^1 \times \Sigma$ with the product metric $g$,  and the background bundle $E$ and connection $B$ are pulled back from $\Sigma$. In this subsection we show that for a generic choice of such circle-invariant $B$ there do not exist Fueter sections in the sense of Definition \ref{defn:fueter}. Recall that the non-existence of Fueter sections is a crucial assumption in Theorem \ref{thm:moduli} which allows us to count Seiberg--Witten multi-monopoles.

\begin{thm} \label{thm:nofueter}
\leavevmode
\begin{enumerate}
  \item For every product Riemannian metric $g$ on $Y = S^1 \times \Sigma$ there is a residual subset $\cA(g) \subset \cA(\Sigma, E)$ with the property that if $B \in \cA(g)$, then there exist no Fueter sections with respect to $(g,B)$.
 \item Let $(g_t)_{t\in[0,1]}$ be a path of product metrics and $B_0 \in \cA(g_0)$, $B_1 \in \cA(g_1)$. For a generic path $(B_t)_{t\in[0,1]}$ in $\cA(\Sigma,E)$ connecting $B_0$ and $B_1$ there exist no Fueter sections with respect to $(g_t,B_t)$ for all $t \in [0,1]$.
\end{enumerate}
\end{thm}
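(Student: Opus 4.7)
The plan is to reduce the existence of Fueter sections with respect to circle-invariant data on $Y = S^1 \times \Sigma$ to a complex-linear Fredholm problem of non-positive complex index on $\Sigma$, and then apply the Sard--Smale theorem. The first step is to show that the singular set of any Fueter section $(A, \Psi, Z)$ with respect to a product metric $g$ and a pulled-back connection $B$ has the product form $Z = S^1 \times D$ for a finite subset $D \subset \Sigma$. The Weitzenb\"ock formula $\D_{AB}^2 = \nabla_{AB}^* \nabla_{AB} + R$ combined with $\mu(\Psi) = 0$ yields an elliptic inequality of the form $\Delta |\Psi|^2 \leq C |\Psi|^2$ on $Y \setminus Z$; combined with Taubes's Hausdorff dimension bound on $Z$, the continuity of $|\Psi|$ on $Y$ (Definition \ref{defn:fueter}), and the $S^1$-symmetry of the background data, this forces $Z$ to be the preimage under $S^1 \times \Sigma \to \Sigma$ of a finite set $D \subset \Sigma$.

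Next I would adapt the proof of Theorem \ref{thm:invariance}, with the curvature equation replaced by $\mu(\Psi) = 0$, to place the Fueter section into circle-invariant form on $Y \setminus Z$: in a temporal gauge, differentiating $\D_{AB} \Psi = 0$ in the circle variable $t$, taking the pointwise inner product with $\Psi$, and integrating by parts over $[0,1] \times (\Sigma \setminus D)$ produces an identity analogous to \eqref{eqn:invarianceproof2} that forces $\partial_t \Psi = 0$ and $\partial_t A = 0$. The outcome is a circle-invariant configuration $(A, \alpha, \beta)$ on $\Sigma \setminus D$ satisfying $\del_{AB}\alpha = \del_{AB}\beta = \alpha\beta = 0$, $|\alpha| = |\beta|$, and with $A$ flat. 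Applying the extension procedure from Step~3 of the proof of Proposition \ref{prop:homeofueter}, a simple complex gauge transformation in $\cG^c_0(\Sigma \setminus D)$ converts this data into a globally defined holomorphic triple $(\mathcal{L}, \alpha, \beta)$ on $\Sigma$ with $\alpha, \beta \neq 0$ and $\alpha \beta = 0$. The resulting moduli space of such triples is cut out by a holomorphic Fredholm section of a complex vector bundle over a complex Banach manifold; using the exact sequence \eqref{eqn:varphi-exact}, Lemma \ref{lem:varphi}, and Riemann--Roch, together with the degree bound of Lemma \ref{lem:bounded}, one verifies that the complex Fredholm index of the deformation operator is non-positive.

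Finally, parametric transversality applied to the universal moduli space over $\cA(\Sigma, E)$ produces the desired residual set $\cA(g)$: the vertical derivative in $B$ is surjective at every Fueter section by a unique continuation argument for $\D_{AB}$ mirroring the one in the proof of Proposition \ref{prop:transversality}, using the freedom to vary $B$ along $\Lambda^1(\mathfrak{su}(E))$, and Sard--Smale then gives a generic $B$ for which the corresponding fibre is empty in view of the non-positive index. For part (2), the universal moduli space over a one-parameter family $(g_t, B_t)$ has even real codimension inside the parameter space because the underlying Fredholm problem is complex-linear, so a generic path $(B_t)$ in $\cA(\Sigma, E)$ avoids all Fueter sections. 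The main obstacle I anticipate is the behaviour near the singular set in the extension step: rigorously showing that the simple complex gauge transformation placing $(A, \alpha, \beta)$ into globally defined holomorphic form extends smoothly across $D$, and controlling the pointwise asymptotics of $|\Psi|$, $|\nabla_{AB}\Psi|$, and $F_A$ near each point of $D$ beyond what Taubes's dimension bound directly provides, requires the decay estimates supplied by Lemma \ref{lem:extend} and the structural results behind Theorem \ref{thm:inv-compactness}.
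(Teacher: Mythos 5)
Your overall architecture (circle-invariance of the Fueter section, a holomorphic description over $\Sigma$, then Sard--Smale over $\cA(\Sigma,E)$) matches the paper's, but three of your key steps have genuine gaps. First, the claim $Z=S^1\times D$ with $D$ finite is not delivered by the argument you give: the $S^1$-invariance of $(g,B)$ does not make the solution or its singular set circle-invariant -- that is precisely what has to be proved -- and an inequality $\Delta|\Psi|^2\le C|\Psi|^2$ plus Taubes's dimension bound cannot force it. In the paper (Proposition \ref{prop:fueterinvariance}) the invariance of $Z$ is obtained \emph{before} any temporal gauge, from the identity $0=\nabla_t^*\nabla_t\Psi+\del_A^*\del_A\Psi$ paired against $\chi_\epsilon^2\Psi$ with cut-offs built from $|\Psi|$, using $\int|\nabla_A\Psi|^2<\infty$ and $\vol(Z)=0$; the paper explicitly notes that one \emph{cannot} start from a temporal gauge, as you propose, because a priori $Z$ may meet the circle fibres badly. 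Moreover finiteness of $D$ does not follow from Hausdorff dimension (a closed zero-dimensional set need not be finite); it is proved afterwards by a separate Rad\'o-theorem argument showing $D$ is the zero set of a holomorphic function, which your outline omits.

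Second, your reduction to ``a globally defined holomorphic triple $(\mathcal{L},\alpha,\beta)$ with $\alpha,\beta\neq0$ and $\alpha\beta=0$'' via Step 3 of Proposition \ref{prop:homeofueter} is not available: that extension argument uses the limiting-configuration conditions (the weights of $\tfrac{i}{2\pi}\ast F_A$ summing to $2d$ and the vanishing rate $|\Psi|=O(\mathrm{dist}^{|a_k|/2})$), which are not part of Definition \ref{defn:fueter}. For a general Fueter section the gauge-transformed sections extend only \emph{meromorphically}, giving the twisted exact sequence \eqref{eqn:fueterexact} with divisors $D_1,D_2$ of which only $D_1+D_2$ is effective; Example \ref{exmp:fueterlimiting} exhibits Fueter sections not of your claimed form. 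One can repair this by clearing poles, i.e.\ twisting by $\cO(D_1)$, which is exactly what Lemma \ref{lem:nofueter1} does, but then the degree of the line bundle changes and the transversality statement must be proved for every degree $k\ge 0$ (Lemma \ref{lem:nofueter2}), which your proposal does not address. Finally, the relevant complex Fredholm index is zero, not negative (Riemann--Roch in Lemma \ref{lem:nofueter2} gives $\chi(\cO)-\chi(F)-\chi(T^2)=0$), so Sard--Smale yields only a generically zero-dimensional solution set; non-existence requires the additional observation that any solution would sweep out a $\C^*$-family by scaling one of the two sections, and the same residual symmetry (not mere complex-linearity) is what makes the parametric statement for paths work. ``Non-positive index implies empty fibre'' is therefore a gap as stated.
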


Recall that $\cM_{\hol}$ contains a compact subspace $\cN$ consisting of holomorphic triples $(\mathcal{L}, \alpha, \beta)$ with $\beta = 0$. As a result of Theorems \ref{thm:inv-compactness}, \ref{thm:nofueter}, and Corollary \ref{cor:subspace}, we obtain

\begin{cor} \label{cor:nofueter} For a generic choice of $B \in \cA(\Sigma,E)$ we have 
\[ \overline{\cM}_{\hol} = \cM_{\hol} = \cN. \] 
\end{cor}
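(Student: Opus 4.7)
The plan is to combine Theorem \ref{thm:nofueter} with the compactness criterion of Corollary \ref{cor:compactness}, transport compactness from $\cM$ to $\cM_{\hol}$ via the homeomorphism of Theorem \ref{thm:holomorphic}, and then extract the two desired equalities from the structural properties of $\overline{\cM}_{\hol}$. No step is expected to present a genuine obstacle; the analytic substance is entirely absorbed into Theorem \ref{thm:nofueter}, and the remaining work is essentially bookkeeping.

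First I would fix $B$ in the residual subset $\cA(g) \subset \cA(\Sigma, E)$ furnished by Theorem \ref{thm:nofueter}(1), so that there exist no Fueter sections with respect to $(g,B)$. Corollary \ref{cor:compactness} then guarantees that for every closed two-form $\eta$ the three-dimensional moduli space $\cM(g, B, \eta)$ on $Y = S^1 \times \Sigma$ is compact. Applying the real analytic homeomorphism $\cM^* \to \cM_{\hol}$ from Theorem \ref{thm:holomorphic} (together with the absence of reducibles in the non-balanced regime considered), I transport this compactness to the holomorphic moduli space and conclude that $\cM_{\hol}$ is itself compact.

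Next, by Proposition \ref{prop:compactification}, $\cM_{\hol}$ sits as an open and dense subset of the metrisable, compact, hence Hausdorff, space $\overline{\cM}_{\hol}$. Because a compact subset of a Hausdorff space is closed, $\cM_{\hol}$ is simultaneously open, closed, and dense in $\overline{\cM}_{\hol}$; combining these three properties forces $\overline{\cM}_{\hol} = \cM_{\hol}$. Finally, the equivalence recorded in Corollary \ref{cor:subspace} gives $\cM_{\hol} = \cN$ as soon as $\cM_{\hol}$ is compact, yielding the full chain $\overline{\cM}_{\hol} = \cM_{\hol} = \cN$ and completing the argument.
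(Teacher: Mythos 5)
Your argument is correct, and every ingredient you invoke is available at this point of the paper: Theorem \ref{thm:nofueter} rules out Fueter sections for generic $B$, Corollary \ref{cor:compactness} then makes $\cM(g,B,\eta)$ compact, the standing assumption $d-\tau<0$ (which you rightly flag) gives $\cM=\cM^*$, Theorem \ref{thm:holomorphic} transports compactness to $\cM_{\hol}$, and Proposition \ref{prop:compactification} together with Corollary \ref{cor:subspace} yields both equalities. The paper assembles the same pieces along a slightly different route: it cites Theorem \ref{thm:inv-compactness}, the refined circle-invariant compactness theorem on $\Sigma$, rather than the three-dimensional criterion of Corollary \ref{cor:compactness}, and it can read off $\overline{\cM}_{\hol}=\cM_{\hol}=\cN$ directly from Proposition \ref{prop:homeofueter}: every point of $\overline{\cM}_{\hol}\setminus\cM_{\hol}$ (equivalently, by the proof of Corollary \ref{cor:subspace}, every triple with $\beta\neq 0$) corresponds to a limiting configuration, hence to a Fueter section, which Theorem \ref{thm:nofueter} forbids. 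Your detour through the three-manifold moduli space is logically equivalent but a bit longer; what it buys is that you only need the coarse Haydys--Walpuski compactness underlying Corollary \ref{cor:compactness}, not the finer structure of limiting configurations, whereas the paper's route makes the geometric reason for $\cM_{\hol}=\cN$ transparent (a non-vanishing $\beta$ is literally a Fueter section in holomorphic disguise). Your point-set step $\overline{\cM}_{\hol}=\cM_{\hol}$ is fine as stated: compactness plus density already force equality in the Hausdorff space $\overline{\cM}_{\hol}$, and openness is not even needed.
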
 

\begin{cor}\label{cor:moduliempty}
If $d - \tau = 0$ then $\cM_{\hol}$ is empty for a generic choice of $B \in \cA(\Sigma, E)$. 
\end{cor}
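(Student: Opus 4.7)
The plan is to argue by contradiction, using Theorem~\ref{thm:nofueter}(1) to fix a generic $B \in \cA(\Sigma, E)$ with no Fueter sections with respect to $(g, B)$, and then to show that any element of $\cM_{\hol}$ would directly produce one. The key observation is that when $d - \tau = 0$, Definition~\ref{defn:holomorphic} forces both $\alpha \neq 0$ and $\beta \neq 0$ for every triple in $\cM_{\hol}$; unlike the case $d - \tau < 0$ treated in Corollary~\ref{cor:nofueter}, where the whole moduli space might reduce to the subspace $\cN$ of triples with $\beta = 0$, here there is no such escape route.

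Concretely, suppose $(\mathcal{L}, \alpha, \beta) \in \cM_{\hol}$. I would replay Step~1 of the proof of Proposition~\ref{prop:homeofueter}: let $D \subset \Sigma$ be the finite union of the zero divisors of $\alpha$ and $\beta$, and apply the simple complex gauge transformation $h = \sqrt{|\beta|/|\alpha|} \in \cG^c_0(\Sigma \setminus D)$ to obtain $(A', \alpha', \beta') = (h(A), h\alpha, h^{-1}\beta)$ with $|\alpha'| = |\beta'|$ on $\Sigma \setminus D$. After rescaling $\beta$ by a positive constant so that $2\int_\Sigma |\alpha'||\beta'| = 1$, Lemma~\ref{lem:varphi} together with the argument in Step~1 of the proof of Proposition~\ref{prop:homeofueter}---specifically, the identity $\varphi_{\alpha' \beta'} = h^2 \varphi_{\alpha \beta}$ and the vanishing bounds $q_x \le \mathrm{ord}_x(\alpha) + \mathrm{ord}_x(\beta)$---shows that $(A', \Psi' = (\alpha', \beta'), D)$ is a limiting configuration in the sense of Definition~\ref{defn:inv-fueter}.

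Pulling back circle-invariantly to $Y = S^1 \times \Sigma$ then produces a Fueter section in the sense of Definition~\ref{defn:fueter}: by Lemma~\ref{lem:dirac} the $t$-derivative vanishes on circle-invariant data, so $\D_{A'B}\Psi' = 0$ reduces to $\bar\partial_{A'B}\alpha' = \bar\partial_{A'B}\beta' = 0$, and the equations $\alpha'\beta' = 0$ and $|\alpha'| = |\beta'|$ give $\mu_{\C}(\Psi') = 0$ and $\mu_{\R}(\Psi') = 0$, hence $\mu(\Psi') = 0$; the remaining items in Definition~\ref{defn:fueter}, i.e.\ the $L^2$-normalisation and finite energy together with the continuous extension of $|\Psi'|$ with zero set $S^1 \times D$, all follow from Definition~\ref{defn:inv-fueter}. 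This contradicts the choice of $B$, so $\cM_{\hol} = \emptyset$. The main (and only) technical point, and hence the principal obstacle, will be the bookkeeping needed to verify that the two-dimensional limiting configuration genuinely lifts to a three-dimensional Fueter section; but this translation is already available through Proposition~\ref{prop:invariance} and the analysis of Section~\ref{sec_compactifications}.
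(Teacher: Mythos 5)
Your proposal is correct and follows essentially the same route the paper intends: Corollary \ref{cor:moduliempty} is deduced from Theorem \ref{thm:nofueter} together with the observation that, when $d-\tau=0$, every point of $\cM_{\hol}$ has both $\alpha\neq 0$ and $\beta\neq 0$ and therefore corresponds, via the construction in Step 1 of Proposition \ref{prop:homeofueter}, to a limiting configuration, i.e.\ a circle-invariant Fueter section on $S^1\times\Sigma$, which is excluded for generic $B$. Your explicit replay of that construction (the gauge transformation $h=\sqrt{|\beta|/|\alpha|}$, the rescaling, and the lift to $Y$) is exactly this correspondence, so the argument matches the paper's.
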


Here is an outline of the proof of Theorem \ref{thm:nofueter}: first, we describe Fueter sections in terms of complex-geometric data on $\Sigma$. Next, we show that this data is described by a Fredholm problem of non-positive index given by the Riemann--Roch theorem. As a result, we can apply the Sard--Smale theorem to exclude the existence of such data for a generic choice of $B\in\cA(\Sigma,E)$.

\subsection{Circle-invariance of Fueter sections}

For the remaining part of the section we continue to assume that $g$ is a product metric on $Y = S^1\times\Sigma$ and $B \in \cA(\Sigma,E)$.

Let $(A,\Psi,Z)$ is a Fueter section as in Definition \ref{defn:fueter}. Suppose that $Z =S^1 \times D$ for $D \subset \Sigma$, and that $(A,\Psi)$ is pulled back from $\Sigma$. Then, as in subsection \ref{subsec:invariance}, we have $\Psi = (\alpha, \beta)$ where
\[ \alpha \in \Gamma(\Sigma \setminus D, E^* \otimes L \otimes K^{1/2}), \]
\[ \beta \in \Gamma(\Sigma \setminus D, E \otimes L^* \otimes K^{1/2}). \]
The Fueter equations $\D_{AB}\Psi =0$ and $\mu(\Psi)=0$ are equivalent to
\begin{equation} \label{eqn:fueterinvariant}
\left\{
\begin{array}{l}
\del_{AB} \alpha = 0, \quad \del_{AB} \beta = 0,\\
\alpha \beta = 0, \\
| \alpha | = | \beta |.\\
\end{array}
\right.
\end{equation}

%Next, we prove that every Fueter section is in fact gauge-equivalent to a solution of \eqref{eqn:fueterinvariant}. This is analogous to the circle-invariance statement for generalised Seiberg--Witten monopoles, see Theorem \ref{thm:invariance}. 

\begin{prop} \label{prop:fueterinvariance}
Let $(A,\Psi,Z)$ be a Fueter section as in Definition \ref{defn:fueter}. Then $Z = S^1 \times D$ for a finite subset $D \subset \Sigma$. Moreover, there is a gauge transformation $u \in \cG(Y \setminus Z)$ such that $u$ has degree zero around each component of $Z$ and $u(A, \Psi)$ is pulled-back from $\Sigma \setminus D$.
\end{prop}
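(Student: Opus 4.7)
The plan is to adapt the temporal-gauge Weitzenb\"ock argument from the proof of Theorem \ref{thm:invariance} to the singular setting of Fueter sections. Two new complications arise: the curvature equation of the Seiberg--Witten system is absent for Fueter sections (only $\mu(\Psi) = 0$ is imposed), so $\partial_t A$ is not directly controlled by an equation of its own; and the integration by parts must be carried out on the open manifold $Y \setminus Z$ rather than on all of $Y$.

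First I would invoke the regularity theory of \cite[Appendix A]{haydys-walpuski} to conclude that $(A, \Psi)$ is smooth on $Y \setminus Z$, and then apply the temporal-gauge transformation $h(t) = \exp\int_0^t b(s)\,ds$ of \eqref{eqn:temporal} to kill the $dt$-component of $A$ on $Y \setminus Z$, writing $A = A(t)$ for a $t$-dependent family of connections on the (a priori moving) subset $\Sigma \setminus D(t)$. In this gauge Lemma \ref{lem:dirac} gives $\D_{AB}\Psi = i\,\partial_t \Psi + \sqrt{2}\,\sigma(\partial_{A(t)B}\Psi)$. Next I would differentiate the Dirac equation in $t$ and follow the computation after \eqref{eqn:invarianceproof} line by line, using the identity $\sigma\partial_{AB}\sigma\partial_{AB}=\partial_{AB}^{*}\partial_{AB}$ from Remark \ref{rem:invariance}. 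Two terms differ from the Seiberg--Witten case: the quadratic $\mu_{\mathbb{C}}$-term vanishes because $\mu_{\mathbb{C}}(\Psi) = 0$, while the $(\partial_t A)^{1,0}\Psi$ contribution, which in Theorem \ref{thm:invariance} was eliminated using the curvature equation, is instead handled by the defining property of the complex moment map:
\[
\big\langle \sqrt{2}\sigma(\partial_t A)^{1,0}\Psi,\,\Psi\big\rangle_{L^{2}(\Sigma)} \;=\; c\,\big\langle \partial_t A,\,\mu_{\mathbb{C}}(\Psi)\big\rangle_{L^{2}(\Sigma)} \;=\; 0.
\]
Integrating the resulting identity over $[0,1]\times(\Sigma \setminus D_\varepsilon)$, where $D_\varepsilon$ is an $\varepsilon$-neighbourhood of the instantaneous zero set, and letting $\varepsilon \to 0$ yields
\[
\Big\|\tfrac{\partial \Psi}{\partial t}\Big\|_{L^{2}(Y \setminus Z)}^{2} + 2\,\|\partial_{A(t)B}\Psi\|_{L^{2}(Y \setminus Z)}^{2} \;=\; 0,
\]
provided the boundary contributions on $\partial D_\varepsilon \times [0,1]$ vanish in the limit.

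From $\partial_t \Psi = 0$ I conclude $|\Psi|$ is $t$-independent, so $Z = |\Psi|^{-1}(0) = S^1 \times D$ with $D \subset \Sigma$ closed; the Hausdorff-dimension bound $\dim_H Z \leq 1$ of \cite{taubes2} then forces $\dim_H D \leq 0$, and being closed in compact $\Sigma$ the set $D$ is finite. Substituting $\partial_t \Psi = 0$ back into the first Fueter equation gives $(\partial_t A)\cdot\Psi = 0$, and unique continuation for $\Psi$ (which is nowhere zero on $Y \setminus Z$) upgrades this to $\partial_t A = 0$, so $A$ itself is pulled back from $\Sigma \setminus D$. The temporal-gauge transformation $u$ used above is built from the ODE \eqref{eqn:temporal} whose coefficients are $S^{1}$-invariant once the conclusion is in place, so $u$ has trivial holonomy around each circle $S^{1}\times\{p\}$ with $p \in D$, giving the required degree-zero condition. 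The main obstacle throughout is the analytic step of justifying integration by parts on $Y \setminus Z$ and the vanishing of boundary contributions near $Z$; this uses the continuous vanishing $|\Psi|\to 0$ on $Z$ from Definition \ref{defn:fueter}, the finite-energy bound $\int_{Y\setminus Z}|\nabla_A\Psi|^{2} < \infty$, and the Hausdorff-one bound on $Z$, via a standard logarithmic cut-off. Beyond this analytic point the argument is a direct translation of the proof of Theorem \ref{thm:invariance}.
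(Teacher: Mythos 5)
Your proposal has two genuine gaps, both at points where the paper's proof deliberately deviates from a straight translation of Theorem \ref{thm:invariance}. First, you impose a temporal gauge on $Y \setminus Z$ \emph{before} knowing that $Z$ is circle-invariant. This is exactly the move the paper notes is unavailable: a priori $Z$ may intersect the circle fibres $S^1 \times \{x\}$ in a complicated way, so the gauge transformation $h(t,x)=\exp\int_0^t b(s,x)\,ds$ of \eqref{eqn:temporal} is not defined along fibres that meet $Z$, and your ``$t$-dependent family $A(t)$ on the moving set $\Sigma\setminus D(t)$'' (and the integration by parts over $[0,1]\times(\Sigma\setminus D_\epsilon)$, with its periodicity/matching of boundary terms in $t$) is not justified. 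The paper's Step 1 is structured precisely to avoid this: it works gauge-invariantly with the covariant derivative $\nabla_t$, uses flatness of $A$ (so the mixed curvature components vanish and $\nabla_t$ commutes with $\nabla_\Sigma$) to derive $\nabla_t^*\nabla_t\Psi+\del_A^*\del_A\Psi=0$ without differentiating the equation in $t$, and only passes to a temporal gauge on $S^1\times(\Sigma\setminus D)$ \emph{after} circle-invariance of $Z$ is established. Relatedly, your boundary-term argument (``standard logarithmic cut-off'' based on distance to $Z$) needs more than the Hausdorff-dimension bound of \cite{taubes2} (e.g.\ finite one-dimensional Minkowski content) to control $\int|d\chi|^2$; the paper instead uses a cut-off built from $|\Psi|$ itself together with Kato's inequality, finite energy, and $\vol(Z)=0$, which requires no regularity of $Z$.

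Second, your finiteness argument for $D$ fails: from $Z=S^1\times D$ and $\dim_H Z\leq 1$ you get $\dim_H D=0$, but a closed subset of $\Sigma$ of Hausdorff dimension zero need not be finite (a convergent sequence with its limit, or a Cantor set of dimension zero). Finiteness is not a dimension-counting consequence; the paper proves it by a separate complex-analytic step: locally, after a complex gauge transformation trivialising the flat holomorphic structure, $\gamma=(h\alpha)\otimes(h^{-1}\beta)$ is holomorphic on $\Sigma\setminus D$ with $|\gamma|=\tfrac12|\Psi|^2$ continuous and vanishing exactly on $D$, so Rad\'o's theorem \cite{rudin} extends $\gamma$ holomorphically across $D$ and $D=\gamma^{-1}(0)$ is locally finite. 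Your observation that the $\partial_t A$ term can be discarded by pairing against $\mu_{\C}(\Psi)=0$ is a nice one and consistent with the paper's framework, but as written the proposal's two structural steps above do not go through.
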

\begin{proof}
The proof is similar to that of Theorem \ref{thm:invariance}. We use the notation from subsection \ref{subsec:invariance} and ignore the background connection $B$; the general proof is the same. 

\setcounter{step}{0}
\begin{step}
A Weitzenb\"ock formula.
\end{step}

Let $t$ be the coordinate on the $S^1$ factor of $S^1 \times \Sigma$. Unlike in the proof of Theorem \ref{thm:invariance}, we cannot put $A$ in a temporal gauge, even after pulling-back to $\R \times \Sigma$, because \emph{a priori} the singular set $Z$ could intersect the $t$--axis in a complicated way. However, we still have
\begin{equation} \label{eqn:diracfueter} 0 = \D_{A} \Psi = - \sigma \nabla_t \Psi +  \del_{A} \Psi, 
\end{equation}
where $\nabla_t = \nabla_A( \partial / \partial t)$ and $\del_A$ is the Dolbeault operator induced by $A$ on the $\{ t \} \times \Sigma$ slice. Let $\nabla_{\Sigma}$ be the part of the covariant derivative $\nabla_{A}$ in the $\Sigma$-direction. Since $A$ is flat, we have 
\[ 0 = \nabla_A^2 = \nabla_t \nabla_{\Sigma} + \nabla_{\Sigma} \nabla_t. \]
Applying $\sigma$ and $\nabla_t^* = - \nabla_t$ to \eqref{eqn:diracfueter} and using the above commutation relation, we obtain
\begin{equation} \label{eqn:diracfueter2}
 0 =  \nabla_t^* \nabla_t \Psi + \sigma \del_A \sigma \del_A \Psi =  \nabla_t^* \nabla_t \Psi + \del_A^* \del_A \Psi. 
 \end{equation}
 
\begin{step}
Integration by parts; the circle-invariance of $Z$.
\end{step}
We want to integrate \eqref{eqn:diracfueter2} by parts to conclude $\nabla_t \Psi = 0$ and $\del_A \Psi = 0$. The equality holds only on  $Y \setminus Z$, so we need to use a cut-off function. Let $f \colon \R \to [0,1]$ be smooth and such that
\[ \left\{
\begin{array}{ll}
f = 0 & \textnormal{on } (-\infty, 0], \\
f=1 & \textnormal{on } [1, \infty)
\end{array}
\right.\]
For every $\epsilon > 0$ we define the cut-off function $\chi_{\epsilon} \colon Y\to [0,1]$ by
\[ \chi_{\epsilon}(x) = f\left( \frac{| \Psi(x) | - \epsilon}{\epsilon} \right). \] 
Let $Z_{\epsilon}$ be the subset of points in $Y$ satisfying $| \Psi(x) | < \epsilon$. We have 
\[ \left\{
\begin{array}{ll}
\chi_{\epsilon} = 0 & \textnormal{on }  Z_{\epsilon}, \\
\chi_{\epsilon} =1 & \textnormal{on } Y \setminus Z_{2\epsilon}
\end{array}
\right.\]
and $\chi_{\epsilon}$ is smooth on $Y$. Take the inner product of \eqref{eqn:diracfueter2} with $\chi_{\epsilon}^2 \Psi$ and integrate by parts:
\begin{equation} \label{eqn:diracfueter3}
 \begin{split} 0 &= \int_{Y} | \nabla_t( \chi_{\epsilon} \Psi ) |^2 + \int_Y | \del_A ( \chi_{\epsilon} \Psi ) |^2 - \int_Y \left( | \partial_t \chi_{\epsilon} |^2 + | \del \chi_{\epsilon} |^2  \right)| \Psi |^2  \\
 & \geq \int_{Y} | \nabla_t( \chi_{\epsilon} \Psi ) |^2 + \int_Y | \del_A ( \chi_{\epsilon} \Psi ) |^2 - 2 \int_Y | d \chi_{\epsilon} |^2 | \Psi |^2. 
 \end{split}
 \end{equation}
We need to show that the last term becomes arbitrarily small as $\epsilon$ tends to zero. By definition, $| \Psi | \leq 2\epsilon$ on $Z_{2 \epsilon}$. Let $P_{\epsilon} = Z_{2\epsilon} \setminus Z_{\epsilon}$. By Kato's inequality
\begin{align*}
 \int_Y | \Psi |^2 | d \chi_{\epsilon} |^2 
 &\leq \int_{P_{\epsilon}} | \Psi |^2 \frac{\left| f' \left( \frac{ | \Psi(x) | - \epsilon}{\epsilon} \right) \right|^2}{\epsilon^2} | \nabla_A \Psi |^2 \\
 & \leq 4 \| f \|^2_{C^1} \int_{P_{\epsilon}} | \nabla_A \Psi |^2 \\
 & \leq C \vol(P_{\epsilon}) \| \nabla_A \Psi \|_{L^2(M\setminus Z)}^2.
\end{align*}
The right-hand side converges to zero as $\epsilon \to 0$ since $| \nabla_A \Psi |^2$ is integrable, $Z = \cap_{\epsilon >0} Z_{\epsilon}$, and $\vol(Z) = 0$ by Taubes \cite[Theorem 1.3]{taubes2}. Taking $\epsilon \to 0$ in  \eqref{eqn:diracfueter3}, we conclude that on $Y \setminus Z$
\[ \nabla_t \Psi = 0 \qquad \textnormal{and} \qquad \del_A \Psi = 0. \]
In particular, 
\[ \partial_t | \Psi |^2 = 2 \langle \nabla_t \Psi, \Psi \rangle = 0 \]
so $|\Psi|$ is invariant under the circle action on $Y \setminus Z$. It is also continuous on the whole of $Y$ and $| \Psi |^{-1}(0) = Z$, so  $Z$ is necessarily of the form $S^1 \times D$ for a proper subset $D \subset \Sigma$. 

\begin{step}
$(A,\Psi)$ is pulled-back from $\Sigma\setminus D$.
\end{step}
We put $A$ in a temporal gauge over $S^1 \times (\Sigma \setminus D)$  as in the proof of Theorem \ref{thm:invariance}. The gauge transformation \eqref{eqn:temporal} used to do that is the exponential of a smooth function $\Sigma \setminus D \to i \R$ when restricted to each slice $\{ t \} \times (\Sigma \setminus D)$; thus, it has degree zero around the components of $Z$. The same argument as in the proof of Theorem \ref{thm:invariance} shows that $\restr{L}{Y \setminus Z}$ is pulled back from a bundle on $\Sigma \setminus D$ and $(A,\Psi)$ is pulled-back from a configuration on $\Sigma\setminus D$ satisfying \eqref{eqn:fueterinvariant}.

\begin{step}
 $D$ is a finite set of points.
\end{step}
It is enough to show that $D$ is locally finite. Suppose that $\Sigma$ is a unit disc and that $L$ and $E$ are trivial. The complement $\Sigma \setminus D$ is a non-compact Riemann surface and 
 $(L, \del_A)$ defines a holomorphic line bundle over $\Sigma\setminus D$ which is necessarily trivial \cite[Theorem 30.3]{forster}. Thus, there is $h \in \cG^c(B \setminus D)$ such that $h(A)$ agrees with the product connection on the trivial bundle, and $h \alpha$ and $h^{-1} \beta$ correspond to holomorphic maps $\Sigma\setminus D \to \C^2$. Let $\gamma = (h \alpha) \otimes (h^{-1} \beta)$; it is a holomorphic map $\Sigma \setminus D \to\C^2 \otimes \C^2 = \C^4$ satisfying
 \[ | \gamma | = | \alpha | | \beta | = \frac{1}{2} | \Psi |^2, \]
 so $D$ is the zero set of $|\gamma|$. Thus, $\gamma$ is continuous on $\Sigma$ and holomorphic on $\Sigma \setminus D$. By a theorem of Rad\'o \cite[Theorem 12.14]{rudin}, $\gamma$ is holomorphic on $\Sigma$ and so $D=\gamma^{-1}(0)$ is locally finite.
\end{proof}

\subsection{A holomorphic description of Fueter sections}

%Finally, we show that Fueter sections over $Y = S^1 \times \Sigma$ correspond to certain holomorphic data over $\Sigma$.

\begin{prop} \label{prop:meromorphic}
If $(A,\Psi,Z)$ is a Fueter section, with $\Psi = (\alpha,\beta)$ and $Z = S^1\times D$ as in Proposition \ref{prop:fueterinvariance}, then there exist $h \in \cG^c_0(\Sigma \setminus D)$ and divisors $D_1$, $D_2$ such that
\begin{enumerate} 
\item $D = D_1 \cup D_2$ as sets and the divisor $D_1 + D_2$ is effective,
\item $\tilde{A} := h(A)$ extends to a unitary connection on a line bundle over $\Sigma$, not necessarily isomorphic to $L$, defining a holomorphic line bundle $\mathcal{L} \to \Sigma$,
\item sections $\tilde{\alpha} = h\alpha$ and $\tilde{\beta} = h^{-1}\beta$ extend to holomorphic sections that fit into the short exact sequence
\begin{equation}\label{eqn:fueterexact}
\begin{tikzcd} 
0 \ar{r} & \mathcal{L}^{-1} \otimes K^{-1/2} \otimes \cO(D_1) \ar{r}{\tilde{\alpha}} & \cE \ar{r}{\tilde{\beta}} & \mathcal{L}^{-1} \otimes K^{1/2} \otimes \cO(-D_2) \ar{r} & 0. 
\end{tikzcd}
\end{equation}
\end{enumerate}
Conversely, every set of holomorphic data $(\tilde{A}, \tilde{\alpha}, \tilde{\beta}, D_1, D_2)$ satisfying conditions $(1)$, $(2)$, $(3)$ can be obtained from a Fueter section $(A,\Psi,Z)$ in this way.
\end{prop}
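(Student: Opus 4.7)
The plan is to closely follow the structure of Step 3 (surjectivity) in the proof of Proposition~\ref{prop:homeofueter}, with the additional preparatory work needed because, unlike a limiting configuration, a Fueter section does not come with \emph{a priori} flatness of $A$. First I would apply Proposition~\ref{prop:fueterinvariance} to assume $(A,\Psi,Z)$ is circle-invariant with $Z = S^1\times D$ and $\Psi=(\alpha,\beta)$ on $\Sigma\setminus D$ satisfying \eqref{eqn:fueterinvariant}. Because $|\Psi|^{-1}(0) = Z$ and $|\alpha|=|\beta|=\tfrac{1}{\sqrt{2}}|\Psi|$, the sections $\alpha$ and $\beta$ are nowhere-vanishing on $\Sigma\setminus D$, so the construction of Lemma~\ref{lem:varphi} applied locally gives a nowhere-vanishing holomorphic section $\varphi_{\alpha\beta}$ of $(L^2,\del_{A\otimes A})$ on $\Sigma\setminus D$. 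The identity $|\varphi_{\alpha\beta}|=|\alpha|/|\beta|=1$ combined with $\del_{A\otimes A}\varphi_{\alpha\beta}=0$ implies $\varphi_{\alpha\beta}$ is parallel with respect to the Chern connection $A\otimes A$, and hence $A$ is flat on $\Sigma\setminus D$.

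With flatness in hand I can invoke Lemma~\ref{lem:extend}: around each $x\in D$ it produces $k_x\in\cG_0^c(B_x\setminus\{x\})$ such that $k_x(A\otimes A)$ extends smoothly across $x$ and $k_x\varphi_{\alpha\beta}$ extends to a meromorphic section whose order at $x$ I denote $q_x\in\Z$. Since $k_x$ is simple (degree zero around $x$), it admits a single-valued square root $h_x$; patching these together yields $h\in\cG^c_0(\Sigma\setminus D)$ for which $\tilde A := h(A)$ extends to a unitary connection on a Hermitian line bundle $\mathcal{L}\to\Sigma$, a priori not isomorphic to $L$.

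The core step is then to show that $\tilde\alpha := h\alpha$ and $\tilde\beta := h^{-1}\beta$ extend as holomorphic sections. I would use the two identities
\begin{equation*}
  |\tilde\alpha|\,|\tilde\beta| \;=\; |\alpha|\,|\beta| \;=\; \tfrac{1}{2}|\Psi|^{2},
  \qquad
  |\tilde\alpha|/|\tilde\beta| \;=\; |h|^{2} \;=\; |h^{2}\varphi_{\alpha\beta}|,
\end{equation*}
where the first extends continuously across $D$ by the Fueter property, and the second extends as the modulus of a meromorphic section with order $q_x$ at $x$. Combining them near $x\in D$ gives $|\tilde\alpha(y)| = O(\mathrm{dist}(x,y)^{(|q_x|+q_x)/2})$ and $|\tilde\beta(y)| = O(\mathrm{dist}(x,y)^{(|q_x|-q_x)/2})$ with non-negative exponents, so by Riemann's removable singularity theorem both $\tilde\alpha$ and $\tilde\beta$ extend to holomorphic sections of $\cE\otimes\mathcal{L}\otimes K^{1/2}$ and $\cE^*\otimes\mathcal{L}^*\otimes K^{1/2}$. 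Declaring $D_1, D_2$ to be the zero divisors of $\tilde\alpha, \tilde\beta$, the equation $\tilde\alpha\tilde\beta=0$ together with $\mathrm{rk}\,\cE=2$ produces the short exact sequence \eqref{eqn:fueterexact} via Lemma~\ref{lem:varphi}, and the vanishing locus analysis shows $|D_1|\cup|D_2| = D$ with $D_1+D_2$ effective.

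For the converse, given holomorphic data $(\tilde A, \tilde\alpha, \tilde\beta, D_1, D_2)$ satisfying (1)--(3), I set $D = |D_1|\cup|D_2|$, define the real-valued function $f = \tfrac{1}{2}\log(|\tilde\beta|/|\tilde\alpha|)$ on $\Sigma\setminus D$ (finite and smooth there since both sections are non-vanishing off their zero divisors), and let $h = e^{f}$; then $A := h^{-1}(\tilde A)$, $\alpha := h^{-1}\tilde\alpha$, $\beta := h\tilde\beta$ manifestly satisfy $|\alpha|=|\beta|$, $\del_{AB}\alpha = \del_{AB}\beta = 0$, $\alpha\beta = 0$, while $|\Psi|^2 = 2|\tilde\alpha||\tilde\beta|$ extends continuously to $\Sigma$ with zero set exactly $D$, and the $L^2$ bound on $\nabla_A\Psi$ follows from the holomorphic regularity of $\tilde\alpha,\tilde\beta$. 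Pulling back to $Y$ recovers a Fueter section with singular set $Z = S^1\times D$. I expect the delicate part to be the norm analysis in the third paragraph: the assertion that both exponents $(|q_x|\pm q_x)/2$ are non-negative is precisely what allows \emph{holomorphic}, rather than merely meromorphic, extension, and it is in this step that the hyperk\"ahler moment-map condition $|\alpha|=|\beta|$ plays its most essential role.
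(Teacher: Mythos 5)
Your overall architecture (circle-invariance via Proposition \ref{prop:fueterinvariance}, flatness of $A$ from $|\varphi_{\alpha\beta}|=1$ and Lemma \ref{lem:extend}, the simple gauge transformation $h$ with $h^2$ the transformation extending $A\otimes A$, and the converse via $h=\sqrt{|\tilde\beta|/|\tilde\alpha|}$) matches the paper's proof, which is modelled on Step 3 of Proposition \ref{prop:homeofueter}. But the core extension step contains a genuine gap: your estimates $|\tilde\alpha(y)| = O(\mathrm{dist}(x,y)^{(|q_x|+q_x)/2})$ and $|\tilde\beta(y)| = O(\mathrm{dist}(x,y)^{(|q_x|-q_x)/2})$ require the vanishing rate $|\Psi(y)| = O(\mathrm{dist}(x,y)^{|q_x|/2})$, and that rate is part of the definition of a \emph{limiting configuration} (Definition \ref{defn:inv-fueter}, Theorem \ref{thm:inv-compactness}), not of a Fueter section (Definition \ref{defn:fueter}). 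Proposition \ref{prop:fueterinvariance} only gives you that $|\Psi|$ extends continuously with zero set $D$, with no prescribed order of vanishing. From $|\tilde\alpha|\,|\tilde\beta| = \tfrac12|\Psi|^2$ bounded and $|\tilde\alpha|/|\tilde\beta| = |h^2\varphi_{\alpha\beta}| \sim \mathrm{dist}^{q_x}$ you can only conclude $|\tilde\alpha| = O(\mathrm{dist}^{q_x/2})$ and $|\tilde\beta| = O(\mathrm{dist}^{-q_x/2})$, i.e.\ polynomial growth, hence \emph{meromorphic} (not holomorphic) extension. This is exactly what the paper does: $D_1$ and $D_2$ are the divisors of zeros \emph{and poles} of $\tilde\alpha$ and $\tilde\beta$, and only the sum $D_1+D_2$ is effective, forced by the fact that $2|\tilde\alpha|\,|\tilde\beta| = |\Psi|^2$ extends continuously and vanishes on $D$.

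Your stronger conclusion — that $\tilde\alpha,\tilde\beta$ extend holomorphically, so that $D_1$ and $D_2$ are individually effective — is false in general: it would say every Fueter section on $S^1\times\Sigma$ is (up to the normalisations) a limiting configuration, contradicting Example \ref{exmp:fueterlimiting} and the paper's assertion that there exist Fueter sections not arising in the compactification of the moduli space; the whole point of allowing $\cO(D_1)$ and $\cO(-D_2)$ with only $D_1+D_2\geq 0$ in \eqref{eqn:fueterexact} is to capture these. The fix is local and small: replace the appeal to Riemann's removable singularity theorem by the meromorphic extension theorem applied to the one-sided bounds above, define $D_1,D_2$ as the zero-and-pole divisors, and deduce $D=D_1\cup D_2$ and $D_1+D_2\geq 0$ from $|\Psi|^{-1}(0)=D$. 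Your converse construction is essentially correct and agrees with how the paper uses the statement (e.g.\ in Example \ref{exmp:fueterlimiting}), though you should note that $\alpha,\beta$ are only required to be nowhere-vanishing holomorphic sections on $\Sigma\setminus D$ with $|\Psi|^2 = 2|\tilde\alpha|\,|\tilde\beta|$ continuous, and verify the finite-energy condition $\int|\nabla_A\Psi|^2<\infty$ rather than asserting it.
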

\begin{proof}
This is similar to Step 3 in the proof of Proposition \ref{prop:homeofueter}. Using Lemma \ref{lem:extend}, we find $h \in \cG^c_0(\Sigma \setminus D, L)$ such that $\tilde{A} = h(A)$ extends yielding a holomorphic line bundle $\mathcal{L}$, say, and $h^2 \varphi_{\alpha \beta}$ extends to a meromorphic section of $\mathcal{L}^2$. Let $\tilde{\alpha} = h \alpha$ and $\tilde{\beta} = h^{-1} \beta$. Then 
\[ \frac{ | \tilde{\alpha} |}{ | \tilde{\beta} | } = | h^2 \varphi_{\alpha\beta} | \qquad \textnormal{and} \qquad  |\tilde{ \alpha}| | \tilde{ \beta} | = |\alpha | | \beta | = \frac{1}{2} | \Psi |^2. \] 
Since $h^2 \varphi_{\alpha \beta}$ is meromorphic and $|\Psi|$ extends to a continuous function on $\Sigma$, it follows that $\tilde{\alpha}$ and $\tilde{\beta}$ extend to meromorphic sections. Let $D_1$ and $D_2$ be the associated divisors of zeroes and poles. We have $D = D_1 \cup D_2$ as sets and the condition $D = | \Psi |^{-1}(0)$ implies that $D_1 + D_2 \geq 0$.  The existence of the short exact sequence involving $\tilde{\alpha}$ and $\tilde{\beta}$ was established in \eqref{eqn:varphi-exact}.
\end{proof}

The next lemma provides a restriction on the possible holomorphic bundles $\cE$ fitting into the short exact sequence \eqref{eqn:fueterexact}.

\begin{lem} \label{lem:nofueter1}
Under the assumptions of Proposition \ref{prop:meromorphic} there exists a holomorphic line bundle $M$ satisfying  $h^0( M^2 ) > 0$ and $h^0( \cE \otimes K^{1/2} \otimes M^{-1} ) > 0$.  
\end{lem}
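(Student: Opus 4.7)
The plan is to exhibit $M$ explicitly as a tensor product involving $\mathcal{L}$ and one of the divisors appearing in the short exact sequence \eqref{eqn:fueterexact}, and then verify both inequalities by elementary manipulations of the exact sequence.

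First, I would set $M := \mathcal{L}^{-1} \otimes \cO(D_1)$. Tensoring the short exact sequence \eqref{eqn:fueterexact} with $K^{1/2} \otimes M^{-1}$ yields
\begin{equation*}
0 \longrightarrow \cO \longrightarrow \cE \otimes K^{1/2} \otimes M^{-1} \longrightarrow \mathcal{L}^{-1} \otimes K \otimes \cO(-D_2) \otimes M^{-1} \longrightarrow 0,
\end{equation*}
since the left-most bundle becomes $\mathcal{L}^{-1} \otimes \cO(D_1) \otimes M^{-1} = \cO$ by construction. Taking global sections gives $h^0(\cE \otimes K^{1/2} \otimes M^{-1}) \geq h^0(\cO) = 1$, which establishes the second inequality.

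Next, I would compute $M^2$ via the determinant. Because $\cE$ is a holomorphic $\SL(2, \C)$--bundle, $\det \cE \cong \cO$, while taking determinants in \eqref{eqn:fueterexact} gives
\begin{equation*}
\cO \cong \det\cE \cong \mathcal{L}^{-2} \otimes \cO(D_1 - D_2),
\end{equation*}
so $\mathcal{L}^2 \cong \cO(D_1 - D_2)$. Hence
\begin{equation*}
M^2 = \mathcal{L}^{-2} \otimes \cO(2 D_1) \cong \cO(D_1 + D_2).
\end{equation*}
By condition $(1)$ of Proposition \ref{prop:meromorphic} the divisor $D_1 + D_2$ is effective, so $\cO(D_1 + D_2)$ has a tautological non-zero section, yielding $h^0(M^2) \geq 1$.

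There is no genuine obstacle here: the only subtle points are remembering that $\det \cE$ is trivial (which fixes the relation $\mathcal{L}^2 \cong \cO(D_1 - D_2)$) and that effectivity of $D_1 + D_2$---rather than of $D_1$ or $D_2$ individually---is the precise hypothesis that makes $M^2$ effective.
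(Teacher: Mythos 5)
Your proposal is correct and follows essentially the same route as the paper: the same choice $M = \mathcal{L}^{-1} \otimes \cO(D_1)$, the same computation $M^2 \cong \cO(D_1+D_2)$ via $\mathcal{L}^2 \cong \cO(D_1-D_2)$, and the same twist of \eqref{eqn:fueterexact} by $K^{1/2} \otimes M^{-1}$ to produce an injection $\cO \to \cE \otimes K^{1/2} \otimes M^{-1}$. No gaps.
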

\begin{proof}
Recall that by Lemma \ref{lem:varphi} we have $\mathcal{L}^2 = \cO(D_1 - D_2)$. Set $M = \mathcal{L}^{-1} \otimes \cO(D_1)$. Then 
\[ M^2 = \mathcal{L}^{-2} \otimes \cO(2D_1) = \cO(D_2 - D_1 + 2D_1) = \cO(D_1 + D_2). \]
We have $h^0(M^2) > 0$  because the divisor $D_1 + D_2$ is effective. On the other hand, multiplying exact sequence \eqref{eqn:fueterexact} by $M^{-1} \otimes K^{1/2}$, we obtain an injective map $\cO \to \cE \otimes M^{-1} \otimes K^{1/2}$, which is the same as a nowhere vanishing section of  $ \cE \otimes K^{1/2} \otimes M^{-1}$.
\end{proof}

\subsection{Proof of Theorem \ref{thm:nofueter}}

\begin{lem} \label{lem:nofueter2}
Fix $k \geq 0$. Let $\cZ \subset \cA(\Sigma,E)$ be the subset consisting of those connections $B$ for which there exists a degree $k$ holomorphic line bundle $M \to \Sigma$ satisfying
\[ h^0(M^2) > 0 \quad \textnormal{and} \quad h^0( \cE_B \otimes K^{1/2} \otimes M^{-1}) > 0. \]
The complement $\cA(\Sigma,E) \setminus \cZ$ is residual. Furthermore, for all $B_0,B_1 \in \cA(\Sigma,E) \setminus \cZ$, a generic path in $\cA(\Sigma,E)$ connecting $B_0$ and $B_1$ is disjoint from $\cZ$.
\end{lem}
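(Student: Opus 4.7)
The plan is to apply the Sard--Smale theorem to a universal moduli space, modelled on the proof of Proposition \ref{prop:transversality}; the key point is a strictly negative Fredholm index coming from Riemann--Roch on projective spaces of holomorphic sections. Fix a smooth Hermitian line bundle $N_0 \to \Sigma$ of degree $k$ and a large Sobolev exponent $l$, with $V$ the rank-two smooth bundle underlying $E$. On
\[
\mathcal{W} = \cA_l(\Sigma, E) \times \cA_l(\Sigma, N_0) \times L^2_l(V^* \otimes K^{1/2} \otimes N_0^{-1}) \times L^2_l(N_0^2),
\]
let $\mathcal{W}^* = \{\alpha \neq 0,\sigma \neq 0\}$ and consider the smooth map $\Phi(B,A,\alpha,\sigma) = (\del_{BA}\alpha, \del_{2A}\sigma)$. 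A connection $B$ lies in $\cZ$ exactly when $\pi^{-1}(B) \neq \emptyset$ for the projection $\pi\colon \mathcal{W}^* \cap \Phi^{-1}(0) \to \cA_l(\Sigma, E)$.

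I would first check that $\Phi$ is a submersion along $\mathcal{W}^* \cap \Phi^{-1}(0)$, in direct analogy with Proposition \ref{prop:transversality}. Given $(\eta, \omega) \in L^2$ orthogonal to the image of $d\Phi$, varying $\dot\alpha, \dot\sigma$ forces $\eta, \omega$ into the kernels of the formal $\del$-adjoints of the relevant Dolbeault operators. Varying $b^{0,1} \in \Omega^{0,1}(\mathfrak{sl}(V))$, and using that $\mathfrak{sl}(2,\C)$ acts surjectively on $V_x^*$ at every point where $\alpha(x)\neq 0$ (the $\mathfrak{sl}(2,\C)$-analogue of Lemma \ref{lem:transversality}, elementary since the $\SL(2,\C)$-orbit of any nonzero vector in $\C^2$ is all of $\C^2 \setminus 0$), together with unique continuation for $\del$-harmonic sections, forces $\eta \equiv 0$. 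Varying $a^{0,1}$ and exploiting that $\sigma \not\equiv 0$ somewhere then gives $\omega \equiv 0$.

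The crucial input is the Fredholm index of $\pi$. Because $\cZ$ depends only on the existence of nonzero $\alpha$ and $\sigma$, and these conditions are each invariant under their own rescaling, the natural moduli witnessing the bad set is the family of triples $(M, [\alpha], [\sigma])$ in which $[\alpha] \in \P H^0(\cE_B \otimes K^{1/2} \otimes M^{-1})$ and $[\sigma] \in \P H^0(M^2)$ are independent projective classes. Its virtual complex dimension over $[B] \in \cA/\cG^c(E)$ is
\[
\dim_\C J^k + \bigl(\chi(\cE_B \otimes K^{1/2} \otimes M^{-1}) - 1\bigr) + \bigl(\chi(M^2) - 1\bigr) = g + (-2k - 1) + (2k - g) = -1
\]
by Riemann--Roch; hence the real Fredholm index of $\pi$ is $-2$. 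Sard--Smale applied to the Sobolev-completed spaces yields a residual subset of regular values in $\cA_l(\Sigma, E)$, over each of which the fibre -- of negative virtual dimension -- is empty. The diagonal approximation argument from the end of the proof of Proposition \ref{prop:transversality} then transfers this to the $C^\infty$ setting, giving the residual subset $\cA(\Sigma,E) \setminus \cZ$.

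The path version follows by rerunning the argument on the path space $\cP(B_0, B_1)$ with an additional $t$-parameter: the real Fredholm index shifts to $-2 + 1 = -1 < 0$, so a residual subset of $C^\infty$-paths avoids $\cZ$ at every time. The most delicate point is the index computation: one must correctly identify the natural moduli of witnesses for the bad set as a family of \emph{independent} projective quotients, reflecting the Brill--Noether nature of the individual existence conditions and supplying the extra $-1$ beyond the na\"ive $\chi_\alpha + \chi_\sigma$ count.
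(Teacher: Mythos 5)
Your proposal is correct and follows essentially the same route as the paper: a universal moduli space over $\cA(\Sigma,E)$, transversality by varying $B$ (pointwise $\mathfrak{sl}(2,\C)$ linear algebra plus unique continuation), Sard--Smale, a Riemann--Roch index count, and Taubes' trick to pass from Sobolev to $C^{\infty}$ parameters. The only difference is bookkeeping: you projectivise both section spaces and get complex index $-1$, concluding emptiness directly, whereas the paper quotients only by $\cG^c(\Sigma)$, finds index $0$, and then rules out a nonempty zero-dimensional regular moduli by noting it would contain the residual $\C^*$-family $[A,t\psi,\alpha]$ --- the same extra scaling symmetry that you absorb into the quotient from the start.
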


\begin{proof}
We use a transversality argument similar to the one used to show Proposition \ref{prop:transversality}. As in that case we pass to suitable Sobolev completions of the spaces of connections and sections (for simplicity we keep the same notation). The statement for $C^{\infty}$ topology will follow from Taubes' trick discussed in the proof of Proposition \ref{prop:transversality}.

Let $T \to \Sigma$ be a unitary line bundle of degree $k$. Denote $F = E^* \otimes K^{1/2} \otimes T^{-1}$ and consider 
\[ \cA(\Sigma,E) \times \cA(\Sigma,T) \times \Gamma(F) \times \Gamma(T^2) \to \Omega^{0,1} (F) \times \Omega^{0,1}( T^2 ), \]
\[ (B, A, \psi, \alpha) \mapsto (\del_{AB} \psi, \del_{A} \alpha). \] 
This map is $\cG^c(\Sigma)$--equivariant. Let  $\mathcal{X}$ be the open subset of $\cA(\Sigma,T)\times\Gamma(F)\times\Gamma(T^2)/ \cG^c(\Sigma)$ given by $\{ [B,A,\psi, \alpha] \ | \ \psi \neq 0, \ \alpha \neq 0 \}$. Let $\mathcal{V} \to \cA(\Sigma, E) \times \mathcal{X}$ be the Banach vector bundle obtained from taking the $\cG^c(\Sigma)$--quotient of the trivial bundle with fibre $ \Omega^{0,1} (F) \times \Omega^{0,1}( T )$. Then the map introduced above descends to a smooth section $s  \colon \cA(\Sigma, E) \times \mathcal{X} \to \mathcal{V}$. For every $B \in \cA(\Sigma,E)$ the restriction $s_B = s(B, \cdot)$ is a Fredholm section whose index is the Euler characteristic of the elliptic complex
\begin{equation} \label{eqn:fuetercmplx}
\begin{tikzcd}
\Omega^0(\C) \ar{r} & \Omega^{0,1}(\C) \oplus \Gamma(F) \oplus \Gamma(T^2) \ar{r} & \Omega^{0,1}(F) \oplus \Omega^{0,1}(T^2).
\end{tikzcd}
\end{equation}
The first arrow in the complex is the linearised action of $\cG^c(\Sigma)$, whereas the second is the linearisation of the map $(A, \psi, \alpha) \mapsto (\del_{AB} \psi, \del_A \alpha)$. This elliptic complex agrees up to terms of order zero with the direct sum of the complexes
\[ 
\begin{tikzcd}
\Omega^0(\C) \ar{r}{\del} & \Omega^{0,1}(\C) \ar{r} & 0 
\end{tikzcd}
\qquad \textnormal{and}
\]
\[ 
\begin{tikzcd}
0 \arrow{r} & \Gamma(F) \oplus \Gamma(T^2) \arrow{rr}{\del_{AB} \oplus \del_A} && \Omega^{0,1}(F) \oplus \Omega^{0,1}(T^2).
\end{tikzcd}
\]
By the Riemann--Roch theorem, the Euler characteristic of this complex is
\[ \chi( \cO ) - \chi(F) - \chi(T^2) = (1-g) - (\deg(F)+2-2g) - (2 \deg(T) + 1-g) = 0 \]
because $\deg(F) = 2g-2 - 2\deg(T)$.  Thus, $s_B$ is a Fredholm section of index zero.

The proof will be completed if we can show that $s$ is transverse to the zero section at all points $[B,A,\psi,\alpha] \in s^{-1}(0) \subset \mathcal{X}$. Indeed, if this is the case, then by the Sard--Smale theorem, the same is true for $s_B$ for $B$ from a residual subset of $\cA(\Sigma,E)$. For every such $B$ the set 
\[ \{ [A, \psi, \alpha] \ | \ \del_{AB} \psi = 0, \ \del_A \alpha = 0, \ \psi \neq 0 \ \alpha \neq 0 \} \]
is a zero-dimensional submanifold of $\mathcal{X}$. This submanifold must be empty as otherwise it would contain a subset homeomorphic to $\C^*$ given by $[A, t \psi, \alpha]$ for $t \in \C^*$. This proves that for a generic $B$ there is no holomorphic line bundle $M = (T, \del_A)$ together with non-zero $\alpha \in H^0(M^2)$ and $\psi \in H^0( \cE_B \otimes K^{1/2} \otimes M^{-1})$. The statement for paths is proved in the same way.
%for a generic path $( B_t )_{t \in [0,1]}$ the sunion $\bigcup_{t\in[0,1]} s_{B_t}^{-1}(0)$ is a one-dimensional submanifold of $\mathcal{X}$; it must be empty because otherwise it would contain a subset homeomorphic to $\C^*$. 

It remains to show that $s$ is transverse to the zero section. At a point $[B,A,\psi,\alpha] \in s^{-1}(0)$ the first map in  \eqref{eqn:fuetercmplx} is injective. Thus, it is enough to prove the surjectivity of the operator combining the second map of \eqref{eqn:fuetercmplx} and the linearisation of $\del_{AB}$ with respect to $B$: 
\[ \Omega^{0,1}(\mathrm{End}(F)) \oplus \Omega^{0,1}(\C) \oplus \Gamma(F) \oplus \Gamma(T^2) \longrightarrow \Omega^{0,1}(F) \oplus \Omega^{0,1}(T^2) \]
\[ (b,a,u,v) \mapsto ( (b+a)\psi + \del_{AB} u, a \alpha + \del_A v). \]
If the map were not surjective, there would exist a non-zero $(p,q) \in \Omega^{0,1}(F) \oplus \Omega^{0,1}(T^2)$ $L^2$--orthogonal to the image; which in turn would imply $\del_{AB}^* p = 0$, $\del_A^* q = 0$, and
\[ \langle b \psi , p \rangle_{L^2} = 0, \qquad \langle a \alpha, q \rangle_{L^2} = 0 \]
for all $b \in \Omega^{0,1}(\mathrm{End}(F))$ and $a \in \Omega^{0,1}(\C)$. Note that $\psi$ and $\alpha$ are both non-zero and holomorphic;  $p$ and $q$ are anti-holomorphic and at least one of them is non-zero. Using a bump function as in the proof of Proposition \ref{prop:transversality} it is easy to construct $b$ and $a$ such that
\[ \langle b \psi , p \rangle_{L^2} + \langle a \alpha, q \rangle_{L^2} > 0. \qedhere\]
\end{proof}

Theorem \ref{thm:nofueter} follows immediately from the previous results.
Let $B\in\cA(\Sigma,E)$ and denote by $\cE_B$  the corresponding holomorphic bundle. Propositions \ref{prop:fueterinvariance} and \ref{prop:meromorphic} show that a Fueter sections with respect to $(g,B)$ corresponds to a holomorphic triple $(\mathcal{L},\alpha, \beta)$ fitting into the short exact sequence \eqref{eqn:fueterexact}. On the other hand, by Lemmas \ref{lem:nofueter1} and \ref{lem:nofueter2}, for a generic choice of $B$ the holomorphic bundle $\cE_B$ does not fit into any such sequence. The same is true when $B$ varies in a generic one-parameter family by the second part of Lemma \ref{lem:nofueter2}. \qed

\subsection{Fueter sections and limiting configurations}

%We have already seen a class of solutions $(A,\alpha,\beta, D)$ to \eqref{eqn:fueterinvariant}: the limiting configurations introduced in Definition \ref{defn:limiting}. We will see that not every Fueter section is is a limiting configuration.
% However, the correspondence between limiting configurations and holomorphic data described in Proposition \ref{prop:homeofueter} extends to more general configurations satisfying \eqref{eqn:fueterinvariant}.

Every limiting configurations, as in Definition \ref{defn:limiting}, is an example of a Fueter section on $Y=S^1\times\Sigma$. The results established in this section allow us to construct a counterexample to the converse statement.

\begin{exmp}\label{exmp:fueterlimiting}
Suppose that the genus of $\Sigma$ is positive so that the canonical divisor $K$ is effective. Let $C_1$ and $C_2$ be two divisors satisfying $C_1 + C_2 = \frac{1}{2}K \geq 0$. Set 
\[ \mathcal{L} := \cO(C_1 - C_2), \qquad D_1 := 2C_1, \qquad D_2 := 2C_2, \]
Then 
\[ \mathcal{L}^{-1} \otimes K^{-1/2} \otimes \cO(D_1) = \cO(C_1 + C_2 - \frac{1}{2}K) = \cO, \]
\[ \mathcal{L}^{-1} \otimes K^{1/2} \otimes \cO(-D_2) = \cO(-C_1 - C_2 + \frac{1}{2}K) = \cO. \]
Set  $\cE := \cO \oplus \cO$. A Fueter section is given by maps $\tilde{\alpha}$ and $\tilde{\beta}$ making the sequence \eqref{eqn:fueterexact} exact. In the present setting,   \eqref{eqn:fueterexact} is equivalent to
\[ \begin{tikzcd}
0 \ar{r} & \cO \ar{r}{\tilde{\alpha}} & \cO \oplus \cO \ar{r}{\tilde{\beta}} & \cO \ar{r} & 0,
\end{tikzcd} \]
so there is an obvious choice of $\tilde{\alpha}$ and $\tilde{\beta}$ making the sequence exact. According to Proposition \ref{prop:meromorphic}, this gives rise to a Fueter section with singular set $D = D_1 \cup D_2$. However, such a Fueter section is not a limiting configuration unless both divisors $C_1$ and $C_2$ are effective. 
\end{exmp}

\section{Moduli spaces of framed vortices} 
\label{sec:vortices}
By Theorem \ref{thm:nofueter}, for a generic choice of a circle-invariant parameter, $\cM$ is homeomorphic to the compact space $\cN$ introduced in Definition \ref{defn:vortexmoduli}. In this section we prove that $\cN$ is a K\"ahler manifold and that the signed count of Seiberg--Witten multi-monopoles on $Y=S^1\times\Sigma$ is the signed Euler characteristic of $\cN$, which proves Theorem \ref{thm:generic0}. We then establish some general properties of $\cN$ using methods of complex geometry.

%This involves comparing $\cM$ and $\cN$ not only as topological spaces but as moduli spaces equipped with deformation theories. We end the section by describing the moduli spaces when the genus of $\Sigma$ is zero, one, and two.

\label{section:vortices}
\subsection{Framed vortices}
We continue to assume throughout this section that $E$ is an $\SU(2)$--bundle and that $d - \tau < 0$ where $d = \deg L$ and $\tau = \int_{\Sigma} i\eta / 2\pi$ \footnote{Most of the results generalise easily to the other cases. We will later discuss the role of the sign of $d - \tau$.}.

 $\cN$ depends on the conformal structure on $\Sigma$, holomorphic structure $\cE_B = (E^*, \del_B)$, and $d$. Its points can be interpreted in three ways. 

\begin{enumerate}
\item As isomorphism classes of pairs $(\mathcal{L}, \alpha)$, where $\mathcal{L} \to \Sigma$ is a degree $d$ holomorphic line bundle and $\alpha$ is a non-zero holomorphic section of $\cE_B \otimes \mathcal{L} \otimes K^{1/2}$. 
\item As $\cG^c(\Sigma)$--equivalence classes of pairs 
\[ (A, \alpha)  \in \cA(\Sigma,L) \times \Gamma(\Sigma, E^* \otimes L \otimes K^{1/2}) \]
satisfying $\del_{AB} \alpha = 0$ and $\alpha \neq 0$.
\item As $\cG(\Sigma)$--equivalence classes of pairs $(A, \alpha)$ as above satisfying
\begin{equation} \label{eqn:framed}
\left\{
\begin{array}{c}
\del_{AB} \alpha = 0 \quad \textnormal{and } \alpha \neq 0, \\
i \ast F_A + | \alpha |^2 - i \ast \eta = 0,
\end{array}
\right.
\end{equation}
\end{enumerate}
Following \cite{garcia-prada}, we will refer to $\cN$ as the \emph{moduli space of framed vortices}. 
%However, we employ the second, holomorphic description to endow $\cN$ with a complex analytic structure. 

\subsection{Deformation theory}
Here we relate the deformation theories of $\cN$ and $\cM_{\hol}$. 
\begin{thm} \label{thm:framed}
For every conformal class of a metric $g$ on $\Sigma$ there exists a residual subset $\cA^{reg}(g) \subset \cA(\Sigma, E)$ such that for every $B \in \cA^{reg}(g)$
\begin{enumerate}
\item $\cN = \cN(g,\cE_B)$ is a compact K\"ahler manifold of complex dimension $g(\Sigma)-1+2d$,
\item $\cM_{\hol} = \cM_{\hol}(g,\cE_B)$ is Zariski smooth and the inclusion $\cN \hookrightarrow \cM_{\hol}$ is a homeomorphism inducing an isomorphism of Zariski tangent spaces at every point,
\item the relative orientation on the obstruction bundle $\Ob \to \cM_{\hol}$ is compatible with the orientation of the cotangent bundle $T^* \cN \to \cN$ induced from the complex structure; equivalently, for every connected component $C$ of $\cM_{\hol}$ we have $\sign(C) = (-1)^{g(\Sigma)-1}$. 
\end{enumerate}
\end{thm}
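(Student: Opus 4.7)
The plan is to exploit Corollary \ref{cor:nofueter}: for a generic $B$, both compactness of $\cM_{\hol}$ and the set-theoretic equality $\cM_{\hol}=\cN$ come for free, so that conclusions (1) and (2) reduce to the local study of the deformation theories at a point $[A,\alpha,0]\in\cN\subset\cM_{\hol}$.

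The key local input is Lemma \ref{lem:cohomology}. Setting $\beta=0$ in its formulae, one obtains a short exact sequence identifying
\[
H^1_{\cM_{\hol}} \;\cong\; H^1_{\cN} \oplus V_0, \qquad V_0 := \ker\!\bigl(\alpha\colon H^0(\cE_B^*\otimes \mathcal{L}^{-1} \otimes K^{1/2}) \to H^0(K)\bigr),
\]
and a Serre duality computation on the deformation complex of $\cN$ analogously identifies its obstruction space $H^2_{\cN}$ with $V_0^*$. The crucial observation is that, for $B$ as in Corollary \ref{cor:nofueter}, necessarily $V_0 = 0$ at every $[A,\alpha]\in\cN$: a non-zero $v\in V_0$ would make $(A,\alpha,v)$ an honest solution of \eqref{eqn:holomorphic}, and since $\cG^c(\Sigma)$ acts on the $\beta$-slot by $\beta\mapsto g^{-1}\beta$ with $g$ nowhere vanishing, this would produce a point of $\cM_{\hol}\setminus\cN$, contradicting the hypothesis. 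This single vanishing yields simultaneously the smoothness of $\cN$ (via $H^2_{\cN}=V_0^*=0$) and the equality $H^1_{\cM_{\hol}} = T\cN$ asserted in (2).

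The remaining items in (1) and (2) now follow quickly. A Riemann--Roch computation for the Fredholm index of the deformation complex of $\cN$ gives $\dim_\C \cN = g-1+2d$. The K\"ahler structure arises from symplectic reduction: in the description (3), the second line of \eqref{eqn:framed} is the moment map equation for the $\cG(\Sigma)$-action on the flat K\"ahler affine configuration space $\cA(\Sigma,L)\times\Gamma(\cE_B\otimes\mathcal{L}\otimes K^{1/2})$ (equipped with the $L^2$ inner product), so the reduced quotient is K\"ahler at smooth points. Zariski smoothness of $\cM_{\hol}$ is then automatic: the Kuranishi map $\kappa\colon H^1_{\cM_{\hol}}\to H^2_{\cM_{\hol}}$ must vanish identically, for otherwise $\kappa^{-1}(0)$ would be a proper subset of $H^1_{\cM_{\hol}}\cong T\cN$, contradicting the fact that locally $\cM_{\hol}=\cN$ is a smooth manifold of dimension $\dim_\C H^1_{\cM_{\hol}}$. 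Finally $\Ob = H^2_{\cM_{\hol}}\cong (H^1_{\cM_{\hol}})^* = T^*\cN$ as complex vector bundles, via the complex duality in Lemma \ref{lem:cohomology}.

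The heart of the proof, and the part I expect to be hardest, is (3): the orientation comparison. The relative orientation on $\Ob\to\cN$ is inherited from the global trivialisation $\mathrm{or}_\Lambda$ of the determinant line bundle of the family $L_{A,\Psi,f}$, while $T^*\cN$ has its own orientation from its complex structure; $\sign(C)$ records their difference. My approach is to trace the ratio $\mathrm{or}_\Lambda/\mathrm{or}_0$ along the path $L_{A,t\Psi,tf}$ from $t=1$ to $t=0$. At $t=1$ the kernel has real dimension $2m$, $m:=\dim_\C\cN = g-1+2d$, so the Knudsen--Mumford factor contributes $(-1)^{2m(2m+1)/2}=(-1)^m=(-1)^{g-1}$ (since $m\equiv g-1\pmod 2$). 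The main obstacle is to verify that all other contributions cancel: this requires analysing $\mathrm{or}_\Lambda$ at $L_{A,0,0}=(d+d^*)\oplus\D_{AB}$, where the kernel splits as $H^0(Y,i\R)\oplus H^1(Y,i\R)\oplus\ker\D_{AB}$, and comparing the fixed real orientation on the topological summands and the complex orientation on $\ker\D_{AB}$ with the KM sign there. Using assumption (B) and the Fourier decomposition of $\D_{AB}$ on $Y=S^1\times\Sigma$ from Lemma \ref{lem:dirac} to reduce $\ker\D_{AB}$ to Dolbeault data on $\Sigma$ of computable parity, one should see that the auxiliary contributions are universal (independent of $d$) and drop out, leaving the universal answer $\sign(C)=(-1)^{g-1}$ and the claimed compatibility of orientations on $\Ob$ and $T^*\cN$.
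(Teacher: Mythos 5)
Your treatment of parts (1) and (2) follows essentially the same route as the paper: Corollary \ref{cor:nofueter} is used to kill both the obstruction space of $\cN$ (a non-zero element of $H^2_{A,\alpha}$ is the same as a non-zero $\beta$ with $\del_{AB}\beta=0$ and $\alpha\beta=0$, hence a point of $\cM_{\hol}\setminus\cN$, equivalently a Fueter section) and the extra $v$-component in the Zariski tangent space of $\cM_{\hol}$ at $[A,\alpha,0]$; the dimension is Riemann--Roch and the K\"ahler structure comes from viewing \eqref{eqn:framed} as an infinite-dimensional K\"ahler quotient. This part is sound.

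Part (3) is where your proposal has a genuine gap. What has to be proved is that the trivialisation $\mathrm{or}_{\Lambda}$, which is pinned down at the endpoint $L_{A,0,0}=(d+d^*)\oplus\D_{AB}$ of the homotopy $L_{A,t\Psi,tf}$, transports to the solution so as to agree with the complex orientations of $H^1_{A,\Psi}\cong T\cN$ and $H^2_{A,\Psi}\cong T^*\cN$. Your sketch records only the Knudsen--Mumford parity $(-1)^{2m(2m+1)/2}=(-1)^m$ at $t=1$ and then asserts that the remaining contributions are ``universal (independent of $d$)'' and therefore drop out. That is not an argument: a contribution independent of $d$ is still an undetermined universal sign (it could a priori depend on $g(\Sigma)$, on $b_1(Y)=2g(\Sigma)+1$, or on the path of operators), and computing it is exactly the content of the claim $\sign(C)=(-1)^{g(\Sigma)-1}$. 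Moreover the kernel of $L_{A,t\Psi,tf}$ jumps along the homotopy, so the transport of $\mathrm{or}_{\Lambda}$ cannot be read off from endpoint parities alone; one needs a concrete mechanism comparing $\det L_{A,0,0}$ with $\det L_{A,\Psi}$.

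The paper supplies that mechanism by writing $L_{A,\Psi}=L_{A,0}+P$ and showing that the finite-dimensional map induced by the zeroth-order term $P$ on $H^1(Y,i\R)\oplus H^0(Y,i\R)\oplus\ker\D_{AB}$ is complex linear for suitable complex structures: one identifies $H^1(S^1,i\R)\oplus H^0(Y,i\R)\cong\C$ and checks block by block that $(a,v)\mapsto -a\cdot\Psi+v\Psi$ is complex linear (Clifford multiplication by $dt$ acts as $i$), that $\phi\mapsto\mu_{\C}(\phi,\Psi)$ is complex linear into $H^{0,1}(\Sigma)$, and that $\phi\mapsto\bigl(i\Im\langle\Psi,\phi\rangle,-2\ast\mu_{\R}(\phi,\Psi)\bigr)$ agrees up to a positive constant with the complex-linear map $u\mapsto -(u,\alpha)$. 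It is this computation (or an equivalent one) that makes the determinant comparison respect the complex orientations and yields both formulations of (3). Your Fourier-decomposition plan never engages with this zeroth-order coupling between the form part and the spinor part, which is precisely where the sign is decided; as written, part (3) remains unproved.
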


\begin{cor} \label{cor:regular}
The following conditions are equivalent:
\begin{enumerate}
\item $\cN$ is regular as the moduli space of framed vortices.
\item $\cM_{\hol}$ is compact and equal to $\cN$.
\item There exist no triple $(\mathcal{L}, \alpha, \beta)$ consisting of a degree $d$ holomorphic line bundle $\mathcal{L} \to \Sigma$ and non-zero holomorphic sections $\alpha \in H^0( \cE_B \otimes \mathcal{L}\otimes K^{1/2})$ and $\beta \in H^0( \cE_B^* \otimes \mathcal{L}^* \otimes K^{1/2})$ satisfying $\alpha \beta = 0 \in H^0(K)$.
\end{enumerate}
\end{cor}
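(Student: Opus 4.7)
The plan is to deduce $(2) \Leftrightarrow (3)$ directly from the structure of $\overline{\cM}_{\hol}$ established in the previous section, and then to establish $(1) \Leftrightarrow (3)$ by identifying the obstruction space of $\cN$ at a framed vortex with the space of sections $\beta$ appearing in $(3)$.

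For $(2)\Leftrightarrow(3)$: By Corollary \ref{cor:subspace}, $\cM_{\hol}$ is compact if and only if $\cN = \cM_{\hol}$, so $(2)$ is equivalent to the single equality $\cN = \cM_{\hol}$. Since $d-\tau<0$, an element of $\cM_{\hol}$ is represented by a triple $(\mathcal{L}, \alpha, \beta)$ with $\alpha\neq 0$ and $\alpha\beta = 0$, while $\cN$ consists of exactly those triples with $\beta = 0$. Thus the equality $\cN = \cM_{\hol}$ amounts to the non-existence of a triple with $\alpha\neq 0$, $\beta\neq 0$, and $\alpha\beta=0$, which is precisely $(3)$.

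For $(1)\Leftrightarrow (3)$: I would write down the deformation complex for $\cN$ at a framed vortex $(A,\alpha)$ representing $[\mathcal{L},\alpha]$, namely
\[
\Omega^0(\C) \xrightarrow{\;G\;} \Omega^{0,1}(\C) \oplus \Gamma(\cE_B \otimes \mathcal{L} \otimes K^{1/2})
\xrightarrow{\;F\;} \Omega^{0,1}(\cE_B \otimes \mathcal{L} \otimes K^{1/2}),
\]
with $G(h)=(-\del h, h\alpha)$ the linearised complexified gauge action and $F(a^{0,1},u) = \del_{AB} u + a^{0,1}\alpha$ the linearisation of $\del_{AB}\alpha = 0$. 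Regularity of $\cN$ at $[\mathcal{L},\alpha]$ is by definition the vanishing of the obstruction group $H^2$ of this complex. Elliptic Hodge theory identifies $H^2$ with the space of $p \in \Omega^{0,1}(\cE_B \otimes \mathcal{L} \otimes K^{1/2})$ satisfying $\del_{AB}^* p = 0$ and $\langle a^{0,1}\alpha, p\rangle_{L^2}= 0$ for every $a^{0,1}\in\Omega^{0,1}(\C)$. The first condition says that $p$ is anti-holomorphic, and via the conjugate-linear isomorphism $\sigma$ of Remark \ref{rem:invariance} (equivalently, Serre duality together with the identification $K^{1/2}\otimes\overline{K^{1/2}}\cong\cO$ coming from the Hermitian metric), it corresponds to a holomorphic section $\beta \in H^0(\cE_B^* \otimes \mathcal{L}^* \otimes K^{1/2})$. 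The second condition then translates to $\alpha\beta = 0 \in H^0(K)$, as in the computation of $H^1_{A,\alpha,\beta}$ in Lemma \ref{lem:cohomology}. Hence $H^2 = 0$ at $[\mathcal{L},\alpha]$ is equivalent to the absence of a non-zero such $\beta$, and requiring this at every point of $\cN$ yields exactly $(3)$.

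The main obstacle I anticipate is the careful bookkeeping of the conjugate-linear isomorphisms, pairings and Serre-duality identifications needed to match $H^2$ with a subspace of $H^0(\cE_B^*\otimes\mathcal{L}^*\otimes K^{1/2})$ and to verify that the orthogonality condition becomes precisely $\alpha\beta=0$ with no stray scalar factors. These calculations are essentially repetitions of the ones already carried out in Lemma \ref{lem:cohomology} and in the proof of Proposition \ref{prop:invariance}, so no new analytic input should be required.
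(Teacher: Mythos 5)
Your proof is correct and follows essentially the same route as the paper: the compactness equivalence rests on Corollary \ref{cor:subspace} together with the definitions of $\cM_{\hol}$ and $\cN$, and the regularity equivalence rests on identifying $H^2_{A,\alpha}=\ker T_{A,\alpha}^*$ with holomorphic sections $\beta\in H^0(\cE_B^*\otimes\mathcal{L}^*\otimes K^{1/2})$ annihilated by $\alpha$ via the conjugate-linear isomorphism, which is exactly the identification made in Step 1 of the proof of Theorem \ref{thm:framed} that the paper cites. The only difference is the trivial reorganisation of which pairs of conditions you compare directly ($(1)\Leftrightarrow(3)$ and $(2)\Leftrightarrow(3)$ instead of $(1)\Leftrightarrow(2)$ and $(2)\Leftrightarrow(3)$).
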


\begin{proof}
The equivalence of $(1)$ and $(2)$  follows from Corollary \ref{cor:subspace} and the identification of the obstruction bundle $\bigcup_{A,\alpha} H^2_{A,\alpha}$ with $\overline{\cM}_{\hol} \setminus \cM_{\hol}$, shown in the proof of Theorem \ref{thm:framed}. The equivalence of $(2)$ and $(3)$ is obvious from the definition of $\overline{\cM}_{\hol}$. 
\end{proof}

%Recall that $\cN(g, \cE_B)$ can be defined purely in terms of the complex geometry $\Sigma$. We will see in Section \ref{sec:examples} that the biholomorphism type of $\cN(g, \cE_B)$ can change when we vary $(g,\cE_B)$. Nevertheless, Theorem \ref{thm:sigmacount} shows that the Euler characteristic  $\cN(g,\cE_B)$ does not depend on $g$ and the connection $B \in \cA(\Sigma,E)$, as long as $B$ is generic. In fact, the proof of Theorem \ref{thm:sigmacount} shows that for two different generic choices $(g_0,B_0)$ and $(g_1,B_1)$ the moduli spaces $\cN(g_0, \cE_{B_0})$ and $\cN(g_1, \cE_{B_1})$ are fibres of a locally trivial fibration over $[0,1]$, and hence they are diffeomorphic.

\begin{cor} \label{cor:invariance}
 The diffeomorphism type of $\cN(g, \cE_B)$ does not depend on the metric on $\Sigma$ and the connection $B \in \cA(\Sigma,E)$, as long as $B$ is generic. 
\end{cor}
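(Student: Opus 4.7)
The plan is a standard parametric cobordism argument. I would fix two pairs $(g_0, B_0)$ and $(g_1, B_1)$ with $B_i \in \cA^{reg}(g_i)$, and connect them by a smooth path $(g_t, B_t)_{t \in [0,1]}$ in the contractible affine space $\Met(\Sigma) \times \cA(\Sigma, E)$. The goal is to show that, after perturbing the path, the total moduli space
\[ \cN' := \bigl\{ (t, [A, \alpha]) : t \in [0,1],\ [A,\alpha] \in \cN(g_t, \cE_{B_t}) \bigr\} \]
is a smooth compact manifold with boundary admitting a proper submersion onto $[0,1]$. Ehresmann's fibration theorem will then identify $\pi \colon \cN' \to [0,1]$ with a trivial smooth fibre bundle, giving a diffeomorphism between the fibres $\cN(g_0, \cE_{B_0})$ and $\cN(g_1, \cE_{B_1})$.

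Two transversality inputs are needed. First, to ensure $\cN'$ is smooth, I would carry out a parametric Sard--Smale argument analogous to the one underlying Theorem \ref{thm:framed}(1), viewing the equation \eqref{eqn:framed} together with a Coulomb gauge fixing condition as the zero set of a Fredholm section over $[0,1] \times \cA(\Sigma,L) \times \Gamma(\Sigma, \cE_{\bullet} \otimes L \otimes K^{1/2})$ parametrised by paths of connections $B_t$ with fixed endpoints. Showing that this section is transverse to the zero section at every solution will yield, for a generic such path, a one-parameter family of smooth vortex moduli spaces. Second, to ensure properness of $\pi$, I would invoke Theorem \ref{thm:nofueter}(2) to perturb $(B_t)$ further so that no Fueter sections appear for any $(g_t, B_t)$; by Corollary \ref{cor:nofueter} this guarantees $\overline{\cM}_{\hol}(g_t, \cE_{B_t}) = \cN(g_t, \cE_{B_t})$ at every $t$, hence $\pi$ is proper.

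The main obstacle is the parametric transversality step. For those $t$ at which $B_t \notin \cA^{reg}(g_t)$ the fibre of $\pi$ carries a non-zero obstruction, so smoothness of $\cN'$ cannot follow fibrewise; one has to show that variations of $B$ transverse to the path direction cancel these finite-dimensional obstructions. The surjectivity of the relevant linearised operator will be verified essentially as in the proof of Proposition \ref{prop:transversality} and Lemma \ref{lem:nofueter2}, by combining unique continuation for $\del_{AB}$ with a linear-algebraic pointwise surjectivity argument for the map $b \mapsto b\,\psi$ acting on $\mathfrak{su}(E)$--valued one-forms, the new feature being that the variation is constrained to lie in a path space rather than the full space $\cA(\Sigma,E)$. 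A standard diagonal/Taubes-style passage from the Sobolev category to $C^{\infty}$ parameters, as in the last part of the proof of Proposition \ref{prop:transversality}, will then complete the argument.
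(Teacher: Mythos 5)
Your overall skeleton (connect the two parameters by a path, kill Fueter sections along it via Theorem \ref{thm:nofueter}(2), then apply Ehresmann) matches the paper, but the step you flag as ``the main obstacle'' is a genuine gap, and the mechanism you propose for it does not work. A parametric Sard--Smale argument over the space of paths only makes the \emph{total} space $\cN'$ smooth; it does not make every fibre $\cN(g_t,\cE_{B_t})$ regular. The projection $\pi\colon\cN'\to[0,1]$ is a submersion at a point precisely when the fibrewise linearisation is surjective there, so at any $t$ where some solution is obstructed, $\pi$ fails to be a submersion and Ehresmann does not apply: what you get from generic-path transversality alone is a cobordism between the end fibres, which does not control their diffeomorphism type. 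Moreover, once the path is fixed there are no ``variations of $B$ transverse to the path direction'' available to cancel fibrewise obstructions; the only way a generic path can have \emph{all} fibres regular is if the obstructed locus in $\Met\times\cA(\Sigma,E)$ has codimension at least two, and that is exactly the fact your argument never establishes.

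The missing ingredient is the identification, in Step 1 of the proof of Theorem \ref{thm:framed} (equivalently Corollary \ref{cor:regular}), of the obstruction space of the framed vortex problem with Fueter data: a non-zero element $q\in H^2_{A,\alpha}$ is the same as a non-zero $\beta$ with $\del_{AB}\beta=0$ and $\alpha\beta=0$, i.e.\ a point of $\overline{\cM}_{\hol}\setminus\cM_{\hol}$, i.e.\ a Fueter section with respect to $(g,B)$. Hence the very genericity you already invoke for properness --- no Fueter sections with respect to $(g_t,B_t)$ for any $t$, via Theorem \ref{thm:nofueter}(2) and Corollary \ref{cor:nofueter} --- simultaneously gives $H^2_{A,\alpha}=0$ for every $[A,\alpha]\in\cN(g_t,\cE_{B_t})$ and every $t$. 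Thus each fibre is compact \emph{and} regular, $\bigcup_{t\in[0,1]}\cN(g_t,\cE_{B_t})\to[0,1]$ is a smooth fibre bundle with no additional parametric transversality argument, and triviality of the bundle over the interval yields the diffeomorphism. (The codimension-two nature of the bad locus is hidden in Lemma \ref{lem:nofueter2}: the index-zero problem carries a free $\C^*$-symmetry, which is why one-parameter families can avoid it.) Your proposal, by treating compactness and regularity as independent issues, reproves the easy half and leaves the essential half unproved.
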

\begin{proof}
Let $g_0$, $g_1$ be metrics on $\Sigma$ and $B_0 \in \cA(g_0)$, $B_1 \in \cA(g_1)$ as in Theorem \ref{thm:nofueter}. For a generich path $(B_t)$ in $\cA(\Sigma,E)$ connecting $B_0$ and $B_1$ there exist no Fueter sections with respect to $(g_t,B_t)$ and so $\cM(g_t,B_t)$ is compact for all $t\in[0,1]$. By Corollary \ref{cor:regular}, $\cN(g_t,\cE_{B_t}) = \cM(g_t,B_t)$ is compact and regular as the moduli space of framed vortices. Thus, $\bigcup_{t\in[0,1]}\cN(g_t,\cE_{B_t}) \to [0,1]$ is a smooth fibre bundle and every fibre $\cN(g_t,\cE_{B_t})$ is diffeomorphic to $\cN(g_0,\cE_{B_0})$.
\end{proof}

The construction of an analytic structure on $\cN$ follows the general scheme that by now is familiar to the reader. 
Consider the elliptic complex associated to a solution $(A,\alpha)$ of $\del_{AB} \alpha = 0$:
\[
\begin{tikzcd}
\Omega^0(\C) \ar{r}{G^c_{A,\alpha}} & \Omega^{0,1}(\C) \oplus \Gamma(E^* \otimes L \otimes K^{1/2}) \ar{r}{T_{A, \alpha}} & \Omega^{0,1}(E^* \otimes L \otimes K^{1/2}).
\end{tikzcd}
\]
where $G^c_{A,\alpha}$ is the linearised action of $\cG^c(\Sigma)$ 
\[ G^c_{A,\alpha}(f) = (- \del f, f \alpha) \qquad \textnormal{for } f \in \Omega^0(\C), \]
and $T_{A,\alpha}$ is the linearisation of the Dolbeault operator
\[ T_{A,\alpha}( a^{0,1}, \phi) = ( \del_{AB} \phi + a^{0,1} \alpha ) \qquad \textnormal{for } (a^{0,1}, \phi) \in  \Omega^{0,1}(\C) \oplus \Gamma(E^* \otimes L \otimes K^{1/2}). \]
Denote by $H^0_{A, \alpha}$, $H^1_{A,\alpha}$, and $H^2_{A, \alpha}$ the homology groups of this complex. By definition $\cN$ consists of solutions with $\alpha \neq 0$, so $H^0_{A,\alpha} = 0$. On the other hand, the deformation complex is isomorphic modulo lower order term to the sum of the Dolbeault complexes for $\del$ on $\Omega^0(\C)$ and $\del_{AB}$ on $E^* \otimes L \otimes K^{1/2}$ (with a shift). By the Riemann--Roch theorem the expected complex dimension of $\cN$ is
\[  \dim_{\C} H^1_{A, \alpha} -  \dim_{\C} H^2_{A,\alpha} =  \chi(\Sigma, \cO) - \chi(\Sigma, \cE \otimes \mathcal{L} \otimes K^{1/2} ) = g(\Sigma) -1 + 2d. \]

\begin{proof}[Proof of Theorem \ref{thm:framed}]
The proof proceeds in three steps.
\setcounter{step}{0}
\begin{step}
$\cN$ is a compact K\"ahler manifold.
\end{step}
We already know that $\cN$ is compact. By Corollary \ref{cor:nofueter}, $\cM_{\hol}  = \cN$ for a generic $B$. 
One can show that $\cN$ is generically smooth in the same way as in Proposition \ref{prop:regular}. 
%After showing that the universal moduli space is a smooth Banach manifold we would conclude from the Sard-Smale theorem that $\cN$ is generically smooth and of expected dimension. 
Alternatively, we can interpret the elements of $H^2_{A,\alpha}$ as Fueter sections:
\[ H^2_{A,\alpha} = \ker T_{A,\alpha}^* = \left\{ q \in \Gamma(E^* \otimes L \otimes K^{-1/2}) \ | \ \overline{\alpha} q = 0, \ \del_{AB}^* q = 0 \right\}. \]
Every non-zero element of $H^2_{A,\alpha}$ gives rise to a non-zero $\beta = \overline{q} \in \Gamma(E \otimes L^* \otimes K^{1/2})$ satisfying $\del_{AB} \beta = 0$ and $\alpha \beta = 0$. Thus, the triple $(A,\alpha,\beta)$ is an element of $\overline{\cM}_{\hol} \setminus \cM_{\hol}$ corresponding to a Fueter section as in Proposition \ref{prop:homeofueter}.  By Theorem \ref{thm:nofueter}, for a generic $B$ there are no Fueter sections so $H^2_{A,\alpha} = 0$ for all $[A,\alpha] \in \cN$. This implies that $\cN$ is a complex manifold of dimension $g(\Sigma) - 1 + 2d$ whose holomorphic tangent space at $[A,\alpha]$ is $H^1_{A,\alpha}$. It admits a natural Hermitian metric induced from the $L^2$--inner product on the space of connections and sections. This metric is K\"ahler because $\cN$ is the moduli space of solutions of the framed vortex equations \eqref{eqn:framed}, which is an infinite-dimensional K\"ahler quotient. For details, see \cite{perutz, dey-thakre}.

\begin{step}
$H^1_{A,\alpha}$ is naturally isomorphic to the Zariski tangent space to $\cM_{\hol}$ at $[A,\alpha,0]$.
\end{step}
$H^1_{A,\alpha}$ consists of pairs $(a^{0,1},u) \in \Omega^{0,1}(\C) \oplus \Gamma(E^* \otimes L \otimes K^{1/2})$ satisfying the linearised equation
\[ 
\del_{AB} u + a^{0,1} \alpha = 0
\]
together with the complex Coulomb gauge  $(G^c_{A,\alpha})^*(a^{0,1},u) = 0$. By \eqref{eqn:deformation2}, the tangent space to $\cM_{\hol}$ at consists of triples $(a^{0,1}, u, v)$ where $a^{0,1}$, $u$ are as above, $v \in \Gamma(E \otimes L^* \otimes K^{1/2})$, and  
\[\left\{
\begin{array}{l}
\del_{AB} u + a^{0,1} \alpha = 0,\\
\del_{AB} v = 0, \\
\alpha v = 0
\end{array}
\right. \]
together with the complex Coulomb gauge for $(a^{0,1}, u,v)$. Any non-zero $v$ satisfying the conditions above would give an element $(A, \alpha, v)$ of $\overline{\cM}_{\hol} \setminus \cM_{\hol}$. Since $B$ has been chosen so that  $\overline{\cM}_{\hol} \setminus \cM_{\hol}$ is empty,  $v = 0$ and the equations obeyed by $(a^{0,1}, u, 0)$ are identical to the ones defining $H^1_{A, \alpha}$. We conclude that the Zariski tangent spaces to $\cN$ and $\cM_{\hol}$ are equal.  

\begin{step}
Comparing the orientations.
\end{step}
Let $(A, \Psi)$ be an irreducible solution of the Seiberg--Witten equations. We have $\Psi = (\alpha, 0)$ where $(A,\alpha)$ is a solution of the framed vortex equations. Consider the extended Hessian operator introduced in subsection \ref{subsec:moduli}:
\[ L_{A,\Psi} \colon \Omega^1(i \R) \oplus \Omega^0(i \R) \oplus \Gamma(E^* \otimes S \otimes L ) \longrightarrow \Omega^1(i \R) \oplus \Omega^0(i \R) \oplus \Gamma(E^* \otimes S \otimes L ) \]
Write $L_{A,\Psi} = L_{A,0} + P$, where 
\[ L_{A,0} = 
\left(
\begin{array}{ccc}
\ast d & -d & 0 \\
- d^* & 0 & 0 \\
0 & 0 & \D_{AB} 
\end{array}
\right) \]
and 
\[ P(a,v, \phi) = ( i \Im \langle \Psi, \phi \rangle , - 2 \ast \mu(\phi, \Psi) , - a \cdot \Psi + v \Psi ). \]
The kernel and cokernel of $L_{A,0}$ are naturally identified with 
\[ H^1( Y, i \R) \oplus H^0 ( Y, i \R) \oplus \ker \D_{AB}. \]
The isomorphism between $\det L_{A,0}$ and  $\det L_{A,\Psi}$, defining the relative orientation on the obstruction bundle, factors through the determinant space $\det P$ of the finite dimensional map
\[ P \colon H^1( Y, i \R) \oplus H^0 ( Y, i \R) \oplus \ker \D_{AB} \to H^1( Y, i \R) \oplus H^0 ( Y, i \R) \oplus \ker \D_{AB} \]
induced from the zeroth order operator $P$ defined above (for simplicity we use the same letter to denote the induced finite dimensional map).  As in the proof of Proposition \ref{prop:invariance}, we have  $H^1(Y, i \R) = H^1( S^1 , i \R ) \oplus H^{0,1}(\Sigma)$. Consider the complex structure on $H^1(S^1, i\R) \oplus H^0(Y, i\R)$ coming from the identification
\[ H^1(S^1, i\R) \oplus H^0(Y, i\R) = i\R \oplus i \R = \C. \]
Let $dt$ be the one-form spanning $H^1(S^1, \R)$. Under the Clifford multiplication, $dt$ acts as the multiplication by $i$ on $S$, and so $i dt$ acts as the multiplication by $-1$. Hene, under the isomorphism $H^1(S^1, i\R) \oplus H^0(Y, i\R) = \C$, the map $(a,v) \mapsto (- a \cdot \Psi + v \Psi)$ is given by $(x+iy) \mapsto (x+iy) \Psi$  and so, in particular, it is complex linear. Next, consider the first two components of $P$, that is the map $\phi \mapsto (i \Im(\Psi, \phi), -2\ast \mu(\phi, \Psi))$.  Decompose the moment map into $\mu = \mu_{\R} \oplus \mu_{\C}$ as in the proof of Proposition \ref{prop:invariance}. The map $\phi \mapsto \mu_{\C}(\phi, \Psi)$ is complex linear from $\ker \D_{AB}$ to $H^{0,1}(\Sigma)$. We are left with the map from $\ker \D_{AB}$ to $H^1(S^1, i \R) \oplus H^0(Y, i\R)$ given by
\[ \phi \mapsto (i \Im(\Psi, \phi) - 2 \ast \mu_{\R}(\phi, \Psi)). \]
We have $\phi = (u,v)$ under the splitting $S = K^{1/2} \oplus K^{-1/2}$. Following the identifications from the proof of Proposition \ref{prop:invariance} we find that $\ast \mu_{\R}(\phi, \Psi) = - 2 i \Re ( \alpha, u)$ and so our map is
 \[ \phi = (u,v) \mapsto (i  \Im ( \alpha, u) , - 4 i \Re (\alpha, u ). \]
Up to a constant, it coincides with the complex linear map 
\[ u \mapsto - \Re(\alpha,u) + i\Im(\alpha, u) = - \overline{(\alpha, u)} = - (u, \alpha). \]
We conclude that the isomorphism $\det P \cong \det L_{A,0}$ agrees with the orientations induced from the complex structures on the cohomology groups. The same is true for $\det P \cong\det L_{A,\Psi}$ where the complex structures on $H^1_{A,\Psi} = \ker L_{A,\Psi}$ and $H^2_{A,\Psi} = \mathrm{coker} L_{A,\Psi}$ come from the isomorphism of analytic spaces $\cM \cong \cM_{\hol}$ given by Theorem \ref{thm:holomorphic}. The tangent and obstruction spaces to $\cM_{\hol}$ are canonically identified with the tangent space to $\cN$. Therefore, the relative orientation on the obstruction bundle agrees on the complex orientation on $T^* \cN \to \cN$. 
\end{proof}

\subsection{Dependence on the perturbing two-form} \label{subsec:sign}
We have so far ignored the fact that for fixed $d = \deg L$ there are two definitions of $\cM_{\hol}$ depending on the sign of $d - \tau$ \footnote{The case $d- \tau =0$ is uninteresting as the moduli space is generically empty, see Corollary \ref{cor:moduliempty}.}, see Definition \ref{defn:holomorphic}. Recall that $\tau := \int_{\Sigma} \frac{i \eta}{2\pi}$ depends on the choice of the perturbing two-form $\eta$. In classical Seiberg--Witten theory, the moduli space of solutions on $S^1 \times \Sigma$ is either $\Sym^{d+g-1} \Sigma$ or $\Sym^{-d+g-1} \Sigma$, depending on the sign of $d - \tau$. On the other hand, the Seiberg--Witten invariant does not depend on the choice of the perturbing two-form, which is reflected by the identity
\[ \chi(\Sym^{d+g-1} \Sigma) = \chi(\Sym^{-d+g-1} \Sigma). \]
 
Theorem \ref{thm:generic0} gives us an analogous identity for the moduli spaces of Seiberg--Witten multi-monopoles. Fix $(g,B)$, with $B$ generic, and denote by $\cM_{\hol}^+(d)$ and $\cM_{\hol}^-(d)$ the moduli spaces corresponding to a given degree $d$ and two choices of the sign of $d - \tau$. (In the previous subsections we have always assumed $\cM_{\hol} = \cM_{\hol}^+$.) The map $(\mathcal{L}, \alpha, \beta) \mapsto (\mathcal{L}^*, \beta, \alpha)$ induces an isomorphism $\cM_{\hol}^+(d) \longrightarrow \cM_{\hol}^-(-d)$. This is in agreement with the standard involution in Seiberg--Witten theory \cite[Section 6.8]{morgan}. 

If $g(\Sigma) \geq 1$, the count of Seiberg--Witten multi-monopoles $(-1)^{g(\Sigma)-1}\chi(\cM_{\hol}^+(d))$ does not depend on the choice of a generic choice of $(B,\eta)$ as long as $d - \tau < 0$. The same is true for $(-1)^{g(\Sigma)-1}\chi(\cM_{\hol}^-(d))$  when $d - \tau > 0$. On the other hand, the choice of $\eta$ is immaterial from the viewpoint of the three-dimensional theory---by Theorem \ref{thm:moduli}, $\SW(g,B,\eta)$ does not depend on the choice of  $\eta$ as long as the moduli space is compact and Zariski smooth. We conclude that for a generic choice of $B \in \cA(\Sigma,E)$ we have $\chi( \cM_{\hol}^+(d) ) = \chi( \cM_{\hol}^-(d))$. Combining this with the isomorphism $\cM_{\hol}^+(d) \longrightarrow \cM_{\hol}^-(-d)$, we obtain

\begin{cor}  \label{cor:sign}
If $g(\Sigma) \geq 1$, then for a generic choice of $B \in \cA(\Sigma,E)$ we have 
\[  \chi( \cM_{\hol}^+(d) ) = \chi( \cM_{\hol}^+(-d)). \]
\end{cor}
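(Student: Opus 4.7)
The plan is to combine the complex-geometric involution $(\mathcal{L},\alpha,\beta)\mapsto(\mathcal{L}^*,\beta,\alpha)$, which reverses the sign of $\deg\mathcal{L}$ and swaps the non-vanishing condition, with the fact that $\SW$ does not depend on the perturbing two-form $\eta$. Concretely, the target identity is obtained by concatenating
\[
\chi(\cM_{\hol}^+(d)) \;=\; \chi(\cM_{\hol}^-(d)) \;=\; \chi(\cM_{\hol}^+(-d)),
\]
where the first equality is a gauge-theoretic statement (wall-independence of $\SW$) and the second is purely algebro-geometric (the involution).

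First, I would check that for generic $B\in\cA(\Sigma,E)$ the entire $\eta$-slice is contained in $\cU$. By Theorem \ref{thm:nofueter} a generic $B$ admits no Fueter sections with respect to $(g,B)$, and since the Fueter equations $\D_{AB}\Psi=0$, $\mu(\Psi)=0$ are completely independent of $\eta$, the second alternative of Theorem \ref{thm:compactness} cannot occur for \emph{any} $\eta\in\cZ$. Hence $(g,B,\eta)\in\cU$ for all $\eta$, and the $\eta$-slice $\{(g,B,\eta):\eta\in\cZ\}$ is a connected subset of $\cU$.

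Second, I would pick $\eta_-,\eta_+\in\cZ$ with $d-\tau_-<0$ and $d-\tau_+>0$, so that under Theorem \ref{thm:holomorphic} we have $\cM(g,B,\eta_-)\cong\cM_{\hol}^+(d)$ and $\cM(g,B,\eta_+)\cong\cM_{\hol}^-(d)$. Since $Y=S^1\times\Sigma$ with $g(\Sigma)\geq1$ has $b_1(Y)=2g(\Sigma)+1\geq3>1$, Proposition \ref{prop:reducibles} places the reducible locus in real codimension at least three in $\cP$, so after a further generic perturbation (and, if needed, a small perturbation of $B$ within $\cA(g)$) we may assume that $\eta_{\pm}$ are generic within their respective components and that the moduli spaces are Zariski smooth K\"ahler manifolds (Theorem \ref{thm:framed}). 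Moreover, we can connect $\eta_-$ and $\eta_+$ by a smooth path $(\eta_t)_{t\in[0,1]}$ in $\cZ$ avoiding the reducible locus. Along this path the moduli spaces are compact (by the first step) and contain no reducibles, so by Theorem \ref{thm:moduli}(1) the function $t\mapsto\SW(g,B,\eta_t)$ is locally constant, hence constant on $[0,1]$.

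Third, at both endpoints Theorem \ref{thm:generic0} (equivalently, Definition \ref{defn:tau2} together with Theorem \ref{thm:framed}) identifies $\SW$ with the signed Euler characteristic of the moduli space:
\[
\SW(g,B,\eta_-)=(-1)^{g(\Sigma)-1}\chi(\cM_{\hol}^+(d)),\qquad
\SW(g,B,\eta_+)=(-1)^{g(\Sigma)-1}\chi(\cM_{\hol}^-(d)).
\]
Equating the two yields $\chi(\cM_{\hol}^+(d))=\chi(\cM_{\hol}^-(d))$. Finally, the map $(\mathcal{L},\alpha,\beta)\mapsto(\mathcal{L}^*,\beta,\alpha)$ is a biholomorphism $\cM_{\hol}^+(-d)\to\cM_{\hol}^-(d)$ (it reverses the degree and swaps which section is required to be non-zero, while preserving $\alpha\beta=0$), which gives $\chi(\cM_{\hol}^-(d))=\chi(\cM_{\hol}^+(-d))$ and closes the argument.

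The main obstacle is the step crossing the wall $\tau=d$: one must ensure that no pathology (Fueter bubbling or reducibles) appears along the chosen path of perturbations, because otherwise $\SW$ could jump. The Fueter issue is handled at the outset by choosing $B$ generic, which suppresses Fueter sections uniformly in $\eta$; the reducibles issue is handled by the codimension count available when $b_1(Y)>1$, which is precisely why the hypothesis $g(\Sigma)\geq1$ enters.
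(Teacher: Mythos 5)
Your proposal is correct and follows essentially the same route as the paper: it uses the $\eta$-independence of the three-dimensional count $\SW$ (constancy on the connected $\eta$-slice of $\cU$, available because generic $B$ excludes Fueter sections independently of $\eta$ and $b_1(S^1\times\Sigma)>1$ handles reducibles) to get $\chi(\cM_{\hol}^+(d))=\chi(\cM_{\hol}^-(d))$, and then the involution $(\mathcal{L},\alpha,\beta)\mapsto(\mathcal{L}^*,\beta,\alpha)$ to identify $\cM_{\hol}^-(d)$ with $\cM_{\hol}^+(-d)$. Your write-up merely makes explicit the compactness and reducible-avoidance points that the paper leaves implicit.
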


Although the moduli spaces can be defined in terms of the complex geometry of $\Sigma$, it is far from obvious how to prove the above equality without a reference to the three-dimensional theory. We will see in the next section that $\cM_{\hol}^+(d)$ and $\cM_{\hol}^-(d)$ can be non-homeomorphic.

\section{Examples and computations}
\label{sec:examples}
%In this and the following subsection we study $\cM_{\hol}$ using methods of complex geometry. Fix the complex structure on $\Sigma$ and spin structure $K^{1/2}$.
%We describe the topology of $\cM_{\hol}$ when $\Sigma$ is a Riemann surface of genus zero, one, and two.
In this section we study $\cM_{\hol}$ using methods of complex geometry. We prove some general properties of the moduli spaces and give their complete description when $\Sigma$ is a  Riemann surface of genus zero,  one, or two.  

\begin{thm}
\label{thm:moduliexamples}
Let $g$ be a product metric on $Y = S^1 \times \Sigma$,  $B$ a generic connection pulled-back from $\Sigma$, and $\eta \in \Omega^2(\Sigma, i\R)$ a two-form satisfying $\tau(\eta) > 0$. 

Set $\cM=\cM(g,B,\eta)$ and $\SW = \SW(g,B, \eta)$. 
\begin{enumerate}
\item If $d < (1-g(\Sigma))/2$, then $\cM$ is empty and $\SW = 0$.
\item If $d \geq 0$, then $\cM$ admits a holomorphic map to the Jacobian torus of $\Sigma$. Its fibres are projective spaces. If $d > 0$, this map is surjective. If $d = 0$, its image is a divisor in the linear system $|2 \Theta|$ where $\Theta$ is the theta divisor.
\item If $d \geq g(\Sigma)-1$, then $\cM$ is biholomorphic to the projectivisation of a rank $2d$ holomorphic vector bundle over the Jacobian of $\Sigma$ and $\SW = 0$.
\item If $d = 0$ and $g(\Sigma) = 1$, then $\cM$ consists of two points and $\SW= 2$.
\item If $d = 0$ and $g(\Sigma) = 2$, then $\cM$ is biholomorphic to a closed Riemann surface of genus five and $\SW = 8$.
\end{enumerate}
\end{thm}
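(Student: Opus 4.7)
The strategy is to translate the problem into holomorphic data using Theorems \ref{thm:holomorphic}, \ref{thm:framed} and Corollary \ref{cor:nofueter}: with $\tau>0$ chosen so that $d-\tau<0$ and $B$ in the residual set where no Fueter sections exist, every multi-monopole is represented by a pair $(\mathcal{L},\alpha)$ in which $\mathcal{L}$ is a degree $d$ holomorphic line bundle and $\alpha$ is a nonzero holomorphic section of $F := \cE_{B}\otimes\mathcal{L}\otimes K^{1/2}$, modulo the natural $\C^{*}$--action rescaling $\alpha$. Moreover $\cN$ is a smooth compact K\"ahler manifold of complex dimension $g(\Sigma)-1+2d$, all obstruction spaces $H^{2}_{A,\alpha}$ vanish, and $\SW=(-1)^{g(\Sigma)-1}\chi(\cN)$.

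Part (1) is then immediate: the actual and virtual dimensions agree on $\cN$, so $g(\Sigma)-1+2d<0$ forces $\cN=\emptyset$ and $\SW=0$. For (2), the holomorphic map $\pi\colon\cN\to J^{d}$ sending $[\mathcal{L},\alpha]\mapsto\mathcal{L}$ has fibre $\mathbb{P}(H^{0}(F))$ over each $\mathcal{L}$; by Riemann--Roch, $\chi(F)=2d$, so for $d>0$ one has $h^{0}(F)\geq 2d>0$ uniformly on $J^{d}$, proving surjectivity. For $d=0$, the image is the generalised theta divisor of $\cE_{B}\otimes K^{1/2}$, which I would identify with the zero locus of a canonical section of $\cO(2\Theta)$ on $J^{0}$ following Beauville and Narasimhan--Ramanan; this is a divisor in $|2\Theta|$.

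For (3), a slope computation using stability of $\cE_{B}$ shows that $F^{*}\otimes K\cong\cE_{B}\otimes\mathcal{L}^{-1}\otimes K^{1/2}$ is stable of slope $g(\Sigma)-1-d\leq 0$, giving $h^{1}(F)=h^{0}(F^{*}\otimes K)=0$ for generic $\mathcal{L}$; the simultaneous vanishing of all $H^{2}_{A,\alpha}$ along $\cN$, combined with the smoothness and fixed virtual dimension of $\cN$, then rules out jumps in fibre dimension and promotes $\pi$ to a holomorphic $\CP^{2d-1}$-bundle, i.e.\ the projectivisation of a rank $2d$ vector bundle over $J^{d}$. Since such a projective bundle satisfies $\chi(\cN)=2d\cdot\chi(J^{d})=0$ for $g(\Sigma)\geq 1$, we obtain $\SW=0$. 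For (4), on an elliptic curve a generic circle-invariant $B$ has $\cE_{B}\cong N\oplus N^{-1}$ with $N^{2}\not\cong\cO$, so $F$ splits as a sum of two degree-zero line bundles which admits a nonzero section exactly when one summand is trivial; this yields the two reduced points $\mathcal{L}=N^{\pm 1}\otimes K^{-1/2}$ of $\cN$ and $\SW=(-1)^{g(\Sigma)-1}\chi(\cN)=2$. For (5), I would invoke the Narasimhan--Ramanan presentation of the moduli of semistable $\SL(2,\C)$-bundles on a genus-two $\Sigma$ as $\CP^{3}$, in which the singular Kummer surface $T^{4}/\Z_{2}$ is embedded in $|2\Theta|=(\CP^{3})^{*}$ as the strictly semistable locus; by part (2), $\cN$ is then identified with the double cover in $T^{4}$ of the hyperplane section $H_{\bp}\cap(T^{4}/\Z_{2})$, which for generic $\bp$ is \'etale over a smooth genus-three curve and hence, by Riemann--Hurwitz, a closed Riemann surface of genus five with $\chi(\cN)=-8$ and $\SW=(-1)^{1}\cdot(-8)=8$.

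The principal technical obstacle I anticipate is in (3) at the boundary value $d=g(\Sigma)-1$, where $F^{*}\otimes K$ has slope zero and Riemann--Roch alone does not force $h^{0}(F)$ to be constant on $J^{d}$; one must carefully extract from the uniform vanishing of all $H^{2}_{A,\alpha}$ together with the global smoothness of $\cN$ that the locus where $h^{1}(F)>0$ is in fact empty rather than a divisor, so that the projective bundle structure genuinely descends from a rank $2d$ vector bundle over the Jacobian.
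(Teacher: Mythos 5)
Your overall route is the paper's: pass to the framed--vortex space $\cN$ via Theorems \ref{thm:holomorphic}, \ref{thm:nofueter}, \ref{thm:framed} and Corollary \ref{cor:nofueter}, then do complex geometry on $\Sigma$; parts (1), (2) and (4) are essentially as in the paper. There are, however, two places where your argument has a real gap. In (3), your vanishing claim is too weak and your proposed repair is not an argument. You only assert $h^1(F)=0$ ``for generic $\mathcal{L}$'', and smoothness of $\cN$ plus $H^2_{A,\alpha}=0$ does not, as stated, exclude jumping fibres: smoothness of a one-parameter-too-big fibre over a small locus is numerically consistent, and $H^2_{A,\alpha}=0$ only says that no nonzero $\beta$ annihilates the \emph{given} $\alpha$, not that $h^0(\cE_B^*\otimes\mathcal{L}^{-1}\otimes K^{1/2})=0$. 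The correct step (the paper's) is one line and kills the boundary case $d=g(\Sigma)-1$ as well: for generic $B$ with $g(\Sigma)\geq 2$, $\cE_B$ is stable (Lemma \ref{lem:stable}); by Serre duality a nonzero element of $H^1(F)$ is a nonzero map $\mathcal{L}\otimes K^{-1/2}\to\cE_B^*$ from a line bundle of degree $d-g(\Sigma)+1\geq 0$ into a stable bundle of slope $0$, which strict stability forbids even at slope zero. Hence $h^1=0$ and $h^0=2d$ for \emph{every} $\mathcal{L}\in J^d$, and the projectivised push-forward structure follows; no genericity in $\mathcal{L}$ and no appeal to unobstructedness is needed. (Your route can be completed, but only with an extra argument you did not give: if $h^1(F)>0$ at some $\mathcal{L}$, then $h^0(F)\geq 2d+1$ and, since $2d\geq 2g(\Sigma)-2\geq g(\Sigma)$, the pairing $H^0(F)\to H^0(K)$ against a nonzero $\beta$ has nonzero kernel, producing $\alpha\neq 0$ with $\alpha\beta=0$ and hence a nonzero obstruction, contradicting Corollary \ref{cor:regular}.)

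In (5), the identification of $\cN$ with the genus-five curve does not follow ``by part (2)'': part (2) only gives a surjection of $\cN$ onto $\theta(\cE_B)\subset J^1$ whose fibres are projective spaces $\P H^0(\cE_B\otimes\mathcal{L}\otimes K^{1/2})$. To conclude that this map is an isomorphism you must know the fibres are single reduced points, i.e.\ $h^0(\cE_B\otimes A)\leq 1$ for every degree-one line bundle $A$ when $\cE_B$ is stable; this is precisely Lemma \ref{lem:genus2} (Narasimhan--Ramanan), and it is where the paper's proof does its work. A dimension count alone does not suffice: a priori $\cN$ could contain $\P^1$-components sitting over jump points of $h^0$ without violating smoothness or the dimension formula. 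Once that lemma is in place, the rest of your argument (generic hyperplane section of the Kummer quartic missing the sixteen nodes, \'etale double cover, Riemann--Hurwitz giving $\chi(\cN)=-8$, and $\SW=(-1)^{g(\Sigma)-1}\chi(\cN)=8$) agrees with the paper.
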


\begin{defn}
We will denote by $\cM_{\hol}(d, \cE)$ the moduli space defined using a degree $d$ line bundle $L$,   holomorphic structure $\cE$ on $E^*$, and any perturbing two-form $\eta$ satisfying $d - \eta < 0$; similarly we define $\cN(d, \cE)$. A property will be said to hold for a generic $\cE$ if it holds for all $\cE$ of the form $\cE_B = (E, \del_B)$ for $B$ from a residual subset of $\cA(\Sigma,E)$.
\end{defn}

\subsection{Generalised theta divisors} \label{subsec:theta}

For $d=0$ the moduli spaces of framed vortices are related to \emph{generalised theta divisors}  \cite{beauville}. Let $\cE \to \Sigma$ be a rank two holomorphic stable bundle with trivial determinant. For any line bundle $A \in J^{g(\Sigma)-1}$ the Riemann--Roch theorem gives us
\[ \chi( \cE \otimes A) = 2 \deg(A) + 2(1-g(\Sigma)) = 0, \]
so we expect $H^0(\cE \otimes A)= H^1( \cE \otimes A) = 0$ if $A$ is generic. 
\begin{defn}
The \emph{generalised theta divisor of} $\cE$ is 
\[ \theta(\cE) := \{ A \in J^{g(\Sigma)-1} \ | \ h^0(\cE \otimes A) > 0 \}; \]
\end{defn}
One can show that $\theta(\cE)$ is a divisor\footnote{This is no longer true if $\cE$ is of higher rank as it can happen that $\theta(\cE)=J^{g(\Sigma)-1}$.} in $J^{g(\Sigma)-1}$  in the linear system $| 2 \Theta | = \mathbb{CP}^{2g(\Sigma)-1}$, where $\Theta$ is the classical theta divisor
\[ \Theta := \{ A \in J^{g(\Sigma)-1} \ | \ h^0(A) > 0 \}. \]

%The relevance of $\theta(\cE)$ to the study of framed vortices is as follows.

\begin{prop}
If $\cE$ is a rank two holomorphic stable bundle with trivial determinant, then there is a surjective morphism $\cN(0, \cE) \to \theta(\cE)$ whose fibres are projective spaces.
\end{prop}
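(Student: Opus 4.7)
The plan is to construct the map explicitly by sending $(\mathcal{L}, \alpha) \in \cN(0,\cE)$ to the degree $g(\Sigma)-1$ line bundle $\mathcal{L} \otimes K^{1/2} \in J^{g(\Sigma)-1}$. Since a point of $\cN(0,\cE)$ is an isomorphism class of a pair with $\alpha \in H^0(\Sigma, \cE \otimes \mathcal{L} \otimes K^{1/2})$ nonzero, the existence of $\alpha$ immediately gives $h^0(\cE \otimes (\mathcal{L} \otimes K^{1/2})) > 0$, so the image lies in the generalised theta divisor $\theta(\cE) \subset J^{g(\Sigma)-1}$. The map is well-defined on isomorphism classes because it forgets $\alpha$ entirely, and is clearly equivariant with respect to the residual $\C^*$--scaling of the section (which is the only remaining automorphism once the isomorphism class of $\mathcal{L}$ is fixed).

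To see that this map is holomorphic, I would use the complex analytic structure on $\cN(0,\cE)$ coming from the Kuranishi description in Section 7: local charts are cut out of $\mathcal{B}^*_{\Sigma}$ by the holomorphic Fredholm section associated to $\del_{AB}\alpha$. The Jacobian $J^0 \cong \cA(\Sigma,L)^{0,1} / \cG^c(\Sigma)$ is the moduli space obtained by forgetting $\alpha$ altogether, and the map $[A,\alpha] \mapsto [A]$ is induced by a trivial projection between the local slice models; tensoring the target by the fixed line bundle $K^{1/2}$ gives a biholomorphism $J^0 \cong J^{g(\Sigma)-1}$, and the composition is the desired holomorphic map $\cN(0,\cE) \to J^{g(\Sigma)-1}$ taking values in the analytic subvariety $\theta(\cE)$.

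Surjectivity is immediate from the definition: for any $A \in \theta(\cE)$, the space $H^0(\Sigma, \cE \otimes A)$ is nonzero, so picking any nonzero $\alpha$ in it and setting $\mathcal{L} := A \otimes K^{-1/2}$ (degree $0$) gives a preimage $[\mathcal{L},\alpha] \in \cN(0,\cE)$. For the fibre description, the preimage of $A$ is the set of equivalence classes $(\mathcal{L},\alpha)$ with $\mathcal{L} \otimes K^{1/2} \cong A$, i.e.\ $\mathcal{L}$ is determined up to isomorphism, and $\alpha$ varies over $H^0(\Sigma,\cE \otimes A) \setminus \{0\}$ modulo the action of $\operatorname{Aut}(\mathcal{L}) = \C^*$ by scalar multiplication. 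Hence the fibre is set-theoretically $\mathbb{P}(H^0(\Sigma, \cE \otimes A))$.

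The main technical point to verify carefully is the holomorphicity at the level of analytic spaces; this is essentially formal once one notes that both $\cN(0,\cE)$ and the Jacobian are built from the same Dolbeault slice construction and that the projection $(a^{0,1},u) \mapsto a^{0,1}$ on the deformation complexes of Lemma \ref{lem:cohomology} is holomorphic and $\cG^c(\Sigma)$--equivariant. The statement about fibres does not, however, claim a projective bundle structure --- the dimension of $H^0(\cE \otimes A)$ can jump along $\theta(\cE)$, and making this compatible with the scheme structure on the generalised theta divisor would require more care, but is not needed here since the proposition only asserts set-theoretic fibres are projective spaces.
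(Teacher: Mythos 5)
Your construction is exactly the paper's: the map $[\mathcal{L},\alpha]\mapsto \mathcal{L}\otimes K^{1/2}$, with the nonvanishing of $\alpha$ placing the image in $\theta(\cE)$, surjectivity by choosing any nonzero section over a point of $\theta(\cE)$, and fibre $\mathbb{P}H^0(\Sigma,\cE\otimes\mathcal{L}\otimes K^{1/2})$. Your extra remarks on holomorphicity via the slice/Kuranishi description and on the possible jumping of $h^0$ along $\theta(\cE)$ are correct but go beyond what the paper's (very brief) proof records; the argument is otherwise the same.
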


\begin{proof}
A point in $\cN(0, \cE)$ is an equivalence class $[\mathcal{L}, \alpha] $ where $\mathcal{L} \in J^0$ and $\alpha \in H^0( \cE \otimes \mathcal{L} \otimes K^{1/2})$, with $\alpha \neq 0$. Since $\det(\mathcal{L} \otimes K^{1/2}) = g(\Sigma)-1$, we have $\mathcal{L} \otimes K^{1/2} \in \theta(\cE)$. The morphism $\cN(0, \cE) \to \theta(\cE)$ is given by $[ \mathcal{L}, \alpha ] \mapsto \mathcal{L} \otimes K^{1/2}$. The preimage of $\mathcal{L} \otimes K^{1/2}$ is $\mathbb{P} H^0( \cE \otimes \mathcal{L} \otimes K^{1/2})$.
\end{proof} 

\begin{rem}
The divisor $\theta(\cE)$ can be described geometrically as follows.  By a theorem of Lefschetz, the linear system $| 2 \Theta |$ is base-point free and gives rise to a holomorphic map $J^{g(\Sigma)-1}\to | 2 \Theta |^*$. It follows that $\theta(\cE)$, as a subset of $J^{g(\Sigma)-1}$, is the preimage of a hyperplane in $| 2 \Theta |^*$ under the map $J^{g(\Sigma)-1} \to |2 \Theta|^*$. This hyperplane is easy to identify---it is exactly $\theta(\cE)$ thought of as a point in $| 2 \Theta |$ or, equivalently, as a hyperplane in $| 2 \Theta |^*$. Varying the background bundle $\cE$, we vary the corresponding hyperplane and therefore the divisor $\theta(\cE) \subset J^{g(\Sigma)-1}$. 
\end{rem}

\subsection{General properties of the moduli spaces}
%In this subsection we describe some properties of $\cM_{\hol}$ for a general Riemann surface $\Sigma$. 

\begin{prop} \label{prop:general}
For a generic choice of $\cE$ the following holds. 
\begin{enumerate}
\item If $d < \frac{1-g(\Sigma)}{2}$, then $\cM_{\hol}(d, \cE)$ is empty.
\item If $d \geq 0$, then $\cM_{\hol}(d, \cE)$ is non-empty. 
\item If $d \geq g(\Sigma)-1$, then $\cM_{\hol}(d, \cE)$ is Zariski smooth with the underying complex manifold biholomorphic to the projectivisation of a rank $2d$ vector bundle over $J^d$.
\end{enumerate}
\end{prop}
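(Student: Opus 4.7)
The plan is to exploit the complex-geometric description $\cM_{\hol}(d, \cE) = \{[\mathcal{L}, \alpha] : \mathcal{L} \in J^d,\ 0 \neq \alpha \in H^0(\cE \otimes \mathcal{L} \otimes K^{1/2})\}$ together with stability properties of $\cE = \cE_B$ for a generic choice of $B$.  For such $B$ the bundle $\cE_B$ is a stable rank two bundle with trivial determinant, so in particular every line subbundle of $\cE_B$ has strictly negative degree.

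For part $(1)$, a non-zero $\alpha \in H^0(\cE \otimes \mathcal{L} \otimes K^{1/2})$ is the same as a non-zero map $\mathcal{L}^{-1} \otimes K^{-1/2} \to \cE$, whose saturation gives a line subbundle $M \subset \cE$ with $\deg M \geq -d - g(\Sigma) + 1$.  For a generic stable rank two bundle with trivial determinant on a curve of genus $g \geq 1$, the Lange--Narasimhan bound on the Segre invariant forces all line subbundles to have degree at most $-\lfloor g/2 \rfloor$, and the hypothesis $d < (1-g)/2$ is equivalent to $-d - g + 1 > -\lfloor g/2 \rfloor$, contradicting this bound.  To convert the classical genericity statement into one for $B \in \cA(\Sigma,E)$, I would imitate Lemma \ref{lem:nofueter2}: for each integer $k$ with $-\lfloor g/2 \rfloor < k \leq -1$, set up a $\cG^c(\Sigma)$-equivariant Fredholm problem parametrising pairs $(A, \psi)$ with $A \in \cA(\Sigma, T)$ on an auxiliary line bundle $T$ of degree $k$ and $\psi$ a non-zero $\del_{AB}$-holomorphic section of $\cE \otimes T$, apply Sard--Smale to exclude such pairs for $B$ in a residual subset, and then intersect the finitely many resulting residual subsets.

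For part $(2)$, Riemann--Roch gives $\chi(\cE \otimes \mathcal{L} \otimes K^{1/2}) = 2d$.  If $d \geq 1$ this is positive, so $H^0 \neq 0$ for every $\mathcal{L} \in J^d$ and $\cM_{\hol}(d, \cE)$ is non-empty.  If $d = 0$, I would invoke Subsection \ref{subsec:theta}: the generalised theta divisor $\theta(\cE) \subset J^{g(\Sigma)-1}$ of a stable rank two bundle with trivial determinant is a non-empty divisor in $|2\Theta|$, so any $\mathcal{M} \in \theta(\cE)$ produces a non-zero section and $\mathcal{L} := \mathcal{M} \otimes K^{-1/2}$ gives a point of $\cM_{\hol}(0, \cE)$.

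For part $(3)$, assume $d \geq g(\Sigma) - 1$.  Using Serre duality together with $\cE^* \cong \cE$ (since $\det \cE = \cO$), I obtain
\[ H^1(\cE \otimes \mathcal{L} \otimes K^{1/2}) \cong H^0(\cE \otimes \mathcal{L}^{-1} \otimes K^{1/2})^*. \]
The twist $\mathcal{L}^{-1} \otimes K^{1/2}$ has non-positive degree $-d + g - 1$, so a non-zero section would yield a line subbundle of $\cE$ of non-negative degree, violating stability; hence $H^1$ vanishes uniformly on $J^d$ and $h^0 = 2d$ is constant.  Grauert's theorem applied to a Poincar\'{e} line bundle $\mathcal{P}$ on $\Sigma \times J^d$ then makes $\mathcal{V} := R^0 p_{J^d,*}(\mathcal{P} \otimes p_\Sigma^*(\cE \otimes K^{1/2}))$ into a rank $2d$ holomorphic vector bundle, and the tautological construction identifies $\cM_{\hol}(d, \cE)$ with $\mathbb{P}(\mathcal{V})$.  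The same vanishing rules out any non-zero $\beta \in H^0(\cE \otimes \mathcal{L}^{-1} \otimes K^{1/2})$, so $\cM_{\hol}(d, \cE) = \cN(d, \cE)$, and by the identification of obstruction spaces in the proof of Theorem \ref{thm:framed} we conclude $H^2_{A,\alpha} = 0$ everywhere, giving Zariski smoothness.

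The hard part will be the transversality step underlying part $(1)$: translating ``generic $\cE$ in the moduli of stable bundles'' into ``generic $B \in \cA(\Sigma, E)$'' requires an analogue of Lemma \ref{lem:nofueter2} adapted to the locus of $B$ whose $\cE_B$ admits an exceptionally large line subbundle.  Once this residual subset is in hand, the remaining statements are standard pieces of complex geometry.
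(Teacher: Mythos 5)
Your parts (2) and (3) follow essentially the paper's own route: Riemann--Roch for $d>0$ and the generalised theta divisor for $d=0$ give non-emptiness, and Serre duality plus stability of the generic $\cE_B$ (Lemma \ref{lem:stable}) gives $h^1(\cE\otimes\mathcal{L}\otimes K^{1/2})=0$ uniformly on $J^d$, hence the projectivised rank $2d$ push-forward; your extra observation that the same vanishing kills $\beta$ and hence the obstruction spaces is exactly how Zariski smoothness is obtained. One caveat you should state: the stability (and theta-divisor) arguments only apply for $g(\Sigma)\geq 2$, since for $g\leq 1$ there are no stable $\SL(2,\C)$--bundles; the paper explicitly defers $g(\Sigma)=0,1$ to the case-by-case computations of the final section, and your write-up silently assumes stability.

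The genuine issue is part (1). Your route through the Lange--Narasimhan/Segre bound needs a statement of the form ``for $B$ in a residual subset of $\cA(\Sigma,E)$, the bundle $\cE_B$ admits no line subbundle of degree $>-\lfloor g(\Sigma)/2\rfloor$'', which is strictly stronger than Lemma \ref{lem:stable} and is nowhere established; you yourself label the required analogue of Lemma \ref{lem:nofueter2} as ``the hard part'' and only sketch it. The sketch is plausible (the parametrised index over $J^{d}$, for sections of $\cE_B\otimes\mathcal{L}\otimes K^{1/2}$ with $d<(1-g(\Sigma))/2$, is negative, so a Sard--Smale argument in the style of Lemma \ref{lem:stable} together with Taubes' trick from Proposition \ref{prop:transversality} would close it), but as written this is a missing proof, not a proof. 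Moreover the detour is unnecessary: the paper obtains (1) in one line from Theorem \ref{thm:framed}, which you already invoke for part (3) --- for generic $B$ one has $\cM_{\hol}=\cN$, a \emph{compact complex manifold} of dimension $g(\Sigma)-1+2d$, and $d<(1-g(\Sigma))/2$ makes this dimension negative, so the moduli space is empty. Using that observation removes both the Segre-invariant input and the extra transversality lemma from your argument.
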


\begin{rem}
Proposition \ref{prop:general} shows that the most interesting case is $(1-g(\Sigma))/2 \leq d < 0$. It is an interesting question whether $\cM_{\hol}(d, \cE)$ is generically non-empty for $d$ in this range.
\end{rem}

The proof of Proposition \ref{prop:general} relies on the following general result about holomorphic vector bundles on Riemann surfaces. Recall that $\cE$  \emph{stable} if for any holomorphic line bundle $A$ the existence of a non-zero holomorphic map $A \to \cE$ implies $\deg(A) < 0$. 
%The next lemma shows that when the genus of $\Sigma$ is at least two, a generic connection on $E$ induces a stable holomorphic structure. The main idea is that the locus of unstable bundles can be described as the zero set of a Fredholm map of non-positive index.

\begin{lem} \label{lem:stable}
If $g(\Sigma) \geq 2$, then there is an open dense subset of $\cA(\Sigma, E)$ such that for every connection $B$ from this subset the corresponding holomorphic bundle $\cE_B$ is stable. 
\end{lem}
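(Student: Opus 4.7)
My plan is to identify $\cA(\Sigma,E)$ with the affine space of holomorphic $\SL(2,\C)$-bundle structures on the smooth bundle $E^*$ via the Chern connection correspondence $B \mapsto \del_{B^*}$, and then show that the stable locus is both open and dense. Recall that a rank-two bundle $\cE$ with trivial determinant is stable iff its Segre invariant $s(\cE) := \max\{\deg L : L\hookrightarrow \cE\}$ is strictly negative.

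\emph{Openness.} It suffices to show that $\{B : s(\cE_B) \geq d\}$ is closed for every $d \geq 0$. Let $B_n \to B$ in $\cA(\Sigma,E)$ and suppose each $\cE_{B_n}$ admits a degree-$d$ line subbundle, given by some $L_n \in J^d(\Sigma)$ together with a non-zero holomorphic inclusion $\iota_n \in H^0(\Sigma, \cE_{B_n}\otimes L_n^{-1})$ normalised so that $\|\iota_n\|_{L^2}=1$. Since the Picard torus $J^d(\Sigma)$ is compact, after a subsequence $L_n \to L$; fixing a background connection $B_L$ on $L^{-1}$, the $\iota_n$ satisfy $\del_{B_n,B_L}\iota_n = 0$, so elliptic estimates for the Dolbeault operator combined with Arzel\`a--Ascoli yield a convergent subsequence with limit $\iota \in H^0(\Sigma,\cE_B\otimes L^{-1})\setminus\{0\}$. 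This produces a degree-$d$ subbundle of $\cE_B$, proving upper semi-continuity of $s(\cE_B)$ and hence openness of the stable locus.

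\emph{Density.} For $g(\Sigma)\geq 2$, stable $\SL(2,\C)$-bundles exist on $\Sigma$: this follows from Narasimhan--Seshadri (irreducible $\SU(2)$-representations of $\pi_1(\Sigma)$ produce stable $\cE_B$), or by a direct construction via a generic non-split extension $0 \to K^{-1/2} \to \cE \to K^{1/2} \to 0$ of the required form. Given an arbitrary $B_0 \in \cA(\Sigma,E)$, I will show stable connections accumulate at $B_0$ by a Sard--Smale argument modelled directly on Proposition \ref{prop:transversality} and Lemma \ref{lem:nofueter2}. For each fixed $d \geq 0$, consider the parametrised Fredholm problem with configuration space $\cA(\Sigma,E) \times \cA(\Sigma,T) \times (\Gamma(\cE_B \otimes T^{-1}) \setminus \{0\})/\cG^c(\Sigma)$, where $T\to \Sigma$ is a smooth degree-$d$ line bundle, subject to the equation $\del_{B,A}\iota = 0$; the same computation as in Lemma \ref{lem:nofueter2} shows that for generic $B$ this section is transverse to zero, so the locus $\cA_d := \{B : s(\cE_B)\geq d\}$ is a proper closed real-analytic subset of $\cA(\Sigma,E)$ of positive codimension. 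Finally, Lange's inequality applied in a small neighbourhood of $B_0$ bounds $s(\cE_B)$ above in terms of $s(\cE_{B_0})$ and a topological constant, so only finitely many strata $\cA_d$ contribute locally, and the complement of this finite union of closed nowhere-dense sets is the required open dense subset of stable connections (or, alternatively, one applies the Baire category theorem to the countable union over all $d\geq 0$, using the Sobolev-space trick at the end of the proof of Proposition \ref{prop:transversality} to pass from $W^{k,2}$ to $C^{\infty}$).

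The main obstacle is the density step: one must simultaneously control infinitely many possible destabilising degrees $d\geq 0$ and, for each $d$, an entire positive-dimensional family $L \in J^d(\Sigma)$ of candidate subbundles. The transversality computation for a single $(d,L)$ is essentially identical to the one already carried out in Lemma \ref{lem:nofueter2}, so the new technical content is purely the finiteness/Baire argument packaging these local statements, handled by the Lange bound or a Baire-category reduction.
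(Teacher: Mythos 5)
Your overall strategy coincides with the paper's: for each fixed $d\geq 0$ the locus of connections $B$ admitting a degree-$d$ destabilising line subbundle is closed (compactness of $J^d$ plus elliptic estimates for the Dolbeault operator), it is nowhere dense by a parametrised Sard--Smale argument of exactly the type carried out in Proposition \ref{prop:transversality} and Lemma \ref{lem:nofueter2}, and one then needs a \emph{local} bound on the possible destabilising degrees $d$ to pass from these fixed-$d$ statements to an open dense stable locus. Your fixed-$d$ steps are fine; I would only insist that you record the index count rather than saying it is ``essentially identical'' to Lemma \ref{lem:nofueter2}: there the index was zero and emptiness required the extra $\C^*$-scaling argument, whereas here the expected complex dimension of the space of pairs (holomorphic structure on a degree-$d$ line bundle, non-zero map into $\cE_B$) is $1-g(\Sigma)-2d\leq -1$ for $g(\Sigma)\geq 2$, $d\geq 0$, so generic transversality directly forces the zero locus to be empty; this negativity is the reason the hypothesis $g(\Sigma)\geq 2$ enters.

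The genuine gap is the step you delegate to ``Lange's inequality''. Lange/Nagata-type inequalities bound the Segre invariant from the wrong side: they guarantee the \emph{existence} of line subbundles of degree at least roughly $-g(\Sigma)/2$, i.e.\ they give a lower bound on $s(\cE)=\max\deg L$, not an upper bound, and no purely topological constant can bound $s$ above (the bundles $A\oplus A^{-1}$ with $\deg A$ arbitrary show $s$ is unbounded on $\cA(\Sigma,E)$). What your argument actually requires --- both for openness (your reduction to the closedness of each $\{s\geq d\}$ is insufficient by itself, since the unstable locus is a countable union of these) and for the finite-union density argument --- is that the degrees of destabilising subbundles are bounded locally uniformly in $B$. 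This is true, but it must be proved, and the proof is precisely the paper's: a destabilising subbundle of $\cE_B$ has degree at most $h^0(\Sigma,\cE_B)+2g(\Sigma)-2$ by \eqref{eqn:bound}, and $h^0(\Sigma,\cE_B)$ can only decrease under small perturbations of $B$, so near any $B_0$ only finitely many strata occur. Note also that your fallback --- Baire category over all $d\geq 0$ together with the Taubes/Sobolev trick --- yields only a residual, hence dense, set and cannot produce the openness asserted in the lemma, so the local boundedness of $d$ cannot be dispensed with.
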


\begin{proof}
$\cE$ fails to be stable if and only if there is a holomorphic line bundle $\mathcal{L}$ with $\deg ( \mathcal{L}) = d \geq 0$ and a non-zero map $\theta \colon \mathcal{L} \to \cE_B$. In other words, if $L$ is a unitary bundle underlying $\mathcal{L}$ and $A$ is a connection inducing $\mathcal{L}$, then $\theta \in \Gamma(L^* \otimes E)$ satisfies $\del_{AB} \theta = 0$.
Consider 
\[
 \cU_d := \{ B\in \cA(\Sigma,E) \ | \  \ker\del_{AB} = \{ 0 \} \textnormal{ for all } A \in \cA(\Sigma,L) \}
\]
for a fixed degree $d$ unitary bundle $L$.

\setcounter{step}{0}
\begin{step}
$\cU_d$ is open.
\end{step}
Let $B\in\cU_d$. For every $A\in\cA(\Sigma,L)$ there is a neighbourhood of $(A,B)$ in $\cA(\Sigma,L) \times \cA(\Sigma, E)$ such that for all $(A',B')$ from this neighbourhood $\ker \del_{A'B'} = 0$. Since this condition is invariant under the action of $\cG^c(\Sigma)$, and $A(\Sigma, L) / \cG^c(\Sigma)$ is homeomorphic to a torus---in particular, compact---there is a neighbourhood of $B$ in $\cA(\Sigma, E)$ such that for all $B'$ from this neighbourhood $\ker \del_{AB'} = 0$ for all $A$. All such $B'$ belong to $\cU_d$ which proves that $\cU_d$ is open.

\begin{step}
$\cU_d$ is dense.
\end{step}
We only outline the proof as it is similar to that of Proposition \ref{prop:transversality}. In what follows we replace the spaces of connections and sections by their Sobolev completions. Cover $\cA(\Sigma, L) / \cG^c(\Sigma)$ by finitely many charts that can be lifted to $\cG^c(\Sigma)$--slices in $\cA(\Sigma,L)$. Let $V$ be such a chart; it is an open subset of $\R^{2g}$ parametrising a smooth family of connections $\{ A_x \}_{x \in U}$. The claim will follow if can show that 
\[
  \cU_V := \{ B\in\cA(\Sigma,E) \ | \ \ker\del_{A_x B} = \{ 0 \} \textnormal{ for all } x\in V \}
\]
is dense in $\cA(\Sigma,E)$. To prove this, consider 
\[ S := \left\{ \theta \in \Gamma(\Sigma, L^* \otimes E) \ | \ \| \theta \|_{L^2} = 1 \right\} \]
and the map
\[ f \colon \cA(\Sigma, E) \times V \times S \longrightarrow \Omega^{0,1}(\Sigma, L^* \otimes E) \]
\[ f(B,x,\theta) := \del_{A_x B} \theta. \]
For every $B \in \cA(\Sigma,E)$ the restriction $f_B \coloneqq f(B, \cdot, \cdot)$ is a Fredholm map (between suitable Sobolev spaces) because its derivative is the sum of $\del_{A_x B}$ and the derivative with respect to $x$, which is a finite-dimensional operator. By the Riemann--Roch theorem,
\begin{equation}\label{eqn:index}
 \begin{split}
\mathrm{ind}_{\R} \ df_B &= \dim V + 2 \mathrm{ind} \ \del_{A_x B} - 1
\\
&= 2g + 4(-d+1-g(\Sigma)) -1 \\
&= 2(-2d + 2 - g(\Sigma)) - 1 \leq 0,
\end{split}
\end{equation}
where we subtract $1$ because $\del_{A_x B}$ is restricted to the tangent space to $S$. A computation similar to that in the proof of Proposition \ref{prop:transversality} shows that the derivative of the full map $f$ is surjective at every point of $f^{-1}(0)$. By the Sard--Smale theorem, the set $f_B^{-1}(0)$ is empty for $B$ from a dense subset of $\cA(\Sigma,E)$; all such $B$ belong to $\cU_V$.

\begin{step}
$\cU:=\bigcap_{d\geq 0}\cU_d$ is open and dense.
\end{step}

$\cU$ is dense by Baire's theorem; it is open by the following argument. By \eqref{eqn:bound}, the existence of a destabilising map $\theta \colon \mathcal{L} \to \cE_B$ implies 
\[ 0 \leq d \leq h^0(\Sigma, \cE_B) + 2g(\Sigma)-2. \]
The right-hand side can only decrease when $B$ is replaced by a sufficiently close $B'$. (Indeed, if we split $\Gamma(\Sigma,E)$ into $\ker\del_B$ and its $L^2$--orthogonal complement $Q$, then by the elliptic estimate  $\del_{B'}$ is non-degenerate when restricted to $Q$ for all nearby connections $B'$; it follows that the projection $\ker \del_{B'} \to \ker \del_B$ is injective. See also 
 \cite[Proposition 11.21]{mukai} for an algebro-geometric proof.) Therefore, to guarantee that a nearby connection $B'$ belongs to $\cU$ it is enough to check that it belongs to $\cU_d$ for finitely many values of $d$. Thus, for every $B \in \cU$ there are finitely many open neighbourhoods of $B$ whose intersection lies entirely in $\cU$.
\end{proof} 

\begin{proof}[Proof of Proposition \ref{prop:general}]\leavevmode
By Theorem \ref{thm:framed}, for a generic choice of $\cE$ the moduli space $\cM_{\hol} = \cN$ is a compact complex manifold of dimension $g(\Sigma) - 1 + 2d$. If $d < (1-g(\Sigma))/2$, then this dimension is negative and $\cM_{\hol}$ must be empty.

 The case $d = 0$ was discussed in the previous subsection. Let $\deg(\mathcal{L}) = d > 0$; then 
\[ h^0( \cE \otimes \mathcal{L} \otimes K^{1/2} ) - h^1( \cE \otimes \mathcal{L} \otimes K^{1/2}) = 2d > 0; \]
thus,  $\mathcal{L}$ is in the image of the projection $\pi \colon \cM_{\hol} \to J^d$ given by $[\mathcal{L}, \alpha, \beta]  \mapsto \mathcal{L}$. For a generic $\cE$, by Theorem \ref{thm:framed},  $\cM_{\hol} = \cN$ and so $\pi^{-1}(\mathcal{L}) = \P H^0( \cE \otimes \mathcal{L})$.

For the proof of the third item, assume $g(\Sigma) \geq 2$; the cases $g(\Sigma) = 0, 1$ will be considered separately in the next section. By Lemma \ref{lem:stable}, a generic $\cE$ is stable and by Serre duality,
\[ h^1( ( \cE \otimes \mathcal{L} \otimes K^{1/2}  ) = h^0 (( \cE^* \otimes \mathcal{L}^* \otimes K^{1/2} ). \]
Any element of $H^0 (( \cE^* \otimes \mathcal{L}^* \otimes K^{1/2} )$ gives a holomorphic map $\mathcal{L} \otimes K^{-1/2} \to \cE^*$. We have
\[ \deg(\mathcal{L} \otimes K^{-1/2}) = d - g + 1 \geq 0. \]
Since $\cE^*$ is stable, it follows that any holomorphic map $\mathcal{L} \otimes K^{-1/2} \to \cE^*$ is trivial. Thus, $h^1( ( \cE \otimes \mathcal{L} \otimes K^{1/2}  ) = 0$ and by the Riemann--Roch theorem $h^0( \cE \otimes \mathcal{L} \otimes K^{1/2}  ) = 2d$ for every $\mathcal{L} \in J^d$. We conclude that $\pi \colon \cM_{\hol} \to J^d$ is the projectivisation of the rank $2d$ vector bundle whose fibre over $\mathcal{L}$ is the cohomology group $H^0( \cE \otimes \mathcal{L} \otimes K^{1/2}  ) = \C^{2d}$. 
\end{proof}

\subsection{Genus zero} 
Let $\Sigma = \mathbb{CP}^1$ with its unique complex structure. For $k\in \mathbb{Z}$ denote by $\cO(k)$ the unique holomorphic line bundle of degree $k$; $K^{1/2} = \cO(-1)$ is the unique spin structure.  By a theorem of Grothendieck, every holomorphic bundle over $\mathbb{CP}^1$ is the direct sum of line bundles. In particular, every holomorphic $\SL(2,\C)$--bundle is of the form $\cE = \cO(k) \oplus \cO(-k)$ for some $k \geq 0$, with $k = 0$ being the generic case.
\begin{prop}
\label{prop_genuszero}
Let $\Sigma = \mathbb{CP}^1$ and $\cE = \cO(k) \oplus\cO(-k)$ for $k \geq 0$. 
\begin{enumerate}
\item If $d \leq 0$ and $k \leq |d|$, then $\cM_{\hol}(d, \cE)$ is empty.
\item If $d > 0$ and $k \leq d$, then $\cM_{\hol}(d, \cE)$ is Zariski smooth with the underlying complex manifold biholomorphic to $\mathbb{CP}^{2d-1}$.
\item If $k > |d|$, then  $\cM_{\hol}(d, \cE)$ is non-compact and its compactification $\overline{\cM_{\hol}}(d, \cE)$ is homeomorphic to a locally trivial $\mathbb{CP}^{k-d}$--fibration over $\mathbb{CP}^{k+d}$. 
\end{enumerate}
\end{prop}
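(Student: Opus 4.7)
The plan is to exploit two features of $\CP^1$ that collapse the theory to explicit algebra. First, every holomorphic line bundle on $\CP^1$ has the form $\cO(n)$, so the Jacobian $J^d$ is a single point and I may fix $\mathcal{L}=\cO(d)$. Second, $K_{\CP^1} = \cO(-2)$ has no global sections, so the constraint $\alpha\beta = 0 \in H^0(K)$ in Theorem~\ref{thm:holomorphic} is automatic. Setting
\begin{equation*}
V_+ := H^0\bigl(\cE \otimes \cO(d-1)\bigr), \qquad V_- := H^0\bigl(\cE^* \otimes \cO(-d-1)\bigr),
\end{equation*}
the moduli space becomes the GIT quotient
\begin{equation*}
\cM_{\hol}(d,\cE) \;=\; \{(\alpha,\beta)\in V_+\oplus V_- : \alpha\neq 0\}\big/\C^*,
\end{equation*}
where the residual $\C^* = \mathrm{Aut}(\cO(d))$ acts by $\mu\cdot(\alpha,\beta) = (\mu\alpha,\mu^{-1}\beta)$; its compactification $\overline{\cM}_{\hol}(d,\cE)$ is the analogous quotient of $\{(\alpha,\beta,t):\alpha\neq 0,\,(\beta,t)\neq 0\}$ by $\C^*\times\C^*$, with the extra $\C^*$ acting by common weight one on $(\beta,t)$.

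The Grothendieck splitting $\cE = \cO(k)\oplus\cO(-k)$ together with $h^0(\cO(m)) = \max(m+1,0)$ then yields
\begin{equation*}
\dim V_+ = \max(d+k,0)+\max(d-k,0), \qquad \dim V_- = \max(k-d,0)+\max(-k-d,0),
\end{equation*}
and the three parts of the proposition reduce to inspection. In part~(1) both summands of $V_+$ vanish, so $\cM_{\hol}$ is empty. In part~(2) one finds $\dim V_+ = 2d$ and $\dim V_- = 0$; the vanishing of $V_-$ rules out any triple with $\beta\neq 0$, so Corollary~\ref{cor:regular} forces $\cN = \cM_{\hol}$ to be compact and regular, i.e.\ Zariski smooth, while the identification $\cM_{\hol}\cong \P(V_+) = \CP^{2d-1}$ is immediate. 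In part~(3) both $\dim V_+ = k+d$ and $\dim V_- = k-d$ are positive, and the projection $[\alpha,\beta]\mapsto[\alpha]$ realises $\cM_{\hol}$ as the associated bundle to the principal $\C^*$-bundle $V_+\setminus\{0\}\to\P(V_+)$ for the weight $-1$ representation on $V_-$, that is, as the total space of the vector bundle $\cO_{\P(V_+)}(1)\otimes V_-$ over $\P(V_+)$; in particular $\cM_{\hol}$ is non-compact.

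The main task is then to describe $\overline{\cM}_{\hol}$ in case~(3) as a projective bundle. I would do this by performing the $(\C^*\times\C^*)$-quotient in two stages: quotient the projective $\C^*$ on $V_-\oplus\C\setminus\{0\}$ first, obtaining a $\CP^{k-d}$-fibre, and then divide by the gauge $\C^*$ to identify the result with the associated projective bundle $\P\bigl(\cO_{\P(V_+)}(1)\otimes V_-\oplus \cO\bigr)$ over $\P(V_+)$, which is a locally trivial $\CP^{k-d}$-fibration of the claimed form. The main obstacle will be the sign bookkeeping of the two $\C^*$-weights (verifying that the weight $-1$ gauge action on $V_-$ produces $\cO(1)$ and not $\cO(-1)$ on $\P(V_+)$); once that is settled, local triviality is immediate from the standard affine charts of $\P(V_+)$ on which every bundle in sight trivialises.
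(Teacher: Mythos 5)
Your reduction is exactly the paper's: fix $\mathcal{L}=\cO(d)$ since $J^d$ is a point, observe that $h^0(K)=h^0(\cO(-2))=0$ makes the constraint $\alpha\beta=0$ vacuous, compute $\dim V_{\pm}$ from the Grothendieck splitting, and realise $\cM_{\hol}$ and $\overline{\cM}_{\hol}$ as the explicit $\C^*$ (resp.\ $\C^*\times\C^*$) quotients. Parts (1) and (2) are complete as written, and invoking Corollary \ref{cor:regular} for Zariski smoothness in (2) is precisely what the paper leaves implicit.

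The one step you defer ("sign bookkeeping") comes out opposite to your prediction: the quotient $\{(\alpha,\beta):\alpha\neq 0\}/\C^*$ with $\lambda\cdot(\alpha,\beta)=(\lambda\alpha,\lambda^{-1}\beta)$ is the total space of $\cO_{\P(V_+)}(-1)\otimes V_-\cong\cO(-1)^{\oplus(k-d)}$, not of $\cO_{\P(V_+)}(1)\otimes V_-$. Indeed the map $[\alpha,\beta]\mapsto\bigl([\alpha],\,\alpha\otimes\beta\bigr)$ into the tautological twist is well defined exactly because $\lambda\alpha\otimes\lambda^{-1}\beta=\alpha\otimes\beta$, and it is a fibrewise linear isomorphism; the same computation identifies $\overline{\cM}_{\hol}$ with $\P\bigl(\cO(-1)^{\oplus(k-d)}\oplus\cO\bigr)$ over $\P(V_+)=\CP^{k+d-1}$, which is the paper's answer. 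This slip does not endanger the proposition as stated, since either twist is non-compact of rank $k-d>0$ and either projectivisation is a locally trivial $\CP^{k-d}$-fibration, but the identification you announce would fail if carried out. Also, your base $\CP^{k+d-1}$ agrees with the paper's proof; the $\CP^{k+d}$ appearing in the statement is an off-by-one slip there, as the dimension count $(k-d)+(k+d-1)=2k-1=\dim_{\C}\overline{\cM}_{\hol}$ shows, so do not adjust your argument to match it.
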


\begin{rem}
The fact that the Euler characteristic of the moduli space depends on the sign of $d$ is consistent with the three-dimensional theory. Since $b_1( S^1 \times \mathbb{CP}^1) = 1$  we do not expect the count of Seiberg--Witten multi-monopoles to be invariant, even when there are no Fueter sections; by Proposition \ref{prop:reducibles}, there are two chambers in the set of parameters which are separated by a codimension one wall where reducibles appear. As discussed in subsection \ref{subsec:sign}, replacing $d$ by $-d$ can be  seen as varying $\tau$ so that the sign of $d - \tau$ changes; any path of parameters joining such two choices of $\tau$ will pass through the wall of reducibles.
\end{rem}

\begin{proof}
$\cM_{\hol}(d, \cE)$ consists of the equivalences classes of pairs $(\alpha, \beta)$ such that
\[ \alpha \in H^0( \cO(k+d-1) ) \oplus H^0(\cO(-k + d-1)), \]
\[ \beta \in H^0( \cO(-k-d-1) ) \oplus H^0( \cO(k-d-1) ), \] 
$\alpha \neq 0$, and $\alpha \beta = 0 \in H^0( \cO(-2))$---this is automatically satisfied since $h^0(\cO(-2)) = 0$.
If $\cE$ is generic, so that $k = 0$, then $d \leq 0$ which implies that $\alpha = 0$ and $\cM_{\hol}(d,\cE)$ is empty. If $d > 0$, then $\alpha \in \mathbb{C}^{2d}$ and $\beta = 0$; it follows that $\cM_{\hol}(d, \cE) = \cN(d, \cE) = \mathbb{CP}^{2d-1}$.  

The same description of $\cM_{\hol}(d, \cE)$ is valid in the non-generic case $k \neq 0$ as long as $k \leq |d|$. When $k > |d|$ the moduli space is no longer compact and Fueter sections appear. If $k > d > 0$, then $\alpha \in \C^{k+d}$, $\beta \in \C^{k-d}$ and $\cM_{\hol}(d, \cE)$ is the total space of the vector bundle $\cO(-1)^{\oplus (k-d)}$ over $\mathbb{CP}^{k+d-1}$. The compactification  $\overline{\cM}_{\hol}(d, \cE)$ is the $\mathbb{CP}^{k-d}$-bundle over $\mathbb{CP}^{k+d-1}$ obtained from the projectivisation of the vector bundle $\cO(-1)^{\oplus (k-d)} \oplus \cO$. 
\end{proof}

\subsection{Genus one} Let $\Sigma = S^1 \times S^1$ equipped with a complex structure making it into an elliptic curve. Isomorphism classes of line bundles of a given degree $d$ on $\Sigma$ form the Jacobian $J^d$ which is isomorphic to the dual torus $\Sigma^*$. The canonical bundle of $\Sigma$ is trivial and without loss of generality we can take $K^{1/2}$ also to be trivial. Holomorphic vector bundles over elliptic curves were classified by Atiyah \cite{atiyah}. A generic holomorphic $SL(2,\C)$--bundle $\cE$ is of the form $\cE = A \oplus A^{-1}$ for a degree zero line bundle $A \to \Sigma$. We may moreover assume that $A^2 \neq \cO$ since there are only four line bundles satisfying $A^2 = \cO$. 

\begin{prop}
Let $\Sigma$ be an elliptic curve. Suppose that $\cE$ is generic, that is of the form $\cE = A \oplus A^{-1}$ for $A \in J^0$ satisfying $A^2 \neq \cO$. 
\begin{enumerate}
\item If $d < 0$, then $\cM_{\hol}(d, \cE)$ is empty.
 \item If $d > 0$, then $\cM_{\hol}(d, \cE)$ is Zariski smooth with the underlying complex manifold biholomorphic to the projectivisation of a rank $2d$ vector bundle over $J^d$.
  \item If $d = 0$, then $\cM_{\hol}(d, \cE)$ is regular and consists of two points. 
  \end{enumerate}
\end{prop}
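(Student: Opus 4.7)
The plan is to exploit the triviality of the canonical bundle $K\cong\cO$ on the elliptic curve, so that with the choice $K^{1/2}=\cO$ the line bundles entering the definition of $\cM_{\hol}(d,\cE)$ reduce to
\[
\cE \otimes \mathcal{L} \otimes K^{1/2} = (A\otimes\mathcal{L})\oplus(A^{-1}\otimes\mathcal{L}),
\]
\[
\cE^* \otimes \mathcal{L}^* \otimes K^{1/2} = (A^{-1}\otimes\mathcal{L}^{-1})\oplus(A\otimes\mathcal{L}^{-1}),
\]
and the entire proof reduces to bookkeeping with the cohomology of line bundles on an elliptic curve. Throughout I will use the standard fact that a line bundle $\mathcal{M}$ of degree $d$ on $\Sigma$ satisfies $h^0(\mathcal{M})=d$ if $d>0$, $h^0(\mathcal{M})=0$ if $d<0$, and $h^0(\mathcal{M})\in\{0,1\}$ if $d=0$, with the value $1$ attained precisely when $\mathcal{M}\cong\cO$.

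For $d<0$ both summands of $\cE\otimes\mathcal{L}\otimes K^{1/2}$ have negative degree, so $H^0$ vanishes and the condition $\alpha\neq 0$ cannot be met; hence $\cM_{\hol}(d,\cE)=\emptyset$. For $d>0$ every summand of $\cE\otimes\mathcal{L}\otimes K^{1/2}$ has positive degree $d$, yielding $h^0=2d$ uniformly in $\mathcal{L}\in J^d$, while $\cE^*\otimes\mathcal{L}^*\otimes K^{1/2}$ has negative degree in both summands and therefore no non-zero sections, forcing $\beta=0$. By Grauert's theorem the groups $H^0(\cE\otimes\mathcal{L}\otimes K^{1/2})$ assemble into a rank $2d$ holomorphic vector bundle $\mathcal{V}\to J^d$, and the assignment $(\mathcal{L},\alpha,0)\mapsto(\mathcal{L},[\alpha])$ defines a biholomorphism $\cM_{\hol}(d,\cE)\cong\P(\mathcal{V})$. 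Zariski smoothness then follows from Corollary \ref{cor:regular}, since the vanishing of $\beta$ forbids any triple with both $\alpha$ and $\beta$ non-zero.

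The case $d=0$ is the most interesting. The condition $\alpha\neq 0$ forces $A\otimes\mathcal{L}\cong\cO$ or $A^{-1}\otimes\mathcal{L}\cong\cO$, i.e.\ $\mathcal{L}\in\{A^{-1},A\}$; the hypothesis $A^2\neq\cO$ makes these distinct. At $\mathcal{L}=A^{-1}$ we have $\cE\otimes\mathcal{L}=\cO\oplus A^{-2}$ with $h^0=1$, so modulo the $\C^*$-action $\alpha$ is the canonical inclusion $\iota\colon A\hookrightarrow\cE$; likewise $\cE^*\otimes\mathcal{L}^*=\cO\oplus A^2$ has $h^0=1$, with a non-zero element given by the projection $\pi\colon\cE\twoheadrightarrow A$. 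Since $\pi\circ\iota$ is a non-zero scalar, the constraint $\alpha\beta=0$ forces $\beta=0$. A symmetric analysis at $\mathcal{L}=A$ yields the second equivalence class, giving exactly two points. Regularity then follows from Corollary \ref{cor:regular}: the same computation shows there is no triple $(\mathcal{L},\alpha,\beta)$ with both $\alpha$ and $\beta$ non-zero, so $\cN(0,\cE)$ is regular.

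The argument encounters no serious obstacles because the cohomology computations on an elliptic curve are essentially trivial; the only point requiring care is the use of the hypothesis $A^2\neq\cO$, without which $A$ and $A^{-1}$ would coincide and $\cE\otimes\mathcal{L}$ would acquire two copies of $\cO$, producing non-trivial $\beta$'s that make the moduli space singular and non-compact. This degeneration is the expected wall behaviour on the non-generic locus of holomorphic $\SL(2,\C)$-bundles.
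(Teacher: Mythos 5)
Your proposal is correct and follows essentially the same route as the paper: reduce everything to line-bundle cohomology on the elliptic curve via the splitting $\cE = A \oplus A^{-1}$ and the triviality of $K^{1/2}$, force $\beta=0$ in all cases, and invoke Corollary \ref{cor:regular} for smoothness/regularity. The only cosmetic differences are that the paper exhibits the rank $2d$ bundle over $J^d$ as the push-forward of the Poincar\'e line bundle rather than citing Grauert, and at $d=0$ it deduces regularity from Zariski smoothness plus the zero expected dimension, whereas you pass through regularity of $\cN$ as the framed-vortex moduli space — both of which amount to the same identification of deformation complexes.
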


\begin{rem}
If $d >0$, then the cohomology ring $H^*( \cM_{\hol}(d, \cE), \Z)$ is isomorphic as $H^*( J^d, \Z)$--modules to $H^*( J^d, \Z) [ H ] / (H^{2d})$ where $\deg(H) =2$. In particular, 
\[ \chi( \cM_{\hol}(d, \cE) ) = \chi( J^d ) \chi( \mathbb{CP}^{2d-1}) = 0,  \]
which is consistent with the fact that $\chi( \cM_{\hol}(d, \cE) )$ is invariant under the change $d \mapsto -d$. 
\end{rem}

\begin{proof}
$\cM_{\hol}(d, \cE)$ consists of equivalence classes of triples $(\mathcal{L}, \alpha, \beta)$ where $\mathcal{L} \in J^d$ and
\[ \alpha \in H^0( A \otimes \mathcal{L}) \oplus H^0(A^{-1} \otimes \mathcal{L}), \]\[ \beta \in H^0(A^{-1} \otimes \mathcal{L}^{-1}) \oplus H^0( A \otimes \mathcal{L}^{-1}), \]
satisfying $\alpha \neq 0$ and $\alpha \beta = 0$ in $H^0( \cO) = \C$. 

For $d < 0$ we must have $\alpha = 0$ and so the moduli space is empty. For $d = 0$ the only choices of $\mathcal{L}$ for which $\alpha$ is possibly non-zero are $\mathcal{L} = A^{-1}$ and $\mathcal{L} = A$. If $\mathcal{L} = A^{-1}$, then
\[ \alpha \in H^0( \cO) \oplus H^0( A^{-2}), \]
\[ \beta \in H^0( \cO) \oplus H^0( A^2). \]
Since $A^2$ and $A^{-2}$ are non-trivial, so they have no non-zero sections. The only possibly choice for $\alpha$, up to scaling, is therefore $\alpha = (1,0)$ and the condition $\alpha \beta = 0$ forces $\beta$ to be zero since the pairing $H^0( \cO ) \times H^0( \cO) \to H^0( \cO)$ is simply the multiplication $\C \times \C \to \C$. We repeat the same argument for $\mathcal{L} = A$ and conlude that $\cM_{\hol}(d, \cE)$ consists of two isolated points. In particular it is compact and so Zariski smooth thanks to Corollary \ref{cor:regular}. Since it is also has the correct dimension zero, we conclude that each of the points is regular.

Consider now the case $d > 0$. For every $\mathcal{L} \in J^d$ the Riemann--Roch theorem gives us
\[ h^0( \mathcal{L} \otimes A ) - h^1( \mathcal{L} \otimes A) = d \]
and by Serre duality, $h^1( \mathcal{L} \otimes A ) = h^0( \mathcal{L}^{-1} \otimes A) = 0$ because $\deg(\mathcal{L}^{-1})<0$, Thus $H^0 (\mathcal{L} \otimes A) = \C^{d}$ and the same is true if $A$ is replaced by $A^{-1}$. Therefore, $\alpha$ is identified with a non-zero vector in $\C^{2d}$. On the other hand, $\beta = 0$ since $A^{\pm 1} \otimes \mathcal{L}^{-1}$ has no non-trivial sections. Since the above discussion is valid for any $\mathcal{L} \in J^d$, it follows that  $\cM_{\hol}(d, \cE)$ is a locally trivial $\mathbb{CP}^{2d-1}$--fibration over $J^d$: the projectivisation of a  rank $2d$ holomorphic vector bundle over $J^d$ given by the push-forward of the Poincar\'e line bundle  $\cP \to J^d \times \Sigma$ to the first factor.
\end{proof}

It is worthwhile discussing some non-generic examples. The cases when $\cE = A \oplus A^{-1}$ and either $\deg(A) \neq 0$ or $A^2 = \cO$ are similar to the ones already considered. Another possibility is that $\cE$ is indecomposable, in which case it is of the form $\cE = \cE_0 \otimes A$ where $A \in J^0$ satisfies $A^2 = \cO$ and $\cE_0$ is the unique non-trivial bundle obtained as an extension
\[ \begin{tikzcd}
0 \ar{r} & \cO \ar{r} & \cE_0 \ar{r} & \cO \ar{r} & 0. 
\end{tikzcd} \] 
The line bundle $A$ is uniquely determined by $\cE$. 

\begin{exmp}
\label{exmp_nongeneric1}
Suppose without loss of generality that  $\cE = \cE_0$. It is shown in \cite{atiyah} that if $h^0( \cE \otimes \mathcal{L} ) \neq 0$, then either $\deg(\mathcal{L})>0$ or $\mathcal{L} = \cO$ in which case we have $h^0( \cE) = 1$. We conclude that when $d < 0$ or $d > 0$ the moduli space $\cM_{\hol}(d, \cE)$ is, respectively, empty or the projectivisation of a vector bundle over $J^d$. On the other hand, for $d = 0$ the only choice of $\mathcal{L}$ for which $h^0(\cE \otimes \mathcal{L}) > 0$ is $ \mathcal{L} = \cO$ and we look for holomorphic sections
\[ \alpha \in H^0( \cE ) = \C, \]
\[ \beta \in H^0( \cE^* ) = \C \]
such that $\alpha \neq 0$ and $\alpha \beta = 0$. Up to scaling,  $\alpha = 1$. We will show now that  the pairing $H^0( \cE ) \times H^0( \cE^* ) \to \C$ is trivial and, as a consequence, $\beta$ can be chosen arbitrarily.  Let $\Omega \in H^0( \Lambda^2 \cE)$ be a nowhere vanishing holomorphic volume form. It induces an isomorphism $\cE \to \cE^*$ given by $v \mapsto \Omega(v, \cdot)$. If $\alpha$ is a generator of $H^0( \cE)$, then $\gamma = \Omega(\alpha, \cdot)$ is a non-zero holomorphic section of $H^0( \cE^*)$ and so it must be a generator. On the other hand, $\gamma(\alpha) = \Omega(\alpha, \alpha) = 0$ since $\Omega$ is skew-symmetric---this shows that $\alpha \beta = 0$ for every $\beta \in H^0( \cE^*)$. Therefore, $\cM_{\hol}(0, \cE)$ is homeomorphic to $\C$. Its compactification $\overline{\cM}_{\hol}(0, \cE)$ is homeomorphic to $\mathbb{CP}^1$.
\end{exmp}

\subsection{Genus two} Let $\Sigma$ be a genus two Riemann surface. By Lemma \ref{lem:stable}, a generic holomorphic bundle on $\Sigma$ is stable. The proof of the next lemma can be found in \cite{narasimhan}. 

\begin{lem} \label{lem:genus2}
Let $W$ be a stable rank two vector bundle with trivial determinant over a genus two closed Riemann surface $\Sigma$. If $\xi \to \Sigma$ is a degree $1$ line bundle, then
\begin{enumerate}
\item $h^0(W \otimes \xi) \leq 1$.
\item Any non-zero homomorphism $\xi^* \to W$ is everywhere injective.
\end{enumerate}
\end{lem}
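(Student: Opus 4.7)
The plan is to prove part (2) first using the standard saturation trick for stable bundles, and then leverage it to prove part (1) by a determinant argument.

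For part (2), I would start with a non-zero homomorphism $\phi\colon \xi^* \to W$ and let $L \subset W$ be the saturation of its image, so that $L$ is a line subbundle and $\phi$ factors as
\[ \xi^* \xrightarrow{\ \otimes s_D\ } \xi^* \otimes \cO(D) \xhookrightarrow{\ \ } L \subseteq W \]
for some effective divisor $D$, giving $\deg L \geq -1 + \deg D$. Stability of $W$ (with $\deg W = 0$) forces $\deg L \leq -1$, hence $\deg D \leq 0$, so $D = 0$. This means $\phi$ itself is a saturated inclusion and is therefore everywhere injective as a bundle map.

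For part (1), suppose for contradiction that $h^0(W\otimes\xi) \geq 2$ and pick two linearly independent sections $s_1,s_2$, which I identify with homomorphisms $\phi_1,\phi_2\colon \xi^* \to W$ under the natural isomorphism $H^0(W\otimes\xi) = \Hom(\xi^*,W)$. By part (2) each $\phi_i$ is everywhere injective. I claim that the combined map $\Phi := \phi_1 \oplus \phi_2 \colon \xi^* \oplus \xi^* \to W$ is injective on fibres at every point. Indeed, if at some $p \in \Sigma$ there existed $(a,b) \in \C^2\setminus\{0\}$ with $a\phi_1(p) + b\phi_2(p) = 0$, then the non-zero homomorphism $a\phi_1 + b\phi_2$, corresponding to the non-zero section $as_1 + bs_2$, would vanish at $p$, contradicting part (2).

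Given that $\Phi$ is injective on every fibre, its determinant is a nowhere vanishing section of
\[ \det W \otimes \det(\xi^* \oplus \xi^*)^* \ =\ \cO \otimes \xi^{\otimes 2} \ =\ \xi^{\otimes 2}. \]
But $\deg \xi^{\otimes 2} = 2 > 0$, so any section of $\xi^{\otimes 2}$ must vanish somewhere, a contradiction. The main conceptual step to get right is the reduction from part (1) to part (2) via the fibre-wise independence argument; once that is in place, the determinant obstruction on the positive-degree line bundle $\xi^{\otimes 2}$ is immediate and closes the proof.
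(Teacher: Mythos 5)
Your proof is correct. Note that the paper does not actually prove this lemma at all --- it simply cites Narasimhan--Ramanan --- so there is no in-text argument to compare against; your write-up supplies a sound, self-contained proof and it is essentially the standard one from that reference. Both steps check out: in (2), the saturation $L$ of the image of $\phi$ satisfies $\deg L \leq -1$ by stability (slope of $W$ is $0$), while $\deg L = -1 + \deg D$ with $D$ effective, forcing $D=0$ and hence nowhere-vanishing of $\phi$; in (1), the pointwise-independence argument is valid because the fibre of $\xi^*\oplus\xi^*$ at $p$ is spanned by $(e,0),(0,e)$ for a single basis vector $e$ of $\xi^*_p$, so any fibrewise kernel element produces a nonzero combination $a\phi_1+b\phi_2$ vanishing at $p$, contradicting (2), and then $\det\Phi$ is a nowhere-vanishing section of $\xi^{\otimes 2}$, which has degree $2>0$ --- impossible. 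A small cosmetic point: in (2) the factorisation gives equality $\deg L=-1+\deg D$ rather than an inequality, though the inequality suffices. It is also worth observing that your argument nowhere uses $g(\Sigma)=2$; the statement holds for stable rank-two bundles with trivial determinant over a curve of any genus, which is harmless here since the paper only needs the genus-two case.
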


\begin{prop}
Let $\Sigma$ be a closed Riemann surface of genus two. For a generic holomorphic $\SL(2,\C)$--bundle $\cE \to \Sigma$ we have the following description of $\cM_{\hol}(d, \cE)$.
\begin{enumerate}
\item If $d < 0$, then $\cM_{\hol}(d, \cE)$ is empty.
\item If $d = 0$, then $\cM_{\hol}(d, \cE)$ is Zariski smooth with the underlying complex manifold biholomorphic to a closed Riemann surface of genus five.
\item If $d > 0$, then $\cM_{\hol}(d, \cE)$ is Zariski smooth with the underlying complex manifold biholomorphic to the projectivisation of a rank $2d$ vector bundle over $J^d$.
\end{enumerate}
\end{prop}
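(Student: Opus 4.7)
The plan is to reduce the three cases to general principles established earlier, with $d=0$ being the essential new content.

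For $d<0$: Lemma \ref{lem:stable} implies that a generic $\cE$ is stable, so any non-zero $\alpha\in H^0(\cE\otimes\mathcal{L}\otimes K^{1/2})$ would saturate to a line subbundle of $\cE$ of degree at least $-(d+1)$, contradicting stability unless $d+1>0$. Hence $\cM_{\hol}(d,\cE)$ is empty for $d<0$. For $d>0$ the claim is exactly Proposition \ref{prop:general}(3), since $d\geq g(\Sigma)-1=1$.

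The substantive case is $d=0$. First I would show $\beta=0$ for every triple in the moduli space: by Lemma \ref{lem:genus2}(2) any non-zero $\alpha$ or $\beta$ is nowhere vanishing, so if both were non-zero their product $\alpha\beta$ would be a nowhere-vanishing holomorphic section of $K$, which is impossible since $\deg K=2$. Hence $\cM_{\hol}(0,\cE)=\cN(0,\cE)$, and by Lemma \ref{lem:genus2}(1) together with the discussion of subsection \ref{subsec:theta}, the map $[\mathcal{L},\alpha]\mapsto\mathcal{L}\otimes K^{1/2}$ identifies $\cN(0,\cE)$ holomorphically with the generalised theta divisor $\theta(\cE)\subset J^1$. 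It remains to show that for generic $\cE$ the divisor $\theta(\cE)$ is a smooth connected curve of genus five.

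To this end, recall that $\theta(\cE)\in|2\Theta|$ corresponds, via the base-point-free Kummer map $J^1\to|2\Theta|^*\cong\CP^3$, to a hyperplane section of the Kummer surface; this surface is the quotient of $J^1$ by the involution $\mathcal{L}\mapsto K\otimes\mathcal{L}^{-1}$ and is singular precisely at the $16$ nodes corresponding to theta characteristics. For generic $\cE$ the associated hyperplane misses all $16$ nodes and is transverse to the smooth locus, cutting out a smooth plane quartic of genus three; its pre-image $\theta(\cE)\subset J^1$ is then an \'etale double cover of this quartic, connected because Kodaira vanishing for the ample divisor $2\Theta$ together with the structure sequence $0\to\cO_{J^1}(-\theta(\cE))\to\cO_{J^1}\to\cO_{\theta(\cE)}\to 0$ gives $H^0(\cO_{\theta(\cE)})=\C$. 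Adjunction on $J^1$, using $K_{J^1}=0$ and the standard identity $\Theta^2=g(\Sigma)!=2$ for a principal polarization, yields $2g-2=(2\Theta)^2=8$, so $g(\theta(\cE))=5$. Zariski smoothness of $\cM_{\hol}(0,\cE)$ then follows from Theorem \ref{thm:framed}(2).

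The main obstacle is verifying that both the smoothness of $\theta(\cE)$ and the avoidance of the $16$ Kummer nodes hold for a residual subset of $B\in\cA(\Sigma,E)$, rather than merely for a generic hyperplane in $\CP^3$; this demands a transversality argument in the spirit of Lemma \ref{lem:nofueter2}, relying on the fact that varying $B$ moves the hyperplane $\theta(\cE_B)$ sufficiently freely inside $|2\Theta|$.
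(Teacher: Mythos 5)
Your treatment of $d<0$ (stability forces $\alpha=0$) and $d>0$ (quoting Proposition \ref{prop:general}(3)) is fine, but the key step of your $d=0$ argument is wrong. From Lemma \ref{lem:genus2}(2) you correctly get that non-zero $\alpha$ and $\beta$ are everywhere injective as bundle maps, but it does not follow that the pairing $\alpha\beta\in H^0(K)$ is nowhere vanishing: at a point $x$ the pairing vanishes exactly when the line $\mathrm{im}\,\alpha(x)\subset E^*_x$ lies in the kernel of $\beta^t(x)$, and in rank two nothing prevents this from happening at some, or indeed all, points even though neither section vanishes. In fact $\alpha\beta\equiv 0$ with both sections nowhere zero is precisely the situation in which $\alpha$ and $\beta^t$ fit into the exact sequence \eqref{eqn:extension}; this forces $\mathcal{L}^2=\cO$ and exhibits $\cE$ as a non-trivial extension of $\mathcal{L}^*\otimes K^{1/2}$ by $\mathcal{L}^*\otimes K^{-1/2}$, and such \emph{stable} bundles do exist — they sweep out the images of $16$ copies of $\CP^2$ in $\mathcal{SU}(2)\cong\CP^3$ (see Remark \ref{rem_nongeneric}). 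So your claim that $\beta=0$ for \emph{every} stable $\cE$ is false, and the real content of the $d=0$ case is to show that a generic $\cE$ avoids this extension locus: either by the dimension count just indicated (the paper's route), or simply by invoking Corollary \ref{cor:nofueter}/Theorem \ref{thm:framed}(2) — which you already use for Zariski smoothness and which gives $\cM_{\hol}=\cN$ for generic $B$ — instead of your pointwise argument.

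On the issue you flag at the end: transferring genericity from hyperplanes in $|2\Theta|$ to connections $B$ does not require a Sard--Smale argument in the spirit of Lemma \ref{lem:nofueter2}. The conditions needed (the hyperplane $\theta(\cE)$ transverse to the Kummer quartic and missing its $16$ nodes, and $\cE$ outside the $16$ extension families) cut out a dense Zariski-open subset of $\mathcal{SU}(2)\cong\CP^3$; since the stable locus in $\cA(\Sigma,E)$ is open and dense by Lemma \ref{lem:stable} and the map $B\mapsto[\cE_B]$ onto $\mathcal{SU}(2)$ is a continuous open surjection (a complex gauge-group quotient), the preimage of that subset is open and dense, hence residual. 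Apart from these two points your argument follows the paper's: the identification $\cN(0,\cE)\cong\theta(\cE)$ via Lemma \ref{lem:genus2}(1) and subsection \ref{subsec:theta}, and the genus-five computation, where your adjunction calculation $(2\Theta)^2=8$ is a harmless substitute for the paper's Hurwitz-formula argument for the double cover of the plane quartic.
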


\begin{proof}
Items $(2)$ and $(3)$ follow from Proposition \ref{prop:general} For $d = 0$ we use the relation between the moduli space of framed vortices and theta divisors described in subsection \ref{subsec:theta}. Let $\cE \to \Sigma$ be a stable $\SL(2,\C)$--bundle. Let $\mathcal{SU}(2)$ be the compactification of the moduli space of such bundles obtained by adding the $S$--equivalence classes of semi-stable bundles. As explained in \cite{narasimhan}, we have $| 2 \Theta | = \mathbb{CP}^3$ and the map introduced in subsection \ref{subsec:theta}
\[ \theta \colon \mathcal{SU}(2) \to \mathbb{CP}^3 \]
\[ \cE \mapsto \theta(\cE) \]
 is an isomorphism. Recall that $\theta(\cE)$ can be seen either as a subset of the Jacobian $J^1$ 
 \[ \theta(\cE) = \{ A \in J^1 \ | \ h^0( \cE \otimes A) > 0 \} \]
 or as the corresponding point in $| 2 \Theta |$. 
 
 $J^1$ is a two-dimensional complex torus and the map $J^1 \to |2 \Theta|^* = (\mathbb{CP}^3 )^*$ induces a degree four embedding of the Kummer surface $J^1 / \Z_2$. Thus, as a subset $\theta(\cE) \subset J^1$ is the preimage of the intersection of the Kummer surface in $(\mathbb{CP}^3 )^* $ with the hyperplane $\theta(\cE) \in \mathbb{CP}^3$ under the quotient map $J^1 \to J^1 / \Z^2$. Since $\theta \colon \mathcal{SU}(2) \to \mathbb{CP}^3$ is an isomorphism, by changing the background bundle $\cE$ we can obtain all hyperplanes in $(\mathbb{CP}^3)^*$. In particular, for a generic choice of $\cE$, the hyperplane $\theta(\cE)$ will avoid  all the $16$ singular points of $J^1 / \Z_2$ and the intersection $J^1/ \Z_2 \cap \theta(\cE)$ will be a smooth complex curve of degree four and genus three. Its preimage under $J^1 \to J^1 / \Z_2$ is a smooth curve $C\subset J^1$ of genus five, by the Hurwitz formula.
 
Let $\pi \colon \cM_{\hol}( 0 , \cE) \to C$ be the composition of $\cM_{\hol}(0, \cE) \to \cN(0, \cE)$ with the projection $\cN(0, \cE) \to \theta(\cE) = C$ from subsection \ref{subsec:theta}.  We claim that this map is an isomorphism for a generic choice of $\cE$. In order to prove that, it is enough to check that the fibre over any line bundle $\mathcal{L} \otimes K^{1/2}$ in $C$ consists of one point. This is equivalent to showing that  $H^0( \cE \otimes \mathcal{L} \otimes K^{1/2})$ is spanned by a single non-zero section $\alpha$ and if $\beta \in H^0( \cE^* \otimes \mathcal{L}^* \otimes K^{1/2})$ is any section satisfying $\alpha \beta = 0$, then $\beta = 0$.  The first claim follows immediately from Lemma \ref{lem:genus2}. As regards the second claim, assume that $\alpha$ and $\beta$ are as above and  $\beta \neq 0$. By Lemma \ref{lem:genus2}, the homomorphisms
\[ \alpha \colon \mathcal{L}^* \otimes K^{-1/2} \longrightarrow \cE \quad \textnormal{and} \quad \beta \colon \mathcal{L} \otimes K^{-1/2} \longrightarrow \cE^* \]
are everywhere injective. Since $\rank\cE=2$, $\alpha \beta = 0$ implies the exactness of the sequence
\begin{equation} \label{eqn:extension}
\begin{tikzcd}
0 \arrow{r} & \mathcal{L}^* \otimes K^{-1/2} \ar{r}{\alpha} & \cE \ar{r}{\beta^t} & \mathcal{L}^* \otimes K^{1/2} \ar{r} & 0.  
\end{tikzcd}
\end{equation}
Since $\det\cE = \cO$, we conclude $\mathcal{L}^2 = \cO$. There are  $16$ line bundles satisfying this condition: order two elements of $J^0$. For each of them, all possible non-trivial extensions $\cE$ as above are classified by the corresponding extension class in $\mathbb{P}H^1(K^{-1}) = \mathbb{CP}^2$. (Note that the extension is non-trivial because $\cE$ is stable.) Thus, all stable bundles $\cE$ that can be represented as such an extension form a proper subvariety of $\mathcal{SU}(2) = \mathbb{CP}^3$ consisting of the images of $16$ maps $\mathbb{CP}^2 \to \mathcal{SU}(2)$. A generic stable bundle $\cE$  will not belong to this subvariety. In this case, we conclude that each fibre of the map $\pi$ consists of a single point and $\pi$ is an isomorphism. In particular, $\cM_{\hol}(0, \cE)$ is compact and therefore Zariski smooth by Corollary \ref{cor:regular}.
\end{proof}

\begin{rem}
\label{rem_nongeneric}
Note that the last part of the proof was unnecessary; we already know that generically $\cM_{\hol} = \cN$ is compact and Zariski smooth which is enough to conclude $\cN = C$. On the other hand, the argument presented above identifies the locus of those semi-stable bundles $\cE \in \mathcal{SU}(2)$ for which Fueter sections appear. It consists of strictly semi-stable bundles $A \oplus A^{-1}$, for some $A \in J^0$, which form the Kummer surface $J^1 / \Z_2$ in $\mathcal{SU}(2) = \mathbb{CP}^3$, and stable bundles $\cE$ that arise from an extension of the form \eqref{eqn:extension} for some element $\mathcal{L} \in J^0$ of order two. The latter form a subvariety covered by the images of $16$ maps $\mathbb{CP}^2 \to \mathbb{CP}^3$. 
\end{rem}

\newpage
\bibliographystyle{alphanum_n.bst}
\bibliography{nsw.bib} 
\end{document}